\documentclass[11pt]{amsart}
\usepackage{mathabx} 
\usepackage{amsmath,amsfonts,amssymb,amsthm,epsfig,color, mathrsfs, scalerel, stackengine} 
\usepackage{hyperref} 
\usepackage{bbm}
\usepackage{mathrsfs, euscript}
\usepackage{upgreek}
\usepackage{tensor}
\usepackage{enumitem}

\newcommand{\bb}{\mathbb}

\newcommand{\ZZ}{\bb Z}

\newcommand{\RR}{\bb R}
\newcommand{\II}{\bb I}

\newcommand{\NN}{\bb N}

\newcommand{\QQ}{\bb Q}

\newcommand{\mK}{\mathcal K}
\newcommand{\mQ}{\mathcal Q}
\newcommand{\Am}{\mathcal A}
\newcommand{\B}{\mathcal B}
\newcommand{\mR}{\mathcal R}
\newcommand{\mS}{\mathcal S}

\newcommand{\mC}{\mathcal C}

\newcommand{\eJ}{\mathcal{J}}
\newcommand{\mW}{\mathcal{W}}
\newcommand{\Ls}{\mathscr{L}}
\newcommand{\sP}{\mathscr{P}}
\newcommand{\mP}{\mathcal{P}}
\newcommand{\sL}{\ell}

\newcommand{\diam}{\operatorname{diam}}
\newcommand{\inte}{\operatorname{int}}

\newcommand{\F}{\mathcal F}
\newcommand{\eO}{\operatorname{ord}}
\newcommand{\col}{\operatorname{col}}

\newcommand{\Fs}{\mathfrak s}
\newcommand{\Fb}{\mathfrak b}

\newcommand{\hh}{\mathcal H}

\newcommand{\BA}{\operatorname{\bold{Bad}}}

\newcommand{\Dir}{\operatorname{\bold{Dir}}}

\newtheorem{Theorem}{Theorem}
\numberwithin{Theorem}{section}

\newtheorem{coro}[Theorem]{Corollary}

\newtheorem{theo}[Theorem]{Theorem}
\newtheorem{prop}[Theorem]{Proposition}

\newtheorem{lemm}[Theorem]{Lemma}

\newtheorem{defn}[Theorem]{Definition}

\newcommand\reallywidehat[1]{%
\savestack{\tmpbox}{\stretchto{%
  \scaleto{%
    \scalerel*[\widthof{\ensuremath{#1}}]{\kern-.6pt\bigwedge\kern-.6pt}%
    {\rule[-\textheight/2]{1ex}{\textheight}}
  }{\textheight}%
}{0.5ex}}%
\stackon[1pt]{#1}{\tmpbox}%
}

\newtheorem*{lemma*}{Lemma}
\newtheorem*{question*}{Question}

\newtheorem*{theorem*}{Theorem}

\newtheorem*{defn*}{Definition}

\theoremstyle{remark}

\newtheorem{rema}[Theorem]{\sc Remark}
\newtheorem*{rema*}{\sc Remark}

\numberwithin{equation}{section}
\begin{document}
\title{$\delta$-Badly approximable numbers and ubiquitously losing sets}
\author{Jimmy Tseng}

\address{Department of Mathematics and Statistics, University of Exeter, Exeter EX4 4QF, U.K}
\email{j.tseng@exeter.ac.uk}

\begin{abstract}  We consider a natural filtration $\BA(\delta) \subset \BA(\delta')$ for $\delta \geq \delta'>0$ on the set of badly approximable numbers to complement the filtration of the well approximable numbers by the $\tau$-well approximable numbers.  We show that the set $\BA(\delta)$ is a $(1/3, 18 \delta)$-winning set and give a lower bound on its Hausdorff dimension.  We introduce the notion of {\em $(\alpha, \beta)$-ubiquitously losing sets} to the theory of Schmidt games, give an upper bound on the Hausdorff dimension of an $(\alpha, \beta)$-ubiquitously losing set that is strictly less than full Hausdorff dimension, show that $\BA(\delta)$ is a $(1/2, 18/\delta)$-ubiquitously losing set, and give an upper bound on the Hausdorff dimension of $\BA(\delta)$ that is strictly less than one.  Combined with a finite intersection property and a bilipschitz transfer property, we obtain results for finite intersections of translates of $\BA(\delta)$.
\end{abstract}

\maketitle

\tableofcontents

\section{Introduction}

Classically, Diophantine approximation is the study of the approximation properties of the real numbers by the rational numbers.  The set of real numbers is partitioned into the set of badly approximable numbers, which we denote $\BA$, and the set of well approximable numbers.  There is a (non-exhaustive) filtration on the set of well approximable numbers by the $\tau$-well approximable numbers, a filtration indexed by the real numbers $\tau$.  The natural tool to study the well approximable numbers and the $\tau$-well approximable numbers is the mass transference principle.  For details, see, for example, the survey~\cite[Sections~1.1--1.3]{BRV16}.

Conversely, a natural tool to study the set of badly approximable numbers $\BA$ is the theory of Schmidt games.  In this paper, our first goal is to consider a theory analogous to that of the $\tau$-well approximable numbers for the badly approximable numbers, namely the filtration on $\BA$ whose subobjects are the sets $\BA(\delta)$ (Definition~\ref{defn:DeltaBANumber}) and which is indexed by the real numbers $1/\sqrt 5 >\delta>0$.  To study these sets, we introduce the notion of $(\alpha, \beta)$-ubiquitously losing sets in Definition~\ref{defn:UbitLos} to the theory of Schmidt games for real numbers $0 < \alpha <1$ and $0< \beta<1$, further reinforcing the utility of the theory of Schmidt games for studying $\BA$.  The benefit of the notion of  $(\alpha, \beta)$-ubiquitously losing sets is that it enables us to bound the upper Hausdorff dimension by a bound that is strictly less than full Hausdorff dimension (see Theorem~\ref{theo:LosingBADeltaUpperHD} for the precise statement) and to also retain a version of the key property associated with Schmidt games, namely a finite intersection property (see Theorem~\ref{theo:WinningBADeltaLowerHDCtbleIntersect} as an example and Theorem~\ref{thm:CtbleWinPropForWinSets} for the finite intersection property).  

More generally, the introduction of $(\alpha, \beta)$-ubiquitously losing sets (for $\RR^d$ where $d \in \NN$ but even more generally is possible) into the theory of Schmidt games enables us to achieve our second goal for this paper, namely extending the study of sets that satisfy the countable intersection property and have full Hausdorff dimension to the study of sets that satisfy the finite intersection property but need not have full Hausdorff dimension.  This may enable the theory of Schmidt games to become a more versatile tool in fractal geometry itself.  

Finally, the ideas in this paper may have greater applicability in studying the finer structure of other $(\alpha, \beta)$-winning sets and amplifying the utility of other games motivated by Schmidt games (see Remark~\ref{rem:Discuss}).

\subsection{$\delta$-Badly approximable numbers}   In this section, we consider the following definition (see also~\cite[Section~1.3]{Sim18}).

\begin{defn}\label{defn:DeltaBANumber} Let $\delta>0$.  An irrational real number $x$ is a {\bf $\delta$-badly approximable number} if there exists an integer $Q>0$ (depending on $x$) such that \begin{align}\label{eqnBadDelta} \left |x - \frac p q \right| \geq \frac \delta{q^2} \end{align} holds for all integers $q \geq Q$ and all integers $p$ for which $(p,q)=1$.\footnote{The notation $(p,q)=1$ means that $p$ is relatively prime to $q$.  For convenience, we extend the definition of relatively prime to $0$ and a natural number $q$ by asserting that $0$ is relatively prime to $1$ and only to $1$.}  The set of $\delta$-badly approximable numbers is denoted $\BA(\delta)$.  

\end{defn}

\begin{prop}\label{prop:BAFiltration}
We have \[\bigcup_{\delta>0} \BA( \delta) = \BA.\] 
\end{prop}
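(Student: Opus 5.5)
We prove the two inclusions separately, using the standard definition of $\BA$: the set of irrational $x$ for which there is $c(x)>0$ with $|x-p/q|\geq c(x)/q^2$ for all integers $p$ and all $q\geq 1$.

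\emph{The inclusion $\BA\subset\bigcup_{\delta>0}\BA(\delta)$.} Let $x\in\BA$ and set $\delta=c(x)$. Then \eqref{eqnBadDelta} holds for every $q\geq 1$ and every $p$, in particular for all coprime pairs. So we may take $Q=1$, and $x\in\BA(\delta)$.

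\emph{The inclusion $\BA(\delta)\subset\BA$.} Fix $\delta>0$ and $x\in\BA(\delta)$, with threshold $Q$ from Definition~\ref{defn:DeltaBANumber}. We produce a uniform constant $c>0$ valid for all $p/q$ in three steps.

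\begin{enumerate}
\item \emph{Reduction to coprime pairs.} Write $p/q=p'/q'$ in lowest terms, with $q'\mid q$ and so $q'\leq q$. Then
\[
\frac{1}{q'^2}\geq\frac{1}{q^2},
\]
so a bound $|x-p'/q'|\geq c/q'^2$ implies $|x-p/q|\geq c/q^2$. It therefore suffices to treat coprime pairs, using the convention in the footnote for $p=0$.
\item \emph{Large denominators.} For coprime $(p,q)$ with $q\geq Q$, \eqref{eqnBadDelta} gives the bound with constant $\delta$ directly.
\item \emph{Small denominators.} For $1\leq q<Q$ we use irrationality: $x\neq p/q$. The function $|x-p/q|$ is positive for every $p$ and tends to infinity as $|p|\to\infty$, so for each such $q$ it attains a positive minimum over $p$. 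There are only finitely many $q<Q$, so the quantity
\[
\eta=\min_{1\leq q<Q}\,\min_{p\in\ZZ}\left|x-\frac pq\right|
\]
is positive. For these $q$ we then have $|x-p/q|\geq\eta\geq\eta/q^2$.
\end{enumerate}

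Taking $c=\min(\delta,\eta)>0$ gives $|x-p/q|\geq c/q^2$ for all $p$ and all $q\geq 1$, so $x\in\BA$.

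No step is a real obstacle. The only point needing care is the small-denominator case, where irrationality of $x$ is what makes the minimum over finitely many denominators strictly positive.
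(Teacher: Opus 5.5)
Your proof is correct and follows the same route as the paper: the inclusion $\BA\subset\bigcup_{\delta>0}\BA(\delta)$ is obtained by taking $Q=1$, and the reverse inclusion by handling the finitely many denominators below $Q$ through the irrationality of $x$ (the paper's positive infima $\delta_j=\inf_{p}j^2|x-p/j|$ play the role of your $\eta$) and all larger denominators through $\delta$. Your extra reduction to coprime pairs is needed only because you define $\BA$ by quantifying over all pairs $(p,q)$ rather than coprime ones, and when $Q=1$ your $\eta$ should be read as $+\infty$, so that $c=\delta$.
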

\begin{proof}  Let $x \in \BA$.  Recall that a definition of badly approximable number is that there exists a constant $c>0$ such that $|x - p/q| > c/q^2$ holds for all $p \in \ZZ$ and $q \in \NN$ for which $(p, q)=1$.  Letting $\delta = c$ and taking $Q =1$, we have that $x \in \BA(\delta)$.

Let $\delta>0$ and let $x \in \BA(\delta)$.  Define the strictly positive real numbers \[\delta_j := \inf_{p \in \ZZ} \left(j^2|x-p/j|\right)\] for $j = 1, \cdots, Q$.  Let $c = \min(\delta_1/2, \cdots, \delta_Q/2, \delta/2)$.  Then $x \in \BA$. \end{proof}

Let $\delta>0$.  The sets $\BA(\delta)$ are natural in two separate ways, namely from the point of view of Schmidt games (see Theorems~\ref{theo:WinningBADelta} and~\ref{theo:LosingBADelta}) and the point of view of Dirichlet's approximation theorem.  Recall that a well-known and important corollary to Dirichlet's approximation theorem says that, for every irrational number $x$, there exists infinitely many $q \in \NN$ and $p \in \ZZ$  such that $(p,q)=1$ for which $|x - \frac p q | < \frac 1{q^2}$ holds.  Let $\II := \RR \backslash\QQ$.  Define \begin{align*}
 &\Dir(\delta) :=  \\ &\left\{x \in \II :\exists \textrm{ infinitely many } (p,q) \in \ZZ \times \NN \textrm{ such that } (p,q)=1 \textrm{ for which } \left|x - \frac p q \right| < \frac \delta{q^2}\textrm{ holds} \right\} .\end{align*}  Note that the elements of $\Dir(\delta)$ are the $\psi(q)$-approximable numbers in $\II$ where $\psi(q) = \delta/q$.  In the literature, the set of $\psi(q)$-approximable numbers in $\RR$ is (often) denoted $W(\psi)$ (see the survey~\cite{BRV16} for example), and thus we have that $W(\psi) =\Dir(\delta) \sqcup \QQ$. Also, Hurwitz's theorem implies that, for $0<\delta < 1 /{\sqrt{5}}$, we have that $\Dir(\delta) \subsetneq \II$.  The complement is $\BA(\delta)$, which is one of the main objects of study in this paper.

\begin{prop}\label{prop:BADeltaComplement}  Let $\delta>0$.  We have that \[\II \backslash \BA(\delta) = \Dir(\delta).\] 
\end{prop}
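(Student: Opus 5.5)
This is essentially a negation of quantifiers, so the plan is to prove both inclusions directly from Definition~\ref{defn:DeltaBANumber} and the definition of $\Dir(\delta)$. The one substantive point is converting ``arbitrarily large $q$'' into ``infinitely many pairs $(p,q)$'' and back. For that I will use that, for a fixed irrational $x$ and fixed $q$, only finitely many coprime $p$ can satisfy $|x-p/q|<\delta/q^2$. Indeed, such $p$ lie in the interval $(qx-\delta/q,\,qx+\delta/q)$ of length $2\delta/q$, which contains at most $\lfloor 2\delta/q\rfloor+1$ integers.

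\emph{Inclusion $\II\backslash\BA(\delta)\subseteq\Dir(\delta)$.} Let $x\in\II$ with $x\notin\BA(\delta)$. Negating the definition, for every integer $Q>0$ there exist $q\geq Q$ and $p$ with $(p,q)=1$ and $|x-p/q|<\delta/q^2$. Applying this with $Q=1$, then with $Q$ larger than the previously found $q$, and so on, produces pairs $(p_n,q_n)$ with $q_1<q_2<\cdots$. These pairs are pairwise distinct because their denominators are. With the convention on $(0,q)$ the coprimality condition matches exactly the one in $\Dir(\delta)$, so $x\in\Dir(\delta)$.

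\emph{Inclusion $\Dir(\delta)\subseteq\II\backslash\BA(\delta)$.} Let $x\in\Dir(\delta)$, which is irrational by definition. Suppose for contradiction that $x\in\BA(\delta)$ with some threshold $Q$. Then every pair $(p,q)$ in the infinite family witnessing $x\in\Dir(\delta)$ has $q<Q$. By the finiteness observation above, each $q<Q$ admits only finitely many $p$, so there are only finitely many such pairs in total. This contradicts the infinitude of the family.

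The only step needing any care is this last finiteness count, and it is elementary; irrationality plays no real role beyond ensuring that both sets live in $\II$. I expect no genuine obstacle beyond keeping the quantifiers and the coprimality convention consistent between the two definitions.
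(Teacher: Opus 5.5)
Your proof is correct and follows the paper's argument. Both prove each inclusion by unwinding the definitions; the one difference is that the paper phrases the first inclusion as a contradiction (finitely many approximating pairs, then take $Q$ to be one more than the largest denominator), whereas you build strictly increasing denominators directly. Your bound of at most $\lfloor 2\delta/q\rfloor+1$ admissible numerators for each denominator also justifies a step the paper leaves implicit, namely picking an approximating pair with $q\geq Q$ out of infinitely many.
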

\begin{proof}
Let $x \in \II \backslash \BA(\delta)$.  Assume that $x \notin \Dir(\delta)$.  This implies that there exists only finitely many $(p,q) \in \ZZ \times \NN$ such that $(p,q)=1$ for which \begin{align}\label{eqnDirDelta}
\left|x - \frac p q \right| < \frac \delta{q^2} \end{align} holds.  Let us denote this finite set by $(p_1, q_1) \cdots (p_n, q_n)$ and let $Q:= \max(q_1, \cdots, q_n)+1$.  Then for all $q \geq Q$ such that $(p, q)=1$, we have that (\ref{eqnBadDelta}) holds, giving that $x \in \BA(\delta)$ which is a contradiction.

Let $x \in \Dir(\delta)$.  Assume that $x \notin \II \backslash \BA(\delta)$.  Then $x \in \BA(\delta)$ and, hence, there exists a $Q\in \NN$ such that, for all integers $q \geq Q$ and for all $p \in \ZZ$ for which $(p,q)=1$, we have that (\ref{eqnBadDelta}) holds.  Since $x \in \Dir(\delta)$, there exist infinitely many $(p,q) \in \ZZ \times \NN$ such that $(p,q)=1$ for which (\ref{eqnDirDelta}) holds.  Choose such a pair $(p, q)$ such that $q \geq Q$.  This implies that $(p, q)$ satisfies both (\ref{eqnBadDelta}, \ref{eqnDirDelta}), which is a contradiction. \end{proof}

If we require $Q=1$ in Definition~\ref{defn:DeltaBANumber}, then the resulting sets, which we denote $\BA_1(\delta)$  for $\delta>0$, have been studied extensively (see~\cite{Jar28} for example).  Also, these sets $\BA_1(\delta)$ are closely related (see~\cite[Section~1.1]{Wei15} for example) to sets $M_N$ of real numbers in $[0,1]$ whose continued fraction expansions have partial quotients that are all bounded by $N=\lfloor \delta^{-1}\rfloor$ where $\lfloor \cdot \rfloor$ is the floor function.  Likewise, the sets $\BA(\delta)$ and the sets of real numbers in $[0,1]$ whose continued fraction expansions have partial quotients that are all eventually bounded by $\lfloor \delta^{-1}\rfloor$ are closely related.  We note that the sets $\BA(\delta)$ are more natural than the sets $\BA_1(\delta)$ for Schmidt games because there is no dependence of $\delta$ on the radius of the initial choice of ball.  This allows us to prove a result like Theorem~\ref{theo:WinningBADeltaLowerHDCtbleIntersect}.  See the remark following the statement of Theorem~\ref{theo:WinningBADelta} and the discussion following the statement of Theorem~\ref{theo:LosingBADeltaUpperHD} for further details.

Finally, note that Proposition~\ref{prop:BAFiltration} implies that $\BA(\delta)$ are sets of Lebesgue measure zero for all $\delta>0$.  In fact, the sets $\BA(\delta)$ for all $\delta >0$ have Hausdorff dimension strictly less than one (Theorem~\ref{theo:LosingBADeltaUpperHD}, its remark, and the discussion following).  Conversely, for all $\delta>0$ small enough, we have that the sets $\BA(\delta)$ have strictly positive Hausdorff dimension (Theorem~\ref{theo:WinningBADeltaLowerHD} and the discussion following), even if we take a finite intersection of translates (Theorem~\ref{theo:WinningBADeltaLowerHDCtbleIntersect}).

\subsection{Schmidt games and $(\alpha, \beta)$-ubiquitously losing sets}\label{sec:LosingEverywhere}  In this section, we recall Schmidt games and introduce Definition~\ref{defn:UbitLos}.  Let $d,n \in  \NN$, $0< \alpha<1$, $0< \beta <1$, and $S \subset \RR^d$.  Schmidt games refer to the $(\alpha, \beta;S)$-games introduced in~\cite[Section~3]{Sch66}.  In an $(\alpha, \beta;S)$-game, two players, Bob and Alice, alternate choosing nested closed balls $B_1 \supset A_1 \supset B_2 \supset A_2 \supset \cdots$ on $\RR^d$ such that $\rho(A_n) = \alpha \rho(B_n)$ and $\rho(B_{n+1}) = \beta \rho(A_{n})$ where $\rho(\cdot)$ denotes the radius of the closed ball.  Note that Bob can make an arbitrary choice of initial ball $B_1$, but we have that $B_{n+1} \in A_n^\beta$ and $A_{n} \in B_n^\alpha$ where, for a closed ball $B$ and a constant $0 < \gamma \leq 1$, the notation $B^\gamma$ denotes the collection of all closed balls $\widetilde{B} \subset B$ such that $\rho(\widetilde{B})= \gamma \rho(B)$.  Alice has a \textit{winning strategy} for the $(\alpha, \beta; S)$-game on $\RR^d$ if Alice can ensure that \[\bigcap_{n=1}^ \infty B_n \in S.\]  The set $S$ is called an \textit{$(\alpha, \beta)$-winning set (of $\RR^d$)}.  Conversely, Bob has a \textit{winning strategy} for the $(\alpha, \beta; S)$-game on $\RR^d$ if Bob can ensure that \[\bigcap_{n=1}^ \infty B_n \in S^c.\]  The set $S$ is called an \textit{$(\alpha, \beta)$-losing set (of $\RR^d$)}. See~\cite{Sch66} (and also Section~\ref{sec:AcceleratedGames}) for more details.  In general, a set $S$ may be neither $(\alpha, \beta)$-winning nor $(\alpha, \beta)$-losing.  However, if $S$ is a Borel set, then Borel determinacy (\cite{Mar75}) implies that $S$ is either $(\alpha, \beta)$-winning or $(\alpha, \beta)$-losing.  See~\cite{Mar82, AFG23, CFJ23} for more details.  The sets, $\BA(\delta)$ and $\Dir(\delta)$, that we study in this paper are Borel sets.  

\begin{rema}\label{rem:PositionalWinning}
Note that, for an $(\alpha, \beta)$-winning set, Alice has a positional winning strategy (\cite[Theorem~7]{Sch66}).  Likewise, the analogous proof of \cite[Theorem~7]{Sch66} shows that, for an $(\alpha, \beta)$-losing set, Bob has a positional winning strategy.  In this paper, we assume that a winning strategy for an $(\alpha, \beta)$-game is a positional winning strategy.
\end{rema}

\begin{defn}\label{defn:UbitLos}  Let $d \in  \NN$, $0< \alpha<1$, and $0< \beta <1$.  A set $S \subset \RR^d$ is an \textbf{$(\alpha, \beta)$-ubiquitously losing set (of $\RR^d$)} if, for every choice of closed ball $B_1$, Bob has a winning strategy for the $(\alpha, \beta; S)$-game on $\RR^d$ and is a \textbf{ubiquitously losing set (of $\RR^d$)} if $S$ is an $(\alpha, \beta)$-ubiquitously losing set of $\RR^d$ for some $\alpha$ and $\beta$.  We refer to $\alpha$ and $\beta$ as the \textbf{winning parameters for Bob}.  Furthermore, let $0< \beta_0 < 1$.  A set $S \subset \RR^d$ is an \textbf{$(\alpha \mid \beta_0)$-ubiquitously losing set (of $\RR^d$)} if $S$ is an $(\alpha, \beta)$-ubiquitously losing set of $\RR^d$ for all $0< \beta \leq \beta_0$.
\end{defn}

\begin{rema}\label{rmk:UbiqLosSet1}  The following observations are immediate.
\begin{enumerate}
\item Every $(\alpha, \beta)$-ubiquitously losing set of $\RR^d$ is an $(\alpha, \beta)$-losing set of $\RR^d$.
\item Every subset of an $(\alpha, \beta)$-ubiquitously losing set of $\RR^d$ is an $(\alpha, \beta)$-ubiquitously losing set of $\RR^d$.
\item There are $(\alpha, \beta)$-losing sets of $\RR^d$ that are not $(\alpha, \beta)$-ubiquitously losing sets of $\RR^d$.  For example, the set $\{0\} \cup [1, 2]$ is an $(\alpha, \beta)$-losing set of $\RR$ for every $0< \alpha <1$ and $0<\beta<1$ but is not an $(\alpha, \beta)$-ubiquitously losing set of $\RR$ for any $0< \alpha <1$ and $0<\beta<1$.
\item  If $S\subset \RR^d$ is an $(\alpha, \beta)$-ubiquitously losing set of $\RR^d$, then $S^c$ is dense in $\RR^d$.
\end{enumerate}

\end{rema}

\begin{rema}\label{rmk:UbiqLosSet2}
The notion of an $(\alpha, \beta)$-ubiquitously losing set (just like the notion of an $(\alpha, \beta)$-losing set defined in~\cite{Sch66}) need not be defined only for $\RR^d$ but can be defined on more general spaces such as for complete metric spaces.  This is what Schmidt does in~\cite{Sch66}.  Similar to~\cite{Sch66}, many of the properties of $(\alpha, \beta)$-ubiquitously losing sets that we show (in Sections~\ref{sec:AcceleratedGames} and~\ref{sec:PropertiesWinAndUbitLose}) are valid over these more general spaces using the same proofs.  We, however, will not need to consider spaces more general than $\RR^d$ in this paper and, thus, will use Definition~\ref{defn:UbitLos}.
\end{rema}

\subsection{Statement of main results}\label{sec:StateMainResults}

\begin{theo}\label{theo:WinningBADelta} Let $0< \alpha \leq 1/3$, $0<\beta<1$, and $\delta:= \frac{\alpha \beta} 6$.  Then $\BA(\delta)$ is an $(\alpha, \beta)$-winning set of $\RR$.

\end{theo}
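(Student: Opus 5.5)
This adapts Schmidt's classical proof that $\BA$ is winning. The key point is that the threshold $Q$ in Definition~\ref{defn:DeltaBANumber} may depend on the outcome $x$, so Alice may ignore rationals whose denominators are small compared with the initial radius.

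Let $\rho = \rho(B_1)$ and $R := (\alpha\beta)^{-1} > 1$. Alice plays arbitrarily at the first $k_0$ stages, where $k_0$ is chosen later. After that, at each stage $n$ she handles a dyadic-type block of denominators. Let $\rho_n = \rho(B_n) = \rho (\alpha\beta)^{n-1}$. At stage $n$ Alice considers the rationals $p/q$ in lowest terms with
\[
R^{n-1}\, c \le q^2 < R^{n}\, c,
\]
where $c := 1/(\text{const}\cdot\rho)$ is chosen so that the block matches the scale $\rho_n$. Her goal is to choose $A_n \subset B_n$ so that every point of $A_n$ satisfies $|x - p/q| \ge \delta/q^2$ for every rational in that block. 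Since the blocks exhaust all $q \ge Q := \sqrt{c R^{k_0}}$, the limit point $x$ lies in $\BA(\delta)$. It is irrational because it avoids small neighbourhoods of all rationals of large denominator, and it is not equal to any rational of small denominator either. If needed, Alice can also dodge finitely many rationals during the first $k_0$ moves; alternatively, we can argue that the limit point is not rational since it avoids the interval around every $p/q$ with $q$ large, and every rational $a/b$ has multiples in reduced form with large $q$ near it.

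The main step is the gap lemma. Two distinct rationals $p/q$ and $p'/q'$ in the same block satisfy
\[
\left|\frac pq - \frac{p'}{q'}\right| \ge \frac{1}{qq'} > \frac{1}{R^n c}.
\]
We choose $c$ so that this exceeds $2\rho_n = 2\rho(\alpha\beta)^{n-1}$, the diameter of $B_n$; that is, $c \le 1/(2\rho\,\alpha\beta\, R^{\,\cdot})$, with exponents matched. Then $B_n$ contains at most one rational from the block. The forbidden interval around that rational has half-length
\[
\frac{\delta}{q^2} \le \frac{\delta}{R^{n-1}c} \approx \frac{\alpha\beta}{6}\cdot 2\rho_n(\alpha\beta)^{-1}\cdots = O(\rho_n)\cdot \frac{1}{3}\text{-ish}.
\]
The arithmetic with $\delta = \alpha\beta/6$ should give half-length at most $\rho_n/3$, up to the factor coming from the block ratio $R$. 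The real task is to balance the choice of $c$ against the block ratio so that both requirements hold simultaneously: at most one rational per ball, and a forbidden half-length of at most $(1-2\alpha)\rho_n$.

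Alice then needs a ball of radius $\alpha\rho_n$ inside $B_n$ that avoids an interval of half-length $h$. In $\RR$ this is possible whenever $h \le (1-2\alpha)\rho_n$: she places $A_n$ flush against whichever end of $B_n$ is farther from the bad point. With $\alpha \le 1/3$ we have $1-2\alpha \ge 1/3$, so $h \le \rho_n/3$ suffices. This is where the hypotheses $\alpha \le 1/3$ and the constant $6$ enter. I expect the bookkeeping that fixes $c$, and hence the exact constant $6$, to be the only delicate point. If the block ratio makes the constant too tight, I would first try shifting the block boundaries by a factor $1/2$, using $q^2 \in [\tfrac12 R^{n-1}c',\, \tfrac12 R^{n}c')$, to recover the constant.
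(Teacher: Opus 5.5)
Your strategy is the paper's: Schmidt's block argument with blocks $cR^{n-1}\le q^2<cR^{n}$, $R=(\alpha\beta)^{-1}$, and the separation $|p/q-p'/q'|\ge 1/(qq')$. The bookkeeping you left open closes exactly with the paper's choice $c=\alpha\beta/(2\rho)$. Distinct block rationals are then more than $1/(cR^n)=2\rho_n$ apart, while $\delta/q^2\le\delta/(cR^{n-1})=\rho_n/3$, so you need neither a delay $k_0$ nor a shift of the blocks.

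The placement step, however, is wrong as stated. Knowing that $B_n$ contains at most one block rational is not enough, because block rationals just outside $B_n$ have forbidden intervals of half-length up to $\rho_n/3$ that reach into $B_n$. Suppose $p/q$ sits at the left endpoint of $B_n$. The separation only gives $p'/q'>b_n+\rho_n$ for its right neighbour, and nothing prevents $p'/q'-(b_n+\rho_n)<\delta/q'^2$. In that case your ball placed flush against the right end of $B_n$ contains $b_n+\rho_n$ and meets the forbidden interval of $p'/q'$.

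The correct placement uses the separation against all neighbours:
\begin{itemize}
\item If a block rational $p/q\le b_n$ lies in $B_n$, take $A_n\subset[p/q+\rho_n/3,\,p/q+\rho_n]$. This interval lies in $B_n$ and has length $2\rho_n/3\ge 2\alpha\rho_n$. It misses every forbidden interval, because all other block rationals are more than $2\rho_n$ from $p/q$.
\item If no block rational lies in $B_n$, each intruding interval penetrates less than $\rho_n/3$ from an endpoint, so the middle interval $[b_n-2\rho_n/3,\,b_n+2\rho_n/3]$ is free.
\end{itemize}
This is what the paper's Cases~1 and~2 do.

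The irrationality argument also has a gap. Your ``alternative'' is false. A rational $a/b$ satisfies $|a/b-p/q|\ge 1/(bq)\ge\delta/q^2$ for every reduced $p/q\neq a/b$ with $q\ge \delta b$. So avoiding the forbidden intervals of large-denominator rationals can never exclude a rational limit point. Rationals with $q\ge Q$ are excluded only because $x$ avoids their own neighbourhoods. Those with $q<Q$ must be dodged explicitly, so this is not an ``if needed'' step, and it is incompatible with letting Alice play arbitrarily at the start.

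Once $c$ is fixed, the dodge is easy. Take $Q=\lceil\sqrt c\,\rceil$. All reduced rationals with $q^2<cR$ are pairwise more than $1/(cR)=2\rho$ apart, and this set contains both the first block and every $a/b$ with $b<Q$. Hence at most one of them lies in $B_1$, and the placement rule above at the first move avoids it together with the first block's forbidden intervals. This is the role of $\mQ$ and of the conclusion $B_2\cap\mQ=\emptyset$ in Lemma~\ref{lemm:WinningBADelta}.
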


\begin{rema*}

Note that \cite[Lemma~8]{Sch66} may be applied to Theorem~\ref{theo:WinningBADelta} to obtain other winning parameters.  Also note that Proposition~\ref{prop:BAFiltration} and Theorem~\ref{theo:WinningBADelta} provide finer information than~\cite[Theorem~3]{Sch66}.  In addition, $\BA(\delta)$ can be shown to be $\beta'$-HAW (see~\cite[Definition~2.1]{BK15}) for some $0<\beta' < 1/3$ (see the proofs of~\cite[Theorem~2.5]{BK15} and Theorem~\ref{theo:WinningBADelta}).  Being $\beta'$-HAW will give $\BA(\delta)$ some better properties than $\BA_1(\delta)$.
\end{rema*}  

\noindent Theorem~\ref{theo:WinningBADelta} is proved in Section~\ref{sec:WinningBADelta}.

\begin{theo}\label{theo:LosingBADelta} Let $0<\beta \leq1/2$, and $\delta:=18\beta$.  Then $\BA(\delta)$ is an $(1/2, \beta)$-ubiquitously losing set of $\RR$.

\end{theo}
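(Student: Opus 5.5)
We must show that for every initial ball $B_1$, Bob has a strategy in the $(1/2,\beta;\BA(\delta))$-game forcing the limit point $x_\infty=\bigcap_n B_n$ to lie outside $\BA(\delta)$, with $\delta=18\beta$. By Proposition~\ref{prop:BADeltaComplement}, it suffices to force $x_\infty$ to be rational or to lie in $\Dir(\delta)$. So Bob should produce infinitely many reduced fractions $p/q$ with $q\to\infty$ and $|x_\infty-p/q|<\delta/q^2$. The plan is to do this in infinitely many disjoint ``rounds.'' In each round Bob, starting from Alice's ball $A_n$ of radius $r$, steers the play toward a rational point $p/q$ whose denominator satisfies $q^2\lesssim 1/r$. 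He then keeps the remaining balls in the round close enough to $p/q$ that the final point is $\delta/q^2$-close to it.

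The key combinatorial input is Dirichlet-type density of rationals. Any interval of length $\ell$ contains a reduced fraction $p/q$ with $q\le C\ell^{-1/2}$, for a modest absolute constant $C$. This follows from Dirichlet's theorem applied to the midpoint: with $N\approx \ell^{-1/2}$ there is $p/q$, $q\le N$, and $|m-p/q|<1/(qN)$. If $q$ is comparable to $N$ this is within $\ell$. If $q$ is small, one needs instead the fact that Farey fractions of order $N$ have gaps at most $1/N$, so every interval of length $\ell\ge 1/N$ contains one, with $q\le N$ and $N\approx 1/\ell$. Mediant/Farey spacing is the cleaner tool here: consecutive Farey fractions $a/b<c/d$ of order $N$ satisfy $c/d-a/b=1/(bd)$ with $b+d>N$.

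Since $\alpha=1/2$, Alice's ball $A_n$ is half of $B_n$. Within $A_n$, of radius $r$, Bob picks a reduced $p/q\in A_n$ at distance at least $r/4$ from the centre, or else near the centre. Farey spacing lets him choose among the fractions with $q^2\le K/r$ one whose distance to the boundary is controlled. Bob's next move $B_{n+1}$ is any ball of radius $\beta r$ inside $A_n$ containing $p/q$, centred as near $p/q$ as allowed. Afterwards Alice can only shrink by half, and Bob keeps re-centring on $p/q$ when possible. The key point of $\alpha=1/2$ is that every half-radius subball of a ball containing $p/q$ in its middle region still contains $p/q$. Alice cannot avoid a point at distance at most $\rho/2$ from the centre? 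She can, since the subball may sit to one side, but then the distance of $p/q$ to the subball is at most $\rho$. So after Bob's move the distance from $x_\infty$ to $p/q$ is bounded by a geometric sum $\lesssim \beta r$. We need $\beta r\cdot c<\delta/q^2$, i.e.\ $q^2< \delta/(c\beta r)=18/(c\, r)$, consistent with $q^2\le K/r$ if $K\le 18/c$. Tracking constants (Farey with $N^2\approx 1/r$, the factor-of-$2$ losses from half-radii, and $\beta\le 1/2$) should yield exactly $18$.

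Then Bob repeats: he moves far down, say after $r$ has shrunk so that $r'\ll 1/q^2$, and starts a new round with a larger denominator $q'$. Either infinitely many distinct $p/q$ occur, giving $x_\infty\in\Dir(\delta)$, or $x_\infty$ is rational, hence not in $\BA(\delta)$. Since $B_1$ is arbitrary and nothing depended on it, the set is ubiquitously losing.

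The main obstacle is the within-round distance estimate. Alice, choosing half-radius subballs, can push away from $p/q$ at each step. Bob must therefore choose $p/q$ and recentre so that the accumulated drift is at most a constant times $\beta r$, rather than times $r$. My first attempt is to have Bob play his ball centred exactly at $p/q$ when $p/q$ lies in the inner half of $A_n$; arranging that is where the Farey-density choice comes in. From then on, Alice's subball of radius $\rho/2$ always contains the centre $p/q$ on its boundary or interior. Bob recentres at $p/q$ again, and the point $p/q$ stays in every ball, forcing $x_\infty=p/q$ or at least keeping it within the rounds' radii.
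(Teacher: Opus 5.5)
There is a genuine gap, located at the step the paper's proof spends most of its effort on.

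\textbf{The density lemma is false.} Your ``key combinatorial input'' fails near any rational of small denominator. Every fraction $p/q$ in $[\ell,2\ell]$ has $p\ge 1$, hence $q\ge 1/(2\ell)$, which is far larger than $C\ell^{-1/2}$ when $\ell$ is small. Your Farey fallback only gives $q\lesssim 1/\ell$, not the bound $q^2\lesssim 1/\ell$ that you need.

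\textbf{Why this breaks the rounds.} Let $p/q$ be the fraction of smallest denominator in Alice's ball $A_n$, and suppose $\rho(A_n)$ is large compared with $1/q^2$. Then no fraction in $A_n$ can start a round at this scale. Every $p''/q''\in A_n$ has $q''\ge q$, and trapping it in Bob's next ball (or even centring on it) only gives an error of order $\beta\rho(A_n)$ or $\beta^2\rho(A_n)$. That is not below $18\beta/(q'')^2$. So Bob must let the radii shrink to order $1/q^2$. Meanwhile Alice can drift toward the Farey neighbours $a/b<p/q<c/d$. The neighbours themselves lie outside $A_n$, but $A_n$ may come arbitrarily close to them. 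Near them every available fraction has denominator much larger than $q$: at distance $t$ from $c/d$ one needs $q''\ge 1/(dt)$. Your proposal gives Bob no means of preventing this.

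\textbf{The drift you flagged is not the real obstacle.} Once a fraction lies in one of Bob's balls $B$, the limit point is within $2\rho(B)$ of it; no geometric sum is needed. Your claim that Bob can keep re-centring at $p/q$ is also false. Alice may take the half-ball having $p/q$ as an endpoint, and one move later a half-ball excluding $p/q$. Otherwise Bob could trivially force a rational limit.

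\textbf{How the paper supplies the missing control.} Let $p/q$ be the minimal-order Farey element of $A_1$.
\begin{itemize}
\item If $\rho(A_1)<1/q^2$, Bob simply takes $B_2\ni p/q$, with error $<2\beta/q^2$.
\item Otherwise the paper uses the half Farey partition: $p/q$ together with the iterated mediants $(ja+p)/(jb+q)$ for $j\le\lceil q/b\rceil$ and $(p+jc)/(q+jd)$ for $j\le\lceil q/d\rceil$. All have denominators $<3q$, and consecutive gaps are $<1/q^2$ (Lemma~\ref{lemm:halfFareyPartitionSeparate}).
\item If $\rho(B_2)<2/q^2$, the middle half of $A_1$ already contains a partition point.
\item Otherwise Bob confines the play to the Farey half-interval $\F(A_1)$, which lies inside the cover of the partition (Lemma~\ref{lemm:halfFareyPartitionCover}). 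He does so until the first $N$ with $\rho(B_N)<2/q^2\le\rho(B_{N-1})$. The middle half of $A_{N-1}$ then has length $\ge 1/q^2$ and contains a partition point $p'/q'$.
\end{itemize}
Bob centres his ball at $p'/q'$. Since $\alpha=1/2$, Alice's next ball must contain this point. Bob's following ball contains it and has radius $<\beta/q^2$, so $|x-p'/q'|<2\beta/q^2<18\beta/(q')^2$, because $q'<3q$. This is where $\alpha=1/2$ and the constant $18=2\cdot 3^2$ enter.

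Across rounds the orders of the minimal-order Farey elements increase weakly. This gives your closing dichotomy: either the limit is rational, or $x\in\Dir(18\beta)$.
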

\begin{rema*}
Note that Proposition~\ref{prop:UbitLosingInvariance} may be applied to Theorem~\ref{theo:LosingBADelta} to obtain other winning parameters for Bob. Also note that Hurwitz's theorem and Proposition~\ref{prop:BADeltaComplement} imply that, for all $\delta \geq 1/\sqrt 5$, we have that $\BA(\delta) = \emptyset$.  Observe that the empty set is an $(\alpha, \beta)$-ubiquitously losing set of $\RR$ for any $0< \alpha <1$ and $0<\beta<1$. 

\end{rema*}

\noindent Theorem~\ref{theo:LosingBADelta} is proved in Section~\ref{sec:prooftheo:LosingBADelta}.  The key observation from the proof is number-theoretical, namely that, roughly speaking, the Farey sequence provides the correct scale for the Schmidt game (via our notions of the half Farey partition and the Farey half-interval, both introduced in Section~\ref{sec:halfFareyPartionHalfInterval}).  The generalization of this observation is to the Farey sequence on the space of unimodular lattices (see~\cite[Section~2]{Tse23}).  Also note that Theorems~\ref{theo:WinningBADelta} and~\ref{theo:LosingBADelta} (and their remarks) give part of a Schmidt diagram (\cite{ZTK13} and~\cite{NZ25}) for $\BA(\delta)$.

Let $\lfloor \cdot \rfloor$ be the floor function, $\lceil \cdot \rceil$ be the ceiling function, and $\dim_H(\cdot)$ denote Hausdorff dimension.

\begin{theo}\label{thm:UpBndHDLosingSets} Let $d \in \NN$, $\RR^d$ be equipped with the supremum norm $\| \cdot \|_\infty$, $s\geq d, j \geq 2$ be integers, $0 < \beta \leq 1/2$, and $S \subset \RR^d$ be a $(1/j, \beta)$-ubiquitously losing set of $\RR^d$.  Then we have that \begin{align}\label{eqn:thm:UpBndHDLosingSets1}
 \dim_H(S) \leq \frac{d \log(j) + \log\left(\left(\lceil j^s \beta^{-s-1}\rceil +1\right)^d -  \left(\lfloor j^s \beta^{-s}\rfloor -1\right)^d\right)}{\log\left(j^{s+1}\beta^{-(s+1)} \right)} < d.\end{align}

 If, furthermore, $1/\beta$ is an integer, then we have that \begin{align}\label{eqn:thm:UpBndHDLosingSets2}
 \dim_H(S) \leq \frac{d \log(j) + \log\left(\left(j^s \beta^{-s-1} +1\right)^d - j^{sd} \beta^{-sd}\right)}{\log\left(j^{s+1}\beta^{-(s+1)} \right)} <d . \end{align}
 \end{theo}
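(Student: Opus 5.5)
The plan is to build an explicit family of covers of $S\cap B_1$ from Bob's strategies and to read the dimension bound off a single-scale counting estimate. I work with sup-norm balls, which are cubes, and fix an arbitrary closed ball $B$ of radius $r$.

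\emph{Alice's moves.} Since $\alpha=1/j$ is the reciprocal of an integer, Alice's legal moves include the $j^d$ grid subcubes of $B$ of radius $r/j$, and these tile $B$. I restrict Alice to these moves.

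\emph{Bob's replies.} For each grid subcube $A$, Definition~\ref{defn:UbitLos} gives Bob a positional winning strategy started from $B$ (Remark~\ref{rem:PositionalWinning}). Its reply is a ball $B'\subset A$ of radius $\beta r/j$.

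\emph{Covering $A$ at the target scale.} Fix the scale $\eta:=r(\beta/j)^{s+1}$, so that $A$ has radius $j^s\beta^{-s-1}\eta$ and $B'$ has radius $j^s\beta^{-s}\eta$. Lay down the grid of cubes of radius $\eta$.
\begin{itemize}
\item In each coordinate, $A$ meets at most $\lceil j^s\beta^{-s-1}\rceil+1$ grid cubes; the $+1$ accounts for misalignment.
\item In each coordinate, $B'$ contains at least $\lfloor j^s\beta^{-s}\rfloor-1$ grid cubes entirely.
\end{itemize}
Call the grid cubes inside $B'$ \emph{deferred} and the rest \emph{active}. The number of active cubes per choice of $A$ is
\[
N:=\bigl(\lceil j^s\beta^{-s-1}\rceil+1\bigr)^d-\bigl(\lfloor j^s\beta^{-s}\rfloor-1\bigr)^d,
\]
so one round produces $j^dN$ active cubes of radius $(\beta/j)^{s+1}r$.

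\emph{Recursion.} Each active cube is itself a closed ball, and Definition~\ref{defn:UbitLos} applies to every starting ball, so the whole construction restarts inside it. This is the one place where ubiquity, rather than mere losing, is used. The deferred region is $B'$, which is again a ball. I continue the game inside it with Bob's same strategy, Alice again choosing grid subcubes, and cover the resulting new active parts at the correspondingly smaller scale.

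\emph{Why every point of $S$ is captured.} Suppose $x\in S$ is deferred forever along some branch. In every round Alice has chosen the grid subcube containing $x$, and Bob has answered with his winning strategy. Hence $x=\bigcap_n B_n\in S^c$, a contradiction. So every $x\in S\cap B$ lies in some active cube after finitely many deferrals. The remaining danger is points of $S$ on cube boundaries that Alice's grid moves miss. I would handle these by using the closed cubes of the grid, which overlap on boundaries so the tiling is exhaustive, or by a countable exceptional set, which does not affect Hausdorff dimension.

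\emph{Dimension estimate.} Iterating, $S\cap B$ is covered at generation $k$ by at most $(j^dN)^k$ cubes of radius $(\beta/j)^{(s+1)k}r$, plus the deferred pieces. Ignoring the deferred pieces for a moment, the natural $t$-dimensional Hausdorff sums are
\[
(j^dN)^k\bigl((\beta/j)^{(s+1)k}r\bigr)^t,
\]
which stay bounded as $k\to\infty$ exactly when
\[
t\ge \frac{\log(j^dN)}{\log(j^{s+1}\beta^{-(s+1)})}.
\]
This is the right-hand side of (\ref{eqn:thm:UpBndHDLosingSets1}).

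\emph{Main obstacle: the deferred pieces.} They live at a coarser scale and could pile up. I would try first to organize the covering as a mass-distribution or pressure argument in which every node has two kinds of children: active cubes with weight $\bigl((\beta/j)^{s+1}\bigr)^t$, and one deferred ball with weight $(\beta/j)^t$. I would then show that the condition above makes the total weight of the subtree below any node at most its own weight. If the deferred weight spoils this exact exponent, the fallback is to let Bob's strategy run for up to $s$ consecutive deferrals within one block of $s+1$ rounds, and to charge each deferral chain to the active cubes it eventually produces. This is presumably why the parameter $s$ appears at all, and I expect it is where the exact choice of $\lfloor\cdot\rfloor-1$ is needed.

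\emph{Strict inequality $<d$.} Write $a=j^s\beta^{-s-1}$ and $b=j^s\beta^{-s}$. Then $N$ is smaller than $(a+2)^d$ by roughly $(b-1)^d$, and $b/a=\beta$ is a fixed positive fraction. Hence
\[
j^dN<j^d a^d=\bigl(j^{s+1}\beta^{-(s+1)}\bigr)^d
\]
once the constants are checked, using $s\ge d$ to absorb the $+1$, $+2$ lower-order terms.

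\emph{The case $1/\beta\in\NN$.} Then the radius ratios $j^s\beta^{-s-1}$ and $j^s\beta^{-s}$ are integers. Aligning the small grid with $B'$, which is possible after translating the grid, removes the floors, the ceilings and the $-1$. This gives (\ref{eqn:thm:UpBndHDLosingSets2}).
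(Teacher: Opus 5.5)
The gap is the step you flag as the main obstacle, and it cannot be repaired. Up to that point your construction is the paper's. Alice is restricted to the $j$-tessellation, Bob's reply is kept as a deferred hole, the $N$ active cubes of radius $(\beta/j)^{s+1}r$ restart fresh games via ubiquity, and the old game continues inside the hole. Your capture argument only excludes points that are deferred forever \emph{within one game}. Every active cube restarts a new game with an unrelated strategy, so a point of $S$ can undergo finite but arbitrarily long runs of deferrals in infinitely many successive games. Nothing quantitative bounds these runs.

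Your weight scheme cannot absorb them. Each of Alice's $j^d$ subcubes contributes $N$ active cubes and one deferred ball, so the children of a node carry relative weight $j^d\bigl(N(\beta/j)^{(s+1)t}+(\beta/j)^t\bigr)$. For $1/\beta\in\NN$ and $t=d$ this equals $(1+1/a)^d>1$, where $a=j^s\beta^{-s-1}$: the hole together with the active cubes is just a redundant cover of Alice's subcube. So no exponent below $d$ comes out, let alone $\zeta$, and charging deferral chains to later active cubes fails for the same reason.

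Reaching $\zeta$ would require that every point lying in infinitely many nested holes, across restarted games, is in $S^c$. Then $S\cap B_1\subset\bigcup_p\bigcup_{B\in\hh_{p-1}}F(B)$, and each never-again-deferred set $F(B)$ has box dimension at most $\zeta$. This is exactly what the paper asserts in Proposition~\ref{lemm:UbiqLosSetInUnionHoles}, via Lemma~\ref{lemm:ACoverForS:PlayWinBob}, by splicing strategies through insertions and appendings. But consecutive holes in such a chain come from different games. Each finite truncation is a winning finite play for its own adapted strategy, and that does not make the infinite chain a play of a single winning strategy.

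In fact the statement is false, so no repair exists. Take $d=s=1$, $j=2$, $\beta=1/3$ and $S:=\RR\setminus\BA$. By Theorem~\ref{theo:WinningBADelta} (or Schmidt's classical theorem), $\BA\supset\BA(1/36)$ is $(1/3,1/2)$-winning. In the $(1/2,1/3;S)$-game from any $B_1$, Bob treats Alice's ball $A_1$ as the opening ball of a $(1/3,1/2;\BA)$-game and plays Alice's winning strategy there. His moves $B_{n+1}\in A_n^{1/3}$ are that game's Alice moves, and Alice's moves $A_{n+1}\in B_{n+1}^{1/2}$ are its Bob moves. This forces $\bigcap_n B_n\in\BA=S^c$, so $S$ is $(1/2,1/3)$-ubiquitously losing. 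Yet $\BA$ is Lebesgue-null, so $\dim_H S=1$, whereas (\ref{eqn:thm:UpBndHDLosingSets2}) would give $\dim_H S\le\log 26/\log 36<1$. The same argument works for every $j\ge2$ and every $\beta\le1/3$.

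So the inclusion above genuinely fails, and your diagnosis located exactly where the claimed bound breaks. Any bound of this type needs extra structure of the specific $S$, for instance a Bob strategy that still wins along chains of replies drawn from restarted games.
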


\begin{rema*}
 Since all norms on $\RR^d$ are equivalent, the analogous result for another norm on $\RR^d$ can be obtained by applying Proposition~\ref{prop:BilipschitzInvar2} to Theorem~\ref{thm:UpBndHDLosingSets}.  Also, as an example, for $d=s=1$, $j=2$, and $\beta = 1/2$, Theorem~\ref{thm:UpBndHDLosingSets} gives that \[\dim_H(S) \leq 1+ \frac{\log\left(1 - \beta + \frac{\beta^2}{j}\right)} {2 \log\left(\frac j \beta \right)} = 1 + \frac{\log(1-\frac 3 8)}{4 \log(2)}\approx0.83.\] 
\end{rema*}

\noindent Theorem~\ref{thm:UpBndHDLosingSets} is proved in Section~\ref{subsec:ProofThm:UpBndHDLosingSets}.  Heuristically, the gist of the proof is to follow the estimation of an upper bound for the upper box dimension of the Cantor set and fill in the removed sets with suitably scaled copies of the Cantor set.  Significant complications, however, arise when this heuristic is applied to Schmidt games.  These complications are resolved in Section~\ref{subsec:ProofThm:UpBndHDLosingSets} by our construction, using inner and outer recursions, of plays that are winning for Bob for various $(1/j, \beta; S)$-accelerated games for Bob.  Such plays and games are introduced in Section~\ref{sec:AcceleratedGames}.

\begin{theo}\label{theo:WinningBADeltaLowerHD}  For $0 < \delta < \frac 1 {18}$, we have that \[\dim_H\left(\BA(\delta)\right) \geq  \frac{\log\left(\lfloor \frac 1 {18 \delta}\rfloor\right)}{\log\left(\frac 1 {6 \delta} \right)}.\]
\end{theo}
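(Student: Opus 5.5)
Apply Schmidt's dimension estimate for winning sets to Theorem~\ref{theo:WinningBADelta}. Recall \cite[Theorem~6 / Lemma~9]{Sch66}: if $S\subset\RR$ is $(\alpha,\beta)$-winning, then
\[
\dim_H(S)\;\ge\;\frac{\log N(\alpha\beta)}{|\log(\alpha\beta)|},
\]
where $N(\gamma)$ is the largest number of disjoint closed balls of radius $\gamma\rho$ that fit inside a ball of radius $\rho$. In $\RR$ one has $N(\gamma)=\lfloor 1/\gamma\rfloor$. I will apply this with $\alpha=1/3$, the largest value Theorem~\ref{theo:WinningBADelta} allows.

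Given $0<\delta<1/18$, set $\beta:=18\delta$, so that $0<\beta<1$. With $\alpha=1/3$, Theorem~\ref{theo:WinningBADelta} shows that $\BA(\alpha\beta/6)=\BA(\delta)$ is $(1/3,18\delta)$-winning. Then $\alpha\beta=6\delta$, and the estimate gives
\[
\dim_H\left(\BA(\delta)\right)\ \ge\ \frac{\log\lfloor \frac1{6\delta}\rfloor}{\log\frac1{6\delta}}.
\]
Since $\lfloor \frac1{6\delta}\rfloor \ge \lfloor \frac1{18\delta}\rfloor$, this already implies the stated bound $\log\lfloor \frac1{18\delta}\rfloor/\log\frac1{6\delta}$.

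The main obstacle is making sure the version of Schmidt's estimate I quote has exactly the form above. In particular I must confirm that it uses $N(\alpha\beta)$, and not a count at scale $\beta$ inside a ball of radius $\alpha\rho$. If the available form instead counts the disjoint sub-balls Bob may choose within Alice's ball, then the count is $\lfloor 1/\beta\rfloor=\lfloor \frac1{18\delta}\rfloor$ per round, while the radius shrinks by $\alpha\beta=6\delta$ per round. That gives exactly
\[
\frac{\log\lfloor \frac1{18\delta}\rfloor}{\log\frac1{6\delta}},
\]
matching the statement precisely, so this is likely what the author intends.

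To be safe, I would run the standard Cantor-set argument directly. Fix Alice's positional winning strategy from Remark~\ref{rem:PositionalWinning}. At each stage, inside Alice's ball $A_n$ of radius $\alpha\rho_n$, Bob picks $\lfloor 1/\beta\rfloor$ disjoint balls of radius $\beta\alpha\rho_n$. Alice responds to each using her strategy, and the process repeats. The limit points form a Cantor set contained in $\BA(\delta)$. Its pieces at level $n$ are disjoint and have radius $(\alpha\beta)^n\rho_1$, with branching number $\lfloor 1/\beta\rfloor$. One detail needs checking here: sibling balls must stay separated, which I would arrange by shrinking or spacing Bob's choices. Given that separation, the mass distribution principle with uniform mass $\lfloor1/\beta\rfloor^{-n}$ on level-$n$ pieces gives the claimed lower bound.
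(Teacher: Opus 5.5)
Your hedged second form is correct and is exactly the paper's proof. Theorem~\ref{theo:WinningBADelta} with $\alpha=1/3$, $\beta=18\delta$ makes $\BA(\delta)$ a $(1/3,18\delta)$-winning set. Theorem~\ref{thm:LowerBndHDWinSet} (Schmidt's Corollary~1, fed with $\lfloor\beta^{-1}\rfloor^{d}$ balls of $B^{\beta}$ with disjoint interiors) then gives $\log\lfloor\frac{1}{18\delta}\rfloor/\log\frac{1}{6\delta}$.

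The version you lead with is false, so drop it rather than keep it as the main route. In the game only Bob's moves branch. Alice's ball is dictated by her strategy, and inside it Bob has room for only $\lfloor 1/\beta\rfloor$ balls of radius $\beta\alpha\rho$. The count is therefore at scale $\beta$, never $\alpha\beta$. Your intermediate claim $\dim_H\BA(\delta)\ge\log\lfloor\frac1{6\delta}\rfloor/\log\frac1{6\delta}$ is actually wrong. For $1/19<\delta<1/18$ its right side is $\log 3/\log\frac{1}{6\delta}$, which tends to $1$ as $\delta\uparrow 1/18$. But every $x\in\BA(\delta)$ has partial quotients eventually at most $18$, since $|x-p_n/q_n|<1/(a_{n+1}q_n^2)$. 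That set has dimension $\dim_H E_{18}<1$ independently of $\delta$, where $E_{18}$ is the set of reals in $[0,1]$ with all partial quotients at most $18$ (Jarn\'ik~\cite{Jar28}; roughly $0.96$). So that route ``proves'' a false statement and then weakens it.

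In your direct Cantor-set argument, the proposed separation fix does not work. The rules fix the radius of Bob's balls, so he cannot shrink them. When $1/\beta\in\NN$ (e.g.\ $\delta=1/36$, $\beta=1/2$), the $\lfloor 1/\beta\rfloor$ balls of radius $\beta\alpha\rho$ exactly tile Alice's ball, so they cannot be spaced apart. Demanding genuine separation cuts the count to $\lfloor 1/\beta\rfloor-1$, which gives only the trivial bound $0$ at $\delta=1/36$.

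Separation is also unnecessary. The level-$n$ balls $B_n$ of your tree all have radius $(\alpha\beta)^{n-1}\rho_1$ and pairwise disjoint interiors: siblings by construction, the rest through their ancestors. Hence an interval of radius $r\in\bigl((\alpha\beta)^{n}\rho_1,(\alpha\beta)^{n-1}\rho_1\bigr]$ meets at most three of them. Push forward the uniform measure on branches; each branch's limit point lies in $\BA(\delta)$. This gives $\mu(B(x,r))\le 3\lfloor 1/\beta\rfloor^{-(n-1)}\le C r^{s}$ with $s=\log\lfloor 1/\beta\rfloor/\log\frac{1}{\alpha\beta}$, and the mass distribution principle yields the bound. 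This disjoint-interiors count is precisely what Theorem~\ref{thm:LowerBndHDWinSet} relies on.
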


\begin{proof}
Theorem~\ref{theo:WinningBADelta} gives that $\BA(\delta)$ is an $(1/3, 18 \delta)$-winning set.  Applying Theorem~\ref{thm:LowerBndHDWinSet} yields the desired result.
\end{proof}

\begin{theo}\label{theo:LosingBADeltaUpperHD}  Let $0 < \delta < 1/\sqrt{5}$.  We have that \[\dim_H\left(\BA(\delta)\right) \leq \frac{\log 2 + \log\left(\lceil\frac{648}{\delta^2} \rceil - \lfloor \frac{36}{\delta} \rfloor + 2 \right)}{2 \log\left(\frac {36}\delta \right)} <1. \]  If, furthermore, $18/\delta \in \NN$, we have that \[\dim_H\left(\BA(\delta)\right) \leq 1+ \frac{\log\left(1 - \frac {\delta}{18} + \frac{\delta^2}{648}\right)} {2 \log\left(\frac {36} \delta \right)}<1.\]
\end{theo}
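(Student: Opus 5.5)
The plan is to combine Theorem~\ref{theo:LosingBADelta} with Theorem~\ref{thm:UpBndHDLosingSets}, in the case $d=s=1$ and $j=2$.

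Fix $0<\delta<1/\sqrt5$ and set $\beta:=\delta/18$. Then $\beta<1/(18\sqrt5)<1/2$ and $\delta=18\beta$, so Theorem~\ref{theo:LosingBADelta} gives that $\BA(\delta)$ is a $(1/2,\beta)$-ubiquitously losing set of $\RR$. The range $\delta\ge 1/\sqrt5$ is not needed, since then $\BA(\delta)=\emptyset$ by the remark after Theorem~\ref{theo:LosingBADelta}. In dimension one the supremum norm is just the absolute value, so Theorem~\ref{thm:UpBndHDLosingSets} applies directly with $d=1$, $s=1$, $j=2$. This gives
\[
\dim_H(\BA(\delta))\le \frac{\log 2+\log\left(\left(\lceil 2\beta^{-2}\rceil+1\right)-\left(\lfloor 2\beta^{-1}\rfloor-1\right)\right)}{\log\left(4\beta^{-2}\right)},
\]
and the right-hand side is strictly less than $1$.

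Now substitute $\beta^{-1}=18/\delta$. Then $2\beta^{-2}=648/\delta^2$, $2\beta^{-1}=36/\delta$, and $\log(4\beta^{-2})=2\log(36/\delta)$. The numerator becomes $\log 2+\log\bigl(\lceil 648/\delta^2\rceil-\lfloor 36/\delta\rfloor+2\bigr)$, which is exactly the first claimed bound, and it is $<1$ by Theorem~\ref{thm:UpBndHDLosingSets}.

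If furthermore $18/\delta\in\NN$, then $1/\beta$ is an integer, so we use (\ref{eqn:thm:UpBndHDLosingSets2}) instead. It gives
\[
\frac{\log 2+\log\bigl(2\beta^{-2}+1-2\beta^{-1}\bigr)}{\log(4\beta^{-2})}.
\]
Rewrite the numerator as $\log(4\beta^{-2})+\log\bigl(1-\beta+\beta^2/2\bigr)$, using $2(2\beta^{-2}-2\beta^{-1}+1)=4\beta^{-2}(1-\beta+\beta^2/2)$. The bound then becomes $1+\log\bigl(1-\beta+\beta^2/2\bigr)/\bigl(2\log(2/\beta)\bigr)$. Substituting $\beta=\delta/18$ gives $1-\delta/18+\delta^2/648$ inside the logarithm and $2\log(36/\delta)$ in the denominator, which is the second bound. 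Strict inequality holds because $0<\beta<1$ forces $1-\beta+\beta^2/2<1$.

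There is no real obstacle; the only points needing care are the algebraic simplification and checking that $\beta\le 1/2$, so that both cited theorems apply.
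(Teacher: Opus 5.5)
Your proposal is correct and is the paper's argument: with $\beta=\delta/18\le 1/2$, Theorem~\ref{theo:LosingBADelta} makes $\BA(\delta)$ a $(1/2,\beta)$-ubiquitously losing set of $\RR$, and Theorem~\ref{thm:UpBndHDLosingSets} with $d=s=1$, $j=2$ then yields both bounds. Your substitutions $2\beta^{-2}=648/\delta^2$, $2\beta^{-1}=36/\delta$ and your rewriting of the integer case as $1+\log(1-\beta+\beta^2/2)/\bigl(2\log(2/\beta)\bigr)$ agree with the paper, including the remark following Theorem~\ref{thm:UpBndHDLosingSets}.
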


\begin{rema*}
For $\delta \geq 1/\sqrt{5}$, Hurwitz's theorem and Proposition~\ref{prop:BADeltaComplement} yield that $\BA(\delta) = \emptyset$.

\end{rema*}
\begin{proof}  For $0< \delta \leq 9$, Theorem~\ref{theo:LosingBADelta} gives that $\BA(\delta)$ is a $(1/2, \delta/18)$-ubiquitously losing set of $\RR$.  (Note that the empty set is an $(\alpha, \beta)$-ubiquitously losing set for any $0 < \alpha <1$ and $0<\beta<1$.)   Applying Theorem~\ref{thm:UpBndHDLosingSets} yields the desired result.  \end{proof}

\noindent Theorems~\ref{theo:WinningBADeltaLowerHD} and~\ref{theo:LosingBADeltaUpperHD} give alternate proofs of known results due to Jarn\'ik~\cite[Satz~4]{Jar28}.  Moreover, an asymptotic formula due to Hensley is also known~\cite{Hen92}.  These have further strengthenings and generalizations (see~\cite{Sha92, Kur51, BK15, Wei15, Sim18, DFSU23} for some examples, including recent ones).  While Theorems~\ref{theo:WinningBADeltaLowerHD} and~\ref{theo:LosingBADeltaUpperHD} do not give as strong results as these, there are some advantages.  First, while many of these known results are proved using techniques from analytic number theory or homogeneous dynamics, our technique, which is intrinsic to Schmidt games, may be more widely applicable (Remark~\ref{rem:Discuss}), especially the using of ubiquitously losing sets for upper bounds on the dimension.  Second, our technique, due to properties of Schmidt games, easily handles intersections.  Theorem~\ref{theo:WinningBADeltaLowerHDCtbleIntersect} is an example of this.  Other examples, involving other $(\alpha, \beta)$-winning sets and/or other bilipschitz mappings, can also be constructed.  Note that the intersection property is powerful because it asserts that subsets of $\RR^d$ meet even if they arise in, perhaps, vastly different ways, as long as they both are $(\alpha, \beta)$-winning for suitable winning parameters.

\begin{theo}\label{theo:WinningBADeltaLowerHDCtbleIntersect}  Let $N \in \NN$, $\{r_i\}_{i=1}^N$ be a collection of real numbers, and $0 < \delta < \frac { 3^{1-N}} {18}$.  Then $\bigcap_{i=1}^N \left(\BA\left( \delta\right)+r_i\right)$ is a dense subset of $\RR$, and, moreover, we have that   \[\dim_H\left(\bigcap_{i=1}^N \left(\BA\left( \delta\right)+r_i\right)\right) \geq  \frac{N \log\left(\left\lfloor \left(\frac 1 {18 \delta \cdot 3^{N-1}} \right)^{1/N}\right\rfloor\right)}{\log\left(\frac 1 {6 \delta} \right)}.\] 
\end{theo}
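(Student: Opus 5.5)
The plan is to use the translation invariance of winning sets, Schmidt's finite intersection property, and the lower bound on the Hausdorff dimension of winning sets. Set $\beta := 18\delta$, so $\beta < 3^{1-N} < 1$. Theorem~\ref{theo:WinningBADelta} with $\alpha = 1/3$ shows that $\BA(\delta)$ is a $(1/3, 18\delta)$-winning set, as in the proof of Theorem~\ref{theo:WinningBADeltaLowerHD}. The $(\alpha,\beta)$-game is invariant under translations of $\RR$, since these are isometries carrying closed balls to closed balls of the same radius. Hence each translate $\BA(\delta) + r_i$ is again $(1/3, 18\delta)$-winning: Alice plays the translate of her strategy for $\BA(\delta)$. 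Alternatively, this is the bilipschitz transfer property (Proposition~\ref{prop:BilipschitzInvar2}) with Lipschitz constant $1$.

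Next I pass to a common game in which all $N$ translates are simultaneously winning. Following Schmidt's intersection argument (\cite[Theorem~2]{Sch66}, recorded here as Theorem~\ref{thm:CtbleWinPropForWinSets}), Alice splits the game into $N$ interleaved subgames. If $S$ is $(\alpha, \beta)$-winning, then $S$ is $(\alpha, \beta')$-winning for an accelerated parameter, and the interleaving shows that a finite intersection of $N$ sets each $(\alpha,\beta)$-winning is $(\alpha, \gamma)$-winning whenever $(\alpha\gamma)^N \alpha^{-1}\cdots$ matches $\alpha\beta$. Concretely, I would aim to show that $\bigcap_{i=1}^N (\BA(\delta)+r_i)$ is $(1/3, \gamma)$-winning with
\[\gamma := \left(\frac{18\delta \cdot 3^{N-1}}{1}\right)^{1/N}\cdot 3^{-(N-1)/N}\cdot 3^{(N-1)/N}\]
or, more cleanly, with $\gamma$ chosen so that $\alpha\gamma\,(\alpha\gamma)^{N-1}\cdot(\text{scaling}) = \alpha\beta$. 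The formula in the statement suggests that the correct choice is $\gamma^N = 18\delta\cdot 3^{N-1}$, that is, $\gamma = (18\delta\, 3^{N-1})^{1/N}$. This lies in $(0,1)$ exactly when $\delta < 3^{1-N}/18$, which is the hypothesis. Checking that the one-player-per-subgame bookkeeping produces precisely this $\gamma$ is the main obstacle. The $N$ consecutive Alice/Bob rounds in the combined game must correspond to a single $(1/3, 18\delta)$ round in each subgame, after Alice absorbs the extra factors of $\alpha = 1/3$ from the intermediate moves. I would first try to verify directly that $N$ rounds shrink the radius by $(\alpha\gamma)^N = 3^{-N}\cdot 18\delta\cdot 3^{N-1} = \alpha\cdot(18\delta)$. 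This is the required ratio between consecutive Alice balls in one subgame, so the parameters should match exactly.

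Given that $\bigcap_i(\BA(\delta)+r_i)$ is $(1/3,\gamma)$-winning, density follows because winning sets are dense: Bob may start inside any prescribed ball. For the dimension bound I apply Theorem~\ref{thm:LowerBndHDWinSet}, which in the case of Theorem~\ref{theo:WinningBADeltaLowerHD} gave $\log\lfloor 1/\gamma\rfloor/\log(1/(\alpha\gamma))$ up to the stated form. This yields
\[\dim_H \geq \frac{\log\lfloor \gamma^{-1}\rfloor}{\log\bigl(3\gamma^{-1}\bigr)}.\]
Rewriting this in terms of $\delta$, I expect the denominator, computed over $N$ rounds, to become $\frac1N\log\bigl(3^N/(18\delta\, 3^{N-1})\bigr) = \frac1N\log\frac{1}{6\delta}$. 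Multiplying through by $N$ then gives exactly the displayed bound, with floor $\lfloor (18\delta\,3^{N-1})^{-1/N}\rfloor$.
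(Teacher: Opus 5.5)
Your proposal is correct and follows the paper's proof. By translation invariance each $\BA(\delta)+r_i$ is $(1/3,18\delta)$-winning, and your choice $\gamma=(18\delta\cdot 3^{N-1})^{1/N}$ is exactly the condition $\gamma(\tfrac13\gamma)^{N-1}=18\delta$ needed to apply Theorem~\ref{thm:CtbleWinPropForWinSets} directly, so the parameter bookkeeping you flag as the main obstacle is already done there; density and Theorem~\ref{thm:LowerBndHDWinSet} then give the stated bound. One correction: Proposition~\ref{prop:BilipschitzInvar2} concerns ubiquitously losing sets, so for transferring the winning property under translations rely on your direct isometry argument or cite \cite[Proposition~5.3]{Dan87}, as the paper does.
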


\begin{proof}
 Apply the proof of Theorem~\ref{theo:WinningBADeltaLowerHD} with \cite[Proposition~5.3]{Dan87} and Theorem~\ref{thm:CtbleWinPropForWinSets} (with its remark).
\end{proof}

\begin{rema}\label{rem:Discuss}  The theory of Schmidt games is a natural tool to study not only $\BA$ but also various generalizations and related sets in Diophantine approximation and also in ergodic theory (see \cite{Sch69, Ts09, ET11,Ts09Non, Ts16, Fi09, MT13, MM13, ABV18,GW18, BNY21, BNY22, DS25,DS26} for some examples).  It may be that the analogous notion of $\BA(\delta)$ to $\BA$ exists for some of these generalizations and related sets and would provide insight into their finer structure, further enhancing the utility of the theory of Schmidt games.  Note that, applied to ergodic theory, this would deepen our understanding of exceptional orbits, which is important because a central goal of the field of dynamical systems, of which ergodic theory is a subfield, is to better understand orbits.  Moreover, Schmidt games have motivated a number of other games (\cite{McM10, KW10, BFKRW12, BHNS25} for example), and it may be that ubiquitously losing sets have suitable analogs for some of these other games and would serve a function similar to that in this paper.  \end{rema}

\subsubsection*{Acknowledgements}  The author wishes to thank Dmitry Kleinbock for pointing him to~\cite{DFSU23} and Barak Weiss for pointing him to~\cite{Sim18} and wishes to thank both for their helpful comments regarding the dimensions of the sets $M_N$.  For the purpose of open access, the author has applied a Creative Commons Attribution (CC BY) licence to any Author Accepted Manuscript version arising from this submission.  This study did not generate any new data.

\section{Winning properties of $\BA(\delta)$}\label{sec:WinningBADelta}  In this section, we prove Theorem~\ref{theo:WinningBADelta} (see Section~\ref{subsec:proof:theo:WinningBADelta}).  The proof is an adaption of the proof of~\cite[Theorem~3]{Sch66}.  To begin, we need the following proposition.  

\begin{prop}\label{prop:WinningBADelta}  Let $0< \alpha \leq 1/3$, $0<\beta<1$, and $\delta:= \frac{\alpha \beta} 6$.  In any $(\alpha, \beta)$-game on $\RR$ for which $\rho(B_1) \leq 1/2$, Alice can ensure any \begin{align*} x := \bigcap_{n=1}^\infty B_n \end{align*} is an irrational real number and satisfies (\ref{eqnBadDelta}) for all integers \[q \geq \sqrt{\frac{\alpha \beta}{2\rho(B_1)}}\] and all $p \in \ZZ$ such that $(p,q) =1$.

\end{prop}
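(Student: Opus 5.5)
The plan is to adapt Schmidt's proof of~\cite[Theorem~3]{Sch66} by splitting the denominators into blocks, one block per round, so that in each round Alice has to dodge at most one rational. Write $\rho:=\rho(B_1)$ and $\rho_k:=\rho(B_k)=\rho\,(\alpha\beta)^{k-1}$. For $k\ge 1$ let the $k$-th block consist of all reduced fractions $p/q$ with
\[
\frac{\alpha\beta}{2\rho_k}\le q^2<\frac{1}{2\rho_k}.
\]
Since $\rho_{k+1}=\alpha\beta\rho_k$, these ranges tile $[\alpha\beta/(2\rho),\infty)$. So every $q\ge\sqrt{\alpha\beta/(2\rho)}$ lies in exactly one block, and the target inequality~(\ref{eqnBadDelta}) for that $q$ will be handled in the corresponding round. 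The hypothesis $\rho\le 1/2$ keeps the first block consisting of genuine denominators $q\ge 1$; beyond that it is only bookkeeping.

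\textbf{Step 1: separation.} Let $p/q\ne p'/q'$ be two reduced fractions in the $k$-th block. Then
\[
\left|\frac pq-\frac{p'}{q'}\right|\ge\frac1{qq'}>2\rho_k=\diam B_k,
\]
so the closed ball $B_k$ contains at most one fraction from the $k$-th block.

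\textbf{Step 2: Alice's move.} If $B_k$ contains such a fraction $p/q$, Alice must keep $x$ out of the open interval $U:=(p/q-\delta/q^2,\,p/q+\delta/q^2)$. Using $q^2\ge \alpha\beta/(2\rho_k)$ and $\delta=\alpha\beta/6$, its half-length satisfies
\[
\frac{\delta}{q^2}\le\frac{\delta\cdot 2\rho_k}{\alpha\beta}=\frac{\rho_k}{3}.
\]
Hence $B_k\setminus U$ has a component of length at least $\tfrac12\left(2\rho_k-\tfrac23\rho_k\right)=\tfrac23\rho_k\ge 2\alpha\rho_k$, where the last inequality uses $\alpha\le1/3$. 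Alice places $A_k$, of diameter $2\alpha\rho_k$, inside that component. If $B_k$ contains no fraction from the block, she moves arbitrarily. Every later ball lies inside $A_k$, and~(\ref{eqnBadDelta}) is a non-strict inequality, so $x\notin U$ is exactly what is needed. This gives~(\ref{eqnBadDelta}) for all $q\ge\sqrt{\alpha\beta/(2\rho)}$ and all $p$ coprime to $q$.

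\textbf{Step 3: irrationality — the main obstacle.} If $x=a/b$ in lowest terms with $b\ge\sqrt{\alpha\beta/(2\rho)}$, then $a/b$ itself violates~(\ref{eqnBadDelta}), since $|x-a/b|=0$. So only the finitely many rationals in $B_1$ with small denominator need separate treatment. After finitely many rounds, $B_k$ contains at most one such rational $r$.

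The first thing I would try is to use the freedom remaining in Alice's move. When $B_k$ contains no block fraction, she can simply choose $A_k$ with $r\notin A_k$. When there is a block fraction, two situations are easy. If $\alpha<1/3$, there is slack in Step 2 and she can place $A_k$ inside the long component while also avoiding $r$. If $r$ lies outside that component, she gets this for free. The delicate case is $\alpha=1/3$ with $r$ in the long component in every round. To handle it, I would check that in that configuration Alice can instead place $A_k$ flush against the far end of $B_k$, so that $r$ is not in its interior. I would then argue, using the rigid nested geometry together with $r$ having a fixed denominator while the block denominators grow, that $r$ cannot remain in the nested balls forever.

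If this becomes messy, the fallback is to shrink the blocks slightly, for example to $\alpha\beta/(2\rho_k)\le q^2<(1-\epsilon)/(2\rho_k)$ with the lower end adjusted accordingly. This creates a margin in every round, which Alice uses to avoid $r$ once $\rho_k$ is small enough.
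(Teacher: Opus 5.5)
Your blocks $\alpha\beta/(2\rho_k)\le q^2<1/(2\rho_k)$ are exactly the paper's intervals $[\sqrt{\alpha\beta/(2\rho)}\,R^{k-1},\sqrt{\alpha\beta/(2\rho)}\,R^{k})$ with $R=(\alpha\beta)^{-1/2}$. Your Step~1 and the bound $\delta/q^2\le\rho_k/3$ are also the paper's.

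\textbf{Gap in Step~2.} You only dodge block fractions lying \emph{inside} $B_k$. A block-$k$ fraction $p'/q'$ just outside $B_k$ still has a forbidden open interval of half-length up to $\rho_k/3$ reaching into $B_k$. Each denominator is handled only in its own round, so nothing later repairs a violation at $q'$. Hence letting Alice move arbitrarily when $B_k$ contains no block fraction is wrong. So is letting her use any position in the long component. If $p/q\in B_k$ lies within $\rho_k/3$ of the left end of $B_k$, the long component reaches beyond $p/q+\tfrac53\rho_k$. That is exactly where the forbidden interval of a block fraction $p'/q'>p/q+2\rho_k$ may begin. The paper handles this as follows:
\begin{itemize}
\item if $p/q\in B_k$, it takes $A_k\subset[p/q+\rho_k/3,\,p/q+\rho_k]$, or the mirror image;
\item otherwise it takes $A_k\subset[b_k-\tfrac23\rho_k-\varepsilon,\,b_k+\tfrac23\rho_k+\varepsilon']$, where $b_k$ is the centre of $B_k$ and $\varepsilon,\varepsilon'\in(0,\rho_k/3]$ are the distances from the ends of $B_k$ to the nearest outside block fractions, capped at $\rho_k/3$.
\end{itemize}
Block fractions are pairwise more than $2\rho_k$ apart, so at most one on each side can intrude.

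\textbf{The missing idea in Step~3.} The delicate case you describe never occurs. The estimate of Step~1 applies to \emph{any} two distinct reduced fractions with denominators below $1/\sqrt{2\rho_k}$. A small-denominator rational $r=a/b$ has $b<\sqrt{\alpha\beta/(2\rho)}\le 1/\sqrt{2\rho_k}$, so it qualifies in every round. Hence $B_k$ never contains both $r$ and a block-$k$ fraction. In particular, $B_1$ contains at most one fraction of denominator below $1/\sqrt{2\rho}$. The paper uses this to remove all of $\mQ$ (reduced fractions with denominator $<Q=\lceil\sqrt{\alpha\beta/(2\rho)}\rceil$) with Alice's first move:
\begin{itemize}
\item if that fraction is a block fraction, then $[p/q+\rho/3,\,p/q+\rho]$ contains no point of $\mQ$;
\item if it is some $r\in\mQ$, Alice takes $A_1$ in the enlarged middle region on the side of $b_1$ away from $r$.
\end{itemize}
The strict margins $\varepsilon,\varepsilon'>0$ are precisely what let a closed interval of length $\tfrac23\rho$ miss $r$ when $\alpha=1/3$ (consider $r=b_1$).

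\textbf{Your workarounds.} None of them holds up as stated:
\begin{itemize}
\item Keeping $r$ out of the interior of $A_k$ does not keep it out of the closed ball $A_k$.
\item Placing $A_k$ flush against the far end of $B_k$ is exactly where an outside block fraction can intrude.
\item Your claim that for $\alpha<1/3$ the slack lets Alice also avoid $r$ fails for $\alpha\ge 1/6$, since the long component can have length exactly $\tfrac23\rho_k$.
\item Shrinking the blocks cannot create a margin at $\alpha=1/3$. Three requirements already force your blocks exactly: $q^2<1/(2\rho_k)$ for separation, $q^2\ge\alpha\beta/(2\rho_k)$ for $\delta/q^2\le\rho_k/3$, and covering every $q$ with one block per round.
\end{itemize}
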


\subsection{Proof of Proposition~\ref{prop:WinningBADelta}}\label{subsec:Proof:prop:WinningBADelta}

Set $\rho:= \rho(B_1)$, $R := \frac 1 {\sqrt{\alpha \beta}}$, and \[Q:= \left\lceil \sqrt{\frac{\alpha \beta}{2\rho}} \right\rceil.\]  Define \[\begin{cases}  \mQ := \left\{ \frac p q \in \QQ : 1 \leq q \leq Q-1 \textrm{ and } (p, q) = 1 \right\} & \text{ if } Q\geq 2 \\
\mQ := \emptyset & \text{ if } Q = 1\end{cases}.\]
\begin{lemm}\label{lemm:WinningBADelta}   Let $n, q \in \NN$.  If \begin{align*}
x \in B_n \textrm{ and }  \sqrt{\frac{\alpha \beta}{2\rho}} \leq q < \sqrt{\frac{\alpha \beta}{2\rho}} R^{n-1}\end{align*} holds, then Alice can ensure\begin{align}\label{eqn:lemm:WinningBADelta:FirstResult} \textrm{
  that (\ref{eqnBadDelta}) holds for any $p \in \ZZ$ such that $(p,q)=1$.} \end{align}
  In addition, Alice can ensure \begin{align}\label{eqn:lemm:WinningBADelta:SecondResult}
 B_2 \cap \mQ = \emptyset. \end{align}
 
\end{lemm}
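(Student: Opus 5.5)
The plan is to follow Schmidt's original argument for \cite[Theorem~3]{Sch66}: prove the lemma by induction on $n$, with Alice's move $A_n$ dealing with one ``block'' of denominators. Write $q_0 := \sqrt{\alpha\beta/(2\rho)}$ and $\rho_n := \rho(B_n) = \rho(\alpha\beta)^{n-1}$. The target statement at stage $n+1$ is that every $x \in B_{n+1}$ satisfies (\ref{eqnBadDelta}) for all $q \in [q_0, q_0R^{n})$. For $n=1$ the range $[q_0,q_0R^0)$ is empty, so the base case is vacuous. For the step from $B_n$ to $B_{n+1}$, Alice uses her move $A_n \in B_n^\alpha$ to handle the new block $q_0R^{n-1} \le q < q_0 R^{n}$, with coprime $p$. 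Since $B_{n+1}\subset A_n \subset B_n$, the earlier blocks remain handled automatically.

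The key spacing estimate is as follows. Take two distinct reduced fractions $p/q \ne p'/q'$ with $q,q' < q_0R^n$. Then
\[
\left|\frac pq - \frac{p'}{q'}\right| \ge \frac 1{qq'} > \frac{1}{q_0^2R^{2n}} = \frac{2\rho}{\alpha\beta}(\alpha\beta)^{n} = 2\rho_n = \diam B_n .
\]
Hence at most one fraction of the new block lies in $B_n$. The same count also shows that the forbidden intervals $\{y : |y-p/q|<\delta/q^2\}$ meet $B_n$ for essentially one fraction only. I expect the main bookkeeping obstacle to be checking this ``at most one bad interval'' claim carefully, including fractions just outside $B_n$. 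Its bad intervals are short, so this should be harmless, but the constants must be verified.

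Next, the size of the bad interval is controlled by the lower end of the block. For $q \ge q_0R^{n-1}$,
\[
\frac{\delta}{q^2} \le \frac{\alpha\beta}{6}\cdot\frac{2\rho}{\alpha\beta}(\alpha\beta)^{n-1} = \frac{\rho_n}{3}.
\]
Removing an open interval of radius at most $\rho_n/3$ from the segment $B_n$ (length $2\rho_n$) leaves a piece of length at least $(2\rho_n - 2\rho_n/3)/2 = 2\rho_n/3$ on one side. This is $\ge 2\alpha\rho_n$ because $\alpha \le 1/3$. So Alice can choose $A_n$ of radius $\alpha\rho_n$ that is disjoint from the bad interval. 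Every later $x\in B_{n+1}\subset A_n$ then satisfies (\ref{eqnBadDelta}) for the whole block, which gives (\ref{eqn:lemm:WinningBADelta:FirstResult}).

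For (\ref{eqn:lemm:WinningBADelta:SecondResult}), note that every $p/q\in\mQ$ has $q\le Q-1<q_0$. The spacing estimate with $n=1$ then shows that $\mQ$, together with the first block $q_0\le q<q_0R$, contributes at most one fraction near $B_1$ in total. Alice's first move $A_1$ avoids the bad interval of that single fraction, and a fraction of $\mQ$ is contained in its own bad interval (positive radius). Therefore $B_2\subset A_1$ misses $\mQ$, and both requirements are met by the same choice of $A_1$.
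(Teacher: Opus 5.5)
Your skeleton matches the paper's: induction over the blocks $q_0R^{n-1}\le q<q_0R^{n}$, the separation $|p/q-p'/q'|>2\rho_n$, and the bound $\delta/q^2\le\rho_n/3$, all of which you compute correctly. The gap is the step where you delete a single forbidden interval. It rests on your claim that ``the forbidden intervals meet $B_n$ for essentially one fraction only'', and that claim is false.

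The separation shows only that at most one block fraction lies \emph{in} $B_n$. A fraction lying just outside $B_n$, within $\rho_n/3$ of an endpoint, also has a forbidden interval reaching into $B_n$, so two forbidden intervals can meet $B_n$. For example, take $\alpha=1/3$, $\beta=0.9$ (so $\delta=0.05$) and $B_1=[0.01,0.99]$. Then $q_0\approx0.553$ and $q_0R=(0.98)^{-1/2}\approx1.010$, so $q=1$ lies in the first block, and both $(-0.05,0.05)$ and $(0.95,1.05)$ meet $B_1$. In such a case the ``piece of length at least $2\rho_n/3$ on one side'' of one removed interval can still be cut by the other, so your argument does not produce $A_n$.

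What is missing is the paper's case split.
\begin{enumerate}
\item Suppose some block fraction $p/q$ lies in $B_n$, say $p/q\le b_n$ (the centre). Take $A_n\subset[p/q+\rho_n/3,\,p/q+\rho_n]$. This interval lies in $B_n$ and has length $2\rho_n/3\ge2\alpha\rho_n$. It misses every other forbidden interval, because those are centred more than $2\rho_n$ from $p/q$ and have radius at most $\rho_n/3$.
\item Suppose no block fraction lies in $B_n$. Then at most one forbidden interval enters from each end, each to depth strictly less than $\rho_n/3$. So $A_n$ can be taken inside $[b_n-2\rho_n/3,\,b_n+2\rho_n/3]$.
\end{enumerate}

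Your treatment of $B_2\cap\mQ=\emptyset$ needs the same split, and it has a further defect. For $p''/q''\in\mQ$, the set $\{y:|y-p''/q''|<\delta/q''^2\}$ has radius greater than $\delta/q_0^2=\rho/3$, since $q''<q_0$. It can even contain all of $B_1$: centre $B_1$ at $0/1$ with $\rho<\alpha\beta/6$, so that $Q\ge2$ and $0/1\in\mQ$. So Alice cannot in general avoid that ``bad interval''; she only needs to avoid the point $p''/q''$ itself.

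Since $\mQ$ together with the first block is still $2\rho_1$-separated, at most one of these points lies in $B_1$.
\begin{itemize}
\item If that point is a block fraction, no point of $\mQ$ lies in $B_1$ at all, and the first case above works unchanged.
\item If it is a point of $\mQ$, Alice places an interval of length $2\rho_1/3$ in the free middle region, strictly on the side of $b_1$ away from that point. There is room for this because the intrusions from the ends have depth strictly less than $\rho_1/3$.
\end{itemize}
This is how the paper obtains both conclusions with the same first move $A_1$.
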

\begin{proof}
 We first prove (\ref{eqn:lemm:WinningBADelta:FirstResult}).  The proof is by induction on $n$.  The initial step $n=1$ is vacuously true.  We assume that the lemma holds for $n$ and show that it holds for $n+1$.  Thus, the induction hypothesis implies that we only need to consider integers $q$ lying in the interval \begin{align}\label{eqn:lemm:WinningBADelta}
\left[\sqrt{\frac{\alpha \beta}{2\rho}}  R^{n-1}, \sqrt{\frac{\alpha \beta}{2\rho}} R^{n}\right). \end{align}  

\noindent If the interval (\ref{eqn:lemm:WinningBADelta}) does not contain any integers, then Alice can choose $A_n$ to be any element of $B_n^\alpha$ to obtain (\ref{eqn:lemm:WinningBADelta:FirstResult}), the desired result.  Otherwise, the interval (\ref{eqn:lemm:WinningBADelta}) does contain integers.

We have following observation.  Let $q \in \NN$ lie in (\ref{eqn:lemm:WinningBADelta}) and $p \in \ZZ$ be such that $(p, q)=1$.  Note that we have $\frac {\delta} {q^2}\leq \frac{1}{3} (\alpha \beta)^{n-1} \rho = \frac{1}{3} \rho(B_{n})$.  Consequently, to complete the proof of (\ref{eqn:lemm:WinningBADelta:FirstResult}), we require that Alice choose $A_n$ to avoid the open ball of radius $\frac{1}{3} \rho(B_{n})$ around any such $p/q$.  We will show that $A_n$ can be chosen in this way.

Note that, if there are distinct integers $q, q'$ lying in the interval (\ref{eqn:lemm:WinningBADelta}) for which $(p,q)=1$ and $(p',q')=1$, then we have that \begin{align} \label{eqn2:lemm:WinningBADelta}\left | \frac p q - \frac {p'}{q'}\right | \geq \frac 1 {q q'} > \frac{2\rho}{(\alpha \beta) R^{2n}}= 2\rho(B_{n}). \end{align}  Consequently, there are two cases.  \begin{description}
 
\item[Case 1]  There is a unique such $q$ for which $p/q \in B_n$.  
\item[Case 2]  For every integer $q$ lying in the interval (\ref{eqn:lemm:WinningBADelta}) and every $p \in \ZZ$ such that $(p,q)=1$, we have that $p/q \notin B_n$.
\end{description}

Let us consider each case.  For Case 1, we have that there exists a unique such $q$.  Note that, if there exist distinct $p, p' \in \ZZ$ such that $p/q, p'/q \in B_n$, then $2 \rho(B_n) \geq 1/q >\sqrt{2 \rho(B_n)}$, which implies that $\rho \geq \rho(B_n) > 1/2$, a contradiction.  Thus, the integer $p$ is also unique.  Consequently, we have a unique $p/q \in B_n$.

Now let $b_n$ denote the center of $B_n$.  If $p/q \leq b_n$, then \[\frac p q + \frac{1}{3} \rho(B_{n}) \leq b_n +  \frac{1}{3} \rho(B_{n}) \quad \textrm{ and } \quad \frac p q +\rho(B_{n}) \leq b_n +  \rho(B_{n}).\]  Let $\tilde{q}$ be an integer which lies in (\ref{eqn:lemm:WinningBADelta}) and $\tilde{p} \in \ZZ$ such that $(\tilde{p}, \tilde{q})=1$.  If $p/q < \tilde{p}/\tilde{q}$, then  (\ref{eqn2:lemm:WinningBADelta}) implies that \[\frac p q + \rho(B_{n}) < \frac p q + \frac 5 3 \rho(B_{n}) < \frac {\tilde{p}} {\tilde{q}} - \frac 1 3 \rho(B_{n}).\]  As $\alpha \leq 1/3$, let the closed interval $A_n$ be contained in the interval \begin{align}\label{eqn:lemm:WinningBADelta:Case1main}
 \left[\frac p q + \frac{1}{3} \rho(B_{n}), \frac p q + \rho(B_{n})\right]. \end{align}  Applying the observation, this gives (\ref{eqn:lemm:WinningBADelta:FirstResult}) for Case 1 when $p/q \leq b_n$.  Note that we may take the same $A_n$ if no such $\tilde{q}$ and $\tilde{p}$ exist.  For $p/q > b_n$, the analogous proof gives (\ref{eqn:lemm:WinningBADelta:FirstResult}).  This proves (\ref{eqn:lemm:WinningBADelta:FirstResult}) for Case 1.

For Case 2, first consider distinct integers $q, q'$ lying in the interval (\ref{eqn:lemm:WinningBADelta}) for which $(p,q)=1$ and $(p',q')=1$.  As we are in Case~2, $p/q$ and $p'/q'$ do not lie in $B_n$.  However, $p/q$ and $p'/q'$ may be near the endpoints of $B_n$.  Without loss of generality, we may assume that $p/q < p'/q'$ and that these are the closest such to $B_n$.  Since (\ref{eqn2:lemm:WinningBADelta}) and $\alpha \leq 1/3$ hold, if Alice were to choose $A_n$ to be contained in \begin{align}\label{eqn:lemm:WinningBADetla:ClosedMiddleTwoThirds}
 \left[b_n- \frac 2 3 \rho(B_n)-\varepsilon, b_n+ \frac 2 3 \rho(B_n)+\varepsilon'\right], \end{align} where \begin{align*}
 \varepsilon &:=  \min\left(b_n- \rho(B_n) -\frac p q, \frac 1 3 \rho(B_n)\right)>0, \\  
 \varepsilon' &:=  \min\left( \frac{p'}{q'} - (b_n +  \rho(B_n)), \frac 1 3 \rho(B_n)\right)>0.  \end{align*}  Then, by the observation, (\ref{eqn:lemm:WinningBADelta:FirstResult}) will now follow for Case 2 when $q, q'$ are distinct. 

Now, for $Q \geq 2$, let $p''/q'' \in \mQ$.  Then note that (\ref{eqn2:lemm:WinningBADelta}) still holds if we replace $p'/q'$ by $p''/q''$.  Consequently, at most one element $p''/q''$ of $\mQ$ for $Q \geq 2$ lies in the interval (\ref{eqn:lemm:WinningBADetla:ClosedMiddleTwoThirds}).  If $p''/q'' < b_n$, then Alice chooses $A_n$ to be contained in \[\left [b_n+\varepsilon', b_n+ \frac 2 3 \rho(B_n)+\varepsilon'\right].\]  Otherwise, we have that $p''/q'' \geq b_n$ and Alice chooses $A_n$ to be contained in \[\left [b_n- \frac 2 3 \rho(B_n)-\varepsilon, b_n-\varepsilon\right].\]

If $Q=1$, then Alice chooses $A_n$ to be contained in the interval (\ref{eqn:lemm:WinningBADetla:ClosedMiddleTwoThirds}).  If $q = q'$, but $p \neq p'$, (\ref{eqn2:lemm:WinningBADelta}) still holds and we may apply the proof for distinct $q, q'$ to show (\ref{eqn:lemm:WinningBADelta:FirstResult}).  

Now, if exists exactly one $q$ and $p$, then we must modify the proof of Case~2 for distinct $q, q'$ as follows.  As we are in Case~2, we have that $p/q \notin B_n$.  Therefore, $p/q$ is either \begin{enumerate}[label=(\alph*)]
\item strictly less than all the elements of $B_n$ or 
\item strictly greater than all the elements of $B_n$.
\end{enumerate}  First, let us consider Case~(a).  There is no $p'/q'$.  Set $ \varepsilon':=\frac 1 3 \rho(B_n)$ in the proof for distinct $q, q'$ to obtain $A_n$ to give (\ref{eqn:lemm:WinningBADelta:FirstResult}) for Case~(a).  Next, let us consider Case~(b).  Since $p/q$ is strictly greater than all the elements of $B_n$, rename it $p'/q'$ and omit $p/q$.  Therefore, we have that $q'$ is the only integer lying in the interval (\ref{eqn:lemm:WinningBADelta}) for which $(p',q')=1$.   Now set $ \varepsilon:=\frac 1 3 \rho(B_n)$ in the proof for distinct $q, q'$ to obtain $A_n$ to give (\ref{eqn:lemm:WinningBADelta:FirstResult}) for Case~(b).  This shows (\ref{eqn:lemm:WinningBADelta:FirstResult}) for all cases and completes the proof of (\ref{eqn:lemm:WinningBADelta:FirstResult}).

We now prove (\ref{eqn:lemm:WinningBADelta:SecondResult}).  Let $Q=1$,  Then $\mQ = \emptyset$ and (\ref{eqn:lemm:WinningBADelta:SecondResult}), the desired result, holds.  Now let $Q \geq 2$ and  $p''/q'' \in \mQ$.  Since (\ref{eqn2:lemm:WinningBADelta}) still holds if we replace $p'/q'$ by $p''/q''$, we have that the interval (\ref{eqn:lemm:WinningBADelta:Case1main}) for $n=1$ does not contain any element of $\mQ$.  Consequently, for $p/q \leq b_1$ in Case~1 of the construction above, we have that $A_1 \cap \mQ = \emptyset$ and, thus, $B_2 \cap \mQ = \emptyset$.  Analogously, we obtain $B_2 \cap \mQ = \emptyset$ for $p/q > b_1$ in Case~1 of the construction above.  Finally, for Case~2, we have $A_1 \cap \mQ = \emptyset$ by construction.  Thus, we have $B_2 \cap \mQ = \emptyset$ in all cases and, thus (\ref{eqn:lemm:WinningBADelta:SecondResult}) holds.  This proves the lemma.

\end{proof}

The Lemma implies that for any \[x \in \bigcap_{n=1}^\infty B_n,\] we have that (\ref{eqnBadDelta}) holds for any integer $q \geq Q$ and any $p \in \ZZ$ such that $(p,q)=1$.  Therefore, $x \notin \QQ \backslash \mQ$.  The Lemma also implies that $\{x\} \cap \mQ = \emptyset$.  Thus, $x$ is an irrational real number.  This proves the proposition.

\subsection{Proof of Theorem~\ref{theo:WinningBADelta}}\label{subsec:proof:theo:WinningBADelta}

We play an $(\alpha, \beta)$-game.  If $\rho(B_1) > 1/2$, then there exists an integer $N \geq 2$ such that $\rho(B_N) \leq 1/2 < \rho(B_{N-1})$.  Alice regards $B_N$ as Bob's first choice of ball and, by reindexing, refers to it as $B_1$.  Applying Proposition~\ref{prop:WinningBADelta} implies that \[\{x \in \II: \exists Q \in \NN \textrm{ such that (\ref{eqnBadDelta}) holds } \forall (p,q) \in \ZZ \times \NN \textrm{ for which } q \geq Q \textrm { and }(p,q)=1\}\] is an $(\alpha, \beta)$-winning set of $\RR$. This proves the theorem.

\section{Ubiquitously losing properties of $\BA(\delta)$}\label{sec:LosingBADelta}

In this section, we prove Theorem~\ref{theo:LosingBADelta} (see Section~\ref{sec:prooftheo:LosingBADelta}).  Our main tool is the Farey sequence (see~\cite[Chapter~3] {HW08} or ~\cite[Chapter~1]{Hat22} for an introduction).  Let $n \in \NN$.  Recall that the Farey sequence $\F_n$ of order $n$ is the ascending sequence:\[\F_n:= \left\{\frac p q \in [0,1): (p, q) \in \ZZ^2 \textrm{ such that } 1 \leq q \leq n \textrm{ and } (p, q)=1 \right\} \cup \left\{\frac 1 1 \right\}.\]  Note that $\F_n$ is ordered by the usual ordering $<$ on $\RR$.  We will use the following two properties:

\begin{lemm}[{\cite[Theorem~28]{HW08}}]\label{lemm:Fareypair}  Let $n \in \NN$.  If $p/q$ and $p'/q'$ are consecutive elements of $\F_n$, then
\[ \frac {p'} {q'}-  \frac p q  = \frac 1{qq'}.\]

\end{lemm}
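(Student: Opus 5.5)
The plan is to argue by induction on $n$. The inductive step passes from $\F_{n-1}$ to $\F_n$ by inserting mediants, and the quantity to track is the determinant $p'q - pq'$. Concretely, we prove the stronger statement that for consecutive $p/q < p'/q'$ in $\F_n$ we have $p'q - pq' = 1$. Dividing by $qq'$ then gives the claimed identity.

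\textbf{Base case.} For $n=1$ we have $\F_1 = \{0/1, 1/1\}$, and $1\cdot 1 - 0\cdot 1 = 1$.

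\textbf{Inductive step.} Assume the claim for $\F_{n-1}$, and let $p/q < p'/q'$ be consecutive in $\F_{n-1}$, so $p'q - pq' = 1$. The key step is to show that every fraction $a/b$ with $p/q < a/b < p'/q'$ satisfies $b \geq q+q'$, with equality only for the mediant $(p+p')/(q+q')$.

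\begin{itemize}
\item Strict inequality and integrality give $aq - bp \geq 1$ and $bp' - aq' \geq 1$.
\item Hence
\[ b = b(p'q - pq') = q'(aq - bp) + q(bp' - aq') \geq q + q'. \]
\item Equality forces $aq - bp = 1 = bp' - aq'$. Solving this linear system, whose determinant is $p'q - pq' = 1$, gives $a = p + p'$ and $b = q + q'$.
\end{itemize}

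It follows that passing from $\F_{n-1}$ to $\F_n$ inserts, between $p/q$ and $p'/q'$, either nothing (when $q+q' > n$) or exactly the mediant (when $q+q' = n$). The mediant is in lowest terms, since
\[ (p+p')q - p(q+q') = 1, \]
which also shows its denominator is exactly $q+q'$ and it is not already in $\F_{n-1}$.

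\textbf{Checking the new consecutive pairs.} When the mediant is inserted, the two new pairs again have determinant $1$:
\[ (p+p')q - p(q+q') = p'q - pq' = 1, \qquad p'(q+q') - (p+p')q' = p'q - pq' = 1. \]
Every consecutive pair in $\F_n$ either was consecutive in $\F_{n-1}$ or is one of these new pairs, so the induction closes. Note that only the single induction hypothesis $p'q - pq' = 1$ is needed to run the inequality argument for $\F_n$.

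\textbf{Main obstacle.} The one step needing care is showing that no fraction of denominator at most $n$ other than the mediant falls between $p/q$ and $p'/q'$, i.e.\ that the gaps of $\F_{n-1}$ are filled exactly as described. This is handled by the inequality $b \geq q+q'$ above. Everything else is routine algebra. Alternatively, one could simply cite \cite[Theorem~28]{HW08} as the paper does.
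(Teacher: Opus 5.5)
Your proof is correct. The paper gives no argument for this lemma; it simply cites \cite[Theorem~28]{HW08}. Your induction on $n$ is therefore a self-contained replacement. It is the classical mediant argument, essentially Hardy--Wright's own first proof, built on the identity $b=q'(aq-bp)+q(bp'-aq')$. What it buys over the bare citation is that the inequality $b\geq q+q'$, together with its equality case, also proves both bullets of Lemma~\ref{lemm:Mediant}, which the paper separately cites from \cite{HW08}. So both Farey facts the paper relies on come out of a single argument.

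One step should be made explicit. Your dichotomy ``nothing is inserted when $q+q'>n$, exactly the mediant when $q+q'=n$'' silently assumes $q+q'\geq n$. If $q+q'<n$ were possible, the inequality $b\geq q+q'$ would not rule out several new fractions in the gap. The case is excluded as follows:
\begin{itemize}
\item the mediant is reduced with denominator $q+q'$, by the determinant computation you already give;
\item it lies strictly between the consecutive terms $p/q$ and $p'/q'$ of $\F_{n-1}$, so it is not in $\F_{n-1}$;
\item hence $q+q'\geq n$.
\end{itemize}
With that line added, the inductive step is complete.
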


\noindent Let $p/q$ and $p'/q'$ be consecutive elements of $\F_n$.  The \textit{mediant} of $p/q$ and $p'/q'$ is \[\frac{p+p'}{q+q'}.\]  Note that $q+q' > n$ (\cite[(3.1.4)]{HW08}) and $(p+p', q+q')=1$ (\cite[(2) of Section~3.2]{HW08}).

\begin{lemm}[{\cite[Theorem~29 and Section~3.2]{HW08}}]\label{lemm:Mediant} Let $n \in \NN$ and let $p/q$ and $p'/q'$ be consecutive elements of $\F_n$.  Then the following hold.
\begin{itemize}
\item For all integers $m$ such that $n \leq m < q+q'$, there are no elements of $\F_m$ between $p/q$ and $p'/q'$.
\item For $m = q+q'$, \[\frac p q, \quad \frac {p+p'}{q+q'}, \quad \frac {p'} {q'}\] are consecutive elements of $\F_m$.
\end{itemize}
\end{lemm}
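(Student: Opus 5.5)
The plan is to prove both items directly from Lemma~\ref{lemm:Fareypair} with an elementary integrality argument. We do not go through the construction of the Farey sequence. Throughout, let $p/q<p'/q'$ be consecutive in $\F_n$. Lemma~\ref{lemm:Fareypair} gives $p'q-pq'=1$. Since $\F_n\subset \F_m$ for every $m\ge n$, both $p/q$ and $p'/q'$ belong to $\F_m$. So each item reduces to determining which reduced fractions $a/b$, with $1\le b\le m$, can lie strictly between $p/q$ and $p'/q'$. Any such fraction automatically lies in $[0,1)$ and is therefore an element of $\F_m$.

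\textbf{Key estimate.} Let $a/b$ with $b\ge 1$ satisfy $p/q<a/b<p'/q'$. Then $aq-pb$ and $p'b-aq'$ are positive integers, so each is at least $1$. Hence
\[
\frac{a}{b}-\frac pq\ge \frac1{qb}, \qquad \frac{p'}{q'}-\frac ab\ge \frac1{bq'} .
\]
Adding the two inequalities and using Lemma~\ref{lemm:Fareypair} gives
\[
\frac1{qq'}=\frac{p'}{q'}-\frac pq\ge \frac{q+q'}{bqq'} ,
\]
that is, $b\ge q+q'$.

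\textbf{First item.} If $n\le m<q+q'$, the key estimate shows that no fraction with denominator at most $m$ lies strictly between $p/q$ and $p'/q'$. So there are no elements of $\F_m$ between them.

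\textbf{Second item.} Take $m=q+q'$.
\begin{enumerate}
\item \emph{The mediant lies in $\F_m$ and between the two fractions.} We have $(p+p')q-p(q+q')=p'q-pq'=1$, and similarly $p'(q+q')-(p+p')q'=1$. So the mediant lies strictly between $p/q$ and $p'/q'$. The first identity is a B\'ezout relation, so $(p+p',q+q')=1$. Thus the mediant belongs to $\F_m$.
\item \emph{It is the only element of $\F_m$ in between.} Let $a/b$ be any element of $\F_m$ strictly between $p/q$ and $p'/q'$. The key estimate forces $b=q+q'$, and then both inequalities in the key estimate must be equalities. So $aq-pb=1$ and $p'b-aq'=1$.
\item \emph{Solving for $(a,b)$.} This is a linear system in $(a,b)$ with determinant $p'q-pq'=1\neq0$. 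It therefore has the unique solution $(a,b)=(p+p',q+q')$, which can be checked by substitution.
\end{enumerate}
Consequently $p/q$, $(p+p')/(q+q')$, $p'/q'$ are consecutive in $\F_m$.

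There is no serious obstacle here. The only point needing care is the equality-forcing step in the second item. We must note that when $b=q+q'$ the sum of the two lower bounds equals the total gap exactly, so neither inequality can be strict.
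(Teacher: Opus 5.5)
Your proof is correct. The paper does not prove this lemma itself; it quotes it from Hardy--Wright, with no argument given in the paper.

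The cited route rests on Theorem~29 (the middle one of three consecutive terms of a single Farey sequence is the mediant of its neighbours) and on the Section~3.2 material describing how $\F_m$ grows. The facts $q+q'>n$ and $(p+p',q+q')=1$ are cited separately.

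You instead derive both bullets directly from Lemma~\ref{lemm:Fareypair}, using one inequality. Every rational $a/b$ strictly between $p/q$ and $p'/q'$ has gaps at least $1/(qb)$ and $1/(bq')$ to the endpoints. Comparing with the total gap $1/(qq')$ gives $b\ge q+q'$, with equality only when $aq-pb=p'b-aq'=1$, that is, only for the mediant.

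Your approach is self-contained and handles both bullets at once. It also yields the stronger fact that the mediant is the unique rational of smallest denominator in the open interval. Getting the first bullet from Theorem~29 alone needs an extra argument: take the least $m$ at which something appears between $p/q$ and $p'/q'$. Then show that only one fraction can appear there, since two consecutive ones with the same denominator $m\ge2$ would contradict Lemma~\ref{lemm:Fareypair}. Only then does Theorem~29 identify that fraction as the mediant. The citation route is simply shorter for the paper.

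One small detail: in the second bullet you rely on $\F_n\subset\F_m$, which requires $q+q'\ge n$. Either note that $p/q$ and $p'/q'$ lie in $\F_{q+q'}$ directly because $q,q'\le q+q'$, or observe that your key estimate applied to the mediant already forces $q+q'>n$.
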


\noindent Finally, define the \textit{Farey sequence of order $n$ in $\RR$} to the be ascending sequence \[\F_n^{\ZZ} := \bigcup_{k \in \ZZ} \left\{\frac p q + k : \frac p q \in \F_n \backslash \left\{\frac 1 1 \right\}\right\}.\]  As $0/1+k = 1/1 + (k-1)$ holds for all $k \in \ZZ$,  Lemmas~\ref{lemm:Fareypair} and~\ref{lemm:Mediant} both apply to $\F_n^{\ZZ}.$

\subsection{The half Farey partition and the Farey half-interval}\label{sec:halfFareyPartionHalfInterval}

We now use the Farey sequence to construct a partition near certain rational points.  This partition, roughly speaking, will give suitable distances between relevant points so that Bob can have a winning strategy for the Schmidt game in Theorem~\ref{theo:LosingBADelta}.  Let $I \subset \RR$ be a closed interval such that $0<D:=\diam(I) < 1$.  If $I \cap \F_1^{\ZZ} \neq \emptyset$, then $|I \cap \F_1^{\ZZ}|=1$ and \[I \cap \F_1^{\ZZ}=\frac 0 1 +k=k\] for some $k \in \ZZ$.  We refer to $0/1+k$ as the {\em minimal-order Farey element of $I$} and define the {\em order} of this minimal-order Farey element of $I$ to be $1$.  Let us, furthermore, denote the order of this minimal-order Farey element of $I$ by $\eO(I):=\eO(0/1+k)$, and, consequently, we have that $\eO(I)=1$.  Finally, we define the {\em half Farey partition for $I$} to be the ascending sequence \[\left\{ k - \frac 1 2, k, k + \frac 1 2\right\}\] and the {\em cover} formed by the half Farey partition for $I$ to be the closed interval $[k-1/2, k+1/2]$.

Otherwise, we have that $I \cap \F_1^{\ZZ} = \emptyset$, and, thus, there exists a least integer $q \geq 2$ such that $I \cap \F_{q-1}^{\ZZ} = \emptyset$ and $I \cap \F_{q}^{\ZZ} \neq \emptyset$.  By Lemma~\ref{lemm:Mediant}, we have that $|I \cap \F_q^{\ZZ}|=1$ and \[I \cap \F_q^{\ZZ}= \frac p q + k\] for some $k \in \ZZ$ and some integer $0<p<q$ such that $(p,q)=1$.  We refer to $p/q+k$ as the {\em minimal-order Farey element of $I$} and define the {\em order} of this minimal-order Farey element of $I$ to be $q$.  Let us, furthermore, denote the order of this minimal-order Farey element of $I$ by $\eO(I):=\eO(p/q+k)$, and, consequently, we have that $\eO(I)=q$.  Now let \[\frac a b + k, \quad  \frac p  q + k, \quad \frac c d + k\] be consecutive elements of $\F_q^{\ZZ}$.  Since $p/q$ is the mediant of $a/b$ and $c/d$, we have that $p = a+c$ and $q = b+d$.  Let \begin{align}\label{eqn:halfFareyPartitionPara}
 l := l(I):=\frac q b  \quad \textrm{ and }\quad r:= r(I):=\frac q d. \end{align}  Note that Lemma~\ref{lemm:Fareypair} implies that $l$ and $r$ are not integers.  We define the {\em half Farey partition for $I$} to be the ascending sequence whose elements are the elements of the union of the two sets \begin{align*}& \left\{\frac p q +k, \frac {a+p} {b+q} +k,  \frac {2a+p} {2b+q} +k, \cdots, \frac {\lceil l \rceil a+p} {\lceil l \rceil b+q} +k\right\} \\ & \left\{\frac p q +k, \frac {p+c} {q+d} +k,  \frac {p+2c} {q+2d} +k, \cdots, \frac {p+\lceil r \rceil c} {q+\lceil r \rceil d} +k \right\}.
  \end{align*} Note that, for all integers $1 \leq j \leq \lceil r \rceil$, we have that \[ \frac {p+jc} {q+jd}\] is the mediant of \begin{align}\label{eqn:iterativeDefnHalfFareyPartition}
 \frac {p+(j-1)c} {q+(j-1)d}  \quad \textrm{ and } \quad \frac c d, \end{align} and, consequently, we have that \[\frac {p+(j-1)c} {q+(j-1)d} <  \frac {p+jc} {q+jd}<  \frac c d.\]  Analogously, for all integers $1 \leq j \leq \lceil l \rceil$, we have that \[ \frac {ja+p} {jb+q}\] is the mediant of \[ \frac a c \quad \textrm{ and } \quad \frac {(j-1)a+p} {(j-1)b+q},\] and, consequently, we have that \[\frac a c < \frac {ja+p} {jb+q}< \frac {(j-1)a+p} {(j-1)b+q}.\]  We define the {\em cover} formed from the half Farey partition for $I$ to be the closed interval \[\left[\frac {\lceil l \rceil a+p} {\lceil l \rceil b+q}+k, \frac {p+\lceil r \rceil c} {q+\lceil r \rceil d} +k\right].\]

\begin{lemm}\label{lemm:WeakIncOrderMinFarey} Let $I \subset \RR$ be a closed interval with $0<D:=\diam(I) < 1$ and $J$ be a closed interval contained in $I$ such that $\diam(J)>0$.  Then $\eO(I) \leq \eO(J)$.   Moreover, if $\eO(I)=\eO(J)$, then the minimal-order Farey element of $I$ is equal to the minimal-order Farey element of $J$.
 
\end{lemm}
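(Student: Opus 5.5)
The plan is to work directly from the definition of $\eO$ via the least $q$ with $I\cap\F_q^{\ZZ}\neq\emptyset$. The key point is that the Farey sequences are nested, $\F_{m}^{\ZZ}\subset\F_{m'}^{\ZZ}$ for $m\le m'$. First note that $J\subset I$ gives $0<\diam(J)\le\diam(I)<1$, so $\eO(J)$ is defined. Also, the quantity $\eO(I)$ is exactly $\min\{q\in\NN : I\cap\F_q^{\ZZ}\neq\emptyset\}$. For $q=1$ this is the case $I\cap\F_1^{\ZZ}\neq\emptyset$. Otherwise it is the least $q\ge2$ with $I\cap\F_{q-1}^{\ZZ}=\emptyset$ and $I\cap\F_q^{\ZZ}\ne\emptyset$, which coincides with the overall minimum by nestedness. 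This minimum exists because $\F_q^{\ZZ}$ has gaps at most $1/q$ by Lemma~\ref{lemm:Fareypair}, so it meets any interval of positive length once $q>1/\diam$.

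\emph{Monotonicity.} Set $q_J:=\eO(J)$. Then $J\cap\F_{q_J}^{\ZZ}\neq\emptyset$, and since $J\subset I$ we also have $I\cap\F_{q_J}^{\ZZ}\neq\emptyset$. By minimality of $\eO(I)$ this gives $\eO(I)\le q_J=\eO(J)$.

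\emph{Equality case.} Suppose $\eO(I)=\eO(J)=q$. By definition (using Lemma~\ref{lemm:Mediant}, as in the construction above), $|I\cap\F_q^{\ZZ}|=1$. Hence $I\cap\F_q^{\ZZ}=\{x\}$, where $x$ is the minimal-order Farey element of $I$. Similarly $J\cap\F_q^{\ZZ}=\{y\}$, where $y$ is the minimal-order Farey element of $J$. Then $y\in J\cap\F_q^{\ZZ}\subset I\cap\F_q^{\ZZ}=\{x\}$, so $y=x$.

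The only step needing care is justifying that $|I\cap\F_q^{\ZZ}|=1$ at the minimal order, including the case $q=1$. For $q=1$, $\F_1^{\ZZ}=\ZZ$ and $\diam(I)<1$, so $I$ contains at most one integer. For $q\ge2$, suppose $I$ contained two elements of $\F_q^{\ZZ}$. Since $I\cap\F_{q-1}^{\ZZ}=\emptyset$, they both lie strictly between two consecutive elements $a/b+k,\ c/d+k$ of $\F_{q-1}^{\ZZ}$. By Lemma~\ref{lemm:Mediant}, the only element of $\F_q^{\ZZ}$ strictly between these is their mediant, which is a contradiction. The mediant occurs only if $b+d=q$; otherwise there are none, and in either case there is at most one. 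I expect no real obstacle beyond this bookkeeping.
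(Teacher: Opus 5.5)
Your proof is correct and takes the same approach as the paper, whose proof just says the result follows from the definitions of the minimal-order Farey element and its order. You have made explicit what that one line leaves implicit: the nestedness of the $\F_q^{\ZZ}$, the description of $\eO(I)$ as the least $q$ with $I\cap\F_q^{\ZZ}\neq\emptyset$, and the uniqueness $|I\cap\F_q^{\ZZ}|=1$ at that order via Lemma~\ref{lemm:Mediant}, which the paper already uses when it defines the minimal-order Farey element.
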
  
  \begin{proof}
 The result follows from the definitions of minimal-order Farey element and its order.
\end{proof}
  \begin{lemm}\label{lemm:halfFareyPartitionSeparate} Let $I \subset \RR$ be a closed interval with $0<D:=\diam(I) < 1$ and minimal-order Farey element $p/q+k$.  Let $s/t+k$ and $u/v+k$ be consecutive elements of the half Farey partition for $I$.  Then we have \[\frac 1 {6 q^2} < \frac u v - \frac s t < \frac 1 {q^2}.\]
\end{lemm}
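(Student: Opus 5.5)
The plan is to compute every gap exactly. Consecutive elements of the half Farey partition form unimodular pairs, so each gap equals $1/(\text{product of denominators})$, and it then suffices to bound the denominators between $q$ and roughly $3q$. The integer shift $k$ plays no role, so we drop it throughout. Two cases are distinguished according to whether $\eO(I)=1$ or $\eO(I)=q\geq 2$.

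\emph{Case $q=1$.} The partition is $\{k-\tfrac12,k,k+\tfrac12\}$. Every gap is $\tfrac12$, and indeed $\tfrac16<\tfrac12<1$.

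\emph{Case $q\geq 2$.} Write $a/b$, $p/q$, $c/d$ for the consecutive elements of $\F_q^{\ZZ}$, so that $p=a+c$ and $q=b+d$. Since $b,d\geq 1$, we have $1\le b,d<q$, hence $l=q/b>1$ and $r=q/d>1$.

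The partition is the union of a right chain $x_j:=(p+jc)/(q+jd)$ for $0\le j\le\lceil r\rceil$ and a left chain $y_j:=(ja+p)/(jb+q)$ for $0\le j\le\lceil l\rceil$. These chains are monotone and meet only at $x_0=y_0=p/q$, as already noted after the definition. Hence the consecutive pairs are exactly $(x_{j-1},x_j)$ and $(y_j,y_{j-1})$ for $j\ge1$.

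For the right chain I will use Lemma~\ref{lemm:Fareypair} in the form $cq-pd=1$. A direct expansion then gives
\[(p+jc)(q+(j-1)d)-(p+(j-1)c)(q+jd)=cq-pd=1,\]
so
\[x_j-x_{j-1}=\frac{1}{(q+(j-1)d)(q+jd)}.\]
The left chain is handled symmetrically using $pb-aq=1$, which gives
\[y_{j-1}-y_j=\frac{1}{(q+(j-1)b)(q+jb)}.\]

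\emph{Upper bound.} For $j\geq1$ the denominator is at least $q(q+d)>q^2$, so every gap is less than $1/q^2$.

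\emph{Lower bound.} This is the only step needing care, and it is where the choice of $\lceil r\rceil$ enters. For $j\le\lceil r\rceil<r+1$ we get
\[q+(j-1)d< q+rd=2q \quad\text{and}\quad q+jd<q+(r+1)d=2q+d<3q,\]
where the last inequality uses $d<q$. The product of the two denominators is therefore less than $6q^2$, so each gap exceeds $1/(6q^2)$. The identical estimate with $b$ and $l$ handles the left chain, which completes the proof.
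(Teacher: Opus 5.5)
Your proof is correct and is the paper's argument. The paper likewise treats $q=1$ directly from the definition. For $q\ge 2$ it writes consecutive partition elements as $(p+(j-1)c)/(q+(j-1)d)$ and $(p+jc)/(q+jd)$, so the gap is $1/\bigl((q+(j-1)d)(q+jd)\bigr)$, and it bounds this between $1/\bigl((q+rd)(q+(r+1)d)\bigr)\ge 1/(6q^2)$ and $1/q^2$ using $j\le\lceil r\rceil$, with the left chain handled analogously.

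Your two differences are minor tightenings of the same route. You verify the gap formula directly from $cq-pd=1$ rather than citing Lemma~\ref{lemm:Fareypair} for the iterated mediants, and you state explicitly that $d<q$ is what gives $2q+d<3q$.
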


\begin{proof}
 We have that $s/t+k<u/v+k$. For $p=0$, we have that $q=1$ and the result follows from the definition of the half Farey partition for $I$.  Otherwise, we have that $p \neq 0$.  Consider the case that $p/q+k \leq s/t+k$.  By (\ref{eqn:iterativeDefnHalfFareyPartition}), we have that \[ \begin{cases}  s &= p+(j-1)c \\ t &=q+(j-1)d \\ u &= p+jc \\ v &= q+jd  \end{cases} \] for some $1 \leq j \leq \lceil r \rceil$.  By Lemma~\ref{lemm:Fareypair}, we have that \[\frac 1 {6 q^2} \leq \frac1 {(q+rd)(q+(r+1)d)}<\frac u v - \frac s t  = \frac 1 {(q+(j-1)d)(q+jd)} < \frac 1 {q^2}.\]  This shows the case that $p/q+k \leq s/t+k$.
 
 Otherwise, we have that $p/q+k > s/t+k$, which implies that $p/q+k \geq u/v+k$.  This case is proved analogously and completes the proof the lemma.  \end{proof}

 \begin{lemm}\label{lemm:halfFareyPartitionLength} Let $I \subset \RR$ be a closed interval with $0<D:=\diam(I) < 1$ and minimal-order Farey element $p/q+k$ such that $p\neq0$.  Let $a/b+k$, $p/q+k$, and $c/d+k$ be consecutive elements of $ \F_q^{\ZZ}$.  Then we have \begin{align*}\frac {p+\lceil r \rceil c} {q+\lceil r \rceil d} +k  - \left(\frac p q + k \right) >  \frac 1{2dq} \quad \textrm { and } \quad
  \frac p q + k - \left( \frac {\lceil l \rceil a+p} {\lceil l \rceil b+q} +k  \right) > \frac 1{2bq}.
   \end{align*}

\end{lemm}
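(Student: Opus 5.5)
Both inequalities will follow from a direct computation with Lemma~\ref{lemm:Fareypair}. The translation by $k$ cancels, so it suffices to compare $\frac{p+\lceil r\rceil c}{q+\lceil r\rceil d}$ with $\frac pq$ and $\frac{\lceil l\rceil a+p}{\lceil l\rceil b+q}$ with $\frac pq$. The two cases are mirror images, so I treat the right-hand one in detail and obtain the left-hand one by exchanging $(a,b,l)$ with $(c,d,r)$.

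First I need the determinant relation. Since $\frac pq$ and $\frac cd$ are consecutive in $\F_q$ (shifted by $k$), Lemma~\ref{lemm:Fareypair} gives $cq-pd=1$. For any integer $j\ge 1$ this yields
\[
\frac{p+jc}{q+jd}-\frac pq=\frac{q(p+jc)-p(q+jd)}{q(q+jd)}=\frac{j(cq-pd)}{q(q+jd)}=\frac{j}{q(q+jd)}.
\]
Equivalently, this follows by telescoping the consecutive differences from the iterative mediant description (\ref{eqn:iterativeDefnHalfFareyPartition}).

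Next I set $j=\lceil r\rceil$ and use $r=q/d$, so $\lceil r\rceil\ge q/d$. The function $j\mapsto \frac{j}{q(q+jd)}=\frac{1}{q(q/j+d)}$ is increasing in $j$. Hence
\[
\frac{\lceil r\rceil}{q(q+\lceil r\rceil d)}\ \ge\ \frac{q/d}{q(q+q)}=\frac1{2dq}.
\]
To make the inequality strict, I note that the paper observes (via Lemma~\ref{lemm:Fareypair}) that $r$ is not an integer. The reason is that $p\neq0$ forces $q\ge2$, and $(q,d)=1$ with $d<q$ means $d\nmid q$. Thus $\lceil r\rceil>r$, and strict monotonicity gives $>\frac1{2dq}$.

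The left side is symmetric. Since $\frac ab$ and $\frac pq$ are consecutive, $pb-aq=1$, so $\frac pq-\frac{ja+p}{jb+q}=\frac{j}{q(q+jb)}$. Taking $j=\lceil l\rceil>l=q/b$ gives $>\frac1{2bq}$ in the same way.

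The only delicate point is confirming that $l,r$ are non-integers. This rests on $1\le b,d<q$ and $\gcd(q,b)=\gcd(q,d)=1$, which hold because $b+d=q$ with $b,d\ge1$, and $(p,q)=1$ together with the determinant relations give coprimality. Even without strictness one still gets $\ge$, so this is the main step to check carefully.
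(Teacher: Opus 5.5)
Your computation is right and is essentially the paper's argument written more directly. The paper writes the gap as $\frac{1}{dq}-\frac{1}{d(q+\lceil r\rceil d)}$, using Lemma~\ref{lemm:Mediant} to see that $\frac{p+\lceil r\rceil c}{q+\lceil r\rceil d}$ and $\frac cd$ are consecutive in $\F_{q+\lceil r\rceil d}$. That quantity equals your $\frac{\lceil r\rceil}{q(q+\lceil r\rceil d)}$, and your route needs only $cq-pd=1$.

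The step that fails is strictness. From $\gcd(q,d)=1$ and $d<q$ you cannot conclude that $d$ does not divide $q$, because $d=1$ divides every $q$. What coprimality actually gives is that $r=q/d\in\ZZ$ if and only if $d=1$. Moreover, $d=1$ happens exactly when $\frac cd=\frac11$, i.e.\ $p=q-1$; symmetrically, $b=1$ exactly when $p=1$. In that case $\lceil r\rceil=r=q$, and your own formula gives exactly $\frac{q}{q\cdot 2q}=\frac{1}{2dq}$. So the strict inequality is false there, not just unproved. Concretely, $I=[0.4,0.6]$ has minimal-order Farey element $\frac12$ with neighbours $\frac01,\frac11$. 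Then $b=d=1$ and $l=r=2$, and we get $\frac34-\frac12=\frac14=\frac{1}{2dq}$ and $\frac12-\frac14=\frac14=\frac{1}{2bq}$.

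The paper's proof has the same hole. It relies on the remark after (\ref{eqn:halfFareyPartitionPara}) that Lemma~\ref{lemm:Fareypair} makes $l$ and $r$ non-integers, and that remark fails whenever $p\in\{1,q-1\}$. So no argument can close your gap as stated. The correct statement is the non-strict one, $\geq\frac{1}{2dq}$ and $\geq\frac{1}{2bq}$, with strict inequality exactly when $d\geq 2$ (resp.\ $b\geq 2$). Your computation proves precisely this.

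The non-strict version is enough for the later uses. In Lemma~\ref{lemm:halfFareyPartitionCover}, and in the claim in the proof of Theorem~\ref{theo:LosingBADelta}, it is chained with strict bounds such as $R<\frac{1}{dq}$ and $a_1+\rho(A_1)<\frac cd+k$. These hold because $\frac ab+k$ and $\frac cd+k$ lie in $\F_{q-1}^{\ZZ}$ and hence outside the interval, so those conclusions are unaffected.
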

\begin{proof}

By Lemma~\ref{lemm:Fareypair}, we have that \begin{align}\label{eqn:halfFareyDist}
 \frac c d + k - \left(\frac p q + k \right) = \frac 1{dq} \quad \textrm { and } \quad
  \frac p q + k - \left(\frac a b + k  \right) = \frac 1{bq}. \end{align}  By (\ref{eqn:iterativeDefnHalfFareyPartition}) and Lemma~\ref{lemm:Mediant} , we have that \[\frac {p+\lceil r \rceil c} {q+\lceil r \rceil d} +k \quad \textrm{ and } \quad \frac c d +k\] are consecutive elements of $\F^{\ZZ}_{q+\lceil r \rceil d}$.  Whence, Lemma~\ref{lemm:Fareypair} and the definition of $r$ from (\ref{eqn:halfFareyPartitionPara}) imply that \[ \frac c d +k - \left(\frac {p+\lceil r \rceil c} {q+\lceil r \rceil d} +k \right) = \frac1 {d(q+\lceil r \rceil d)} < \frac 1 {2dq}.\]  The analogous proof gives that \[ \frac {\lceil l \rceil a+p} {\lceil l \rceil b+q} +k  - \left(\frac a b + k\right) = \frac1 {(\lceil l \rceil b+q)b} < \frac 1 {2bq}.\]  Consequently, these bounds together with (\ref{eqn:halfFareyDist}) give that \begin{align*}\label{eqn:halfFareyDist2}\frac {p+\lceil r \rceil c} {q+\lceil r \rceil d} +k  - \left(\frac p q + k \right) >  \frac 1{2dq} \quad \textrm { and } \quad
  \frac p q + k - \left( \frac {\lceil l \rceil a+p} {\lceil l \rceil b+q} +k  \right) > \frac 1{2bq}.
   \end{align*}

\end{proof}

 \begin{lemm}\label{lemm:halfFareyPartitionCover}  Let $I:= [\xi, \eta] \subset \RR$ be a closed interval with $0<D:=\diam(I) < 1$ and minimal-order Farey element $p/q+k$.  Let $L:= L(I):= p/q+k- \xi$ and $R:= R(I):= \eta - (p/q+k)$.  Then the closed interval \[\F(I):= \F\left(I, \frac p q +k \right) := \left[ \frac p q+k - \frac L 2, \frac p q+k + \frac R 2 \right]\] is contained in both $I$ and the cover formed from the half Farey partition for $I$.  Note that $\diam(\F(I))= L/2 + R/2 = D/2$.

\end{lemm}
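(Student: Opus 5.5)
The plan is to check the two containments separately. Containment in $I$ is immediate; containment in the cover comes from comparing $L$ and $R$ with the gaps between $p/q+k$ and its Farey neighbours, and then invoking Lemma~\ref{lemm:halfFareyPartitionLength}. Throughout, I use that the minimal-order Farey element $p/q+k$ lies in $I=[\xi,\eta]$, so $L,R\ge 0$ and $L+R=D$. This already gives the identity $\diam(\F(I))=L/2+R/2=D/2$.

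\emph{Containment in $I$.} Since $L\ge 0$, we have $\frac pq+k-\frac L2 \ge \frac pq+k-L=\xi$. Likewise, since $R\ge0$, we have $\frac pq+k+\frac R2\le \frac pq +k+R=\eta$. Hence $\F(I)\subset I$.

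\emph{Containment in the cover, case $q=1$.} Here $p=0$ and the cover is $[k-\frac12,k+\frac12]$. Since $L,R\le D<1$, we get $L/2<1/2$ and $R/2<1/2$, so $\F(I)$ lies in the cover.

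\emph{Containment in the cover, case $q\ge 2$.} Let $\frac ab+k,\ \frac pq+k,\ \frac cd+k$ be consecutive in $\F_q^{\ZZ}$, so that $q=b+d$.
\begin{itemize}
\item Since $b,d\ge1$, we have $b,d\le q-1$. Hence $\frac ab+k$ and $\frac cd+k$ both belong to $\F_{q-1}^{\ZZ}$.
\item By the minimality of $q$, we have $I\cap \F_{q-1}^{\ZZ}=\emptyset$. Since $I$ is an interval containing $\frac pq+k$, this forces $\frac ab+k<\xi$ and $\eta<\frac cd+k$.
\item By Lemma~\ref{lemm:Fareypair}, as in (\ref{eqn:halfFareyDist}), we obtain
\[R<\frac cd-\frac pq=\frac1{dq}\quad\textrm{and}\quad L<\frac pq-\frac ab=\frac1{bq}.\]
\item Lemma~\ref{lemm:halfFareyPartitionLength} says that the right endpoint of the cover exceeds $\frac pq+k$ by more than $\frac1{2dq}>\frac R2$. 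It also says that the left endpoint of the cover is below $\frac pq+k$ by more than $\frac1{2bq}>\frac L2$.
\end{itemize}
Therefore $\F(I)$ is contained in the cover.

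The only point needing care is the third bullet: the strict separation of $\xi$ and $\eta$ from the neighbours $\frac ab+k$ and $\frac cd+k$. This rests on identifying $b,d\le q-1$ and on the emptiness of $I\cap\F_{q-1}^{\ZZ}$. Everything else is bookkeeping.
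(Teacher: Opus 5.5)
Your proposal is correct and follows the same route as the paper: you handle $q=1$ directly from the definition of the cover, bound $R<\frac{1}{dq}$ and $L<\frac{1}{bq}$ using the Farey neighbours $\frac ab+k,\frac cd+k$, and then conclude with Lemma~\ref{lemm:halfFareyPartitionLength}. Your justification that $\frac ab+k<\xi$ and $\eta<\frac cd+k$ (using $b,d\le q-1$ and $I\cap\F_{q-1}^{\ZZ}=\emptyset$) is slightly more careful than the paper's appeal to $I\subsetneq[\frac ab+k,\frac cd+k]$, which on its own does not give both strict inequalities.
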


\begin{rema}
We will refer to $\F(I)$ as the {\em Farey half-interval of $I$}, $L(I)$ as the {\em left Farey half-length of $\F(I)$}, and $R(I)$ as the {\em right Farey half-length of $\F(I)$}.

\end{rema}
\begin{proof}
For $p=0$, the desired result follows from the definition of cover.  For $p \neq 0$, consider the following.  Let $a/b+k$, $p/q+k$, and $c/d+k$ be consecutive elements of $ \F_q^{\ZZ}$.  Since $p/q+k$ is the minimal-order Farey element for $I$, we have that $I \subsetneq [a/b+k,  c/d+k]$ and, hence, $R < 1/dq$ and $L < 1/bq$ by Lemma~\ref{lemm:Fareypair}.  Using these inequalities along with Lemma~\ref{lemm:halfFareyPartitionLength} gives \begin{align*}
\frac {p+\lceil r \rceil c} {q+\lceil r \rceil d} +k &>   \left(\frac p q + k \right) +  \frac 1{2dq} > \left(\frac p q + k \right)+  \frac R 2  \\
\frac {\lceil l \rceil a+p} {\lceil l \rceil b+q} +k &< \frac p q + k -  \frac 1{2dq} < \frac p q + k -  \frac L{2}, \end{align*} which shows that $\F(I)$ is a subset of the cover formed from the half Farey partition for $I$.  Finally, by construction, $\F(I) \subset I$ and has  diameter $D/2$.  This proves the lemma.  \end{proof}

\subsection{Proof of Theorem~\ref{theo:LosingBADelta}}\label{sec:prooftheo:LosingBADelta}

In this section (Section~\ref{sec:prooftheo:LosingBADelta}), we set $\alpha:=1/2$.  To show the desired result, we must show that, for any choice of $B_1$, Bob has a winning strategy for the $(\alpha, \beta; \BA(\delta))$-game on $\RR$. Let $B_1$ be a closed ball.  If $\rho(B_1) > 1/2$, then there exists an integer $M \geq 2$ such that $\rho(B_M) \leq 1/2 < \rho(B_{M-1})$.  Bob regards $B_M$ as his first choice of ball and, by reindexing, refers to it as $B_1$.  Set $\rho:= \rho(B_1)$.  We, thus, have \begin{align}\label{eqn:BobFirstBallRadius}
 \rho \leq 1/2.   \end{align}

Consider $A_1$.  If $A_1 \cap \F_1^{\ZZ} \neq \emptyset$, set $q=1$.  As $2\rho(A_1) \leq \alpha$, we have that $|A_1 \cap \F_1^{\ZZ}| = 1$ and \[A_1 \cap \F_1^{\ZZ} = \frac 0 1 +k = k\] where $\gamma:=0/1+k$ is the minimal-order Farey element of $A_1$.  Otherwise, there exist a least integer $q \geq 2$ such that $A_1 \cap \F_q^{\ZZ} \neq \emptyset$ and $A_1 \cap \F_{q-1}^{\ZZ} = \emptyset$.  By Lemma~\ref{lemm:Mediant}, we have that $|A_1 \cap \F_q^{\ZZ}|=1$ and \[A_1 \cap \F_q^{\ZZ} = \frac p q +k = \frac{p+qk} q \] where $\gamma:=p/q+k$ is the minimal-order Farey element of $A_1$.  Since $(p, q)=1$, we also have that $(p + q k, q) =1$.   Also let $a/b+k$, $p/q+k$, and $c/d+k$ be consecutive elements of $ \F_q^{\ZZ}$.

Let us consider the case that $\rho(A_1) < 1/{q^2}.$  Bob chooses $B_2 \subset \F(A_1)$ such that $p/q+k \in B_2$, and such a choice for Bob is possible because of Lemma~\ref{lemm:halfFareyPartitionCover}.  Consequently, we have that \begin{align}\label{eqn:MainDirProof0}
 \left | x - \frac{p+qk} q \right | \leq 2 \rho(B_2) < \frac {2 \beta} {q^2} \end{align} for all $x \in B_2$.  Now Alice chooses $A_2 \subset B_2$.

Otherwise, we have the case that \begin{align}\label{eqn:RadA1Big}
 \rho(A_1) \geq \frac 1 {q^2}.  \end{align}  Note that, for this case, we have that $q \geq 2$ by (\ref{eqn:BobFirstBallRadius}) and thus, $p \neq 0$.  Let $a_1$ be the center of $A_1$.  We claim that an element of the half Farey partition for $A_1$ is contained in the closed interval \[J:=\left[a_1 - \frac{\rho(A_1)} 2, a_1+ \frac{\rho(A_1)}2\right].\]  We now prove this claim.  If $p/q+k \in J$, we are done.  Otherwise, consider \[p/q+k \in A_1 \backslash J= \left[a_1 - \rho(A_1), a_1 - \frac{\rho(A_1)} 2\right) \cup \left(a_1 + \frac{\rho(A_1)} 2, a_1 + \rho(A_1), \right].\]  

Let us consider the case that \begin{align}\label{eqn:minimal Fareyleftcenter}
 a_1 - \rho(A_1) \leq p/q+k < a_1 - \rho(A_1)/2. \end{align}  Let $p/q+k$ and $u/v+k$ be consecutive elements of the half Farey partition for $A_1$.  Thus, we have $p/q+k < u/v+k$.  First consider the subcase that $a_1 - \rho(A_1)/2\leq u/v+k$.  If, furthermore, we have that $a_1 + \rho(A_1)/2< u/v+k$, then \[\frac u v + k - \left(\frac p q +k \right) > \rho(A_1)\geq \frac 1 {q^2},\] which contradicts Lemma~\ref{lemm:halfFareyPartitionSeparate}.  Consequently, we have that $u/v+k \in J,$ which yields the desired result for this subcase.  
 
Otherwise, we have the other subcase that $u/v+k < a_1 - \rho(A_1)/2$.  As mentioned, $p\neq 0$ for the case (\ref{eqn:RadA1Big}).  For $p \neq 0$, consider the following.  Since $p/q+k$ is the minimal-order Farey element of $A_1$, we have that \[\left[a_1 - \rho(A_1), \frac p q +k \right] \subset  \left[\frac a b+k, \frac p q +k\right] \textrm{ and } \left[ \frac p q +k, a_1 + \rho(A_1)\right] \subset \left[\frac p q +k, \frac c d+k\right].\]  Recall the definition of $r:= r(A_1)$ from (\ref{eqn:halfFareyPartitionPara}).  By Lemmas~\ref{lemm:halfFareyPartitionLength} and~\ref{lemm:Fareypair} and (\ref{eqn:minimal Fareyleftcenter}), we have \begin{align*}\frac {p+\lceil r \rceil c} {q+\lceil r \rceil d} +k  - \left(\frac p q + k \right) >  \frac 1{2dq} > \frac 1 2 \left(a_1+ \rho(A_1) - \left(\frac p q +k \right) \right) > \frac 3 4 \rho(A_1),
   \end{align*} which, using (\ref{eqn:minimal Fareyleftcenter}) again, implies that \[a_1 - \frac{\rho(A_1)}2 < \frac {p+\lceil r \rceil c} {q+\lceil r \rceil d} +k.\]  If \[\frac {p+\lceil r \rceil c} {q+\lceil r \rceil d} +k \leq a_1 + \frac{\rho(A_1)}2 ,\] also holds, then \[\frac {p+\lceil r \rceil c} {q+\lceil r \rceil d} +k\] is the desired element of the half Farey partition for $A_1$ contained in $J$.
   
 Otherwise, we have that \[a_1 + \frac{\rho(A_1)}2 <\frac {p+\lceil r \rceil c} {q+\lceil r \rceil d} +k.\] Thus, if there is no element of the half Farey partition for $A_1$ is contained in $J$, then $J$ lies between consecutive elements of the half Farey partition for $A_1$, which, by Lemma~\ref{lemm:halfFareyPartitionSeparate}, implies that $1/q^2>\diam(J) = \rho(A_1)$.  This contradicts (\ref{eqn:RadA1Big}) and yields the desired result in this subcase.  This completes the proof of the claim for the case that (\ref{eqn:minimal Fareyleftcenter}) holds.

Finally, we consider the case that $ a_1 + \rho(A_1)/2 < p/q+k \leq a_1 + \rho(A_1)$. The proof is analogous to that for the case that (\ref{eqn:minimal Fareyleftcenter}) holds.  This proves the claim.

 We now continue with the proof of the theorem in the case that (\ref{eqn:RadA1Big}) holds.  Let us first consider the subcase that  $\rho(B_2) < 2/q^2$.  By the claim, we have an element $p'/q'+k$ of the half Farey partition for $A_1$ such that $p'/q'+k \in J$.  Since $\beta \leq 1/2$, let $B_2$ have center $p'/q'+k$.  Consequently, since $\rho(A_2)= \rho(B_2)/2$, any valid choice of $A_2$ must contain $p'/q'+k$.  Bob now chooses $B_3$ to contain $p'/q'+k$, yielding \begin{align}\label{eqn:PreDirProof}
 \left | x - \frac{p'+kq'} {q'} \right | \leq 2 \rho(B_3) =  \beta \rho(B_2)  < \frac {2\beta} {q^2} \end{align} for all $x \in B_3$.  Since $p'/q'+k$ is an element of the half Farey partition for $A_1$, we have that \[\begin{cases} q' = q \textrm{ or }\\  q' = q+jb \textrm{ for some } 1 \leq j \leq \lceil l \rceil \textrm { or } \\  q' = q+ j d \textrm{ for some } 1 \leq j \leq \lceil r \rceil\end{cases}\] where $l:= l(A_1)$ and $r:=r(A_1)$ are defined in (\ref{eqn:halfFareyPartitionPara}).  Hence, we have that $q' < 3 q$ and that \begin{align}\label{eqn:MainDirProof}
 \left | x - \frac{p'+kq'} {q'} \right |< \frac {2\beta} {q^2}< \frac {18\beta} {(q')^2} \end{align} for all $x \in B_3$.  Now Alice chooses $A_3 \subset B_3$.
 
 Finally, let us consider the other subcase that $\rho(B_2) \geq 2/q^2$.  Hence, there exist an integer $N \geq 3$ such that $\rho(B_N) < 2/q^2 \leq \rho(B_{N-1})$.  Let $B_2 \subset \F(A_1)$ which Lemma~\ref{lemm:halfFareyPartitionCover} implies is a valid choice for Bob since $\beta \leq 1/2$.  Furthermore, Lemma~\ref{lemm:halfFareyPartitionCover} also implies that $A_{N-1} \subset B_{N-1} \subset \cdots \subset B_2$ are contained in the cover formed by the half Farey partition for $A_1$.  Let $a_{N-1}$ be the center of $A_{N-1}$ and form the interval \[K:= \left[ a_{N-1} - \frac{\rho(A_{N-1})} 2,  a_{N-1} + \frac{\rho(A_{N-1})} 2\right].\]  Since $\rho(A_{N-1}) \geq 1/q^2$ holds, Lemma~\ref{lemm:halfFareyPartitionSeparate} implies that $K$ contains an element $p'/q'+k$ of the half Farey partition for $A_1$.  Now Bob chooses $B_N$ to have center $p'/q'+k$, and Bob's choice is valid because $\beta \leq 1/2$.  Since $\rho(A_N) = \rho(B_N)/2$, any valid choice of $A_N$ must contain $p'/q'+k$.  Now Bob chooses $B_{N+1}$ to contain $p'/q'+k$, yielding \[\left | x - \frac{p'+kq'} {q'} \right | \leq 2 \rho(B_{N+1}) =  \beta \rho(B_N)  < \frac {2\beta} {q^2}\]for all $x \in B_{N+1}$.  Using the same proof that allows us to show (\ref{eqn:MainDirProof}) from (\ref{eqn:PreDirProof}), we obtain \begin{align}\label{eqn:MainDirProof2}
 \left | x - \frac{p'+kq'} {q'} \right |< \frac {18\beta} {(q')^2} \end{align} for all $x \in B_{N+1}$.  Now Alice chooses $A_{N+1} \subset B_{N+1}$.
 
 This completes one iteration $m=1$ of the recursion.  In all the cases, Alice makes the final choice of ball, denoted above as $A_2, A_3,$ or $A_{N+1}$ depending on the case. In the next step of the recursion, this final choice of Alice's ball is treated in the same way as $A_1$ in the above proof.  This yields the next step $m=2$ which we can recursively continue.
 
 Now, for each step $m$ of the recursion, we have a unique $\gamma$.  Let us denote this unique $\gamma$ at step $m$ of the recursion by $\gamma(m)$.  By Lemma~\ref{lemm:WeakIncOrderMinFarey}, $\eO(\gamma(m))$ increases weakly with $m$.  If $\eO(\gamma(m))$ is bound from above for all $m \in \NN$, then let $M \in \NN$ be such that $\eO(\gamma(M))$ is the least upper bound.  Lemma~\ref{lemm:WeakIncOrderMinFarey} further implies that $\gamma(m) = \gamma(M)$ for all $m \geq M$.  Consequently, we have \[\bigcap_{n=1} A_n = \bigcap_{n=1} B_n = \gamma(M) \in \QQ.\]

Otherwise, $\eO(\gamma(m))$ is not bounded from above.  For a step $m$ of the recursion, we have $\nu(m):=p/q+k$ from (\ref{eqn:MainDirProof0}) or $\nu(m):=p'/q'+k$ from either (\ref{eqn:MainDirProof}) or (\ref{eqn:MainDirProof2}).  Since possible $\nu(m)$ are all Farey elements, they are in lowest terms and, consequently, we have that $q' \geq \eO(\gamma(m))$ and $q = \eO(\gamma(m))$ because $\gamma(m)$ is the minimal-order Farey element.  Hence, Lemma~\ref{lemm:WeakIncOrderMinFarey} implies that the denominators of the $\nu(m)$ expressed in lowest terms are not bounded from above as $m$ increases, which yields that \[\bigcap_{n=1} A_n = \bigcap_{n=1} B_n \in \Dir_{\RR} (18 \beta)\] where $\Dir_{\RR}(\widetilde{\delta})$ is, for any $\widetilde{\delta}>0$, the following set \begin{align*}
&\left\{x \in \RR :\exists \textrm{ infinitely many } (s,t) \in \ZZ \times \NN \textrm{ such that } (s,t)=1 \textrm{ for which } \left|x - \frac s t \right| < \frac {\widetilde{\delta}}{t^2}\textrm{ holds} \right\}.\end{align*}  This shows that $\left(\Dir_{\RR} (18 \beta) \cup \QQ\right)^c$ is an $(1/2, \beta)$-ubiquitously losing set of $\RR$.
 
 Now it is well-known that $\Dir_{\RR}(\widetilde{\delta})=\Dir(\widetilde{\delta})$ because $\Dir_{\RR}(\widetilde{\delta})$ contains no rational numbers.  The proof is brief so we provide it.  If there is a rational number $x=s'/t'$ in $\Dir_{\RR}(\widetilde{\delta})$, then we have that \[ \frac 1 {tt'} \leq \left|x - \frac s t \right| < \frac {\widetilde{\delta}}{t^2}\] and, thus, $t < \widetilde{\delta} t'$ whenever $s/t \neq s'/t'$.  Since $t \rightarrow \infty$, we have a contradiction.  Consequently, using set algebra, DeMorgan's laws, and Proposition~\ref{prop:BADeltaComplement}, we have that $\BA(18 \beta)$ is an $(1/2, \beta)$-ubiquitously losing set of $\RR$.  This proves Theorem~\ref{theo:LosingBADelta}.

\section{Accelerated games}\label{sec:AcceleratedGames}

To compute an upper bound for the Hausdorff dimension in Section~\ref{sec:UpBndHDLosingSets}, we introduce a generalization of Schmidt games.  Let $d, n \in  \NN$, $0< \alpha<1$, $0< \beta <1$, $\{s_n\}:= \{s_n\}_{n = 1}^\infty \subset \NN \cup \{0\}$, and $S \subset \RR^d$.  We define the {\em $(\alpha, \beta^{\{s_n\}}; S)$-game (on $\RR^d$)} or the {\em $(\alpha, \beta^{\{s_n\}})$-game (on $\RR^d$)} to be a two-player game in which the two players, Bob and Alice, alternate choosing nested closed balls $B_n$ and $A_n$ at {\em move $n$} as follows.  To start the $(\alpha, \beta^{\{s_n\}} )$-game, Bob chooses $B_1$ and Alice chooses $A_1 \in B_1^\alpha$.  Then Bob chooses \[B_2 \in A_1^{\beta (\alpha \beta)^{s_1}}\] and Alice chooses $A_2 \in  B_2^\alpha$.  Recursively, Bob chooses \[B_n \in A_{n-1}^{\beta (\alpha \beta)^{s_{n-1}}}\] and Alice chooses $A_n \in  B_n^\alpha$.  {\em Alice wins} the game if $\bigcap_{n=1}^\infty B_n \in S$, and {\em Bob wins} the game if $\bigcap_{n=1}^\infty B_n \in S^c$.  The set $S$ is called an {\em $(\alpha, \beta^{\{s_n\}})$-winning set (of $\RR^d$)} if Alice can always win or, equivalently, as for Schmidt games, Alice has an {\em $(\alpha, \beta^{\{s_n\}}; S)$-winning strategy}.  On the other hand, the set $S$ is called an {\em $(\alpha, \beta^{\{s_n\}})$-losing set (of $\RR^d$)} if Bob can always win or, equivalently, as for Schmidt games, Bob has an {\em $(\alpha, \beta^{\{s_n\}}; S)$-winning strategy}.  Moreover, if, for every choice of closed ball $B_1$, Bob has a winning strategy for the $(\alpha, \beta^{\{s_n\}}; S)$-game, then $S$ is called an {\em $(\alpha, \beta^{\{s_n\}})$-ubiquitously losing set (of $\RR^d$)}.  Note that Remarks~\ref{rmk:UbiqLosSet1} and~\ref{rmk:UbiqLosSet2} also apply to $(\alpha, \beta^{\{s_n\}})$-ubiquitously losing sets.  For a given $\alpha, \beta,$ and $S$, we will also refer to the $(\alpha, \beta^{\{s_n\}} )$-game as an {\em $(\alpha, \beta; S)$-accelerated game for Bob} and $\{s_n\}$ as its {\em acceleration sequence}.\footnote{One could, analogously, define $(\alpha, \beta; S)$-accelerated games for Alice, but we will not use such games in this paper.}  Finally, note that some of the accelerated games for Bob are the same as the usual Schmidt games.  For example, the $(\alpha, \beta^{\{0\}};S)$-game is the same as the $(\alpha, \beta; S)$-game, and, for every $s \in \NN$, the $(\alpha, \beta^{\{s\}}; S)$-game is the same as the $(\alpha, \beta(\alpha \beta)^{s}; S)$-game.  We call an $(\alpha, \beta; S)$-accelerated game for Bob {\em proper} if it is not the $(\alpha, \beta; S)$-game.

The following lemma is the generalization of the analog of~\cite[Lemma~9]{Sch66} with essentially the same proof, given below for the convenience of the reader.

\begin{lemm}\label{lemm:UbitLosImplies} Let $d, n \in  \NN$, $0< \alpha<1$, and $0< \beta <1$.  Then every $(\alpha, \beta)$-ubiquitously losing set of $\RR^d$ is an $(\alpha,  \beta^{\{s_n\}})$-ubiquitously losing set of $\RR^d$ for every $\{s_n\} \subset \NN \cup \{0\}$.
 
\end{lemm}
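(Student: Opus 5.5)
Fix a closed ball $B_1$ and an acceleration sequence $\{s_n\}$. Since $S$ is $(\alpha,\beta)$-ubiquitously losing, Bob has a winning strategy $\sigma$ for the $(\alpha,\beta;S)$-game starting at this $B_1$. By Remark~\ref{rem:PositionalWinning} we may take $\sigma$ positional. The plan is the simulation argument of \cite[Lemma~9]{Sch66}. Bob plays the accelerated game by privately running an ordinary $(\alpha,\beta;S)$-game in which he follows $\sigma$ and he also plays Alice's moves for her. Each single move of the accelerated game is made to correspond to a block of consecutive moves of the simulated game.

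The construction goes as follows. Bob opens both games with the same $B_1$. Suppose that in the accelerated game Alice has just chosen $A_n \in B_n^\alpha$, and that $B_n$ is also the current ball in the simulated game. Bob takes $A_n$ as Alice's response in the simulated game, which is legal there. Next Bob lets $\sigma$ produce $B'_{1}\in A_n^\beta$. He then answers for a fictitious Alice with any ball $A'_1\in (B'_1)^\alpha$, for instance the one concentric with $B'_1$. He lets $\sigma$ respond again, and so on, for $s_n$ rounds of (fictitious Alice, $\sigma$). The ball that results is nested in $A_n$ and has radius
\[
\beta(\alpha\beta)^{s_n}\rho(A_n).
\]
Bob plays this ball as $B_{n+1}$ in the accelerated game, which is legal since $B_{n+1}\in A_n^{\beta(\alpha\beta)^{s_n}}$. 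When $s_n=0$ no fictitious moves are made and $B_{n+1}$ is simply $\sigma$'s reply to $A_n$.

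It remains to check that this wins. By induction, the balls of the accelerated game form a subsequence of the nested sequence of balls in a legitimate play of the simulated $(\alpha,\beta;S)$-game starting from $B_1$ in which Bob followed $\sigma$. The radii go to $0$, so both nested sequences have the same single intersection point. Because $\sigma$ is winning, that point lies in $S^c$, and so Bob wins the accelerated game. Since $B_1$ was arbitrary, $S$ is $(\alpha,\beta^{\{s_n\}})$-ubiquitously losing.

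The only delicate point is bookkeeping. One must verify that every fictitious Alice move is legal in the simulated game and that the simulated play really starts at the given $B_1$. This is exactly where the hypothesis is needed: $\sigma$ must be winning for this $B_1$, which ubiquitous losing provides. Plain losing would not suffice, because a losing set only guarantees Bob a good choice of $B_1$, whereas here Alice's moves constrain the game from the given $B_1$ onward.
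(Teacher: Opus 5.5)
Your proposal is correct and is essentially the paper's own proof, which adapts \cite[Lemma~9]{Sch66}: Bob runs an ordinary $(\alpha,\beta;S)$-game from the given $B_1$ and makes Alice's moves himself except at the indices $1+\sum_{n=1}^{k}(s_n+1)$, so the balls of the accelerated game form a subsequence of a play in which he follows his winning strategy. Your appeal to positionality is harmless but not needed, since Bob keeps the full history of the simulated game.
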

\begin{proof}  Let $m \in \NN$ and $S \subset \RR^d$ be an $(\alpha, \beta)$-ubiquitously losing set of $\RR^d$.  Suppose in an $(\alpha, \beta; S)$-game, Bob chooses his own balls $B_m$ and those balls $A_m$ of Alice except where \[m = 1 \quad \textrm{ and } \quad m=1+ \sum_{n=1}^{k} (s_n+1)\] for all $k \in \NN$.  Consequently, Alice picks the first ball and every $ \sum_{n=1}^{k} (s_n+1)$-st ball, namely the balls \[A_1 \supset A_{1+(s_1+1)} \supset A_{1+(s_1+1)+(s_2+1)} \cdots \supset A_{1+\sum_{n=1}^{k} (s_n+1)} \supset \cdots.\]  The balls \[B_1 \supset A_1 \supset B_{1+(s_1+1)} \supset A_{1+(s_1+1)}  \supset \cdots \supset B_{1+\sum_{n=1}^{k} (s_n+1)} \supset A_{1+\sum_{n=1}^{k} (s_n+1)} \supset \cdots \] are the balls of an $(\alpha,  \beta^{\{s_n\}})$-game.  Since Bob can win the $(\alpha, \beta; S)$-game, he can also win the $(\alpha,  \beta^{\{s_n\}}; S)$-game.  This gives the desired result.
 
\end{proof}

\begin{rema}   Let $d, n \in  \NN$, $0< \alpha<1$, and $0< \beta <1$.  The proof of Lemma~\ref{lemm:UbitLosImplies} also shows that every $(\alpha, \beta)$-losing set of $\RR^d$ is an $(\alpha,  \beta^{\{s_n\}})$-losing set of $\RR^d$ for every $\{s_n\} \subset \NN \cup \{0\}$.

\end{rema}

\begin{rema}\label{rmk:UbitLosImplies2}   Let $d,k,n \in  \NN$, $0< \alpha<1$, $0< \beta <1$, and $S \subset \RR^d$.  Every $(\alpha, \beta; S)$-winning strategy for Bob yields, for every $\{s_n\}:= \{s_n\}_{n = 1}^\infty \subset \NN \cup \{0\}$, an $(\alpha, \beta^{\{s_n\}}; S)$-winning strategy for Bob.  This follows because at every move $k \geq 2$ for the $(\alpha, \beta^{\{s_n\}}; S)$-game, Bob uses his $(\alpha, \beta; S)$-winning strategy for the move $1+ \sum_{n=1}^{k-1} (s_n+1)$.  In this paper, all the $(\alpha, \beta^{\{s_n\}}; S)$-winning strategies for Bob comes from, in this way, an $(\alpha, \beta; S)$-winning strategy for Bob, and we will (usually) omit explicit mention of this.

\end{rema}

Let $\{s_n\} \subset \NN \cup \{0\}$ and fix an $(\alpha, \beta^{\{s_n\}} )$-game.  A sequence of closed balls $B_1 \supset B_2 \supset B_3 \supset \cdots$ is a {\em play} for the $(\alpha, \beta^{\{s_n\}} )$-game if Bob chooses each $B_n$ according to the rules of the $(\alpha, \beta^{\{s_n\}} )$-game.  Its associated {\em dyadic play} is the sequence \[B_1 \supset A_1 \supset B_2 \supset A_2 \supset B_3 \supset A_3 \supset \cdots\] where the $A_n$ are the closed balls chosen by Alice according to the rules of the $(\alpha, \beta^{\{s_n\}} )$-game.  For $k \in \NN$, we call $B_1 \supset B_{2} \supset \cdots \supset B_{k}$ a {\em finite play (or $k$-finite play)} for the $(\alpha, \beta^{\{s_n\}} )$-game if there exist closed balls $B_{k+1} \supset B_{k+2} \supset \cdots$ such that \[B_1 \supset B_2 \supset \cdots \supset B_k \supset B_{k+1} \supset B_{k+2} \supset \cdots\] is a play for the $(\alpha, \beta^{\{s_n\}} )$-game.  Its associated {\em dyadic finite play} is the sequence \[B_1 \supset A_1 \supset B_2 \supset A_2 \supset \cdots  \supset B_k \supset A_k\] where the $A_n$ are the closed balls chosen by Alice according to the rules of the $(\alpha, \beta^{\{s_n\}} )$-game.  The ball $B_k$ is referred to as the {\em end ball} of the finite play $B_1 \supset B_{2} \supset \cdots \supset B_{k}$.  The collection of finite plays \[\{B_1, B_1 \supset B_2, B_1 \supset B_2 \supset B_3, \cdots\}\] together with the play $B_1 \supset B_2 \supset B_3 \supset \cdots$ will be referred to as a {\em game} or {\em game played}.  Likewise, the collection of dyadic finite plays \[\{B_1 \supset A_1, B_1 \supset A_1\supset B_2\supset A_2, B_1\supset A_1 \supset B_2 \supset A_2\supset B_3\supset A_3, \cdots\}\] together with the dyadic play \[B_1\supset A_1 \supset B_2\supset A_2 \supset B_3\supset A_3 \supset \cdots\] will be also referred to as a {\em game} or {\em game played}.   We say a play $B_1 \supset B_2 \supset B_3 \supset \cdots $ is {\em winning for Bob} if, for all $n \geq 2$, the $B_n$ are chosen according to an $(\alpha, \beta; S)$-winning strategy for Bob with initial ball $B_1$.\footnote{On~\cite[Page~179]{Sch66}, there is the notion of a chain for a winning strategy.  The analogous notion of a chain for a winning strategy for Bob can also be defined.  The notion of play is different from that of chain.}  Similarly, for integers $k \geq 2$, a $k$-finite play $B_1 \supset B_{2} \supset \cdots \supset B_{k}$ is {\em winning for Bob} if, for all integers $n$ such that $k \geq n \geq 2$, the $B_n$ are chosen according to an $(\alpha, \beta; S)$-winning strategy for Bob with initial ball $B_1$.  Consequently, if $B_1 \supset B_2 \supset B_3 \supset \cdots $ is a winning play for Bob, then $\bigcap_{n=1}^\infty B_n \in S^c$ holds.  Also, note that, for all integers $k \geq 2$, any $k$-finite play $B_1 \supset B_{2} \supset \cdots \supset B_{k}$ that is winning for Bob can be completed into a play that is winning for Bob by Bob choosing, recursively, successive $B_n$ for $n \geq k+1$ according to the $(\alpha, \beta; S)$-winning strategy for Bob with initial ball $B_1$.  Moreover, if $S$ is an $(\alpha, \beta)$-ubiquitously losing set, then we say any $1$-finite play $B_1$ is {\em winning for Bob}, and it can be completed into a play that is winning for Bob by Bob choosing, recursively, successive $B_n$ for $n \geq 2$ according to an $(\alpha, \beta; S)$-winning strategy for Bob with initial ball $B_1$.

Finally, a subsequence of a play $B_1 \supset B_2 \supset B_3 \supset \cdots$ for the $(\alpha, \beta^{\{s_n\}} )$-game is referred to as its {\em restriction}, and the collection of restrictions of a play includes the play itself.  

\begin{rema}\label{rmk:RestrictionPlay}
Note that a restriction of a play is a play for an $(\alpha, \beta; S)$-accelerated game for Bob and its the acceleration sequence is determined by the restriction.  Also note that, by the analog of the proof of Lemma~\ref{lemm:UbitLosImplies}, we have that any restriction of a play that is winning for Bob is itself a play that is winning for Bob.
\end{rema}

\subsection{Induction and adapted winning strategies for Bob}  In this section, we show how to induce finite plays that are winning for Bob from a certain class of finite plays that are winning for Bob.  This certain class of finite plays that are winning for Bob comes from the restrictions that are defined in Remark~\ref{rmk:DefnRestriction}.  The induced finite plays that are winning for Bob will be chosen according to an adapted winning strategy for Bob (defined in Remark~\ref{rmk:DefnAdaptedWinningStrat}).  Let $d \in  \NN$, $0< \alpha<1$, $0< \beta <1$, and $S \subset \RR^d$ be an $(\alpha, \beta)$-ubiquitously losing set of $\RR^d$.  First let $B_1 \supset B_{2} \supset  B_3  \supset \cdots$ be a play that is winning for Bob for an $(\alpha, \beta; S)$-game, and let \[B_1 \supset A_1 \supset B_2 \supset A_2 \supset B_3 \supset A_3  \supset  \cdots\] be its associated dyadic play.  Recall that all the winning strategies in this paper are positional (Remark~\ref{rem:PositionalWinning}).  Since the play is winning for Bob, the $B_n$, for $n \geq 2$, are chosen according to $g_{B_1}$, which is an $(\alpha, \beta; S)$-winning strategy for Bob with initial ball $B_1$.  Note that $g_{B_1}$ is defined analogously to that of a winning strategy for Alice (\cite[Page~179]{Sch66}).  Precisely, we have the following.  Let \[\Omega(B_1):= \{A \subset B_1 : A \textrm{ is a closed ball}\}.\]  The function $g_{B_1}(A)$, defined for elements $A \in \Omega(B_1)$ such that $g_{B_1}(A) \in A^\beta$, is an {\em $(\alpha, \beta; S)$-winning strategy for Bob with initial ball $B_1$} if the following holds:  for elements $B_1, B_2, \cdots , A_1, A_2, \cdots$ of $\Omega(B_1)$ such that \begin{align*} \begin{cases}  A_n \in B_n^\alpha & \textrm{ for } n \in \NN \\
B_{n+1} = g_{B_1}(A_n) \in A_n^{\beta} & \textrm{ for } n \in \NN  \end{cases},
  \end{align*} we have that $\bigcap_{n=1}^\infty B_n \in S^c$.

Next, let $B_1 \supset B_{2} \supset  B_3  \supset \cdots$ be a play for an $(\alpha, \beta; S)$-accelerated game for Bob with acceleration sequence $\{s_n\}:= \{s_n\}_{n = 1}^\infty \subset \NN \cup \{0\}$, and let \[B_1 \supset A_1 \supset B_2 \supset A_2 \supset B_3 \supset A_3  \supset  \cdots\] be its associated dyadic play.  Then, we say that the sequence of functions $g_{B_1, 1}, g_{B_1, 2}, \cdots$, defined for elements $A \in \Omega(B_1)$ such that $g_{B_1, n}(A) \in A^{\beta(\alpha \beta)^{s_n}}$, is an {\em $(\alpha, \beta^{\{s_n\}}; S)$-winning strategy for Bob with initial ball $B_1$} if the following holds:  for elements $B_1, B_2, \cdots , A_1, A_2, \cdots$ of $\Omega(B_1)$ such that \begin{align}\label{eqn:RestrictedStrat} \begin{cases}  A_n \in B_n^\alpha & \textrm{ for } n \in \NN \\ B_{n+1} = g_{B_1, n}(A_n) \in A_n^{\beta (\alpha \beta)^{s_n}} & \textrm{ for } n \in \NN  \end{cases},
  \end{align} we have that $\bigcap_{n=1}^\infty B_n \in S^c$.

 \begin{lemm}\label{lemm:RestrictedStrat}  The strategy $g_{B_1}$ for the $(\alpha, \beta; S)$-game yields a family of $(\alpha, \beta^{\{s_n\}}; S)$-winning strategies for Bob all with initial balls $B_1$.
 
\end{lemm}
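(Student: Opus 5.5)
The plan is to make precise the simulation argument from Lemma~\ref{lemm:UbitLosImplies} and Remark~\ref{rmk:UbitLosImplies2} in the language of the functions $g_{B_1}$. Fix $\{s_n\}\subset \NN\cup\{0\}$ and write $m_k := 1+\sum_{n=1}^{k-1}(s_n+1)$ for $k \in \NN$, so $m_1=1$. For each $n$ I will define $g_{B_1,n}(A)$, for $A\in\Omega(B_1)$, by iterating $g_{B_1}$ through $s_n$ ``phantom'' rounds in which Bob also makes Alice's move. Concretely, set $\widetilde B := g_{B_1}(A)\in A^\beta$. Then, $s_n$ times, replace $\widetilde B$ by $g_{B_1}(\widetilde A)$, where $\widetilde A\in \widetilde B^\alpha$ is a fixed choice made by a selection rule depending only on $\widetilde B$ (say, the concentric ball of radius $\alpha\rho(\widetilde B)$). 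Finally let $g_{B_1,n}(A)$ be the resulting ball. Each phantom round multiplies the radius by $\alpha\beta$, so $g_{B_1,n}(A)\in A^{\beta(\alpha\beta)^{s_n}}$ and the function is well defined on $\Omega(B_1)$. Since every ball produced lies inside $A\subset B_1$, all intermediate balls stay in $\Omega(B_1)$, which is where $g_{B_1}$ is defined.

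Next I will verify the winning condition. Take any sequence $B_1,B_2,\dots,A_1,A_2,\dots$ satisfying (\ref{eqn:RestrictedStrat}) for these $g_{B_1,n}$. Interleaving the phantom balls produces a full sequence $B'_1\supset A'_1\supset B'_2\supset A'_2\supset\cdots$, which I check by induction on $k$ has the following properties:
\begin{enumerate}
\item $B'_{m_k}=B_k$ and $A'_{m_k}=A_k$;
\item the phantom balls fill the indices strictly between $m_k$ and $m_{k+1}$;
\item $A'_j\in B_j'^{\alpha}$ and $B'_{j+1}=g_{B_1}(A'_j)$ for every $j$.
\end{enumerate}
This is a legitimate $(\alpha,\beta)$-game driven by the winning strategy $g_{B_1}$ with initial ball $B_1$, so $\bigcap_j B'_j\in S^c$. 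Because $\{B_k\}$ is a subsequence of the nested sequence $\{B'_j\}$ and the radii tend to $0$, $\bigcap_k B_k=\bigcap_j B'_j\in S^c$. Hence $(g_{B_1,n})_n$ is an $(\alpha,\beta^{\{s_n\}};S)$-winning strategy for Bob with initial ball $B_1$. Letting $\{s_n\}$ range over all sequences, or over all selection rules for the phantom Alice moves, gives the asserted family.

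The only delicate point is bookkeeping. The phantom Alice balls must be determined by a rule that depends only on the current ball, so that $g_{B_1,n}$ is a genuine function of $A$; this keeps the strategy positional, consistent with Remark~\ref{rem:PositionalWinning}. I also have to check that $g_{B_1,n}$ depends only on $A$ and $n$, and not on earlier history, so that it matches the definition in (\ref{eqn:RestrictedStrat}). I expect this to be the main, if mild, obstacle; everything else is the index computation for $m_k$.
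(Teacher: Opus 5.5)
Your proposal is correct and follows the same route as the paper: the accelerated game is regarded as an ordinary $(\alpha,\beta;S)$-game exactly as in Lemma~\ref{lemm:UbitLosImplies}. In both, $g_{B_1,n}(A_n)$ is defined as $g_{B_1}(\widetilde{A}_n)$ for a ball $\widetilde{A}_n\subset A_n$ of radius $(\alpha\beta)^{s_n}\rho(A_n)$, and one checks that $\bigcap_n B_n\in S^c$. You insist explicitly that the simulated Bob moves inside $A_n$ are themselves made by $g_{B_1}$, so that $\widetilde{A}_n$ comes out of those simulated rounds rather than being an arbitrary element of $A_n^{(\alpha\beta)^{s_n}}$. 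The paper leaves this implicit in its appeal to Lemma~\ref{lemm:UbitLosImplies}, and it is exactly what makes the interleaved sequence a genuine $g_{B_1}$-play.
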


  \begin{proof}  For an $(\alpha, \beta^{\{s_n\}}; S)$-game, let Alice's moves $A_1, A_2, \cdots \in \Omega(B_1)$ and Bob's moves $B_2, B_3, \cdots \in \Omega(B_1)$ be made as follows: \begin{align}\label{eqn:RestrictedStrat2} \begin{cases}  A_n \in B_n^\alpha & \textrm{ for } n \in \NN \\
  \widetilde{A}_n \in A_n^{(\alpha \beta)^{s_n}} & \textrm{ for } n \in \NN\\
B_{n+1} = g_{B_1}(\widetilde{A}_n) \in \widetilde{A}_n^{\beta} & \textrm{ for } n \in \NN  \end{cases}.
  \end{align}  Since $g_{B_1}$ is an $(\alpha, \beta; S)$-winning strategy for Bob with initial ball $B_1$, then regarding, in the same way as in the proof of Lemma~\ref{lemm:UbitLosImplies}, the $(\alpha, \beta^{\{s_n\}}; S)$-game being played as an $(\alpha, \beta; S)$-game yields that $\bigcap_{n=1}^\infty B_n \in S^c$. Hence, for a choice of $\widetilde{A}_n \in A_n^{(\alpha \beta)^{s_n}}$, define $g_{B_1, n}(A_n):= g_{B_1}(\widetilde{A}_n)$.  Since $\bigcap_{n=1}^\infty B_n \in S^c$ holds, we have that $g_{B_1, 1}, g_{B_1, 2}, \cdots$ is an $(\alpha, \beta^{\{s_n\}}; S)$-winning strategy for Bob with initial ball $B_1$. \end{proof}
\begin{rema}\label{rmk:DefnRestriction} An $(\alpha, \beta^{\{s_n\}}; S)$-winning strategy for Bob with initial ball $B_1$ constructed as in Lemma~\ref{lemm:RestrictedStrat} is called an {\em $\{s_n\}$-restriction (of $g_{B_1}$)} or, more briefly, a {\em restriction (of $g_{B_1}$)}.  Usually, we denote an $\{s_n\}$-restriction by (\ref{eqn:RestrictedStrat2}) rather than (\ref{eqn:RestrictedStrat}).  Also note that a play or finite play that is winning for Bob is chosen according to a restriction of an $(\alpha, \beta;S)$-winning strategy for Bob.

\end{rema}

Now let $k \in \NN$, let $B_1 \supset B_{2} \supset \cdots \supset B_{k}$ be a $k$-finite play for an $(\alpha, \beta; S)$-accelerated game for Bob with acceleration sequence $\{s_n\}:= \{s_n\}_{n = 1}^\infty \subset \NN \cup \{0\}$, and let \[B_1 \supset A_1 \supset B_2 \supset A_2 \supset \cdots  \supset B_k \supset A_k\] be its associated dyadic finite play. We say that a closed ball $B$ such that \begin{align}\label{eqn:InsertionCond}
 \rho(B) = \rho(B_1) (\alpha \beta)^\ell \end{align} holds for some $\ell \in \NN$ is {\em insertable for the $k$-finite play $B_1 \supset B_{2} \supset \cdots \supset B_{k}$} if there exists an integer $m$ such that $ 1 \leq m < k$ for which \begin{align} \label{eqn:InsertionCond2}
 B_m \supsetneq B \supsetneq B_{m+1} \end{align} holds and is {\em appendable for the $k$-finite play $B_1 \supset B_{2} \supset \cdots \supset B_{k}$} if \begin{align}\label{eqn:AppendCond}
 B_k \supsetneq B \end{align} holds.  The integer $m$ is called the {\em insertion index (of $B$) (into $B_1 \supset B_{2} \supset \cdots \supset B_{k}$)}.  Likewise, if the $k$-finite play is replaced by a play $B_1 \supset B_{2} \supset  B_3  \supset \cdots$ in the above, then we have the analogous definition of insertable, which precisely is the following:  a closed ball $B$ such that (\ref{eqn:InsertionCond}) holds for some $\ell \in \NN$ is {\em insertable for the play $B_1 \supset B_{2} \supset  B_3  \supset \cdots$} if there exists an integer $m$ such that (\ref{eqn:InsertionCond2}) holds.  The integer $m$ is called the {\em insertion index (of $B$) (into $B_1 \supset B_{2} \supset  B_3  \supset \cdots$)}. 
  
 First consider the case for which $B$ is insertable for $B_1 \supset B_{2} \supset \cdots \supset B_{k}$.  Let \[\widetilde{B}_1 := B_1, \cdots, \widetilde{B}_m := B_m, \widetilde{B}_{m+1} := B, \widetilde{B}_{m+2} := B_{m+1}, \cdots, \widetilde{B}_{k+1} := B_k.\]  By (\ref{eqn:InsertionCond}, \ref{eqn:InsertionCond2}), we have that \begin{align}\label{eqn:InsertExpMain}
\begin{cases}  \sum_{i=1}^{m-1} (s_i+1) < \ell <  \sum_{i=1}^{m} (s_i+1)  & \text{ for } k>m \geq 2 \\
0< \ell < s_1 +1 & \text{ for } m = 1\end{cases} \end{align}  Consequently, $s_m \geq 1$.  Now consider the two cases in (\ref{eqn:InsertExpMain}) separately.  For the case $k>m \geq 2$, let \begin{align}\label{eqn:InsertExp1} \widetilde{s}_{j} :=
 \begin{cases}  s_j & \text{ for } j = 1, \cdots, m-1 \\  \ell -  \sum_{i=1}^{j-1} (s_i+1) - 1 & \text{ for } j = m \\  \sum_{i=1}^{j-1} (s_i+1)- \ell  - 1  & \text{ for } j = m+1 \\  s_{j-1} & \text{ for } j \geq m+2  \end{cases}.\end{align}  Observe that $\widetilde{s}_{j} \in \NN \cup \{0\}$ for all $j \in \NN$ and, thus, $\{\widetilde{s}_{j}\}$ is an acceleration sequence.  We call $\{\widetilde{s}_{j}\}$ the {\em induced} (or {\em $B$-induced}) acceleration sequence.  Note that $\{\widetilde{s}_{j}\}$ is obtained from $\{{s}_{j}\}$ by removing the $m$-th element and replacing it with the two successive elements $\widetilde{s}_{m}$ and $\widetilde{s}_{m+1}$.  Also observe that \begin{align}\label{eqn:InsertExp2}   \sum_{i=1}^{j} (\widetilde{s}_i+1) = \begin{cases}  \sum_{i=1}^{j} (s_i+1)  & \text{ for } j = 1, \cdots, m-1 \\
  \ell & \text{ for } j = m \\   \sum_{i=1}^{j-1} (s_i+1)  & \text{ for } j \geq m+1\end{cases} .
  \end{align}

Now it follows from (\ref{eqn:InsertExp2}) that \begin{align*} 
 \rho(\widetilde{B}_{m+1}) = \rho(\widetilde{B}_1) (\alpha \beta)^{\sum_{i=1}^{m} (\widetilde{s}_i+1)} =  (\alpha \beta)^{(\widetilde{s}_m+1)} \rho(\widetilde{B}_{m}).  \end{align*} Thus, there exists a closed ball $\widetilde{A}_{m}$ such that $\widetilde{A}_{m} \in \widetilde{B}^\alpha_{m}$ and $\widetilde{A}_{m} \supset \widetilde{B}_{m+1}$.  Likewise, we have that $\rho(\widetilde{B}_{m+2})  =  (\alpha \beta)^{(\widetilde{s}_{m+1}+1)} \rho(\widetilde{B}_{m+1})$.  Thus, there exists a closed ball $\widetilde{A}_{m+1}$ such that $\widetilde{A}_{m+1} \in \widetilde{B}^\alpha_{m+1}$ and $\widetilde{A}_{m+1} \supset \widetilde{B}_{m+2}$.  Now let $\widetilde{A}_1 := A_1, \cdots, \widetilde{A}_{m-1} := A_{m-1}$, $\widetilde{A}_{m+2} := A_{m+1}$, $\widetilde{A}_{m+3} := A_{m+2}, \cdots, \widetilde{A}_{k+1} := A_{k}$.  Hence, for the case $k>m \geq 2$, we call $\widetilde{B}_1 \supset \widetilde{B}_{2} \supset \cdots \supset \widetilde{B}_{k+1}$ the {\em induced} (or {\em $B$-induced}) ($(k+1)$-finite) play and \[\widetilde{B}_1 \supset \widetilde{A}_1 \supset \widetilde{B}_{2} \supset  \widetilde{A}_2 \supset \cdots \supset \widetilde{B}_{k+1} \supset \widetilde{A}_{k+1}\] an {\em associated dyadic (finite) play}.  Likewise, for $B$ insertable into a play $B_1 \supset B_{2} \supset  B_3  \supset \cdots$, we obtain the {\em induced} (or {\em $B$-induced}) play and its {\em associated dyadic play} in the analogous way.

\begin{rema}\label{rmk:InsertPlay}  Note that, while an associated dyadic play is not unique, it will be immaterial for this paper.   Also note that the induced play is a play for an $(\alpha, \beta^{\{\widetilde{s}_j\}}; S)$-game.
\end{rema}

We now consider the other case in (\ref{eqn:InsertExpMain}), namely the case $m=1$.  The analog of (\ref{eqn:InsertExp1}) is \begin{align}\label{eqn:InsertExp1MIsOne} \widetilde{s}_{j} :=
 \begin{cases}    \ell -  1 & \text{ for } j = 1 \\  s_1- \ell  & \text{ for } j = 2 \\   s_{j-1} & \text{ for } j \geq 3  \end{cases}.\end{align}  Similar to the previous case, we call $\{\widetilde{s}_{j}\}$ the {\em induced} (or {\em $B$-induced}) acceleration sequence.  The analog of (\ref{eqn:InsertExp2}) is \begin{align}\label{eqn:InsertExp2MIsOne} \sum_{i=1}^{j} (\widetilde{s}_i+1) = \begin{cases}  \ell & \text{ for } j = 1 \\   \sum_{i=1}^{j-1} (s_i+1)  & \text{ for } j \geq 2\end{cases} .
  \end{align}  Analogous to the previous case, we have that there exists a closed ball $\widetilde{A}_{1}$ such that $\widetilde{A}_{1} \in \widetilde{B}^\alpha_{1}$ and $\widetilde{A}_{1} \supset \widetilde{B}_{2}$ and that there exists a closed ball $\widetilde{A}_{2}$ such that $\widetilde{A}_{2} \in \widetilde{B}^\alpha_{2}$ and $\widetilde{A}_{2} \supset \widetilde{B}_{3}$.    Now let $\widetilde{A}_{3} := A_{2}$, $\widetilde{A}_{4} := A_{3}, \cdots, \widetilde{A}_{k+1} := A_{k}$.  Hence, for the case $m=1$, we call $\widetilde{B}_1 \supset \widetilde{B}_{2} \supset \cdots \supset \widetilde{B}_{k+1}$ the {\em induced} (or {\em $B$-induced}) ($(k+1)$-finite) play and \[\widetilde{B}_1 \supset \widetilde{A}_1 \supset \widetilde{B}_{2} \supset  \widetilde{A}_2 \supset \cdots \supset \widetilde{B}_{k+1} \supset \widetilde{A}_{k+1}\] an {\em associated dyadic (finite) play}.  Likewise, for $B$ insertable into a play $B_1 \supset B_{2} \supset  B_3  \supset \cdots$, we obtain the {\em induced} (or {\em $B$-induced}) play and its {\em associated dyadic play} in the analogous way.  Remark~\ref{rmk:InsertPlay} also applies for the case $m=1$.
  
\begin{rema}  Note that $B_1 = \widetilde{B}_1$ in all cases.

\end{rema}
  
\begin{lemm}\label{lemm:InducedWinningPlayInsert} Let $k \in  \NN$, $B_1 \supset B_{2} \supset \cdots \supset B_{k}$ be a $k$-finite play that is winning for Bob, and $B$ be closed ball that is insertable for $B_1 \supset B_{2} \supset \cdots \supset B_{k}$.  Then the $B$-induced play $\widetilde{B}_1 \supset \widetilde{B}_{2} \supset \cdots \supset \widetilde{B}_{k+1}$ is winning for Bob.

\end{lemm}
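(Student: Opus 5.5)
The idea is to reduce the claim to Lemma~\ref{lemm:RestrictedStrat} and Remark~\ref{rmk:DefnRestriction}, as in the proof of Lemma~\ref{lemm:UbitLosImplies}. A finite play is winning for Bob exactly when its balls are chosen according to a restriction of a single $(\alpha,\beta;S)$-winning strategy $g_{B_1}$ with initial ball $B_1$. So I would show that $\widetilde B_1\supset\cdots\supset\widetilde B_{k+1}$ is generated by some $\{\widetilde s_j\}$-restriction of the same $g_{B_1}$. This works because $\widetilde B_1=B_1$ and the induced acceleration sequence from (\ref{eqn:InsertExp1}) or (\ref{eqn:InsertExp1MIsOne}) only splits the $m$-th block of the original sequence into two blocks.

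\emph{Unfold the original play into an ordinary game.} Since the original $k$-finite play is winning for Bob, there are balls $A_n\in B_n^\alpha$ and $\widetilde A'_n\in A_n^{(\alpha\beta)^{s_n}}$ with $B_{n+1}=g_{B_1}(\widetilde A'_n)$. Viewing this as an ordinary $(\alpha,\beta;S)$-game as in Lemma~\ref{lemm:UbitLosImplies} produces a dyadic sequence in which Bob's balls occur at the indices $1+\sum_{i=1}^{j}(s_i+1)$. Those at the intermediate indices are the balls Bob chose on Alice's behalf inside the $m$-th block; their radii are $\rho(B_1)(\alpha\beta)^{\ell'}$ for $\sum_{i<m}(s_i+1)<\ell'<\sum_{i\le m}(s_i+1)$.

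\emph{Identify $B$ with one of these intermediate balls.} By (\ref{eqn:InsertExp2}), the radius of $B$ corresponds to exactly one intermediate level $\ell$ of the unfolded game, and $B_m\supsetneq B\supsetneq B_{m+1}$. The key point is that the intermediate balls inside block $m$ are not forced by the strategy: they are merely a chain of balls of radii $\alpha(\alpha\beta)^i\rho(B_m)$ that Bob picks as proxies for Alice. The only constraint is that the last of them, $\widetilde A'_m$, satisfies $g_{B_1}(\widetilde A'_m)=B_{m+1}$. I would therefore re-choose this chain so that it passes through $B$ at level $\ell$ and still ends at $\widetilde A'_m$. This is possible because $B\supset B_{m+1}$, and since balls in $\RR^d$ of radii at least that of $B_{m+1}$ containing $B_{m+1}$ can be nested, one can interpolate nested balls with the prescribed radii. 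The balls $\widetilde A_m$ and $\widetilde A_{m+1}$ constructed in the text give the first steps of this chain.

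\emph{Reinterpret the rechosen chain.} Read with the split blocks $\widetilde s_m,\widetilde s_{m+1}$, the rechosen chain shows that $\widetilde B_{m+1}=B$ equals $g_{B_1}$ applied to the proxy ball just before level $\ell$, and $\widetilde B_{m+2}=B_{m+1}=g_{B_1}(\widetilde A'_m)$. This is where the main obstacle lies. The hypothesis only tells us that $B$ is some ball of the right radius, not that $B=g_{B_1}(\cdot)$ of anything. The rechosen chain must therefore reach $B$ through Bob's $\beta$-move, which requires choosing the preceding proxy ball $C$ so that $g_{B_1}(C)=B$. Positional strategies need not be surjective, so this is the delicate point. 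I would first try to reverse the roles within the block: the gaps inside block $m$ are Alice-controlled in the restricted game, so $B$ can be taken as a ball in the Alice position of the unfolded game, with the $\widetilde s$ bookkeeping absorbing the parity shift. If that fails, I would fall back on using the freedom in the definition of a play that is winning for Bob through restrictions, namely the choice of $\widetilde A_n$ in (\ref{eqn:RestrictedStrat2}). Once the chain is in place, the remaining indices are unchanged ($\widetilde A_j=A_{j-1}$ for $j\ge m+2$, with the $m=1$ case handled identically via (\ref{eqn:InsertExp2MIsOne})), so the induced play is generated by a restriction of $g_{B_1}$ and is winning for Bob.
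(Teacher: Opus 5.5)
Your plan requires the induced play to be an $\{\widetilde s_j\}$-restriction of the \emph{original} strategy $g_{B_1}$. As set up, this cannot be done in general. The obstacle you flag is real, and the proposal never removes it.

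Insertability only says that $B$ is some ball with $\rho(B)=\rho(B_1)(\alpha\beta)^{\ell}$ and $B_m\supsetneq B\supsetneq B_{m+1}$. But $\widetilde B_{m+1}=B$ is one of Bob's balls in the induced play, so it would have to equal $g_{B_1}(C)$ for some admissible $C$. Nothing provides such a $C$: $g_{B_1}$ returns one ball per input, and being winning places no constraint on its values away from positions reached by $g_{B_1}$-consistent plays.

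There is a second problem you do not flag. To get $\widetilde B_{m+2}=B_{m+1}$ you let the rechosen chain end at the old proxy $\widetilde A'_m$. A chain through $B$ can only end there if $\widetilde A'_m\subset B$, and this does not follow from $B\supset B_{m+1}$. The ball $\widetilde A'_m$ has the larger radius $\rho(B_{m+1})/\beta$, was chosen without reference to $B$, and $B_{m+1}$ may touch the boundary of $B$. Any replacement $C'\subset B$ would again need $g_{B_1}(C')=B_{m+1}$.

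Neither fallback helps. Alice's radii $\alpha\rho(B_1)(\alpha\beta)^{t}$ never equal $\rho(B_1)(\alpha\beta)^{\ell}$, since $\alpha$ is not an integer power of $\alpha\beta$. In any case $B=\widetilde B_{m+1}$ is by definition a move of Bob, whatever bookkeeping you choose for the $\widetilde s_j$. The freedom in $\widetilde A_n$ in (\ref{eqn:RestrictedStrat2}) only changes the argument of $g_{B_1}$; Bob's next ball must still lie in its image.

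The missing idea is that ``winning for Bob'' only requires the balls to be chosen according to \emph{some} $(\alpha,\beta;S)$-winning strategy with initial ball $B_1$, so you may change the strategy. The paper sets $\Am:=\{A\in\widetilde B_m^{\alpha(\alpha\beta)^{\widetilde s_m}}: B\subset A\}$ and $\Am':=\{A\in B^{\alpha(\alpha\beta)^{\widetilde s_{m+1}}}: B_{m+1}\subset A\}$, which are disjoint by radius. It defines an adapted strategy $\widetilde g_{B_1}$ that returns $B$ on $\Am$, returns $B_{m+1}$ on $\Am'$, and agrees with $g_{B_1}$ elsewhere; (\ref{eqn:InsertExp2}) and (\ref{eqn:InsertExp2MIsOne}) make these legal $\beta$-moves. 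It then argues that $\widetilde g_{B_1}$ is winning by splitting plays according to whether they meet $\Am$ and/or $\Am'$. Finally, every proxy ball of the right radius containing $B$ inside an Alice ball of $\widetilde B_m^{\alpha}$ lies in $\Am$ (likewise for $\Am'$). So the induced play is a restriction of $\widetilde g_{B_1}$, and neither surjectivity nor $\widetilde A'_m\subset B$ is needed.

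If you take this route, be careful with plays that are sent to $B$ after which Alice avoids $\Am'$. Bob then continues from $B$, which need not be reachable under $g_{B_1}$. The paper's Case~(1b) simply notes that $\widetilde g_{B_1}=g_{B_1}$ from then on; this is exactly where your non-surjectivity concern resurfaces, so that continuation needs its own justification.
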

\begin{proof}  Since $B_1 \supset B_{2} \supset \cdots \supset B_{k}$ is winning for Bob, the $B_n$ are chosen according to $g_{B_1}$, an $(\alpha, \beta; S)$-winning strategy for Bob (with initial ball $B_1$).  Let $m$ be the insertion index of $B$.  Define the sets \[\Am := \{A \in  \widetilde{B}_m^{\alpha (\alpha \beta)^{\widetilde{s}_m}}: B \subset A\} \textrm{ and } \Am' := \{A \in  B^{\alpha (\alpha \beta)^{\widetilde{s}_{m+1}}}: B_{m+1} \subset A\}.\]  Note that $\Am \cap \Am' = \emptyset$. Define a function \begin{align*} \widetilde{g}_{B_1}( A) := \begin{cases}  B & \text{ if } A  \in \Am\\
  B_{m+1}  & \text{ if } A \in \Am' \\
g_{B_1}( A) & \text{ if } A \in \Omega(B_1) \backslash \left( \Am \cup \Am'\right)
 \end{cases}.
  \end{align*}  We claim that $\widetilde{g}_{B_1}$ is an $(\alpha, \beta; S)$-winning strategy for Bob with initial ball $B_1$.  We now prove the claim.  First note that, by (\ref{eqn:InsertExp2}, \ref{eqn:InsertExp2MIsOne}),  $\widetilde{g}_{B_1}(A) \in A^\beta$ for all $A \in  \Am \cup \Am'$ and, thus, for all $A \in \Omega(B_1)$.  Now let  $B'_1 := B_1$, let $B'_1 \supset B'_{2} \supset \cdots$ be a play for an $(\alpha, \beta; S)$-game, and let \[B'_1 \supset A'_1 \supset  B'_{2} \supset A'_2 \supset \cdots\] be its associated dyadic play for which the $A'_1, A'_2, A'_3, \cdots$ and $B'_2, B'_3, B'_4, \cdots $ are chosen by  \begin{align*} \begin{cases}  A'_n \in (B'_n)^\alpha & \textrm{ for } n \in \NN \\
B'_{n+1} = \widetilde{g}_{B_1}(A'_n) \in (A'_n)^{\beta} & \textrm{ for } n \in \NN  \end{cases}.
  \end{align*}  There are two cases to consider.
  
 \begin{enumerate}
\item There exists an $n \in \NN$ for which $A'_n \in \Am$.

\item There does not exist an $n \in \NN$ for which $A'_n \in \Am$.
\end{enumerate}

Let us consider Case~(1).  Since every element of $\Am$ has the same radius, we have that $n$ is unique.  The condition of Case~(1) gives that $B'_{n+1} = B$.  There are two subcases to consider for Case~(1).
\begin{enumerate}
\item [(1a)] There exists an $N \in \NN$ for which $A'_N \in \Am'$.
\item [(1b)] There does not exist an $N \in \NN$ for which $A'_N \in \Am'$.
\end{enumerate}
Let us consider Case~(1a) first.  Since every element of $\Am'$ has the same radius, we have that $N$ is unique.  The condition of Case~(1a) gives that $B'_{N+1} = B_{m+1}$.  Let $M \in \NN$ be such that $M > N$.  As $\rho(A'_{M}) < \rho (A'_N)$, we have that $A'_{M} \in \Omega(B_1) \backslash \left( \Am \cup \Am'\right)$.  Consequently, we have $\widetilde{g}_{B_1}( A'_{M}) = g_{B_1}( A'_{M})$ for all $M >N$.  Since $g_{B_1}$ is an $(\alpha, \beta; S)$-winning strategy for Bob with initial ball $B_1$, we have that $\bigcap_{i=N+1}^\infty B'_i \in S^c$ and, thus,\begin{align}\label{eqn:lemm:InducedWinningPlay}
  \bigcap_{i=1}^\infty B'_i \in S^c. \end{align}  This completes Case~(1a).

Next, let us consider Case~(1b).  We have that \[\{A'_P\}_{P=n+1}^\infty \subset  \Omega(B_1) \backslash \left( \Am \cup \Am'\right)\] and, thus, that $\widetilde{g}_{B_1}( A'_{P}) = g_{B_1}( A'_{P})$ for all $P >n$, yielding (\ref{eqn:lemm:InducedWinningPlay}).  This completes Case~(1b) and, thus, Case~(1).

Now let us consider Case~(2). There are two subcases to consider for Case~(2).
\begin{enumerate}
\item [(2a)] There exists an $N \in \NN$ for which $A'_N \in \Am'$.
\item [(2b)] There does not exist an $N \in \NN$ for which $A'_N \in \Am'$.
\end{enumerate}
The proof of Case~(2a) is analogous to that of Case~(1a) and, thus, yields (\ref{eqn:lemm:InducedWinningPlay}).  The proof of Case~(2b) is analogous to that of Case~(1b) with $n$ is replaced by $0$, yielding (\ref{eqn:lemm:InducedWinningPlay}).  This completes Case~(2) and the claim.
  
We now claim that the $\widetilde{B}_n$, for $n =1, \cdots, k+1$, are chosen according to $\widetilde{g}_{B_1}$ (as an $\{\widetilde{s}_j\}$-restriction of $\widetilde{g}_{B_1}$ as noted in Remark~\ref{rmk:DefnRestriction}).  We now prove the claim.  Let \[\widetilde{B}_1 \supset \widetilde{A}_1\supset \widetilde{B}_{2}\supset \widetilde{A}_2 \supset \cdots \supset \widetilde{B}_{k+1}\supset \widetilde{A}_{k+1}\] be the associated dyadic finite play of the $B$-induced play.  Consider the case $m \geq 2$ first.  Since $\widetilde{B}_1 = B_1, \cdots, \widetilde{B}_m = B_m$ and any closed ball $A \supset B_m$ has radius strictly larger than the radii of the elements of $\Am \cup \Am'$, we have that $\widetilde{B}_2, \cdots, \widetilde{B}_m$ are a chosen according to $g_{B_1}$, which, for these $A$, coincides with $\widetilde{g}_{B_1}$.  Note that $\widetilde{A}_m \in  \widetilde{B}_m^\alpha$ such that $B \subset \widetilde{A}_m$.  Now, for any choice $A \in \widetilde{B}_m^\alpha$ such that $B \subset A$, we have that \[\{A^* \in A^{(\alpha \beta)^{\widetilde{s}_m}}: B \subset A^*\} \subset \Am\] and, thus, $\widetilde{B}_{m+1} = B$ is chosen according to $\widetilde{g}_{B_1}$.  Likewise, we have $\widetilde{B}_{m+2}$ is chosen according to $\widetilde{g}_{B_1}$.  Now since $\widetilde{B}_{m+2} = B_{m+1}, \cdots, \widetilde{B}_{k+1} = B_k$ and any closed ball $A \subset \widetilde{B}_{m+2}$ has radius strictly smaller than the radii of the elements of $\Am \cup \Am'$, we have that $\widetilde{B}_{m+3}= B_{m+2}, \cdots, \widetilde{B}_{k+1}= B_{k}$ are a chosen according to $g_{B_1}$, which, for these $A$, coincides with $\widetilde{g}_{B_1}$.  This shows the claim for $m \geq 2$ and, thus, that the $B$-induced play is winning for Bob for $m \geq 2$.

For $m=1$, the analog of the proof for the $m \geq 2$, starting with note that $\widetilde{A}_m \in  \widetilde{B}_m^\alpha$ such that $B \subset \widetilde{A}_m$, shows the claim for $m=1$ and, thus, that the $B$-induced play is winning for Bob for $m =1$.   This yields the desired result and completes the proof of the lemma.  \end{proof}

\begin{lemm}\label{lemm:InducedWinningPlayInsert2} Let $B_1 \supset B_{2} \supset  B_{3}\supset \cdots $ be a play that is winning for Bob and $B$ be closed ball that is insertable for $B_1 \supset B_{2} \supset  B_{3}\supset \cdots $.  Then the $B$-induced play $\widetilde{B}_1 \supset \widetilde{B}_{2} \supset \widetilde{B}_{3} \supset \cdots$ is winning for Bob.
 
\end{lemm}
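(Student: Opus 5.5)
The plan is to follow the proof of Lemma~\ref{lemm:InducedWinningPlayInsert} almost verbatim. The only new point is that the play is now infinite, so we must check that the modified strategy still produces the whole induced play and not just a finite initial segment of it. Let $g_{B_1}$ be the $(\alpha, \beta; S)$-winning strategy for Bob with initial ball $B_1$ according to which $B_1 \supset B_2 \supset \cdots$ is chosen. Let $m$ be the insertion index of $B$, and let $\{\widetilde{s}_j\}$ be the $B$-induced acceleration sequence given by (\ref{eqn:InsertExp1}) or (\ref{eqn:InsertExp1MIsOne}). As before, we define
\[\Am := \{A \in \widetilde{B}_m^{\alpha(\alpha\beta)^{\widetilde{s}_m}} : B \subset A\}, \qquad \Am' := \{A \in B^{\alpha(\alpha\beta)^{\widetilde{s}_{m+1}}} : B_{m+1}\subset A\}.\]
We then define $\widetilde{g}_{B_1}$ to equal $B$ on $\Am$, to equal $B_{m+1}$ on $\Am'$, and to agree with $g_{B_1}$ elsewhere on $\Omega(B_1)$.

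The first claim from the proof of Lemma~\ref{lemm:InducedWinningPlayInsert} transfers unchanged: $\widetilde{g}_{B_1}$ is an $(\alpha,\beta;S)$-winning strategy for Bob with initial ball $B_1$. That argument never used $k$. It only used two facts. First, all elements of $\Am$ share one radius, and all elements of $\Am'$ share another, so each set is entered at most once in any play. Second, after the last such entry $\widetilde{g}_{B_1}$ coincides with $g_{B_1}$, and $g_{B_1}$ wins from any position. The four cases (1a), (1b), (2a), (2b) go through exactly as written and give $\bigcap_i B'_i \in S^c$.

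The second claim is that every $\widetilde{B}_n$, for all $n \in \NN$, is chosen according to the $\{\widetilde{s}_j\}$-restriction of $\widetilde{g}_{B_1}$ (Remark~\ref{rmk:DefnRestriction}). For $n \le m$ this holds because the relevant Alice balls have radii strictly larger than those of $\Am\cup\Am'$, so $\widetilde{g}_{B_1}$ agrees with $g_{B_1}$ there. The moves $\widetilde{B}_{m+1}=B$ and $\widetilde{B}_{m+2}=B_{m+1}$ are produced by the $\Am$ and $\Am'$ clauses, using the radius identities (\ref{eqn:InsertExp2}) and (\ref{eqn:InsertExp2MIsOne}). For all $n \ge m+3$ we have $\widetilde{B}_n = B_{n-1}$, and the relevant Alice balls have radii strictly smaller than those of $\Am \cup \Am'$. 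So $\widetilde{g}_{B_1}$ again coincides with $g_{B_1}$, and $B_{n-1}$ was chosen by (a restriction of) $g_{B_1}$ because the original play is winning. This last step is now an induction over all $n$ rather than over finitely many.

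The main, though mild, obstacle is to make sure the tail alignment is consistent. The restriction used for the original play after index $m+1$ must match the restriction with sequence $\widetilde{s}_j = s_{j-1}$ for $j \ge m+2$, and the auxiliary balls $\widetilde{A}_n$ must be taken equal to $A_{n-1}$ for $n \ge m+2$. This holds by (\ref{eqn:InsertExp2}) and (\ref{eqn:InsertExp2MIsOne}). Alternatively, one can shortcut the whole argument: apply Lemma~\ref{lemm:InducedWinningPlayInsert} to each $k$-finite truncation with $k > m$. The resulting modified strategy $\widetilde{g}_{B_1}$ is the same for all such $k$, since it depends only on $B$, $m$ and $g_{B_1}$. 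Hence every finite segment of the induced play is chosen according to one fixed winning strategy, so the whole induced play is winning for Bob, and in particular $\bigcap_n \widetilde{B}_n = \bigcap_n B_n \in S^c$.
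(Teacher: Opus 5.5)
Your proposal is correct and takes the same approach as the paper. The paper's proof also reuses the adapted strategy $\widetilde{g}_{B_1}$ from Lemma~\ref{lemm:InducedWinningPlayInsert} unchanged, and adds only that the infinite tail $\widetilde{B}_{m+3}=B_{m+2},\ \widetilde{B}_{m+4}=B_{m+3},\ \cdots$ is chosen by $g_{B_1}$, which coincides with $\widetilde{g}_{B_1}$ on the relevant Alice balls. Your alternative via finite truncations with $k>m$ is also valid, since $\widetilde{g}_{B_1}$ does not depend on $k$.
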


\begin{proof}
 Let $m$ be the insertion index of $B$.  The proof of this lemma is identical to that of Lemma~\ref{lemm:InducedWinningPlayInsert}, except that we have $\widetilde{B}_{m+3}= B_{m+2}, \cdots $ are a chosen according to $g_{B_1}$, which, for these $A$, coincides with $\widetilde{g}_{B_1}$.  This proves the lemma.
\end{proof}

 Finally, we consider the case for which $B$ is appendable.   Let \[\widetilde{B}_1 := B_1, \cdots, \widetilde{B}_k := B_k,  \widetilde{B}_{k+1} := B\] and let $\{s^*_n\}:= \{s^*_n\}_{n=1}^\infty$ be any acceleration sequence.  Let us first consider the case $k \geq 2$.  By (\ref{eqn:InsertionCond}, \ref{eqn:AppendCond}), we have that \begin{align}\label{eqn:AppendExpMain}
 \sum_{i=1}^{k-1} (s_i+1) < \ell.\end{align}  Now let \begin{align}\label{eqn:AppendExp1} \widetilde{s}_{j} :=
 \begin{cases}  s_j & \text{ for } j = 1, \cdots, k-1 \\  \ell -  \sum_{i=1}^{j-1} (s_i+1) - 1 & \text{ for } j = k  \\  s^*_{j-k} & \text{ for } j \geq k+1  \end{cases}.\end{align}  Observe that $\widetilde{s}_{j} \in \NN \cup \{0\}$ for all $j \in \NN$ and, thus, $\{\widetilde{s}_{j}\}$ is an acceleration sequence.  We call $\{\widetilde{s}_{j}\}$ the {\em induced} (or {\em $(B, \{s^*_n\})$-induced}) acceleration sequence.  Observe that \begin{align}\label{eqn:AppendExp2}   \sum_{i=1}^{j} (\widetilde{s}_i+1) = \begin{cases}  \sum_{i=1}^{j} (s_i+1)  & \text{ for } j = 1, \cdots, k-1 \\
  \ell & \text{ for } j = k \\   \ell + \sum_{i=k+1}^{j} (s^*_{i-k}+1)  & \text{ for } j \geq k+1\end{cases} .
  \end{align}  
  
  Now it follows from (\ref{eqn:AppendExp2}) that \begin{align*} 
 \rho(\widetilde{B}_{k+1}) = \rho(\widetilde{B}_1) (\alpha \beta)^{\sum_{i=1}^{k} (\widetilde{s}_i+1)} =  (\alpha \beta)^{(\widetilde{s}_k+1)} \rho(\widetilde{B}_{k}).  \end{align*} Thus, there exists a closed ball $\widetilde{A}_{k}$ such that $\widetilde{A}_{k} \in \widetilde{B}^\alpha_{k}$ and $\widetilde{A}_{k} \supset \widetilde{B}_{k+1}$.  Now let $\widetilde{A}_1 := A_1, \cdots, \widetilde{A}_{k-1} := A_{k-1}$ and let $\widetilde{A}_{k+1} \in \widetilde{B}^\alpha_{k+1}$.  We call $\widetilde{B}_1 \supset \widetilde{B}_{2} \supset \cdots \supset \widetilde{B}_{k+1}$ the {\em induced} ({\em $B$-induced}, or {\em $(B, \{s^*_n\})$-induced}) ($(k+1)$-finite) play and \[\widetilde{B}_1 \supset \widetilde{A}_1 \supset \widetilde{B}_{2} \supset  \widetilde{A}_2 \supset \cdots \supset \widetilde{B}_{k+1} \supset \widetilde{A}_{k+1}\] an {\em associated dyadic finite play}.  Remark~\ref{rmk:InsertPlay} also applies for the case for which $B$ is appendable and $k \geq2$.
 
 Now consider the case $k=1$.  Let \begin{align}\label{eqn:AppendExp1KOne} \widetilde{s}_{j} :=
 \begin{cases} \ell  - 1 & \text{ for } j = 1  \\  s^*_{j-1} & \text{ for } j \geq 2  \end{cases}.\end{align} Similar to the previous case, we call $\{\widetilde{s}_{j}\}$ the {\em induced} (or {\em $(B, \{s^*_n\})$-induced}) acceleration sequence.  The analog of (\ref{eqn:AppendExp2}) is \begin{align}\label{eqn:AppendExp2MIsOne} \sum_{i=1}^{j} (\widetilde{s}_i+1) = \begin{cases}   \ell & \text{ for } j = 1 \\  \ell + \sum_{i=2}^{j} (s^*_{i-1}+1) & \text{ for } j \geq 2\end{cases} .
  \end{align}   Thus, there exists a closed ball $\widetilde{A}_{1}$ such that $\widetilde{A}_{1} \in \widetilde{B}^\alpha_{1}$ and $\widetilde{A}_{1} \supset \widetilde{B}_{2}$.  Now let $\widetilde{A}_{2} \in \widetilde{B}^\alpha_{2}$. We call $\widetilde{B}_1 \supset \widetilde{B}_{2}$ the {\em induced} ({\em $B$-induced}, or {\em $(B, \{s^*_n\})$-induced}) ($2$-finite) play and \[\widetilde{B}_1 \supset \widetilde{A}_1 \supset \widetilde{B}_{2} \supset  \widetilde{A}_2\] an {\em associated dyadic finite play}.  Remark~\ref{rmk:InsertPlay} also applies for the case for which $B$ is appendable and $k =1$.

\begin{lemm}\label{lemm:InducedWinningPlayAppend} Let $k \in  \NN$, $B_1 \supset B_{2} \supset \cdots \supset B_{k}$ and $B^* \supset B$ be finite plays that are winning for Bob such that $B$ is appendable for $B_1 \supset B_{2} \supset \cdots \supset B_{k}$, and $\{s^*_n\}:= \{s^*_n\}_{n=1}^\infty$ be any acceleration sequence.  Then the $(B, \{s^*_n\})$-induced play $\widetilde{B}_1 \supset \widetilde{B}_{2} \supset \cdots \supset \widetilde{B}_{k+1}$ is winning for Bob.
 
\end{lemm}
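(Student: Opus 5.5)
The plan mirrors the proof of Lemma~\ref{lemm:InducedWinningPlayInsert}: glue two positional winning strategies into a single $(\alpha, \beta; S)$-winning strategy for Bob with initial ball $B_1$, and check that the induced play is chosen according to it.

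The hypotheses supply two strategies. Since $B_1 \supset \cdots \supset B_k$ is winning for Bob, the $B_n$ are chosen according to an $(\alpha,\beta;S)$-winning strategy $g_{B_1}$. Since $B^* \supset B$ is winning for Bob, there is an $(\alpha,\beta;S)$-winning strategy $g_{B^*}$ with initial ball $B^*$; by the convention for $1$-finite plays, such a strategy also exists with initial ball $B$ itself, since $S$ is ubiquitously losing. I will use one with initial ball $B$, call it $h_B$, since only the tail from $B$ matters.

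Next I define the set
\[
\Am := \{A \in \widetilde B_k^{\alpha(\alpha\beta)^{\widetilde s_k}} : B \subset A\},
\]
and the glued strategy
\[
\widetilde g_{B_1}(A) :=
\begin{cases}
B & \text{if } A \in \Am, \\
h_B(A) & \text{if } A \subset B,\ A \text{ a closed ball with } \rho(A) \le \alpha \rho(B), \\
g_{B_1}(A) & \text{otherwise}.
\end{cases}
\]
By (\ref{eqn:AppendExp2}) and (\ref{eqn:AppendExp2MIsOne}), $\rho(B) = \beta\rho(A)$ for $A \in \Am$, so $\widetilde g_{B_1}(A) \in A^\beta$ in every case. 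Radii show that the three domains do not overlap along any single play. Elements of $\Am$ all have the same radius, so at most one move of a play lands in $\Am$. The moves of size at most $\alpha\rho(B)$ inside $B$ can only arise after $B$ has been chosen.

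To prove $\widetilde g_{B_1}$ is winning, take any play following it and split into two cases.
\begin{enumerate}
\item If some $A'_n \in \Am$, then $B'_{n+1} = B$. All later Alice balls lie in $B$ with radius at most $\alpha\rho(B)$, so Bob follows $h_B$ from $B$ onward. Hence $\bigcap B'_i \in S^c$, because $h_B$ is winning with initial ball $B$.
\item If no $A'_n \in \Am$, the main obstacle arises: one must rule out a later Alice ball lying inside $B$ without $B$ having been played, since that would trigger the $h_B$ branch spuriously. I would restrict the second branch to balls $A$ that are contained in $B$ \emph{and} appear after $B$ in the play; since strategies are positional, this is encoded via radius. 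Every $A \subset B$ with radius at most $\alpha\rho(B)$ that does not descend from $B$ is sent to $g_{B_1}(A)$ instead. A cleaner alternative, if positionality makes this awkward, is to note that a play avoiding $\Am$ at the radius level $\rho(B)/\beta$ never has $B$ as a Bob ball. In that case I redefine the second branch only on balls nested in $B$ along plays through $B$, and the remaining plays follow $g_{B_1}$ throughout and so end in $S^c$.
\end{enumerate}

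Finally, I verify that $\widetilde B_1, \ldots, \widetilde B_{k+1}$ are chosen according to $\widetilde g_{B_1}$, as an $\{\widetilde s_j\}$-restriction in the sense of Remark~\ref{rmk:DefnRestriction}. The balls $\widetilde B_2, \ldots, \widetilde B_k$ agree with the $g_{B_1}$-choices, since every relevant ball $A \supset B_k$ has radius larger than those in $\Am$ and is not contained in $B$. For $\widetilde A_k \in \widetilde B_k^\alpha$ with $B \subset \widetilde A_k$, any $A^* \in \widetilde A_k^{(\alpha\beta)^{\widetilde s_k}}$ containing $B$ lies in $\Am$, so $\widetilde B_{k+1} = B$ is chosen by $\widetilde g_{B_1}$. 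The case $k=1$ is the same argument using (\ref{eqn:AppendExp1KOne}). Extending beyond $\widetilde B_{k+1}$ via $h_B$, arranged with the tail acceleration $\{s^*_n\}$ through Lemma~\ref{lemm:RestrictedStrat}, shows that the finite play completes to a winning play, which gives the lemma.
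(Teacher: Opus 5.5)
Your architecture is the paper's. You use the same $\Am$, the same three-branch glued strategy, the same two cases, and the same check that $\widetilde B_2,\dots,\widetilde B_{k+1}$ are chosen by the glued strategy. Using $h_B$, a winning strategy with initial ball $B$, instead of $g_{B^*}$ is harmless here and makes Case~(1) immediate. It does, however, give up the property in Remark~\ref{rmk:DefnAdaptedWinningStrat} that play past $\widetilde B_{k+1}$ continues with $g_{B^*}$.

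The genuine gap is Case~(2). Your claim that Alice balls of radius at most $\alpha\rho(B)$ inside $B$ ``can only arise after $B$ has been chosen'' is false. Take $\ell$ as in (\ref{eqn:InsertionCond}), so that Alice's move-$\ell$ ball has the radius of the elements of $\Am$. Suppose $A'_\ell\notin\Am$, and $B'_{\ell+1}=g_{B_1}(A'_\ell)\neq B$ meets $B$ in a set containing a ball of radius $\alpha\rho(B)$. Then Alice may put $A'_{\ell+1}$ inside $B$. From then on your $\widetilde g_{B_1}$ follows $h_B$, but the play is not an $h_B$-play from $B$, so nothing gives $\bigcap B'_i\in S^c$.

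Neither of your repairs is available for positional strategies. $\widetilde g_{B_1}$ sees only the current ball. Its radius fixes the move number but not whether $B'_{\ell+1}=B$, and the same ball $A\subset B$ occurs in plays of both kinds. So ``send balls not descending from $B$ to $g_{B_1}$'' does not define a function on $\Omega(B_1)$. The premise of your second repair is also false, though the failing subcase is harmless: $g_{B_1}(A'_\ell)$ can equal $B$ when $A'_\ell\supset B$ but $A'_\ell\not\subset\widetilde B_k$.

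The paper does not try to exclude this situation. It keeps $g_{B^*}$ on all of $\Omega(B)$ and takes the first $N$ with $A'_N\in\Omega(B)$. It then concludes $\bigcap_{i\ge N}B'_i\in S^c$ because Bob follows $g_{B^*}$ from there on. Be aware that this inference needs $g_{B^*}$ to win from every ball of $\Omega(B)$ through which a play can enter. That includes balls not reachable by $g_{B^*}$-plays from $B^*$, on which the definition of a winning strategy with initial ball $B^*$ places no constraint. So the missing ingredient, in your write-up and in that step alike, is one of the following:
\begin{itemize}
\item an inner strategy on $\Omega(B)$ that is winning from every starting ball in $\Omega(B)$, whose existence must itself be proved; or
\item history-dependent strategies. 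For these your intended rule (after move $\ell$, follow $h_B$ exactly when $B'_{\ell+1}=B$, otherwise $g_{B_1}$) is well defined and plainly winning. The price is leaving the positional convention of Remark~\ref{rem:PositionalWinning}, on which the paper's restriction and induction machinery rests.
\end{itemize}
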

\begin{proof}  Since $B_1 \supset B_{2} \supset \cdots \supset B_{k}$ and $B^* \supset B$ are winning for Bob, the $B_n$ are chosen according to $g_{B_1}$, which is an $(\alpha, \beta; S)$-winning strategy for Bob with initial ball $B_1$, and $B$ is chosen according to $g_{B^*}$, which is an $(\alpha, \beta; S)$-winning strategy for Bob with initial ball $B^*$.  Define the set \[\Am := \{A \in  \widetilde{B}_k^{\alpha (\alpha \beta)^{\widetilde{s}_k}}: B \subset A\}.\]  Note that the elements of $\Am$ have the same radius, and this radius is strictly larger than the radius of any element of $\Omega(B)$.  Thus, $\Am \cap \Omega(B) = \emptyset$.  Define a function \begin{align*} \widetilde{g}_{B_1}( A) := \begin{cases} B & \text{ if } A  \in \Am\\
g_{B^*}(A) & \text{ if } A \in \Omega(B) \\
g_{B_1}( A) & \text{ if } A \in \Omega(B_1) \backslash \{\Am \cup \Omega(B)\}
 \end{cases}.
  \end{align*}  We claim that $\widetilde{g}_{B_1}$ is an $(\alpha, \beta; S)$-winning strategy for Bob with initial ball $B_1$.  We now prove the claim.  First note that, by (\ref{eqn:AppendExp2}, \ref{eqn:AppendExp2MIsOne}),  $\widetilde{g}_{B_1}(A) \in A^\beta$ for all $A \in  \Am$ and, thus, for all $A \in \Omega(B_1)$.  Now let  $B'_1 := B_1$, let $B'_1 \supset B'_{2} \supset \cdots$ be a play for an $(\alpha, \beta; S)$-game, and let \[B'_1 \supset A'_1 \supset  B'_{2} \supset A'_2 \supset \cdots\] be its associated dyadic play for which the $A'_1, A'_2, A'_3, \cdots$ and $B'_2, B'_3, B'_4, \cdots $ are chosen by  \begin{align*} \begin{cases}  A'_n \in (B'_n)^\alpha & \textrm{ for } n \in \NN \\
B'_{n+1} = \widetilde{g}_{B_1}(A'_n) \in (A'_n)^{\beta} & \textrm{ for } n \in \NN  \end{cases}.
  \end{align*}  There are two cases to consider.
  
 \begin{enumerate}
\item There exists an $n \in \NN$ for which $A'_n \in \Am$.
\item There does not exist an $n \in \NN$ for which $A'_n \in \Am$.
\end{enumerate}

Let us consider Case~(1).  Since every element of $\Am$ has the same radius, we have that $n$ is unique.  The condition of Case~(1) gives that $B'_{n+1} = B$.  Let $N \geq n+1$.  Since $A'_N \in \Omega(B)$, we have that $\widetilde{g}_{B_1}( A'_{N}) = g_{B^*}( A'_{N})$ and, thus, that $\bigcap_{N=n+1}^\infty B'_n \in S^c$.  Hence, we have that \begin{align}\label{eqn:lemm:AppendWinningPlay}
  \bigcap_{m=1}^\infty B'_m \in S^c. \end{align}  This completes Case~(1).
 
Now let us consider Case~(2).  By the rules of the Schmidt game, there exists a unique $M \in \NN$ such that $\rho(A'_M)$ is equal to the radius of the elements of $\Am$.  Consequently, we must have that $A'_M \not\supset B$ or $A'_M \not\subset\widetilde{B}_k$ or both and, thus, $\widetilde{g}_{B_1}( A'_{M}) = g_{B_1}( A'_{M})$.  If there exists an integer $\widetilde{N} \geq M+1$ such that $A'_{\widetilde{N}} \in \Omega(B)$, then let $N \geq M+1$ be the least such.  Consequently, since $A_i \subset A_N$ for all $i \geq N$, we have that $\widetilde{g}_{B_1}( A'_{i}) = g_{B^*}( A'_{i})$.  Thus, we have $\bigcap_{i=N}^\infty B'_i \in S^c$, which yields (\ref{eqn:lemm:AppendWinningPlay}).

Otherwise, there exists no integer $\widetilde{N} \geq M+1$ such that $A'_{\widetilde{N}} \in \Omega(B)$.  Thus, $A'_m \notin \Omega(B)$ for all $m \in \NN$.  Therefore, $\widetilde{g}_{B_1}( A'_{m}) = g_{B_1}( A'_{m})$ for all $m \in \NN$ and (\ref{eqn:lemm:AppendWinningPlay}) holds.  This completes Case~(2) and the claim.

We now claim that the $\widetilde{B}_n$, for $n =1, \cdots, k+1$, are chosen according to $\widetilde{g}_{B_1}$.  The proof of this claim is similar to the proof of the corresponding claim in Lemma~\ref{lemm:InducedWinningPlayInsert}.  This yields the desired result and completes the proof of the lemma.  \end{proof}

\begin{rema}\label{rmk:DefnAdaptedWinningStrat}
Given a $g_{B_1}$, i.e. an $(\alpha, \beta; S)$-winning strategy for Bob with initial ball $B_1$, the strategy $\widetilde{g}_{B_1}$ constructed from it in the proof of Lemma~\ref{lemm:InducedWinningPlayInsert} is called an {\em adapted $(\alpha, \beta; S)$-winning strategy for Bob (with initial ball $B_1$)}.  Note that, for $B$ insertable, it does not matter how $B$ is chosen (whether according to a winning strategy for Bob or not).  Also note that, if Bob were to apply $\widetilde{g}_{B_1}$ to choose $\widetilde{B}_{k+2}$, it would be the same as applying $g_{B_1}$  to choose $\widetilde{B}_{k+2}$.

For $B$ appendable, we also have $g_{B^*}$, i.e. the $(\alpha, \beta; S)$-winning strategy for Bob with initial ball $B^*$, and the strategy $\widetilde{g}_{B_1}$ constructed from $g_{B_1}$ and $g_{B^*}$ in the proof of Lemma~\ref{lemm:InducedWinningPlayAppend} is also called an {\em adapted $(\alpha, \beta; S)$-winning strategy for Bob (with initial ball $B_1$)}.  Note that, if Bob were to apply $\widetilde{g}_{B_1}$  to choose $\widetilde{B}_{k+2}$, it would be the same as applying $g_{B^*}$ to choose $\widetilde{B}_{k+2}$.  

\end{rema}

 \begin{rema}\label{rmk:DistinctPlaysWhenDiffBallsSameRad}  If $B \neq B'$ are closed balls with $\rho(B) = \rho(B')$ such that both are insertable for a $k$-finite play, then the $B$-induced and $B'$-induced $k+1$-finite plays are distinct.  Let $\{s^*_n\}:= \{s^*_n\}_{n=1}^\infty$ be any acceleration sequence.  A similar remark applies for $B$ and $B'$ appendable.

\end{rema}

\section{Tessellations}\label{sec:Tesselations}  Let $d \in \NN$ and $0< \alpha<1$, $0< \beta <1$.  In this section, we develop the tools, Propositions~\ref{prop:RepsMinTess} and~\ref{prop:RepsMaxTess}, that we need to compute an upper bound for the Hausdorff dimension of an $(\alpha, \beta)$-ubiquitously losing set of $\RR^d$ for certain $\alpha$ and $\beta$.  We do this in Theorem~\ref{thm:UpBndHDLosingSets} for the supremum norm $\| \cdot \|_\infty$ on $\RR^d$ because this norm yields families of closed balls that tessellate $\RR^d$, and such tessellations (defined below) are convenient for computing an upper bound on the Hausdorff dimension.  The general case of any norm on $\RR^d$ follows by applying Proposition~\ref{prop:BilipschitzInvar2} to Theorem~\ref{thm:UpBndHDLosingSets}.

Note that, for $d=1$, the proof of our main tools, Propositions~\ref{prop:RepsMinTess} and~\ref{prop:RepsMaxTess}, are simpler and, heuristically, can be thought of as sliding a closed interval along an equally-spaced partition on the real line and counting the number of partition elements that the closed interval intersects.  Readers who are interested only in dimension $d=1$ may wish to omit this section, Section~\ref{sec:Tesselations}, except for some definitions and the statements of  Propositions~\ref{prop:RepsMinTess} and~\ref{prop:RepsMaxTess}, and continue directly to Section~\ref{sec:UpBndHDLosingSets}.

\subsection{Elementary properties of tessellations}  Let $\boldsymbol{x}:=(x_1, \cdots, x_d)^t\in \RR^d$, and define the projection maps \begin{align}\label{eqn:DefnProjMaps}
 \pi_i(\boldsymbol{x})= x_i  \end{align} for integers $i$ such that $1 \leq i \leq d$.  Next, consider the coordinate axes in $\RR^d$.  Let us denote them by $1$-axis, $\cdots,$ $d$-axis.  Thus, for an integer $ 1 \leq i \leq d$, if, for $x \in \RR$, we let $(0, \cdots, 0, x, 0, \cdots 0)^t$ denote the element of $\RR^d$ with $x$ in the $i$-th component and zeros in all other components, we have that the $i$-axis is the subset of $\RR^d$ given by $\{(0, \cdots, 0, x, 0, \cdots 0)^t: x \in \RR\}$.  Recall that a {\em $d$-dimensional hypercube whose edges are parallel to the coordinate axes in $\RR^d$} is the Cartesian product $\prod_{i=1}^d [a_i, b_i]$ where $a_i < b_i$ are real numbers such that $b_i - a_i$ are the same for all integers $1\leq i \leq d$.

Let $R>0$ and $B(\boldsymbol{x}, R)$ denote the closed ball given by $\|\cdot\|_\infty$ of radius $R$ and center $\boldsymbol{x}$.  In coordinates, we have \[B(\boldsymbol{x}, R) = [-R+ x_1, x_1 +R] \times \cdots \times [-R+x_d, x_d+R].\]  Now, for all integers $ 1 \leq i \leq d$, let $a_i < b_i$ be real numbers such that $b_i - a_i = 2R$.  Then the Cartesian product $\prod_{i=1}^d [a_i, b_i]$ is a closed ball given by $\|\cdot\|_\infty$ of radius $R$ and center $(a_1 +R, \cdots, a_d + R)^t$.  Consequently, a $d$-dimensional hypercube with edges parallel to the coordinate axes and edge length $2R$ is a closed ball given by $\|\cdot\|_\infty$ of radius $R$, and vice versa.   

Note that any closed ball given by $\|\cdot\|_\infty$ of radius $R$ can be translated to another such closed ball by applying the translation of $\RR^d$ that takes the center of the first ball to the center of the second.  Finally, we note that the interior of a closed ball given by $\|\cdot\|_\infty$ is an open ball given by $\|\cdot\|_\infty$:\begin{align}\label{eqn:defnInterior}
 \inte\left(\prod_{i=1}^d [a_i, b_i]\right) = \prod_{i=1}^d (a_i, b_i). \end{align}

Let $\boldsymbol{y}:=(y_1, \cdots, y_d)^t\in B(\boldsymbol{x}, R).$  The {\em chord (of $B(\boldsymbol{x}, R)$) parallel to the $i$-axis (through the point $\boldsymbol{y}$)}, which we denote by $\sL_i(\boldsymbol{y}) := \sL_i(\boldsymbol{y}, B(\boldsymbol{x}, R))$, is the set \[\sL_i(\boldsymbol{y}):= \sL_i(\boldsymbol{y}, B(\boldsymbol{x}, R)):=\{y_1\} \times \cdots \times \{y_{i-1}\} \times [-R+x_i, x_i+R] \times \{y_{i+1}\} \times \cdots \times \{y_{d}\}\] and the points \begin{align}\label{eqn:EndPointsChordDefn}
 \boldsymbol{e}\left(\sL_i(\boldsymbol{y})\right)&:=\boldsymbol{e}\left(\sL_i(\boldsymbol{y}), B(\boldsymbol{x}, R)\right):=(y_1, \cdots, y_{i-1}, x_i-R, y_{i+1}, \cdots, y_{d}), \\\nonumber  \boldsymbol{e'}\left(\sL_i(\boldsymbol{y})\right)&:= \boldsymbol{e'}\left(\sL_i(\boldsymbol{y}),B(\boldsymbol{x}, R)\right):=(y_1, \cdots, y_{i-1}, x_i+R, y_{i+1}, \cdots, y_{d}) \end{align} are called the {\em endpoints of $\sL_i(\boldsymbol{y})$}.  Thus, we have that any chord parallel to any axis through any point of $B(\boldsymbol{x}, R)$ has length $2R$.  In particular, the length of any edge is $2R$. 

We define an equivalence relation $\sim_i$ on $B(\boldsymbol{x}, R)$ as follows:  $\boldsymbol{y} \sim_i \boldsymbol{z}$ if and only if $\sL_i(\boldsymbol{y})= \sL_i(\boldsymbol{z})$.  The verification that $\sim_i$ is indeed an equivalence relation is immediate from the definition of a chord of $B(\boldsymbol{x}, R)$ parallel to the $i$-axis.  (Note that, in coordinates, the condition that $\sL_i(\boldsymbol{y})= \sL_i(\boldsymbol{z})$ is equivalent to the condition that $y_1 = z_1, \cdots, y_{i-1} = z_{i-1}, y_{i+1} = z_{i+1}, \cdots, y_{d} = z_d$ and $y_i, z_i \in [-R + x_i, x_i +R]$.)

\begin{lemm}  Let $d \in \NN$, $i$ be an integer such that $1 \leq i \leq d$, $R>0$,  and $\boldsymbol{x}\in \RR^d$.  Then, for each $i$, we have that the chords of $B(\boldsymbol{x}, R)$ parallel to the $i$-axis partition $B(\boldsymbol{x}, R)$.
\end{lemm}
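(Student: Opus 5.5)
We need to show two things: that every point of $B(\boldsymbol{x}, R)$ lies in some chord parallel to the $i$-axis, and that any two such chords are either equal or disjoint. Both follow from the coordinate description of $\sL_i(\boldsymbol{y})$.

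\emph{Covering.} Let $\boldsymbol{y} \in B(\boldsymbol{x}, R)$. Then $y_i \in [-R+x_i, x_i+R]$. Since all other coordinates of a point of $\sL_i(\boldsymbol{y})$ are fixed equal to those of $\boldsymbol{y}$, we get $\boldsymbol{y} \in \sL_i(\boldsymbol{y})$. Hence the chords cover the ball.

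\emph{Containment.} Each chord $\sL_i(\boldsymbol{y})$ is a subset of $B(\boldsymbol{x}, R)$. Indeed, for $j \neq i$ its $j$-th coordinate is $y_j \in [-R+x_j, x_j+R]$, because $\boldsymbol{y}$ lies in the ball. Its $i$-th coordinate ranges over exactly $[-R+x_i, x_i+R]$. So we obtain a family of subsets whose union is $B(\boldsymbol{x}, R)$.

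\emph{Disjointness.} Suppose $\boldsymbol{w} \in \sL_i(\boldsymbol{y}) \cap \sL_i(\boldsymbol{z})$. Then for every $j \neq i$ we have $w_j = y_j$ and $w_j = z_j$, so $y_j = z_j$. By the coordinate characterization noted after the definition of $\sim_i$, this gives $\sL_i(\boldsymbol{y}) = \sL_i(\boldsymbol{z})$. Thus distinct chords are disjoint.

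Equivalently, the chords are exactly the equivalence classes of $\sim_i$: the class of $\boldsymbol{y}$ is $\{\boldsymbol{z} \in B(\boldsymbol{x},R) : z_j = y_j \ \forall j \neq i\}$, which equals $\sL_i(\boldsymbol{y})$ by the containment step. The equivalence classes of an equivalence relation partition the set, so the lemma follows. Each chord is nonempty, since it has length $2R>0$.

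There is no real obstacle here. The only point needing care is checking that the class of $\boldsymbol{y}$ under $\sim_i$ coincides with the set $\sL_i(\boldsymbol{y})$, and this is immediate from the definitions.
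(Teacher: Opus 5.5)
Your proof is correct and takes essentially the same approach as the paper, whose entire proof is the observation that the chords parallel to the $i$-axis are the equivalence classes of $\sim_i$. Your covering, containment and disjointness checks also spell out a step the paper leaves implicit: that the $\sim_i$-class of $\boldsymbol{y}$ is exactly $\sL_i(\boldsymbol{y})$.
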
 
\begin{proof}
 This follows because $\sim_i$ is an equivalence relation on $B(\boldsymbol{x}, R)$ for each $i$.
\end{proof}

\begin{lemm}\label{lemm:ChordsThruInteriorPointsChar}  Let $d \in \NN$, $i$ be an integer such that $1 \leq i \leq d$, $R>0$, and $\boldsymbol{x}\in \RR^d$.  Let $\boldsymbol{y} \in \inte(B(\boldsymbol{x}, R))$.  Then \begin{itemize}
\item  $\sL_i(\boldsymbol{y}) \backslash \{\boldsymbol{e}\left(\sL_i(\boldsymbol{y})\right), \boldsymbol{e'}\left(\sL_i(\boldsymbol{y})\right)\} \subset \inte(B(\boldsymbol{x}, R))$ and 
\item $ \boldsymbol{e}\left(\sL_i(\boldsymbol{y})\right), \boldsymbol{e'}\left(\sL_i(\boldsymbol{y})\right) \in B(\boldsymbol{x}, R) \backslash \inte(B(\boldsymbol{x}, R))$.
\end{itemize}
\end{lemm}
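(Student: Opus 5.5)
The plan is to work directly in coordinates, using the description of the closed ball $B(\boldsymbol{x}, R) = \prod_{j=1}^d [x_j - R, x_j + R]$ and its interior from (\ref{eqn:defnInterior}), namely $\inte(B(\boldsymbol{x}, R)) = \prod_{j=1}^d (x_j - R, x_j + R)$. Since $\boldsymbol{y} \in \inte(B(\boldsymbol{x}, R))$, we have $y_j \in (x_j - R, x_j + R)$ for every $j$, and in particular for every $j \neq i$. This is the only input needed.

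For the first item, I take a point $\boldsymbol{z} \in \sL_i(\boldsymbol{y})$ different from both endpoints. By the definition of the chord, $z_j = y_j$ for $j \neq i$ and $z_i \in [x_i - R, x_i + R]$. The endpoints (\ref{eqn:EndPointsChordDefn}) are exactly the points of the chord whose $i$-th coordinate is $x_i \mp R$, so excluding them forces $z_i \in (x_i - R, x_i + R)$. Together with $z_j = y_j \in (x_j - R, x_j + R)$ for $j \neq i$, this gives $\boldsymbol{z} \in \prod_j (x_j - R, x_j + R) = \inte(B(\boldsymbol{x}, R))$.

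For the second item, each endpoint lies in $B(\boldsymbol{x}, R)$, since its $i$-th coordinate is $x_i \pm R \in [x_i - R, x_i + R]$ and its other coordinates are $y_j \in [x_j - R, x_j + R]$. However, its $i$-th coordinate is not in the open interval $(x_i - R, x_i + R)$, so by (\ref{eqn:defnInterior}) it is not in the interior.

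There is no real obstacle. The only point that needs care is justifying the interior formula (\ref{eqn:defnInterior}), which the paper states as a fact: the interior of a finite product of closed intervals is the product of the open intervals, because the sup-norm topology on $\RR^d$ is the product topology.
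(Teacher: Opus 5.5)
Your proof is correct and follows the paper's own argument. Like the paper, you write the chord minus its endpoints and the interior in coordinates via (\ref{eqn:defnInterior}), then read off both items from $y_j \in (x_j - R, x_j + R)$ for $j \neq i$ and $x_i \pm R \notin (x_i - R, x_i + R)$.
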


\begin{proof}
 Using coordinates, we have \begin{align*} \sL_i(\boldsymbol{y}) \backslash & \{\boldsymbol{e}\left(\sL_i(\boldsymbol{y})\right), \boldsymbol{e'}\left(\sL_i(\boldsymbol{y})\right)\}  \\ &=  \{y_1\} \times \cdots \times \{y_{i-1}\} \times (-R+x_i, x_i+R) \times \{y_{i+1}\} \times \cdots \times \{y_{d}\}\end{align*} and \begin{align*}  \inte(B(\boldsymbol{x}, R)) = (-R+ x_1, x_1 +R) \times \cdots \times (-R+x_d, x_d+R).
  \end{align*}  Since $\boldsymbol{y} \in \inte(B(\boldsymbol{x}, R))$, the first result follows.  The second result follows from the definitions.

\end{proof}

Let $M \in \NN$ and $x$ be a real number.  The {\em $M$-tessellation of the closed interval $[-R+x, x+R]$} is the family of closed intervals\begin{align*} 
\left\{B[m, x]:=\left[\left(\frac {2 (m-1) } M-1\right)R+x, x+ \left(\frac {2 m} M-1\right)R \right] : m = 1, \cdots, M \right\},  \end{align*} where $\rho(B[m, x])=R/M$ for all $m = 1, \cdots, M$.  (Recall that $\rho(\cdot)$ denotes the radius of a ball.)  Furthermore, the {\em $M$-tessellation of $B(\boldsymbol{x}, R)$} is the family \begin{align}\label{eqn:MTesselationDefn}
\left\{B[(m_1, \cdots, m_d), \boldsymbol{x}] :=\prod_{i=1}^d\left[\left(\frac {2 (m_i-1) } M-1\right)R+x_{i}, x_{i}+ \left(\frac {2 m_i} M-1\right)R \right] : m_i = 1, \cdots, M \textrm{ for all } i \right\} \end{align} comprised of $M^d$ closed balls $B[(m_1, \cdots, m_d), \boldsymbol{x}]$ given by $\| \cdot \|_\infty$ such that $\rho(B[(m_1, \cdots, m_d), \boldsymbol{x}])=R/M$ for all $i = 1, \cdots, d$ and $m_i= 1, \cdots, M$.  When $d=1$, these two constructions agree.  \begin{rema}\label{rema:MTessUnique}

For any $d \in \NN$, the $M$-tessellation of $B(\boldsymbol{x}, R)$ is, by construction, unique.

\end{rema}

\begin{lemm}\label{lemm:TesselBallFund} Let $d, M \in \NN$, $R>0$,  $\boldsymbol{x}:=(x_1, \cdots, x_d)^t\in \RR^d$, and $\B$ be the $M$-tessellation of $B(\boldsymbol{x}, R)$.  Then the following hold. \begin{itemize}
\item $\bigcup_{B \in \B} B = B(\boldsymbol{x}, R)$ and,
\item $\inte(B) \cap B' = \emptyset $ for any $B, B' \in \B$ such that $B \neq B'$.

\end{itemize} 
\end{lemm}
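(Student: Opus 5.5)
The plan is to reduce both claims to the one-dimensional $M$-tessellation of an interval and then take Cartesian products coordinate by coordinate. Write $B(\boldsymbol{x}, R)=\prod_{i=1}^d [x_i-R, x_i+R]$. By (\ref{eqn:MTesselationDefn}), each $B[(m_1,\cdots,m_d),\boldsymbol{x}]$ is the product $\prod_{i=1}^d B[m_i, x_i]$ of elements of the $M$-tessellations of the intervals $[x_i-R,x_i+R]$.

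\emph{One-dimensional covering.} Fix $x\in\RR$. The endpoints $t_m := x+\left(\frac{2m}{M}-1\right)R$ for $m=0,\cdots,M$ form a strictly increasing sequence. We have $t_0 = x-R$, $t_M = x+R$, and $B[m,x]=[t_{m-1},t_m]$. Hence $\bigcup_{m=1}^M B[m,x]=[x-R,x+R]$, as a telescoping union of adjacent closed intervals. For $m\neq m'$, say $m<m'$, we have $\inte(B[m,x])=(t_{m-1},t_m)$ and $B[m',x]=[t_{m'-1},t_{m'}]$ with $t_m\le t_{m'-1}$. So these two sets are disjoint.

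\emph{Covering in $\RR^d$.} Let $\boldsymbol{y}\in B(\boldsymbol{x},R)$. Each coordinate satisfies $y_i\in[x_i-R,x_i+R]$, so by the one-dimensional step there is some $m_i$ with $y_i\in B[m_i,x_i]$. Then $\boldsymbol{y}\in B[(m_1,\cdots,m_d),\boldsymbol{x}]$. The reverse inclusion is immediate, since each factor $B[m_i,x_i]$ lies in $[x_i-R,x_i+R]$.

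\emph{Interiors in $\RR^d$.} By (\ref{eqn:defnInterior}), $\inte(B)=\prod_i \inte(B[m_i,x_i])$. If $B\neq B'$ with indices $(m_i)\neq(m'_i)$, choose a coordinate $i$ with $m_i\neq m'_i$. Any point of $\inte(B)\cap B'$ would have its $i$-th coordinate in $\inte(B[m_i,x_i])\cap B[m'_i,x_i]=\emptyset$, which is impossible. The one point needing care is that distinct balls correspond to distinct index tuples; this holds because the map from tuples to balls is injective, as the corner $(t_{m_1-1},\cdots)$ determines the tuple. There is no real obstacle here; the argument is just bookkeeping.
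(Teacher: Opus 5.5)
Your proof is correct and follows the same route as the paper. The covering is read off coordinatewise from the construction, where you spell out the telescoping union that the paper leaves implicit. Disjointness comes from choosing a coordinate $i$ with $m_i \neq m'_i$ and using (\ref{eqn:defnInterior}) to see that the open $i$-th factor of $B$ misses the closed $i$-th factor of $B'$.

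One small remark: the point you flag as needing care is automatic. Equal index tuples give equal balls by definition, so $B \neq B'$ already forces distinct tuples; injectivity of the tuple-to-ball map is not needed.
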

\begin{proof}
 The first assertion follows by the construction of the $M$-tessellation $\B$.  Let $B= B[(m_1, \cdots, m_d), \boldsymbol{x}]$ and $B'=B[(m'_1, \cdots, m'_d), \boldsymbol{x}]$.  Since $B$ and $B'$ are distinct, there exists an $i$ such that the $m_i \neq m'_i$.  Thus, we have that \begin{align*}&
\left(\left(\frac {2 (m_i-1) } M-1\right)R+x_{i}, x_{i}+ \left(\frac {2 m_i} M-1\right)R \right) \cap  \\ &\quad \quad \left[\left(\frac {2 (m'_i-1) } M-1\right)R+x_{i}, x_{i}+ \left(\frac {2 m'_i} M-1\right)R \right] = \emptyset \end{align*}  By (\ref{eqn:defnInterior}), we have that $\inte(B) \cap B' = \emptyset$.
\end{proof}

Let $S \subset \RR^d$ and $\boldsymbol{a}:=(a_1, \cdots, a_d)^t\in \RR^d$.  We use the notation $S +  \boldsymbol{a}$ for translation by $\boldsymbol{a}$:  namely, we have \begin{align}\label{eqn:TranslationEuclid}
 S +  \boldsymbol{a}:= \{\boldsymbol{x} + \boldsymbol{a}: \boldsymbol{x} \in S\}.
 \end{align} \begin{lemm}\label{lemm:TesselBallTrans} Let $d, M \in \NN$, $R>0$,  $\boldsymbol{x}, \boldsymbol{a} \in \RR^d$,  $\B$ be the $M$-tessellation of $B(\boldsymbol{x}, R)$, and $\B_{ \boldsymbol{a}}$ be the $M$-tessellation of $B(\boldsymbol{x}+ \boldsymbol{a}, R)$.  Then $\B_{ \boldsymbol{a}} = \{B + \boldsymbol{a} : B \in \B\}$.\end{lemm}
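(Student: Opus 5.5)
The plan is to compare the two families element by element, using the explicit formula (\ref{eqn:MTesselationDefn}) together with the uniqueness in Remark~\ref{rema:MTessUnique}. Write $\boldsymbol{x} = (x_1, \cdots, x_d)^t$ and $\boldsymbol{a} = (a_1, \cdots, a_d)^t$. Then $B(\boldsymbol{x}+\boldsymbol{a}, R)$ has center coordinates $x_i + a_i$. By definition, $\B_{\boldsymbol{a}}$ consists of the balls
\[B[(m_1, \cdots, m_d), \boldsymbol{x}+\boldsymbol{a}] = \prod_{i=1}^d\left[\left(\frac {2 (m_i-1) } M-1\right)R+x_{i}+a_i,\; x_{i}+a_i+ \left(\frac {2 m_i} M-1\right)R \right],\]
with $m_i \in \{1, \cdots, M\}$ for each $i$.

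The first step is to observe that translating a Cartesian product of intervals by $\boldsymbol{a}$ translates each factor: $\prod_{i=1}^d [c_i, e_i] + \boldsymbol{a} = \prod_{i=1}^d [c_i + a_i, e_i + a_i]$. This follows directly from (\ref{eqn:TranslationEuclid}) and the coordinatewise description of points. Applying this to $B[(m_1, \cdots, m_d), \boldsymbol{x}]$ gives exactly the displayed product above. Hence
\[B[(m_1, \cdots, m_d), \boldsymbol{x}] + \boldsymbol{a} = B[(m_1, \cdots, m_d), \boldsymbol{x}+\boldsymbol{a}]\]
for every multi-index $(m_1, \cdots, m_d)$.

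The second step is to conclude equality of the two families. Both $\B$ and $\B_{\boldsymbol{a}}$ are indexed by the same set $\{1, \cdots, M\}^d$. The map $B[(m_i), \boldsymbol{x}] \mapsto B[(m_i), \boldsymbol{x}] + \boldsymbol{a}$ therefore sends the indexed family $\B$ onto the indexed family $\B_{\boldsymbol{a}}$, so $\{B + \boldsymbol{a} : B \in \B\} = \B_{\boldsymbol{a}}$ as sets.

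There is no real obstacle here. The only care needed is to note that the tessellation is defined by the explicit formula, so uniqueness is not used substantively. The case $d=1$ is the same computation with a single factor.
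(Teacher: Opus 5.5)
Your proposal is correct and follows essentially the same route as the paper: it verifies $B[(m_1, \ldots, m_d), \boldsymbol{x}] + \boldsymbol{a} = B[(m_1, \ldots, m_d), \boldsymbol{x}+\boldsymbol{a}]$ for each multi-index directly from the defining formula of an $M$-tessellation. Your remark that both families are indexed by the same set $\{1, \ldots, M\}^d$ simply spells out the set equality that the paper leaves implicit.
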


\begin{proof}
 Let $\boldsymbol{x}:=(x_1, \cdots, x_d)^t$ and $\boldsymbol{a}:=(a_1, \cdots, a_d)^t$.  Consider $B[(m_1, \cdots, m_d), \boldsymbol{x}] \in \B$.  Then \[B[(m_1, \cdots, m_d), \boldsymbol{x}]+ \boldsymbol{a} = B[(m_1, \cdots, m_d), \boldsymbol{x} +  \boldsymbol{a} ]\] by the construction of an $M$-tessellation.
\end{proof}

We say a family $\B$ of closed balls, each given by $\| \cdot \|_\infty$ and with the same radius $R >0$, is a {\em tessellation} if there exists a closed ball $B \subset \RR^d$ given by $\| \cdot \|_\infty$ such that $\B$ is an $M$-tessellation of $B$ for some $M \in \NN$.\footnote{In the literature (see~\cite[Definition~2.4.1]{HS24} for example), tessellations can be families of more general polytopes, not just hypercubes.  However, we will not use these more general tessellations in this paper.}  By Lemma~\ref{lemm:TesselBallFund}, the closed ball $B$ is unique.

Also, we will need to tessellate all of $\RR^d$.  First, for a given $\boldsymbol{x} \in \RR^d$, we construct the following countably infinite family of closed balls each given by $\| \cdot \|_\infty$ and of radius $R>0$:  \begin{align*} &\B_{\RR^d}(\boldsymbol{x}, R) :=\\& \quad
\left\{B[(m_1, \cdots, m_d), \boldsymbol{x}] :=\prod_{i=1}^d\left[{(2 m_i-1) }R + x_i, x_i+ (2 m_i+1)R \right] : (m_1, \cdots, m_d) \in \ZZ^d \right\}.\end{align*} 

\begin{rema}\label{rmk:UniqCompleteTess}  Note that, for a given $\boldsymbol{x} \in \RR^d$ and $R>0$, the family $\B_{\RR^d}(\boldsymbol{x}, R)$ is uniquely determined by construction.  
\end{rema}

\begin{lemm}\label{lemm:TesselEuclidFund} Let $d \in \NN$, $R>0$,  $\boldsymbol{x}:=(x_1, \cdots, x_d)^t\in \RR^d$, and $\B:= \B_{\RR^d}(\boldsymbol{x}, R)$.  Then the following hold. \begin{itemize}
\item $\bigcup_{B \in \B} B = \RR^d$ and,
\item $\inte(B) \cap B' = \emptyset $ for any $B, B' \in \B$ such that $B \neq B'$.

\end{itemize} 
\end{lemm}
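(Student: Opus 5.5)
The plan is to reduce both assertions to the one-dimensional case coordinate by coordinate, in the same way as the proof of Lemma~\ref{lemm:TesselBallFund}. Here $\B_{\RR^d}(\boldsymbol{x}, R)$ is the Cartesian product over $i$ of the families of intervals $[(2m_i-1)R + x_i, x_i + (2m_i+1)R]$ with $m_i \in \ZZ$. So everything comes down to showing that, for fixed $i$, these intervals cover $\RR$ and have pairwise disjoint interiors.

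For the covering, I take an arbitrary $\boldsymbol{y} = (y_1, \cdots, y_d)^t \in \RR^d$. For each $i$ I choose
\[m_i := \left\lfloor \frac{y_i - x_i + R}{2R} \right\rfloor \in \ZZ.\]
Then
\[2m_iR \leq y_i - x_i + R < (2m_i+2)R,\]
which gives $(2m_i - 1)R + x_i \leq y_i < x_i + (2m_i+1)R$. Hence $y_i$ lies in the $i$-th factor interval, and so $\boldsymbol{y} \in B[(m_1, \cdots, m_d), \boldsymbol{x}]$. This proves $\bigcup_{B \in \B} B = \RR^d$. The reverse inclusion is trivial.

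For the second assertion, let $B = B[(m_1, \cdots, m_d), \boldsymbol{x}]$ and $B' = B[(m'_1, \cdots, m'_d), \boldsymbol{x}]$ be distinct. Then some index $i$ has $m_i \neq m'_i$, and without loss of generality $m_i < m'_i$, so $m_i + 1 \leq m'_i$.
\begin{itemize}
\item The open interval $((2m_i-1)R + x_i, x_i + (2m_i+1)R)$ has right endpoint $x_i + (2m_i+1)R$.
\item The closed interval $[(2m'_i-1)R + x_i, x_i + (2m'_i+1)R]$ has left endpoint $(2m'_i-1)R + x_i \geq x_i + (2m_i+1)R$.
\end{itemize}
So these two intervals are disjoint. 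By (\ref{eqn:defnInterior}), $\inte(B)$ is the product of the open factor intervals, so its $i$-th projection misses the $i$-th projection of $B'$, and therefore $\inte(B) \cap B' = \emptyset$.

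There is no real obstacle. The only care needed is choosing the floor correctly so that every point, including shared boundary points, lands in some closed cube, and using the open interior for disjointness.
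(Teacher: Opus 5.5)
Your proof is correct and follows the paper's route: the paper says the covering follows from the construction of $\B$ and that disjointness is proved exactly as in Lemma~\ref{lemm:TesselBallFund}, and your explicit choice $m_i = \lfloor (y_i - x_i + R)/(2R) \rfloor$ just makes the covering step concrete. One small point: $\inte(B) \cap B' = \emptyset$ is not literally symmetric in $B, B'$, so your ``without loss of generality $m_i < m'_i$'' needs the immediate mirror-image check that, when $m_i > m'_i$, the $i$-th open interval of $B$ lies strictly to the right of $x_i + (2m'_i+1)R$.
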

\begin{proof}
 The first assertion follows by the construction of the complete tessellation $\B$.  The proof of the second assertion is analogous to the proof of the second assertion in Lemma~\ref{lemm:TesselBallFund}.
 \end{proof}

\noindent Furthermore, if two of these families share an element in common, then they are the same.
\begin{lemm}\label{lemm:UnifExtrapolCompTess}
 Let $d \in \NN$, $R >0$, and $\boldsymbol{x}, \boldsymbol{x'} \in \RR^d$.  If $\B_{\RR^d}(\boldsymbol{x}, R) \cap \B_{\RR^d}(\boldsymbol{x'}, R) \neq \emptyset$, then $\B_{\RR^d}(\boldsymbol{x}, R) = \B_{\RR^d}(\boldsymbol{x'},R)$.
\end{lemm}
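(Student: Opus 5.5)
The plan is to show that a shared ball forces the two center vectors to differ by a vector in the lattice $2R\ZZ^d$, and then to observe that such a shift of the index set leaves the family unchanged. Everything will be done coordinate by coordinate, since every member of $\B_{\RR^d}(\boldsymbol{x}, R)$ is a product of intervals.

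Suppose $B \in \B_{\RR^d}(\boldsymbol{x}, R) \cap \B_{\RR^d}(\boldsymbol{x'}, R)$. Then there are $(m_1, \cdots, m_d), (m'_1, \cdots, m'_d) \in \ZZ^d$ with
\[B = B[(m_1, \cdots, m_d), \boldsymbol{x}] = B[(m'_1, \cdots, m'_d), \boldsymbol{x'}].\]
Both sides are products of nondegenerate closed intervals. Two such products are equal exactly when their factors agree, which one sees by applying the projections $\pi_i$ from (\ref{eqn:DefnProjMaps}). So for each $i$ we get $[(2m_i-1)R + x_i, x_i + (2m_i+1)R] = [(2m'_i-1)R + x'_i, x'_i + (2m'_i+1)R]$. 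Comparing left endpoints then gives $x'_i = x_i + 2(m_i - m'_i)R$. Writing $k_i := m_i - m'_i \in \ZZ$, this says $\boldsymbol{x'} = \boldsymbol{x} + 2R\boldsymbol{k}$ with $\boldsymbol{k} := (k_1, \cdots, k_d)^t \in \ZZ^d$.

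Next I will show that $\B_{\RR^d}(\boldsymbol{x} + 2R\boldsymbol{k}, R) = \B_{\RR^d}(\boldsymbol{x}, R)$ for every $\boldsymbol{k} \in \ZZ^d$. Directly from the definition, for any $(n_1, \cdots, n_d) \in \ZZ^d$ the $i$-th factor of $B[(n_1, \cdots, n_d), \boldsymbol{x}+2R\boldsymbol{k}]$ is $[(2(n_i+k_i)-1)R + x_i, x_i + (2(n_i+k_i)+1)R]$. Hence
\[B[(n_1, \cdots, n_d), \boldsymbol{x}+2R\boldsymbol{k}] = B[(n_1+k_1, \cdots, n_d+k_d), \boldsymbol{x}].\]
The map $\boldsymbol{n} \mapsto \boldsymbol{n} + \boldsymbol{k}$ is a bijection of $\ZZ^d$, so the two families consist of the same balls.

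I do not expect a real obstacle here. The only point needing care is the claim that equality of the product sets forces equality of each factor interval, and this holds because all the intervals are nonempty, so each factor is recovered as the image under $\pi_i$.
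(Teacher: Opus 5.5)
Your proof is correct and follows essentially the same route as the paper: a common ball gives $x'_i = x_i + 2(m_i - m'_i)R$ coordinatewise, and then shifting the index by $m_i - m'_i$ identifies the two families. You are slightly more explicit than the paper on two points it leaves implicit: that each factor interval is recovered via $\pi_i$, and that the shift is a bijection of $\ZZ^d$.
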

\begin{proof}
Since $\B_{\RR^d}(\boldsymbol{x}, R) \cap \B_{\RR^d}(\boldsymbol{x'}, R) \neq \emptyset$, there is a common element:  $B[(m_1, \cdots, m_d), \boldsymbol{x}]= B[(m'_1, \cdots, m'_d), \boldsymbol{x'}]$.  Consequently, we have that $(2 m_i-1) R + x_i = (2 m'_i-1) R + x'_i$ and, thus, $x'_i = x_i+ 2(m_i - m'_i)R$  for all $i$.  Since $m_i - m'_i \in \ZZ$, we have that $B[(m_1+s_1, \cdots, m_d+s_d), \boldsymbol{x}]= B[(m'_1+s_1, \cdots, m'_d+s_d), \boldsymbol{x'}]$ for all $(s_1, \cdots, s_d) \in \ZZ^d$.
\end{proof}

\begin{lemm}\label{lemm:CompTessExtrapol}
Let $d \in \NN$, $R >0$, and $\boldsymbol{x}, \boldsymbol{x'} \in \RR^d$.  Then $\B_{\RR^d}(\boldsymbol{x}, R) = \B_{\RR^d}(\boldsymbol{x'}, R)$ if and only if $\boldsymbol{x} - \boldsymbol{x'} \in 2R \ZZ^d$.
\end{lemm}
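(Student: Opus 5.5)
The plan is to prove both directions directly from the definition of $\B_{\RR^d}(\boldsymbol{x}, R)$, using Lemma~\ref{lemm:UnifExtrapolCompTess} for the ``if'' direction and a comparison of edge endpoints for the ``only if'' direction.

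For the ``if'' direction, suppose $\boldsymbol{x} - \boldsymbol{x'} = 2R\boldsymbol{k}$ with $\boldsymbol{k} = (k_1, \cdots, k_d)^t \in \ZZ^d$. I will show that the two families share a common element, and then Lemma~\ref{lemm:UnifExtrapolCompTess} gives equality. Concretely, take the element $B[(0, \cdots, 0), \boldsymbol{x}] = \prod_{i=1}^d [x_i - R, x_i + R]$. Since $x_i = x'_i + 2k_iR$, we can write $x_i - R = (2k_i - 1)R + x'_i$ and $x_i + R = x'_i + (2k_i + 1)R$. Hence this ball equals $B[(k_1, \cdots, k_d), \boldsymbol{x'}] \in \B_{\RR^d}(\boldsymbol{x'}, R)$, which is the required common element.

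For the ``only if'' direction, suppose $\B_{\RR^d}(\boldsymbol{x}, R) = \B_{\RR^d}(\boldsymbol{x'}, R)$. Then $B[(0, \cdots, 0), \boldsymbol{x}]$ lies in $\B_{\RR^d}(\boldsymbol{x'}, R)$, so it equals $B[(m'_1, \cdots, m'_d), \boldsymbol{x'}]$ for some $(m'_1, \cdots, m'_d) \in \ZZ^d$. Two products of nondegenerate closed intervals coincide exactly when their factors coincide, which I will check by applying the projection maps $\pi_i$ from (\ref{eqn:DefnProjMaps}). Comparing left endpoints then gives $x_i - R = (2m'_i - 1)R + x'_i$ for each $i$, that is, $x_i - x'_i = 2m'_iR$, so $\boldsymbol{x} - \boldsymbol{x'} \in 2R\ZZ^d$. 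This is the same computation as in the proof of Lemma~\ref{lemm:UnifExtrapolCompTess}.

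I do not expect a genuine obstacle here. The only point that needs care is justifying the equality of the factor intervals from the equality of the boxes, and taking the projection $\pi_i$ of each box, which recovers its $i$-th factor interval, suffices for this.
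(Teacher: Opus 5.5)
Your proposal is correct and matches the paper's proof: the forward direction compares the left endpoints of a common element to get $x_i - x'_i \in 2R\ZZ$, and the reverse direction exhibits a shared element and invokes Lemma~\ref{lemm:UnifExtrapolCompTess}. The differences are cosmetic. In the forward direction you take the specific element $B[(0, \cdots, 0), \boldsymbol{x}]$ where the paper uses a general matching pair of indices, and in the reverse direction you pair $B[(0, \cdots, 0), \boldsymbol{x}]$ with $B[(k_1, \cdots, k_d), \boldsymbol{x'}]$ rather than the mirror-image pairing the paper uses.
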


\begin{proof}
We first prove the forward implication.  Since $\B_{\RR^d}(\boldsymbol{x}, R) = \B_{\RR^d}(\boldsymbol{x'}, R)$, we have \[B[(m_1, \cdots, m_d), \boldsymbol{x}]= B[(m'_1, \cdots, m'_d), \boldsymbol{x'}]\] for some $(m_1, \cdots, m_d), (m'_1, \cdots, m'_d) \in \ZZ^d$.  Consequently, we have  $x_i -x'_i= 2(m'_i - m_i)R \in 2R \ZZ$  for all $i$.  Hence, we have that $\boldsymbol{x} - \boldsymbol{x'}\in 2R \ZZ^d$.
 
Now we prove the reverse implication.  Since $\boldsymbol{x} - \boldsymbol{x'}\in 2R \ZZ^d$ holds, we have, for all $i$, that there exists $\widetilde{m}_i \in \ZZ$ such that $x'_i  = x_i +2\widetilde{m}_iR$ holds.  Consequently, we have that $B[(\widetilde{m}_1, \cdots, \widetilde{m}_d), \boldsymbol{x}] = B[(0, \cdots, 0), \boldsymbol{x'}]$.  The desired result now follows by Lemma~\ref{lemm:UnifExtrapolCompTess}.
\end{proof}

\noindent As an immediate corollary, we obtain, for a given $R>0$, that these families are uniquely determined by a point of the torus $\RR^d/(2R\ZZ^d)$.  Let $\boldsymbol{z} \in \RR^d/(2R\ZZ^d)$.  We call the family $\B_{\RR^d}(\boldsymbol{z}, R)$ a {\em complete tessellation (of $\RR^d$ by closed balls given by $\| \cdot \|_\infty$ of radius $R$)}, and it is uniquely determined by $\boldsymbol{z}$.  We refer to $\RR^d/(2R\ZZ^d)$ as the {\em parameter torus (for $R$)}.

\begin{lemm}\label{lemm:CompTesselBallTrans} Let $d \in \NN$, $R>0$, $\boldsymbol{z} \in\RR^d/(2R\ZZ^d)$, and $\boldsymbol{a} \in \RR^d/(2R\ZZ^d)$.  Then \[\B_{\RR^d}(\boldsymbol{z}+\boldsymbol{a}, R) = \{B + \boldsymbol{a} : B \in \B_{\RR^d}(\boldsymbol{z}, R)\}.\]\end{lemm}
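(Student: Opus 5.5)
The plan is to prove both inclusions by unwinding the definition of a complete tessellation coordinatewise. This is essentially the torus-level analogue of Lemma~\ref{lemm:TesselBallTrans}.

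First, I interpret the statement via representatives. Choose $\boldsymbol{x} \in \RR^d$ representing $\boldsymbol{z}$ and $\boldsymbol{a}' \in \RR^d$ representing $\boldsymbol{a}$. By Lemma~\ref{lemm:CompTessExtrapol}, the family $\B_{\RR^d}(\boldsymbol{x}+\boldsymbol{a}', R)$ depends only on the class of $\boldsymbol{x}+\boldsymbol{a}'$ in $\RR^d/(2R\ZZ^d)$, so the left-hand side is well defined. For the right-hand side, translation by $\boldsymbol{a}$ must be read as translation by a representative $\boldsymbol{a}'$. I will check that a different representative $\boldsymbol{a}'' = \boldsymbol{a}' + 2R\boldsymbol{n}$, with $\boldsymbol{n} \in \ZZ^d$, gives the same family. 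This holds because translating $B[(m_1,\dots,m_d),\boldsymbol{x}]$ by $2R\boldsymbol{n}$ gives $B[(m_1+n_1,\dots,m_d+n_d),\boldsymbol{x}]$, and $(m_i)\mapsto(m_i+n_i)$ is a bijection of $\ZZ^d$. So the right-hand side is also independent of choices.

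Second, I compute directly from the definition. For each $(m_1,\dots,m_d)\in\ZZ^d$,
\[
B[(m_1,\dots,m_d),\boldsymbol{x}] + \boldsymbol{a}' = \prod_{i=1}^d\left[(2m_i-1)R + x_i + a'_i,\ x_i + a'_i + (2m_i+1)R\right] = B[(m_1,\dots,m_d),\boldsymbol{x}+\boldsymbol{a}'].
\]
Letting $(m_1,\dots,m_d)$ range over $\ZZ^d$ gives the equality of the two families of sets, which proves the lemma.

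There is no real obstacle here. The only point requiring care is the well-definedness in the first step, namely that the statement is phrased on the parameter torus while the balls live in $\RR^d$. Lemma~\ref{lemm:CompTessExtrapol}, together with the reindexing bijection on $\ZZ^d$, handles this.
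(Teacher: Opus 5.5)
Your proposal is correct and is essentially the paper's argument: the paper says only that the proof is analogous to Lemma~\ref{lemm:TesselBallTrans}, namely the coordinatewise identity $B[(m_1,\dots,m_d),\boldsymbol{x}]+\boldsymbol{a}=B[(m_1,\dots,m_d),\boldsymbol{x}+\boldsymbol{a}]$ for every $(m_1,\dots,m_d)\in\ZZ^d$. You go slightly further than the paper by checking that both sides are independent of the chosen representatives on the parameter torus (via Lemma~\ref{lemm:CompTessExtrapol} and the reindexing $m_i\mapsto m_i+n_i$), a point the paper leaves implicit.
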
 

\begin{rema}
The sum $\boldsymbol{z}+\boldsymbol{a}$ is understood to be on the parameter torus, namely modulo $2R \ZZ^d$.  The sum $B + \boldsymbol{a}$ is as in (\ref{eqn:TranslationEuclid}).
\end{rema}

\begin{proof}
 The proof is analogous to that of Lemma~\ref{lemm:TesselBallTrans}.
\end{proof}

A {\em subtessellation} of a complete tessellation $\B$ (or tessellation $\B$) is a tessellation $\mS$ such that $\mS \subset \B$.  Note that all the elements in $\mS$ and in $\B$ have the same radius $R>0$.  Furthermore, note that, if $B \in \B$, then $\{B\}$ is a subtessellation of $\B$.  

\begin{lemm}\label{lemm:SubTessOfCompTess}
Let $d \in \NN$, $M \in \NN \cup \{0\}$, $R>0$, and $\boldsymbol{x}:=(x_1, \cdots, x_d)^t \in \RR^d$.   Then \begin{align*}\mS_M&(\boldsymbol{x}, R):= \\ &\left\{
\prod_{i=1}^d\left[{(2 m_i-1) }R + x_i, x_i+ (2 m_i+1)R \right] : (m_1, \cdots, m_d) \in \ZZ^d \cap [-M, M]^d \right\}  \end{align*} is a subtessellation of $\B_{\RR^d}(\boldsymbol{x}, R)$ and the $(2M+1)$-tessellation of $B(\boldsymbol{x}, (2M+1)R)$.
\end{lemm}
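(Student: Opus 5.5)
The argument just compares definitions. I will check the two assertions of Lemma~\ref{lemm:SubTessOfCompTess} separately: first that $\mS_M(\boldsymbol{x}, R)$ equals the $(2M+1)$-tessellation of $B(\boldsymbol{x}, (2M+1)R)$, and then that it is contained in $\B_{\RR^d}(\boldsymbol{x}, R)$. Once $\mS_M(\boldsymbol{x}, R)$ is shown to be a tessellation, the second fact makes it a subtessellation.

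For the first assertion, take $N := 2M+1$ and $R' := NR$ in (\ref{eqn:MTesselationDefn}). The element indexed by $(n_1, \cdots, n_d)$, with $1 \leq n_i \leq N$, has $i$-th factor
\[\left[\left(\frac{2(n_i-1)}{N}-1\right)NR + x_i,\; x_i + \left(\frac{2n_i}{N}-1\right)NR\right] = \left[(2n_i - 2 - N)R + x_i,\; x_i + (2n_i - N)R\right].\]
Substitute $m_i := n_i - M - 1$, so that $n_i \in \{1, \cdots, 2M+1\}$ corresponds bijectively to $m_i \in \{-M, \cdots, M\}$. Then $2n_i - N = 2m_i + 1$ and $2n_i - 2 - N = 2m_i - 1$, so the factor becomes $[(2m_i-1)R + x_i, x_i + (2m_i+1)R]$. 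This is exactly the $i$-th factor in the definition of $\mS_M(\boldsymbol{x}, R)$.

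Taking the product over $i$, the $(2M+1)$-tessellation of $B(\boldsymbol{x}, (2M+1)R)$ coincides as a family with $\mS_M(\boldsymbol{x}, R)$. In particular $\mS_M(\boldsymbol{x}, R)$ is a tessellation in the paper's sense, with ambient ball $B(\boldsymbol{x}, (2M+1)R)$ and parameter $2M+1 \in \NN$; this holds even when $M=0$.

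For the containment, each element of $\mS_M(\boldsymbol{x}, R)$ is by definition $B[(m_1, \cdots, m_d), \boldsymbol{x}] \in \B_{\RR^d}(\boldsymbol{x}, R)$ with $(m_1, \cdots, m_d) \in \ZZ^d \cap [-M,M]^d$, so $\mS_M(\boldsymbol{x}, R) \subset \B_{\RR^d}(\boldsymbol{x}, R)$. Every element has radius $R$, as the definition of subtessellation requires, and this completes the proof.

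The only place needing care is the index shift $n_i \mapsto m_i$ and the endpoint arithmetic; everything else is immediate from the definitions.
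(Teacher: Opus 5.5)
Your proof is correct and follows the paper's route. The paper also reads $\mS_M(\boldsymbol{x}, R) \subset \B_{\RR^d}(\boldsymbol{x}, R)$ directly off the definitions, and identifies $\mS_M(\boldsymbol{x}, R)$ with the $(2M+1)$-tessellation of $B(\boldsymbol{x}, (2M+1)R)$ using the same index shift $m_i \leftrightarrow m_i + M + 1$ and the resulting bijection $\ZZ^d \cap [-M,M]^d \leftrightarrow \NN^d \cap [1, 2M+1]^d$; the only difference is that you do the endpoint computation starting from the tessellation side instead of from $\mS_M(\boldsymbol{x}, R)$.
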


\begin{proof} By construction, we have $\mS_M(\boldsymbol{x}, R) \subset \B_{\RR^d}(\boldsymbol{x}, R)$.  We now show that $\mS_M(\boldsymbol{x}, R)$ is the $(2M+1)$-tessellation of $B(\boldsymbol{x}, (2M+1)R)$ (which is unique by Remark~\ref{rema:MTessUnique}).   Let $B \in \mS_M(\boldsymbol{x}, R)$.  Thus we have \[B = \prod_{i=1}^d\left[{(2 m_i-1) }R + x_i, x_i+ (2 m_i+1)R \right]\] for some $(m_1, \cdots, m_d) \in \ZZ^d \cap [-M, M]^d.$  Setting \begin{align}\label{eqn:SubTessInCompTessIndexBij}
 \widetilde{m_i} = m_i +M +1, \end{align} we have that \begin{align*}
B &= \prod_{i=1}^d\left[{2 (\widetilde{m_i}-1) -2M-1) }R+ x_i, x_i+ (2 \widetilde{m_i}-2M-1)R \right] \\ &= \prod_{i=1}^d\left[\left(\frac{2 (\widetilde{m_i}-1)}{2M+1} -1\right)(2M+1) R+ x_i, x_i+ \left(\frac{2 \widetilde{m_i}}{2M+1}-1\right)(2M+1)R \right].\end{align*}  Therefore, $B$ is an element of the $(2M+1)$-tessellation of $B(\boldsymbol{x}, (2M+1)R)$.

Note that (\ref{eqn:SubTessInCompTessIndexBij}) gives a bijection between $\ZZ^d \cap [-M, M]^d$ and $\NN^d \cap [1, (2M+1)]^d$.  Consequently, the inverse bijection allows us to take an element of the $(2M+1)$-tessellation of $B(\boldsymbol{x}, (2M+1)R)$ into an element of $\mS_M(\boldsymbol{x}, R)$ by allowing us to reverse the above calculation.  This shows that $\mS_M(\boldsymbol{x}, R)$ is the $(2M+1)$-tessellation of $B(\boldsymbol{x}, (2M+1)R)$ and completes the proof of the lemma.

\end{proof}

Let $\B$ be a tessellation.  Then there exists a closed ball $B$ given by $\|\cdot\|$ for which $\B$ is an $M$-tessellation of $B$ for some $M \in \NN$.  Let $B = B(\boldsymbol{x}, R)$ for some $\boldsymbol{x} \in \RR^d$ and $R>0$.  Consequently, $\B$ is the family given in (\ref{eqn:MTesselationDefn}).  If $M$ is odd, then set $m_i = (M+1)/2$ in \[\left[\left(\frac {2 (m_i-1) } M-1\right)R+x_{i}, x_{i}+ \left(\frac {2 m_i} M-1\right)R \right]\] for all $i$ to obtain the interval \[\left[- \frac R M +x_{i}, x_{i}+  \frac R M\right]\] and, thus, the element $B[(0, \cdots, 0), \boldsymbol{x}]$ of the complete tessellation $\B_{\RR^d}(\boldsymbol{x}, R/M)$.  Consequently, comparing (\ref{eqn:MTesselationDefn}) and the definition of $\B_{\RR^d}(\boldsymbol{x}, R/M)$, we have that $\B$ is a subtessellation of $\B_{\RR^d}(\boldsymbol{x}, R/M)$.  We refer to $\B_{\RR^d}(\boldsymbol{x}, R/M)$ as the {\em completion} of the tessellation $\B$.

If $M$ is even, then set $m_i = M/2$ in \[\left[\left(\frac {2 (m_i-1) } M-1\right)R+x_{i}, x_{i}+ \left(\frac {2 m_i} M-1\right)R \right]\] for all $i$ to obtain the interval \[\left[- \frac {2R} M +x_{i}, x_{i}\right]\] and, thus, the element $B[(0, \cdots, 0), \boldsymbol{x}- (R/M, \cdots, R/M)^t]$ of the complete tessellation $\B_{\RR^d}(\boldsymbol{x} - (R/M, \cdots, R/M)^t, R/M)$.  Consequently,  comparing (\ref{eqn:MTesselationDefn}) and the definition of $\B_{\RR^d}(\boldsymbol{x}- (R/M, \cdots, R/M)^t, R/M)$, we have that $\B$ is a subtessellation of $\B_{\RR^d}(\boldsymbol{x}- (R/M, \cdots, R/M)^t, R/M)$.  We refer to $\B_{\RR^d}(\boldsymbol{x}- (R/M, \cdots, R/M)^t, R/M)$ as the {\em completion} of the tessellation $\B$.  Remark~\ref{rmk:UniqCompleteTess} gives that the completion of a tessellation is unique.  

\begin{lemm}\label{lemm:UniqCompletion}
Let $\B$ be a tessellation.  Then $\B$ is a subtessellation of exactly one complete tessellation, namely the completion of $\B$.
\end{lemm}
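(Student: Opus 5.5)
The plan has two parts: existence, which the preceding paragraphs already supply, and uniqueness, which follows from Lemma~\ref{lemm:UnifExtrapolCompTess}.

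\emph{Existence.} Write $\B$ as the $M$-tessellation of some $B(\boldsymbol{x}, R)$. The closed ball $B(\boldsymbol{x}, R)$ is unique by Lemma~\ref{lemm:TesselBallFund}, and $M$ is then determined since every element of $\B$ has radius $R/M$. The discussion just before the lemma splits into the cases $M$ odd and $M$ even.
\begin{itemize}
\item For $M$ odd, it exhibits $\B \subset \B_{\RR^d}(\boldsymbol{x}, R/M)$.
\item For $M$ even, it exhibits $\B \subset \B_{\RR^d}(\boldsymbol{x}-(R/M,\cdots,R/M)^t, R/M)$.
\end{itemize}
To make this rigorous, I would show that each element $B[(m_1,\cdots,m_d),\boldsymbol{x}]$ of (\ref{eqn:MTesselationDefn}) is an element of the corresponding complete tessellation. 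This is an index shift: take $m_i' = m_i - (M+1)/2$ in the odd case and $m_i' = m_i - M/2$ in the even case. One then checks that the endpoints $\left(\frac{2(m_i-1)}{M}-1\right)R + x_i$ and $x_i + \left(\frac{2m_i}{M}-1\right)R$ equal $(2m_i'-1)R/M + x_i'$ and $x_i' + (2m_i'+1)R/M$, where $\boldsymbol{x}'$ is the appropriate base point. This verification is routine arithmetic. So the completion of $\B$ is a complete tessellation containing $\B$, and by Remark~\ref{rmk:UniqCompleteTess} it is well defined.

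\emph{Uniqueness.} Suppose $\B$ is also a subtessellation of some complete tessellation $\B_{\RR^d}(\boldsymbol{x}', R')$. All elements of a subtessellation share the radius of the ambient complete tessellation, so $R' = R/M$. Since $\B$ is nonempty, it contains at least one ball $B_0$. Then $B_0$ lies in both $\B_{\RR^d}(\boldsymbol{x}', R/M)$ and the completion of $\B$, so their intersection is nonempty. Lemma~\ref{lemm:UnifExtrapolCompTess} then gives that the two complete tessellations coincide. Equivalently, by Lemma~\ref{lemm:CompTessExtrapol}, they determine the same point of the parameter torus $\RR^d/(2(R/M)\ZZ^d)$.

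\emph{Main obstacle.} The only step needing care is the endpoint bookkeeping in the even case, where the base point must be shifted by $(R/M,\cdots,R/M)^t$ so that the index $m_i = M/2$ corresponds to $m_i' = 0$. Everything else follows directly from the lemmas already established.
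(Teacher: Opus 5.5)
Your proof is correct and follows essentially the same route as the paper. Existence comes from the completion constructed in the preceding discussion; your index shifts $m_i'=m_i-(M+1)/2$ and $m_i'=m_i-M/2$ simply make explicit the comparison the paper leaves implicit. Uniqueness comes from matching the radii and then applying Lemma~\ref{lemm:UnifExtrapolCompTess} to a common element of $\B$.
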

\begin{proof}
Let $R>0$ and $\boldsymbol{x} \in \RR^d $ be such that $\B_{\RR^d}(\boldsymbol{x}, R)$ is the completion of $\B$.  Remark~\ref{rmk:UniqCompleteTess} gives that $\B_{\RR^d}(\boldsymbol{x}, R)$ is a unique family.  If $\B$ is a subtessellation of another complete tessellation $\B_{\RR^d}(\boldsymbol{x'}, R')$ for some $R'>0$ and $\boldsymbol{x'} \in \RR^d $, then we have that $\emptyset \neq \B \subset \B_{\RR^d} \cap \B_{\RR^d}'$.  Consequently, the elements in $\B$ have radius $R$ and radius $R'$, which gives that $R=R'$ because the elements of any tessellation have the same radius.  The result now follows from Lemma~\ref{lemm:UnifExtrapolCompTess}.
\end{proof}

\begin{rema}\label{rmk:lemm:UniqCompletion}  We use the notation $\overline{\B}$ to denote the unique completion of $\B$.
\end{rema}

\begin{coro}\label{coro:UniqCompletion}  Let $\mS, \mS'$ be subtessellations of a complete tessellation $\B$.  Then $\overline{\mS}= \overline{\mS'} = \B$.
 
\end{coro}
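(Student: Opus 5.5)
This is essentially a direct consequence of Lemma~\ref{lemm:UniqCompletion}, so the plan is to apply that lemma once to each subtessellation.

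First, by definition a subtessellation $\mS$ of the complete tessellation $\B$ is itself a tessellation, and $\mS \subset \B$. So $\mS$ is a subtessellation of the complete tessellation $\B$. Lemma~\ref{lemm:UniqCompletion} says that a tessellation is a subtessellation of exactly one complete tessellation, namely its completion $\overline{\mS}$ (Remark~\ref{rmk:lemm:UniqCompletion}). Since $\B$ is one complete tessellation containing $\mS$, uniqueness forces $\overline{\mS} = \B$. The identical argument applied to $\mS'$ gives $\overline{\mS'} = \B$, and hence $\overline{\mS} = \overline{\mS'} = \B$.

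The only point needing care is that $\B$ is genuinely a complete tessellation in the sense used by Lemma~\ref{lemm:UniqCompletion}, i.e.\ of the form $\B_{\RR^d}(\boldsymbol{z}, R)$, and that $\mS$ is nonempty. Both hold by hypothesis: a tessellation is an $M$-tessellation with $M \geq 1$, so it has $M^d \geq 1$ elements. This nonemptiness is exactly what the proof of Lemma~\ref{lemm:UniqCompletion} uses to force equality of radii and then to invoke Lemma~\ref{lemm:UnifExtrapolCompTess}. I do not expect any real obstacle.
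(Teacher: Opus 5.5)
Your proposal is correct and is exactly the paper's argument: the corollary is stated to follow immediately from Lemma~\ref{lemm:UniqCompletion}, since $\B$ is a complete tessellation containing the subtessellation $\mS$ (resp.\ $\mS'$), and uniqueness then forces $\overline{\mS}=\B$ (resp.\ $\overline{\mS'}=\B$). Your remark on nonemptiness is a correct reading of what that lemma's proof uses, though it is not needed at the level of the corollary.
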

\begin{proof}
 This follows immediately from Lemma~\ref{lemm:UniqCompletion}.
\end{proof}

\begin{lemm}\label{lemm:ElementPartofTess} Let $d \in \NN$, $R' >0$, $\boldsymbol{x'} \in \RR^d$, $\B:= \B_{\RR^d}(\boldsymbol{x'}, R')$, $B \in \B$, and $\mS$ be a subtessellation of $\B$.  If \[\inte(B) \cap  \bigcup_{B'' \in \mS} B'' \neq \emptyset,\] then $B \in \mS$.
 
\end{lemm}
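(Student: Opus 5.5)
Since $\mS$ is a subtessellation of $\B$, every element $B''$ of $\mS$ is itself an element of $\B$. By hypothesis there is a point $\boldsymbol{y} \in \inte(B) \cap B''$ for some $B'' \in \mS$. Lemma~\ref{lemm:TesselEuclidFund} applied to $\B$ then gives: if $B \neq B''$, then $\inte(B) \cap B'' = \emptyset$, a contradiction. Hence $B = B''$, and so $B \in \mS$.

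The steps, in order, are as follows.
\begin{enumerate}
\item Unwind the union to pick $B'' \in \mS$ with $\inte(B) \cap B'' \neq \emptyset$.
\item Use $\mS \subset \B$ from the definition of subtessellation to get $B'' \in \B$.
\item Invoke the second bullet of Lemma~\ref{lemm:TesselEuclidFund} with $B, B'' \in \B$ to conclude $B = B''$.
\end{enumerate}

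There is no real obstacle. The only point to check is that the lemma is applied in the correct order: its statement is $\inte(B) \cap B' = \emptyset$ for any distinct $B, B' \in \B$, which is exactly the form needed with $B' = B''$. Note also that we need not know which complete tessellation $\mS$ completes to; by Corollary~\ref{coro:UniqCompletion} it is $\B$, but this is not needed.
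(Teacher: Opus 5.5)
Your proof is correct and matches the paper's argument: you pick $B'' \in \mS$ with $\inte(B) \cap B'' \neq \emptyset$, use $\mS \subset \B$ so that $B'' \in \B$, and apply the second bullet of Lemma~\ref{lemm:TesselEuclidFund} to conclude $B = B''$. You spell out the membership $B'' \in \B$ explicitly, which the paper leaves implicit.
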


\begin{proof}  We have that $\inte(B) \cap B'' \neq \emptyset$ for some $B'' \in \mS$.  Lemma~\ref{lemm:TesselEuclidFund} implies that $B = B''$, which gives the desired result.
\end{proof}

\begin{lemm}\label{lemm:ChordSubtesselProperties1}  Let $d \in \NN$, $R' >0$, $\boldsymbol{x'} \in \RR^d$, $\B:= \B_{\RR^d}(\boldsymbol{x'}, R')$, and $\mS$ be a subtessellation of $\B$ and also an $M$-tessellation of $B(\boldsymbol{\widetilde{x}}, \widetilde{R})$ for some $M \in \NN$, $\boldsymbol{\widetilde{x}} \in \RR^d$, and $\widetilde{R}>0$. Then $M = \widetilde{R}/R'$.

\end{lemm}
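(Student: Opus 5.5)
The plan is to compare radii: every element of $\B$ has radius $R'$, and every element of an $M$-tessellation of $B(\boldsymbol{\widetilde{x}}, \widetilde{R})$ has radius $\widetilde{R}/M$. Since $\mS$ is nonempty, we can pick one element and read off $M$ from it.

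First, I will recall from (\ref{eqn:MTesselationDefn}) that every element $B[(m_1, \cdots, m_d), \boldsymbol{\widetilde{x}}]$ of the $M$-tessellation of $B(\boldsymbol{\widetilde{x}}, \widetilde{R})$ is a product of intervals of length $2\widetilde{R}/M$. Hence, as noted right after (\ref{eqn:MTesselationDefn}), it is a closed ball for $\|\cdot\|_\infty$ of radius $\widetilde{R}/M$. I will also note that $\mS$ is nonempty, since an $M$-tessellation contains $M^d \geq 1$ balls.

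Second, I will take any $B \in \mS$. Because $\mS \subset \B = \B_{\RR^d}(\boldsymbol{x'}, R')$, the ball $B$ is of the form $\prod_{i=1}^d [(2m_i - 1)R' + x'_i, x'_i + (2m_i+1)R']$, so each of its edges has length $2R'$. The radius of a product of intervals of common length is determined by that length, namely half the edge length, so it is independent of the center. Equating the two descriptions of $\rho(B)$ gives $R' = \widetilde{R}/M$, that is, $M = \widetilde{R}/R'$.

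I do not expect a real obstacle here. The only point needing care is that the radius of a closed $\|\cdot\|_\infty$-ball is well defined as half its edge length, which is clear from the coordinate description $B(\boldsymbol{x}, R) = \prod_i [x_i - R, x_i + R]$ given earlier.
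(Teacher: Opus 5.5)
Your proposal is correct and follows essentially the same route as the paper: every element of $\mS$ has radius $R'$ because $\mS\subset\B$, every element of an $M$-tessellation of $B(\boldsymbol{\widetilde{x}}, \widetilde{R})$ has radius $\widetilde{R}/M$, and equating the two gives $M=\widetilde{R}/R'$. Your remark that $\mS$ is nonempty, since it has $M^d\geq 1$ elements, is a small point the paper leaves implicit, and it is what makes picking an element legitimate.
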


\begin{proof}
 By the definition of subtessellation, $\mS$ is itself a tessellation, and, thus, there exists a closed ball $B:=B(\boldsymbol{\widetilde{x}}, \widetilde{R})$ given by $\| \cdot \|_\infty$ for some  $\boldsymbol{\widetilde{x}} \in \RR^d$ and $\widetilde{R}>0$ such that $\mS$ is an $M$-tessellation of $B$ for some $M \in \NN$.  Every element of $\mS$ has radius $R'$ because every element of $\B$ has radius $R'$.  Since every element of the $M$-tessellation has radius $\widetilde{R}/M$, we have that $\widetilde{R}/M = R'$, which yields the desired result.
\end{proof}

\begin{lemm}\label{lemm:ChordSubtesselProperties2}   Let $d \in \NN$, $i$ be an integer such that $1 \leq i \leq d$, $R' >0$, $\boldsymbol{x'} \in \RR^d$, $\B:= \B_{\RR^d}(\boldsymbol{x'}, R')$, and $\mS$ be a subtessellation of $\B$ and also an $M$-tessellation of $B:=B(\boldsymbol{\widetilde{x}}, \widetilde{R})$ for some $M \in \NN$, $\boldsymbol{\widetilde{x}} \in \RR^d$, and $\widetilde{R}>0$.  Let $\boldsymbol{y} \in \inte(B)$ and $\sL_i(\boldsymbol{y})$ be the  chord of $B$ parallel to the $i$-axis through the point $\boldsymbol{y}$.  Then we have that \begin{align}\label{eqn:lemm:ChordSubtesselProperties2} 
 \boldsymbol{e}\left(\sL_i(\boldsymbol{y})\right), \boldsymbol{e'}\left(\sL_i(\boldsymbol{y})\right) \in \bigcup_{B'' \in \mS} \left(B'' \backslash \inte(B'')\right) \subset \bigcup_{B'' \in \B} \left(B'' \backslash \inte(B'')\right). \end{align}

\end{lemm}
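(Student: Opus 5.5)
The plan is to work in coordinates and identify, among the elements of $\mS$, one that contains each endpoint of $\sL_i(\boldsymbol{y})$ on its boundary. Write $\boldsymbol{\widetilde{x}}=(\widetilde{x}_1,\cdots,\widetilde{x}_d)^t$ and $\boldsymbol{y}=(y_1,\cdots,y_d)^t$. By (\ref{eqn:EndPointsChordDefn}), the endpoint $\boldsymbol{e}(\sL_i(\boldsymbol{y}))$ has $i$-th coordinate $\widetilde{x}_i-\widetilde{R}$ and $j$-th coordinate $y_j$ for $j\neq i$. Since $\boldsymbol{y}\in\inte(B)$, by (\ref{eqn:defnInterior}) each $y_j$ lies in the open interval $(\widetilde{x}_j-\widetilde{R},\widetilde{x}_j+\widetilde{R})$. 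Lemma~\ref{lemm:ChordSubtesselProperties1} gives $\widetilde{R}/M=R'$, so by (\ref{eqn:MTesselationDefn}) the elements of $\mS$ are the products of the intervals $[\widetilde{x}_j-\widetilde{R}+2(m_j-1)R',\,\widetilde{x}_j-\widetilde{R}+2m_jR']$ with $m_j=1,\cdots,M$.

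For each $j\neq i$, choose $m_j$ with $y_j$ in the $m_j$-th interval; this is possible because these intervals cover $[\widetilde{x}_j-\widetilde{R},\widetilde{x}_j+\widetilde{R}]$. Set $m_i=1$. The corresponding $B''=B[(m_1,\cdots,m_d),\boldsymbol{\widetilde{x}}]\in\mS$ contains $\boldsymbol{e}(\sL_i(\boldsymbol{y}))$, because its $i$-th factor is $[\widetilde{x}_i-\widetilde{R},\widetilde{x}_i-\widetilde{R}+2R']$, which contains the $i$-th coordinate $\widetilde{x}_i-\widetilde{R}$ as its left endpoint. That $i$-th coordinate is not in the open interval $(\widetilde{x}_i-\widetilde{R},\widetilde{x}_i-\widetilde{R}+2R')$, so by (\ref{eqn:defnInterior}) the point lies in $B''\backslash\inte(B'')$. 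The endpoint $\boldsymbol{e'}(\sL_i(\boldsymbol{y}))$ is handled symmetrically with $m_i=M$, whose $i$-th factor has right endpoint $\widetilde{x}_i+\widetilde{R}$. This proves the first inclusion in (\ref{eqn:lemm:ChordSubtesselProperties2}).

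The second inclusion in (\ref{eqn:lemm:ChordSubtesselProperties2}) is immediate from $\mS\subset\B$, which holds by the definition of subtessellation.

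I expect no real obstacle here. The only points needing care are that $y_j$ may lie on the boundary between two tessellation intervals, in which case either choice of $m_j$ works, and that the hypothesis $\boldsymbol{y}\in\inte(B)$ is used only to guarantee $y_j\in[\widetilde{x}_j-\widetilde{R},\widetilde{x}_j+\widetilde{R}]$, so that a suitable $m_j$ exists.
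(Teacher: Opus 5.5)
Your proof is correct, but it takes a more hands-on route than the paper's.

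\emph{Your route.} You exhibit an explicit witness tile $B[(m_1,\cdots,m_d),\boldsymbol{\widetilde{x}}]\in\mS$. Its index is $m_i=1$ (resp.\ $m_i=M$) in the $i$-th direction, and the other indices $m_j$ are chosen so that $y_j$ lies in the $m_j$-th factor. You then check directly from (\ref{eqn:MTesselationDefn}) and (\ref{eqn:defnInterior}) that the endpoint lies on a face of that tile.

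\emph{The paper's route.} The argument there is coordinate-free:
\begin{itemize}
\item the endpoints lie in $B\backslash\inte(B)$, by Lemma~\ref{lemm:ChordsThruInteriorPointsChar};
\item the tiles of $\mS$ cover $B$, by Lemma~\ref{lemm:TesselBallFund};
\item each $\inte(B'')$ is contained in $\inte(B)$, because $B''\subset B$.
\end{itemize}
Hence an endpoint lies in some tile of $\mS$ but in the interior of none.

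\emph{Comparison.} The paper's argument uses only that $\mS$ is a family of closed sets covering $B$ and contained in $B$, so it avoids the index bookkeeping. Yours gives slightly more information: it identifies which tile carries each endpoint, namely one in the first or last layer in the $i$-th direction.

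Incidentally, your appeal to Lemma~\ref{lemm:ChordSubtesselProperties1} is unnecessary. The factors have length $2\widetilde{R}/M$ by the definition of an $M$-tessellation, regardless of $R'$.
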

\begin{proof}
 By the definition of subtessellation, $\mS$ is also an $M$-tessellation of $B:=B(\boldsymbol{\widetilde{x}}, \widetilde{R})$ for some $M \in \NN$, $\boldsymbol{\widetilde{x}} \in \RR^d$, and $\widetilde{R}>0$.  By Lemma~\ref{lemm:TesselBallFund} , we  have that\begin{align*}\label{eqn:lemm:ChordSubtesselProperties2:tessint} 
  \bigcup_{B'' \in \mS} \inte(B'') \subset \inte(B), \end{align*} which, together with Lemmas~\ref{lemm:ChordsThruInteriorPointsChar} and~\ref{lemm:TesselBallFund}, yields \[\boldsymbol{e}\left(\sL_i(\boldsymbol{y})\right), \boldsymbol{e'}\left(\sL_i(\boldsymbol{y})\right) \in  B \backslash \inte(B) \subset \left(\bigcup_{B'' \in \mS} B'' \right)\backslash \bigcup_{B'' \in \mS} \inte(B'') = \bigcup_{B'' \in \mS} \left(B'' \backslash \inte(B'') \right).\]  Since $\mS \subset \B$, the set inclusion of (\ref{eqn:lemm:ChordSubtesselProperties2}) also follows.  This yields the desired result.
\end{proof}

Let $R >0$, $\boldsymbol{x} \in \RR^d$, and $N \in \NN$.  We define the {\em $N$-refinement of $B(\boldsymbol{x}, R)$}, denoted by $\mR_N(B(\boldsymbol{x}, R))$, to be the $N$-tessellation of $B(\boldsymbol{x}, R)$.  Note that $\mR_1(B(\boldsymbol{x}, R)) = \{B(\boldsymbol{x}, R)\}$.  Let $\B$ be a tessellation or a complete tessellation.   We define the {\em $N$-refinement of $\B$}, denoted by $\mR_N(\B)$, to be \begin{align*} \mR_N(\B) := \bigcup_{B \in \B }\mR_N(B)
\end{align*}  Note that $\mR_1(\B) = \B$ and $\mR_N(\{B(\boldsymbol{x}, R)\}) = \mR_N(B(\boldsymbol{x}, R))$.

\begin{lemm}\label{lemm:RefineTess}
Let $d,M, N \in \NN$, $R >0$, $\boldsymbol{x} \in \RR^d$, and $\B$ be the $M$-tessellation of $B(\boldsymbol{x}, R)$.  Then $\mR_N(\B)$ is the $(MN)$-tessellation of $B(\boldsymbol{x}, R)$.
\end{lemm}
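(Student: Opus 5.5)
The plan is to reduce the statement to a coordinatewise computation of intervals, since both $\mR_N(\B)$ and the $(MN)$-tessellation of $B(\boldsymbol{x}, R)$ are families of Cartesian products of one-dimensional closed intervals. Write $\boldsymbol{x} = (x_1, \cdots, x_d)^t$. By (\ref{eqn:MTesselationDefn}), a typical element of $\B$ is $B[(m_1, \cdots, m_d), \boldsymbol{x}]$, the closed ball with center coordinates
\[c_i := x_i + \left(\frac{2m_i - 1}{M} - 1\right)R\]
and radius $R/M$. By definition, $\mR_N$ of this ball is the $N$-tessellation of $B(\boldsymbol{c}, R/M)$, where $\boldsymbol{c} = (c_1, \cdots, c_d)^t$.

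Applying (\ref{eqn:MTesselationDefn}) again with $R/M$ in place of $R$, its elements are indexed by $(n_1, \cdots, n_d)$ with $1 \le n_i \le N$. The $i$-th factor is
\[\left[\left(\frac{2(n_i-1)}{N} - 1\right)\frac{R}{M} + c_i,\; c_i + \left(\frac{2n_i}{N} - 1\right)\frac{R}{M}\right].\]
Substituting $c_i$ and simplifying, the left endpoint becomes
\[x_i + \left(\frac{2\bigl((m_i-1)N + n_i - 1\bigr)}{MN} - 1\right)R,\]
and the right endpoint becomes the same expression with $(m_i-1)N + n_i - 1$ replaced by $(m_i-1)N + n_i$. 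This is exactly the $i$-th factor of the element of the $(MN)$-tessellation of $B(\boldsymbol{x}, R)$ with index $k_i := (m_i-1)N + n_i$.

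The key combinatorial step is that the map $(m_i, n_i) \mapsto (m_i-1)N + n_i$ is a bijection from $\{1, \cdots, M\} \times \{1, \cdots, N\}$ onto $\{1, \cdots, MN\}$; this is division with remainder. Taking the product over $i$ gives a bijection between the index set of $\mR_N(\B) = \bigcup_{B \in \B} \mR_N(B)$, namely pairs (element of $\B$, index in its refinement), and $\{1, \cdots, MN\}^d$. Under this bijection the corresponding balls coincide, so the two families are equal as sets of balls. Injectivity of the bijection is not needed to conclude equality of the sets, but it shows that no two distinct refinement pieces coincide.

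The only real obstacle is the algebraic simplification of the endpoints, which is routine. The result can be checked first for $d = 1$ and then extended factorwise, since products commute with the coordinatewise identification.
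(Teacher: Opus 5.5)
Your proposal is correct and follows essentially the same route as the paper. Both write an element of the $N$-refinement of $B[(m_1,\cdots,m_d),\boldsymbol{x}]$ coordinatewise, simplify its endpoints to those of the element of the $(MN)$-tessellation with index $N(m_i-1)+n_i$, and invoke the division-algorithm bijection $\{1,\cdots,M\}\times\{1,\cdots,N\}\to\{1,\cdots,MN\}$ in each coordinate; the only cosmetic difference is that you parametrize by the center $c_i$ where the paper offsets from the left endpoint $x_i-R+2(m_i-1)R/M$.
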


\begin{proof}  

Let $B[(m_1, \cdots, m_d), \boldsymbol{x}] \in \B$.  Then \[B[(m_1, \cdots, m_d), \boldsymbol{x}]=\prod_{i=1}^d\left[x_i - R+ \frac {2 (m_i-1) R} M, x_{i}-R+ \frac {2 m_i R} M \right]\] for some integers $1 \leq m_1, \cdots, m_d \leq M$.  Note that $\rho(B[(m_1, \cdots, m_d), \boldsymbol{x}]) = R/M$, and thus we have that an arbitrary element of $\mR_N(B[(m_1, \cdots, m_d), \boldsymbol{x}])$ is \[B:=\prod_{i=1}^d\left[x_i - R+ \frac {2 (m_i-1) R} M + \frac {2 (n_i-1) \frac R M} N, x_i - R+ \frac {2 (m_i-1) R} M + \frac {2 n_i \frac R M} N\right]\] for some integers $1 \leq n_1, \cdots, n_d \leq N$.  Hence, we have \[B:=\prod_{i=1}^d\left[\left(2 \left(\frac{N(m_i-1) + n_i -1}{MN}\right) -1 \right)R +x_i, x_i + \left(2 \left(\frac{N(m_i-1) + n_i }{MN}\right) -1 \right)R \right].\]

Now the division algorithm gives that, for any integer $p$, we have $p = m N +n$ where integers $m$ and $0 \leq n \leq N-1$ are uniquely determined by $p$ and $N$.  Requiring $0 \leq p \leq MN-1$ gives that $0 \leq m \leq M-1$.

Moreover, for any integers $0 \leq m \leq M-1$ and $0 \leq n \leq N-1$, the integer $m N +n$ lies in the closed interval $[0, MN-1]$.  Consequently, $\{p \in \ZZ : 0 \leq p \leq MN-1\}$ is in bijection with $\{(m,n) \in \ZZ \times \ZZ:  0 \leq m \leq M-1, 0 \leq n \leq N-1\}$ and the bijection is given by $p = m N +n$.

Applying the proceeding to each coordinate $i$ by setting $m = m_i-1$ and $n = n_i-1$ gives that the integers $1 \leq p_i \leq MN$ can be written uniquely as $p_i=N(m_i-1) + n_i $ for integers $1\leq m_i \leq M$ and $1 \leq n_i \leq N$.  Consequently, we have that \[B:=\prod_{i=1}^d\left[\left(2 \left(\frac{p_i-1}{MN}\right) -1 \right)R +x_i, x_i + \left(2 \left(\frac{p_i }{MN}\right) -1 \right)R \right]\] for some integers $1 \leq p_1, \cdots, p_d \leq MN$. 

Since  $1 \leq m_1, \cdots, m_d \leq M$ and $1 \leq n_1, \cdots, n_d \leq N$ hold, applying the bijection from the division algorithm in each coordinate $i$ gives us the desired result.  \end{proof}

\begin{rema}
For $N=1$, Lemma~\ref{lemm:RefineTess} also follows by the definition of $1$-refinement and the uniqueness of an $M$-tessellation for a given $B(\boldsymbol{x}, R)$.

\end{rema}

\subsection{Constructing subtessellations using a uniform local coordinate system} Let $d \in \NN$, $i$ be an integer such that $1 \leq i \leq d$, $R' >0$, and $\boldsymbol{x'} \in \RR^d$.  Fix a complete tessellation $\B:= \B_{\RR^d}(\boldsymbol{x'}, R')$ and let \[\boldsymbol{y} \in \bigcup_{B'' \in \B} \inte(B'').\]  For $a_i < y_i < a'_i$, define the sets \[\Ls_i(\boldsymbol{y}):=\Ls_i(\boldsymbol{y}, a_i, a'_i):=\{y_1\} \times \cdots \times \{y_{i-1}\} \times [a_i, a'_i] \times \{y_{i+1}\} \times \cdots \times \{y_{d}\}\] and the points \begin{align*}
\boldsymbol{e}\left(\Ls_i(\boldsymbol{y})\right)&:=(y_1, \cdots, y_{i-1}, a_i, y_{i+1}, \cdots, y_{d}) \textrm { and } \\  \boldsymbol{e'}\left(\Ls_i(\boldsymbol{y})\right)&:=(y_1, \cdots, y_{i-1}, a'_i, y_{i+1}, \cdots, y_{d}).\end{align*}  We refer to the set $\Ls_i(\boldsymbol{y})$ as a {\em line segment parallel to the $i$-axis (through the point $\boldsymbol{y}$)} and the points $\boldsymbol{e}\left(\Ls_i(\boldsymbol{y})\right), \boldsymbol{e'}\left(\Ls_i(\boldsymbol{y})\right)$ as the {\em endpoints of $\Ls_i(\boldsymbol{y})$}.  Note that the length of any $\Ls_i(\boldsymbol{y})$ is positive by construction.  Finally, define the set \[\sP(\Ls_i(\boldsymbol{y})):=\sP(\Ls_i(\boldsymbol{y}), \B) := \Ls_i(\boldsymbol{y}) \cap \bigcup_{B'' \in \B} \left(B'' \backslash \inte(B'')\right).\]

\begin{lemm}\label{lemm:PartitionLineSeg}  Let $d \in \NN$, $i$ be an integer such that $1 \leq i \leq d$, $R' >0$, $\boldsymbol{x'} \in \RR^d$, $\B:= \B_{\RR^d}(\boldsymbol{x'}, R')$ and \[\boldsymbol{y}:=(y_1, \cdots, y_d)^t \in \bigcup_{B'' \in \B} \inte(B'').\]  Then \begin{align*}\sP(\Ls_i(\boldsymbol{y})) &= \left\{(y_1, \cdots, y_{i-1}, y'_i, y_{i+1}, \cdots y_d)^t : y'_i \in [a_i, a'_i] \cap \{(2 m-1)R' + x'_i: m \in \ZZ\}\right\}\end{align*}  is a finite, discrete set of points such that, for $|\sP(\Ls_i(\boldsymbol{y}))|\geq 2$, any two consecutive elements of $\sP(\Ls_i(\boldsymbol{y}))$ have distance exactly $2R'$.  Moreover, if $a'_i - a_i \geq 2nR'$ for some $n \in \NN$, then $|\sP(\Ls_i(\boldsymbol{y}))| \geq n$.
 
\end{lemm}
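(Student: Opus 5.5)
The plan is to describe $\bigcup_{B''\in\B}(B''\backslash\inte(B''))$ explicitly in coordinates and intersect it with the segment $\Ls_i(\boldsymbol{y})$.

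\textbf{The boundary set.} Write $c_m := (2m-1)R' + x'_j$ for $m \in \ZZ$; these are the endpoints of the coordinate intervals in $\B$. Using the product description of $B[(m_1,\cdots,m_d),\boldsymbol{x'}]$ and (\ref{eqn:defnInterior}), a point $\boldsymbol{z}$ lies in $B''\backslash\inte(B'')$ for some $B''\in\B$ exactly when some coordinate $z_j$ has the form $c_m$, with $x'_j$ in place of $x'_i$. For the reverse direction, given such a $\boldsymbol{z}$, use Lemma~\ref{lemm:TesselEuclidFund} to choose $B''$ containing $\boldsymbol{z}$; then $z_j$ is an endpoint of the $j$-th factor of $B''$, so $\boldsymbol{z}\notin\inte(B'')$.

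\textbf{Restricting to the segment.} Since $\boldsymbol{y}\in\inte(B'')$ for some $B''$, each coordinate $y_j$ lies strictly between consecutive grid values $c_m$. In other words, $y_j$ is not of the form $(2m-1)R'+x'_j$ for any $j$. On $\Ls_i(\boldsymbol{y})$ only the $i$-th coordinate varies, over $[a_i,a'_i]$, while the others stay equal to $y_j$. Hence a point of the segment lies on the boundary set iff its $i$-th coordinate lies in $\{(2m-1)R'+x'_i : m\in\ZZ\}$. This gives the displayed description of $\sP(\Ls_i(\boldsymbol{y}))$.

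\textbf{Finiteness and spacing.} The points $(2m-1)R'+x'_i$, $m\in\ZZ$, form an arithmetic progression with step $2R'$. A bounded interval therefore meets it in finitely many points, and consecutive points differ by exactly $2R'$.

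\textbf{Counting.} Suppose $a'_i-a_i\ge 2nR'$. Let $m_0$ be the least integer with $(2m_0-1)R'+x'_i\ge a_i$, so that this value lies in $[a_i, a_i+2R')$. Then the $n$ points indexed by $m_0,\dots,m_0+n-1$ are all at most $a_i+2R'+2(n-1)R' - \epsilon$ for some $\epsilon>0$, which is strictly less than $a_i+2nR'\le a'_i$. So all $n$ lie in $[a_i,a'_i]$, and $|\sP(\Ls_i(\boldsymbol{y}))|\ge n$.

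The main care point is the strict inequality at the left end in the counting step, namely that $c_{m_0}<a_i+2R'$; this follows from the minimality of $m_0$. Everything else is routine coordinate bookkeeping.
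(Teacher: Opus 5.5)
Your proof is correct. The first part is the same as the paper's: the coordinate description of $\bigcup_{B''\in\B}(B''\backslash\inte(B''))$, the observation that no $y_j$ is a grid value, and the resulting description of $\sP(\Ls_i(\boldsymbol{y}))$ with spacing $2R'$.

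The two proofs differ in the counting assertion. The paper argues by induction on $n$ and proves the slightly stronger claim that the half-open interval $(a_i,a'_i]$ contains at least $n$ values $(2m-1)R'+x'_i$. Its base case is by contradiction: if $(a_i,a'_i]$ contains no grid value, it lies strictly between two consecutive grid values, which forces $a'_i-a_i<2R'$. Its inductive step splits the interval at $a_i+2(n-1)R'$. You instead take the least grid value $c_{m_0}\geq a_i$ and exhibit the $n$ points $c_{m_0},\dots,c_{m_0+n-1}$ directly. This is shorter and equally rigorous.

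One small remark: the strictness of $c_{m_0}<a_i+2R'$ is not really the essential point. The bound $c_{m_0}\le a_i+2R'$ would already give $c_{m_0+n-1}\le a'_i$.
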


\begin{rema*}
Note that the set $\sP(\Ls_i(\boldsymbol{y}))$ from Lemma~\ref{lemm:PartitionLineSeg} could (but need not) be empty if $a'_i - a_i <2R'$.

\end{rema*}

\begin{proof}  Let $\boldsymbol{x'}:=(x'_1, \cdots, x'_d)^t$.  By the definition of $\B:= \B_{\RR^d}(\boldsymbol{x'}, R')$, we have that \[\boldsymbol{z}:=(z_1, \cdots, z_d)^t \in \bigcup_{B'' \in \B}\left(B'' \backslash \inte(B'') \right)=:\partial(\B)\] if and only if there exists an integer $j$ such that $1 \leq j \leq d$ for which $z_j = (2 m-1)R' + x'_j$ for some $m \in \ZZ$.  Since $\boldsymbol{y} \in \RR^d \backslash \partial(\B)$, we, thus, have, for  every integer $j$ such that $1 \leq j \leq d$, that $y_j \neq  (2 m-1)R' + x'_j$ for any $m \in \ZZ$.  Consequently, we have that \begin{align*}
\sP(\Ls_i(\boldsymbol{y})) &= \Ls_i(\boldsymbol{y}) \cap \partial(\B) \\  &= \left\{(y_1, \cdots, y_{i-1}, y'_i, y_{i+1}, \cdots y_d)^t : y'_i \in [a_i, a'_i] \cap \{(2 m-1)R' + x'_i: m \in \ZZ\}\right\}.\end{align*}  Thus, it follows that $\sP(\Ls_i(\boldsymbol{y}))$ is a discrete set with finite cardinality and, if it contains two or more elements, that the distance between consecutive elements is $2R'$. This proves the first assertion.

To prove the second assertion, we prove a stronger assertion.  Define the following subset of $\sP(\Ls_i(\boldsymbol{y}))$:  \begin{align*}
\widetilde{\sP}(\Ls_i(\boldsymbol{y})) := \left\{(y_1, \cdots, y_{i-1}, y'_i, y_{i+1}, \cdots y_d)^t : y'_i \in (a_i, a'_i] \cap \{(2 m-1)R' + x'_i: m \in \ZZ\}\right\}.\end{align*}  The stronger assertion that we prove is that, if $a'_i - a_i \geq 2nR'$ for some $n \in \NN$, then $|\widetilde{\sP}(\Ls_i(\boldsymbol{y}))| \geq n$.  We induct on $n$ to prove this stronger assertion.  We prove the initial case $n=1$.  Assume that the conclusion does not hold.  Then we have that $|\widetilde{\sP}(\Ls_i(\boldsymbol{y}))| <1$, and, consequently, $\widetilde{\sP}(\Ls_i(\boldsymbol{y}))$ is the empty set.  Thus, we have that \[(a_i, a'_i] \cap \{(2 m-1)R' + x'_i: m \in \ZZ\}= \emptyset,\] which implies that the interval $(a_i, a'_i]$ lies strictly between two consecutive elements of the set $ \{(2 m-1)R' + x'_i: m \in \ZZ\}$.  Thus, we have that $a'_i - a_i < 2R'$, which is a contradiction and thus proves the initial case.

To finish the induction proof, we assume that the result holds for $n-1$ and prove it for $n$.  Now, since we have $a'_i - a_i \geq 2nR'$, there exists a real number $a$ such that $a_i < a < a'_i$ and $a - a_i = 2(n-1)R'$.  Consequently, we have that $a'_i -a \geq 2R'$.  Now the induction hypothesis gives that \[\left| \left\{(y_1, \cdots, y_{i-1}, y'_i, y_{i+1}, \cdots y_d)^t : y'_i \in (a_i, a] \cap \{(2 m-1)R' + x'_i: m \in \ZZ\}\right\}\right| \geq n-1\] and the initial step gives that \[\left| \left\{(y_1, \cdots, y_{i-1}, y'_i, y_{i+1}, \cdots y_d)^t : y'_i \in (a, a'_i] \cap \{(2 m-1)R' + x'_i: m \in \ZZ\}\right\}\right| \geq 1,\] which together yields the stronger assertion.  This proves the second assertion and the completes the proof of the lemma.
\end{proof}

\noindent We refer to $\sP(\Ls_i(\boldsymbol{y}), \B) $ as the {\em partition of the line segment $\Ls_i(\boldsymbol{y})$ (according to the complete tessellation $\B$)}.  

Now let $\Ls_1(\boldsymbol{y}, a_1, a'_1), \cdots, \Ls_d(\boldsymbol{y}, a_d, a'_d)$ be line segments parallel to the $1$-axis, $\cdots$, $d$-axis, respectively, such that their lengths are all the same positive real number $D$.  Hence, $D = a'_i - a_i$ for all integers $i$ such that $1 \leq i \leq d$.  We call the family $\left\{\Ls_1(\boldsymbol{y}), \cdots, \Ls_d(\boldsymbol{y})\right\}$ a {\em uniform local coordinate system (at $\boldsymbol{y}$ of size $D$).}

\begin{lemm}\label{lemm:LocalCoordinates} Let $d \in \NN$, $R' >0$, $R'' >0$, $\boldsymbol{x'} \in \RR^d$, $\B:= \B_{\RR^d}(\boldsymbol{x'}, R')$, \[\boldsymbol{y} \in \bigcup_{B'' \in \B} \inte(B''),\] and $\left\{\Ls_1(\boldsymbol{y}), \cdots, \Ls_d(\boldsymbol{y})\right\}$ be a uniform local coordinate system at $\boldsymbol{y}$ of size $2R''$ such that \begin{align}\label{eqn:lemm:LocalCoordinates}
 \left\{\boldsymbol{e}\left(\Ls_1(\boldsymbol{y})\right), \boldsymbol{e'}\left(\Ls_1(\boldsymbol{y})\right), \cdots, \boldsymbol{e}\left(\Ls_d(\boldsymbol{y})\right), \boldsymbol{e'}\left(\Ls_d(\boldsymbol{y})\right) \right\} \subset \bigcup_{B'' \in \B} \left(B'' \backslash \inte(B'')\right) \end{align} holds.  Then there exists a unique subtessellation $\mS$ of $\B$ such that \begin{enumerate}
\item \[\Ls_1(\boldsymbol{y}) \cup  \cdots \cup \Ls_d(\boldsymbol{y}) \subset \bigcup_{B'' \in \mS} B''  \textrm{ and, } \]
\item if $\mS'$ is a subtessellation of $\B$ such that \[\Ls_1(\boldsymbol{y}) \cup  \cdots \cup \Ls_d(\boldsymbol{y}) \subset \bigcup_{B'' \in \mS'} B'',\] then $\mS' \supset \mS$.

\end{enumerate}

\noindent Moreover, we have that $(R''/R') \in \NN$ and $\mS$ is the $(R''/R')$-tessellation of $\prod_{i=1}^d[a_i, a'_i]$. 
\end{lemm}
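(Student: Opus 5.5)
The plan is to work coordinate by coordinate and then take products. Write $\boldsymbol{x'}=(x'_1,\dots,x'_d)^t$ and $\boldsymbol{y}=(y_1,\dots,y_d)^t$. Let $\partial(\B)$ be the union of the boundaries of the balls in $\B$, as in the proof of Lemma~\ref{lemm:PartitionLineSeg}. A point $\boldsymbol{z}$ lies in $\partial(\B)$ if and only if some coordinate satisfies $z_j \in \{(2m-1)R'+x'_j : m\in\ZZ\}$.

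\textbf{Step 1: the endpoints lie on the grid.} Consider the endpoint $\boldsymbol{e}(\Ls_i(\boldsymbol{y}))$. Its coordinates other than the $i$-th agree with those of $\boldsymbol{y}$. Since $\boldsymbol{y}$ lies in the interior of some ball of $\B$, none of the coordinates $y_j$ is a grid value. Hypothesis (\ref{eqn:lemm:LocalCoordinates}) therefore forces the $i$-th coordinate to be a grid value:
\[a_i=(2m_i-1)R'+x'_i \quad\text{and}\quad a'_i=(2m'_i-1)R'+x'_i\]
for some integers $m_i<m'_i$. The same argument gives the second equation from $\boldsymbol{e'}(\Ls_i(\boldsymbol{y}))$.

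\textbf{Step 2: the integrality of $R''/R'$.} Subtracting the two equations in Step 1 gives $2R''=a'_i-a_i=2(m'_i-m_i)R'$. Hence $R''/R'=m'_i-m_i=:M\in\NN$. This value is independent of $i$, because all the segments have the same length $2R''$.

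\textbf{Step 3: the candidate subtessellation.} Define $\mS$ to be the set of balls of $\B$ contained in the box $\prod_i[a_i,a'_i]$. The box is a closed $\|\cdot\|_\infty$-ball of radius $R''$, centered at $\boldsymbol{c}$ with $c_i=(a_i+a'_i)/2$. The balls of $\B$ inside it are exactly the products $\prod_i[(2n_i-1)R'+x'_i,(2n_i+1)R'+x'_i]$ with $m_i\le n_i\le m'_i-1$. Reindexing by $\widetilde{n}_i=n_i-m_i+1$, as in the proof of Lemma~\ref{lemm:SubTessOfCompTess}, identifies $\mS$ with the $M$-tessellation of $B(\boldsymbol{c},R'')$. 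In particular $\mS$ is a subtessellation of $\B$. Property (1) then follows from Lemma~\ref{lemm:TesselBallFund}, since each segment $\Ls_i(\boldsymbol{y})$ is contained in the box and the box is the union of the balls of $\mS$.

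\textbf{Step 4: minimality.} This is the main obstacle, because the segments are only one-dimensional while $\mS$ fills a full $d$-dimensional box. Let $\mS'$ be any subtessellation of $\B$ covering the segments. Then $\mS'$ is an $M'$-tessellation of some box $B'=\prod_i[b_i,b'_i]$.

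First, for each $i$ choose a point of $\Ls_i(\boldsymbol{y})$ whose $i$-th coordinate is arbitrarily close to $a_i$, and another arbitrarily close to $a'_i$. These points lie in the interior of balls of $\B$, since their other coordinates equal those of $\boldsymbol{y}$. Since they are covered by $\mS'\subset\B$, Lemma~\ref{lemm:ElementPartofTess} shows that the balls containing them belong to $\mS'$.

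Next, because $B'$ is a box containing these points, we get $b_i\le a_i$ and $b'_i\ge a'_i$ for every $i$. Hence $B'\supset\prod_i[a_i,a'_i]$.

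Finally, take any ball $B''\in\mS$. Its interior lies inside $B'$, which is the union of the balls of $\mS'$. So by Lemma~\ref{lemm:ElementPartofTess}, $B''\in\mS'$. This gives $\mS'\supset\mS$.

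\textbf{Step 5: uniqueness.} If two subtessellations both satisfy (1) and (2), then property (2) applied in each direction shows each contains the other, so they are equal.
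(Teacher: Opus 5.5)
Your proof is correct and follows essentially the same route as the paper. The endpoints are forced onto the grid $\{(2m-1)R'+x'_i : m\in\ZZ\}$, which gives $R''/R'\in\NN$. Then $\mS$ is identified with the $(R''/R')$-tessellation of $\prod_{i=1}^d[a_i,a'_i]$ by reindexing. Property~(2) follows because the ball $B'=\bigcup_{B\in\mS'}B$ must contain every segment and hence the whole box. The paper argues this last step by contradiction via Lemma~\ref{lemm:TesselEuclidFund}, while you argue it directly via Lemma~\ref{lemm:ElementPartofTess}; the two are equivalent.

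The first paragraph of your Step~4, with points arbitrarily close to $a_i$ and $a'_i$, is harmless but unnecessary. By Lemma~\ref{lemm:TesselBallFund}, $B'$ already contains the closed segments $\Ls_i(\boldsymbol{y})$ themselves.
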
 

\begin{rema}\label{Rmk:lemm:LocalCoordinates}
Note that, in Lemma~\ref{lemm:LocalCoordinates}, we have that $a'_i -a_i =2R''$ for all $i$ and, hence, $\prod_{i=1}^d[a_i, a'_i]$ is a closed ball given by $\|\cdot\|_\infty$ of radius $R''$.
\end{rema}
\begin{proof}
Using the definition of $\B$ and (\ref{eqn:lemm:LocalCoordinates}), we have that \begin{align}\label{eqn:lemmLocalCoordinates}
 a_i &= (2 m_i-1) R' + x'_i,  \\\nonumber a'_i &= (2 m'_i+1) R' + x'_i  \end{align}for integers $m_i$ and $m'_i$ such that  $m_i \leq m'_i$.  Since $\left\{\Ls_1(\boldsymbol{y}), \cdots, \Ls_d(\boldsymbol{y})\right\}$ is a uniform local coordinate system at $\boldsymbol{y}$ of size $2R''$, we have that \begin{align}\label{eqn:lemmLocalCoordinates2}
 R'' = (m'_i - m_i+1)R' \end{align} for all $i$.  Since $m'_i - m_i + 1\in \NN$, we have that $(R''/R') \in \NN$.

We claim that  \begin{align*} \mS = 
\left\{\prod_{i=1}^d\left[{(2 n_i-1) }R' + x'_i, x'_i+ (2 n_i+1)R' \right] : (n_1, \cdots, n_d) \in \ZZ^d \cap\prod_{i=1}^d[m_i, m'_i] \right\}.\end{align*}  First, we show that $\mS$ is a subtessellation of $\B$.  By construction, we have that $\mS \subset \B$.  To complete the proof that $\mS$ is a subtessellation of $\B$, it suffices to show that $\mS$ is the $(R''/R')$-tessellation of $\prod_{i=1}^d[a_i, a'_i].$  By Remark~\ref{Rmk:lemm:LocalCoordinates}, we have that $\prod_{i=1}^d[a_i, a'_i]$ is a closed ball given by $\|\cdot\|_\infty$ of radius $R''$.  Setting $x_i:=a_i+ R''$, we have that the center of this ball is the point $\boldsymbol{x}:=(x_1, \cdots, x_d)^t$.  Consequently, we have that $B(\boldsymbol{x}, R'') = \prod_{i=1}^d[a_i, a'_i].$

Let $B[(j_1, \cdots, j_d), \boldsymbol{x}]$ be an element of the $(R''/R')$-tessellation of $B(\boldsymbol{x}, R'')$.  Then we have that \[B[(j_1, \cdots, j_d), \boldsymbol{x}] =\prod_{i=1}^d\left[\left(\frac {2 (j_i-1) } {R''/R'}-1\right){R''}+x_{i}, x_{i}+ \left(\frac {2 j_i} {R''/R'}-1\right){R''} \right] \] for some integers $1 \leq j_1, \cdots,j_d\leq R''/R'$.

Consequently, we have \begin{align*} B[(j_1, \cdots, j_d), \boldsymbol{x}] &=\prod_{i=1}^d\left[2 (j_i-1) R'  +x_{i} -{R''},  2 j_i R'+  x_i - R'' \right] \\ &= \prod_{i=1}^d\left[2 (j_i-1) R'  +a_{i},  2 j_i R'+  a_i \right]\\ &= \prod_{i=1}^d\left[\left(2 (m_i+j_i-1)-1\right) R'  +x'_{i}, x'_i+ \left(2 (m_i+j_i) - 1\right) R' \right]
  \end{align*}
  
 \noindent Here, the third equality follows by (\ref{eqn:lemmLocalCoordinates}).  Setting $n_i = m_i+j_i-1$ for all $i$ and applying (\ref{eqn:lemmLocalCoordinates2}) give that the elements of $\mS$ and the elements of the $(R''/R')$-tessellation of $\prod_{i=1}^d[a_i, a'_i]$ are the same.  This shows that $\mS$ is a subtessellation of $\B$.
 
Now, by Lemma~\ref{lemm:TesselBallFund}, we have that \begin{align}\label{eqn:LemmLocalCoordinates:SIsExact}
 \bigcup_{B'' \in \mS} B'' = \prod_{i=1}^d[a_i, a'_i], \end{align} which, using the definition of the $\Ls_i(\boldsymbol{y})$, proves assertion (1).

To prove assertion (2), we assume that the conclusion does not hold, namely that there exists $B^* \in \mS$ such that $B^* \notin \mS'$.  Since $\mS'$ is a subtessellation, there exists a unique closed ball $B:=B(\widetilde{\boldsymbol{x}}, \widetilde{R})$ given by $\|\cdot\|_\infty$ for some $\widetilde{\boldsymbol{x}}:= (\widetilde{x}_1, \cdots, \widetilde{x}_d)^t \in \RR^d$ and $\widetilde{R}>0$ such that \[B =  \bigcup_{B'' \in \mS'} B''.\]  Consequently, Lemma~\ref{lemm:TesselEuclidFund} implies that $\inte(B^*) \cap B = \emptyset$.  Thus (\ref{eqn:LemmLocalCoordinates:SIsExact}) and the second assertion of Lemma~\ref{lemm:TesselBallFund} imply that $B \not\supset  \prod_{i=1}^d[a_i, a'_i]$.  

Now, for all integers $i$ such that $1 \leq i \leq d$, consider the chord $\sL_i(\boldsymbol{y}, B)$ of $B$.  Since $\Ls_i(\boldsymbol{y}) \subset B$, we have that $\Ls_i(\boldsymbol{y}) \subset \sL_i(\boldsymbol{y}, B)$, which implies that $-\widetilde{R}+ \widetilde{x}_i \leq a_i < a'_i \leq \widetilde{x}_i+ \widetilde{R}$.  Since this holds for all $i$, we have that $B \supset  \prod_{i=1}^d[a_i, a'_i]$, a contradiction.   This proves that assertion (2) holds.  If $\mS''$ a subtessellation for which assertions (1) and (2) hold, then assertion (2) gives that $\mS'' = \mS$.  Thus $\mS$ is unique.  This proves the lemma.
\end{proof}

\subsection{Minimal and maximal tessellations}\label{subsec:MinTess}

Let $d \in \NN$, $R \geq R' >0$, and $\boldsymbol{x}, \boldsymbol{x'} \in \RR^d$.  Fix a complete tessellation $\B:= \B_{\RR^d}(\boldsymbol{x'}, R')$.  All subtessellations in this section, Section~\ref{subsec:MinTess}, refer to subtessellations of $\B$ unless otherwise stated.  Note that the collection \[\col(\B):=\{\mS : \mS \textrm{ is a subtessellations of } \B\} \cup \{ \B\} \] is partially ordered by inclusion.  For a closed ball $B(\boldsymbol{x}, R)$ given by $\| \cdot \|_\infty$, we say $\mS \in \col(\B)$ {\em covers $B(\boldsymbol{x}, R)$} if \begin{align}\label{eqn:CompTessMeetsBall}
 B(\boldsymbol{x}, R) \subset \bigcup_{B \in \mS} B\end{align} holds and {\em is contained in $B(\boldsymbol{x}, R)$} if \begin{align}\label{eqn:CompTessMeetsBallInner}
 B(\boldsymbol{x}, R) \supset \bigcup_{B \in \mS} B\end{align} holds.  Define the subcollections \begin{align*}
\col(\B, B(\boldsymbol{x}, R))&:= \{\mS \in \col(\B): \mS \textrm{ covers } B(\boldsymbol{x}, R)\} \\ \col^*(\B, B(\boldsymbol{x}, R))&:= \{\mS \in \col(\B): \mS \textrm{ is contained in } B(\boldsymbol{x}, R)\}, \end{align*} and note, since $\B$ covers every $B(\boldsymbol{x}, R)$, we have that $\B \in \col(\B, B(\boldsymbol{x}, R))$ for every $B(\boldsymbol{x}, R)$.  Furthermore, we say an element $\mS \in \col(\B, B(\boldsymbol{x}, R))$ is a {\em minimal (outer) tessellation for $B(\boldsymbol{x}, R)$} if, for any $\mS' \in \col(\B, B(\boldsymbol{x}, R))$ such that $\mS'\subset \mS$, we have $\mS=\mS'$, and an element $\mS \in \col^*(\B, B(\boldsymbol{x}, R))$ is a {\em maximal (inner) tessellation for $B(\boldsymbol{x}, R)$} if, for any $\mS' \in \col^*(\B, B(\boldsymbol{x}, R))$ such that $\mS'\supset \mS$, we have $\mS=\mS'$.  Note that a minimal tessellation for $B(\boldsymbol{x}, R)$ is a minimal element for $\col(\B, B(\boldsymbol{x}, R))$ ordered by inclusion.\footnote{One could show that any (non-empty) descending chain has a lower bound, which implies the existence of minimal tessellations for $B(\boldsymbol{x}, R)$ by Zorn's lemma.  However, we do not do this because it is more convenient and useful for us to explicitly construct these minimal tessellations (see Remark~\ref{rmk:prop:RepsMinTess} and Section~\ref{subsubsec:ProofSecAssertProp:MinTess}).}  Analogously, a maximal tessellation for $B(\boldsymbol{x}, R)$ is a maximal element for $\col^*(\B, B(\boldsymbol{x}, R))$ (when nonempty) ordered by inclusion.  All collections $\col^*(\B, B(\boldsymbol{x}, R))$ under consideration in this paper will be nonempty.

\begin{lemm}\label{lemm:MaxTessNonEmptyCond}
Let $d \in \NN$, $R, R' >0$ such that $R \geq 2R'$, $\boldsymbol{x}, \boldsymbol{x'} \in \RR^d$, and $\B:=\B_{\RR^d}(\boldsymbol{x'}, R')$.  Then $\col^*(\B, B(\boldsymbol{x}, R)) \neq \emptyset$ and $\B \notin \col^*(\B, B(\boldsymbol{x}, R))$.
\end{lemm}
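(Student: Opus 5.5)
Both assertions are direct from the coordinate description of $\B:=\B_{\RR^d}(\boldsymbol{x'}, R')$. For nonemptiness, I will exhibit a single element $B''\in\B$ with $B''\subset B(\boldsymbol{x}, R)$; then $\{B''\}$ is a subtessellation of $\B$ (a $1$-tessellation of $B''$, as noted earlier) contained in $B(\boldsymbol{x}, R)$, so it lies in $\col^*(\B, B(\boldsymbol{x}, R))$. For the second assertion, I will observe that $\bigcup_{B''\in\B}B''=\RR^d$ by Lemma~\ref{lemm:TesselEuclidFund}, and $\RR^d$ is not contained in the bounded set $B(\boldsymbol{x}, R)$.

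To find $B''$, write $\boldsymbol{x}=(x_1,\dots,x_d)^t$ and work one coordinate at a time. For each $i$, the endpoints $\{(2m-1)R'+x'_i : m\in\ZZ\}$ of the one-dimensional intervals of $\B$ along the $i$-axis are spaced exactly $2R'$ apart. Consider the interval $[x_i-R, x_i+R-2R']$. It has length $2R-2R'\geq 2R'$ because $R\geq 2R'$, so it contains a point $(2m_i-1)R'+x'_i$ for some $m_i\in\ZZ$. This can be seen directly, or by invoking the counting argument in the proof of Lemma~\ref{lemm:PartitionLineSeg} with $n=1$. Then
\[
[(2m_i-1)R'+x'_i,\ (2m_i+1)R'+x'_i]\subset[x_i-R, x_i+R].
\]
Taking the product over $i$ gives $B''=B[(m_1,\dots,m_d),\boldsymbol{x'}]\in\B$ with $B''\subset B(\boldsymbol{x}, R)$.

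None of these steps is a real obstacle. The only care needed is the endpoint bookkeeping, namely choosing the shortened interval so that a whole cell of side $2R'$ fits. One could even get away with $R\geq R'$ plus this slack argument in some cases, but the hypothesis $R\geq 2R'$ makes the length bound $2R-2R'\geq 2R'$ clean.
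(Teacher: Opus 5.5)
Your proof is correct. Your second assertion is handled exactly as in the paper: the union of $\B$ is $\RR^d$, which cannot lie inside the bounded ball.

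For the first assertion you take a slightly different route. The paper picks the cell $\widetilde{B}\in\B$ that contains the center $\boldsymbol{x}$, which exists by Lemma~\ref{lemm:TesselEuclidFund}. Since $\diam(\widetilde{B})=2R'$, every $\boldsymbol{z}\in\widetilde{B}$ satisfies $\|\boldsymbol{z}-\boldsymbol{x}\|_\infty\le 2R'\le R$, so $\widetilde{B}\subset B(\boldsymbol{x},R)$. You instead locate, coordinate by coordinate, a grid endpoint in $[x_i-R,\,x_i+R-2R']$ and build the cell from it.

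Both arguments use the hypothesis $R\ge 2R'$ at exactly the same threshold. The paper's version is shorter and needs no explicit description of cell endpoints. Yours is more hands-on, and it is the kind of one-dimensional lattice counting (cf.\ Lemma~\ref{lemm:PartitionLineSeg}) that the paper needs anyway when counting cells inside the ball for Proposition~\ref{prop:RepsMaxTess}.

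Your closing aside about $R\ge R'$ is vague but harmless, since you only claim it works ``in some cases''. In general the hypothesis $R\ge 2R'$ is necessary. For example, with $d=1$, $R'=1$, $\boldsymbol{x'}=0$ (cells $[2m-1,2m+1]$) and the ball $[-\tfrac12,\tfrac52]$, no cell fits inside and $\col^*(\B,B(\boldsymbol{x},R))$ is empty.
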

\begin{proof}
 By Lemma~\ref{lemm:TesselEuclidFund}, there exists a $\widetilde{B} \in \B$ such that $\boldsymbol{x} \in \widetilde{B}$.  Now, since $\rho(\widetilde{B})=R'$, we have that $\diam(\widetilde{B}) =2R'$.  Let $\boldsymbol{z} \in \widetilde{B}$.  Then $\|\boldsymbol{z}-\boldsymbol{x}\|_\infty \leq 2R'$, which gives that $\boldsymbol{z} \in B(\boldsymbol{x}, 2R') \subset B(\boldsymbol{x}, R)$.  Thus, $\{\widetilde{B}\} \in \col^*(\B, B(\boldsymbol{x}, R))$, yielding the first assertion.
 
Now if we assume the conclusion of the second assertion is false, then Lemma~\ref{lemm:TesselEuclidFund} yields that $\RR^d \subset  B(\boldsymbol{x}, R)$, which is a contradiction.  This proves the lemma.
\end{proof}

 Finally, we say $B(\boldsymbol{x}, R)$ is {\em representable (in $\B$)} if there exists an $\mS \in \col(\B, B(\boldsymbol{x}, R))$ for which we have equality in (\ref{eqn:CompTessMeetsBall}), or, equivalently, if there exists an $\mS \in \col^*(\B, B(\boldsymbol{x}, R))$ for which we have equality in (\ref{eqn:CompTessMeetsBallInner}).  We refer to this $\mS$ as its {\em representation (in $\B$)}.  Note that representations are always subtessellations of $\B$ and never $\B$ itself.
 
 \begin{lemm}\label{lemm:RepresentUniqSubTess}  Let $d \in \NN$, $R \geq R' >0$, and $\boldsymbol{x}, \boldsymbol{x'} \in \RR^d$.  If $B(\boldsymbol{x}, R)$ is representable in $\B:= \B_{\RR^d}(\boldsymbol{x'}, R')$, then its representation $\mS$ is a unique element of $\col(\B, B(\boldsymbol{x}, R))$ and a unique element of $\col^*(\B, B(\boldsymbol{x}, R))$. 
 
\end{lemm}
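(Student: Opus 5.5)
The plan is to show that the union of the elements of a subtessellation already determines the subtessellation. Once that is established, uniqueness in both collections follows at once.

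Suppose $\mS \in \col(\B, B(\boldsymbol{x}, R))$ gives equality in (\ref{eqn:CompTessMeetsBall}). Let $\mS'$ be another element of $\col(\B, B(\boldsymbol{x}, R))$ that also gives equality, so that
\[\bigcup_{B''\in\mS} B'' = B(\boldsymbol{x},R) = \bigcup_{B''\in\mS'} B''.\]
First note that $\mS' \neq \B$: by Lemma~\ref{lemm:TesselEuclidFund} the union over $\B$ is all of $\RR^d$, which is not the bounded set $B(\boldsymbol{x},R)$. Hence $\mS'$ is a genuine subtessellation of $\B$.

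Now take any $B^* \in \mS$. Its interior is nonempty and contained in $B(\boldsymbol{x},R) = \bigcup_{B''\in\mS'}B''$, so $\inte(B^*) \cap \bigcup_{B''\in\mS'}B'' \neq \emptyset$. Lemma~\ref{lemm:ElementPartofTess}, applied with $B = B^* \in \B$ and the subtessellation $\mS'$, gives $B^* \in \mS'$. This shows $\mS \subset \mS'$, and the symmetric argument gives $\mS' \subset \mS$, so $\mS = \mS'$. Therefore the representation is unique among the elements of $\col(\B, B(\boldsymbol{x}, R))$ that attain equality.

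The claim for $\col^*(\B, B(\boldsymbol{x}, R))$ is the same argument. Equality in (\ref{eqn:CompTessMeetsBallInner}) is the identical set equation, and $\B \notin \col^*(\B, B(\boldsymbol{x}, R))$ because its union is unbounded. Moreover, a single $\mS$ attaining equality lies in both collections, which confirms the "equivalently" in the definition of representable.

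The only point that needs care is the precise reading of "unique element". It should mean uniqueness among the elements realizing equality, not uniqueness in the whole collection $\col(\B, B(\boldsymbol{x}, R))$, which also contains $\B$ itself. Under that reading, the only nontrivial step is the application of Lemma~\ref{lemm:ElementPartofTess}, and that lemma relies on the fact that distinct tiles of $\B$ have disjoint interiors (Lemma~\ref{lemm:TesselEuclidFund}).
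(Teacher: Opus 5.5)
Your proof is correct and is essentially the paper's argument. The paper assumes two distinct representations, takes a tile $B$ lying in one but not the other, and uses $\inte(B)\subset B(\boldsymbol{x},R)$ together with the disjoint-interiors property of Lemma~\ref{lemm:TesselEuclidFund} to reach a contradiction; your appeal to Lemma~\ref{lemm:ElementPartofTess}, followed by the double inclusion, packages exactly that step. Your reading of ``unique'' as uniqueness among representations matches the paper's proof, and your check that $\mS'\neq\B$ makes explicit what the paper leaves to its remark that representations are never $\B$ itself.
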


\begin{proof}
We first consider $\col(\B, B(\boldsymbol{x}, R))$.  Assume that the conclusion does not hold.  Let $\mS_1$ and $\mS_2$ be two distinct elements of $\col(\B, B(\boldsymbol{x}, R))$ that are representations of $B(\boldsymbol{x}, R)$.  Consequently, we have that \[\bigcup_{B \in \mS_1} B= B(\boldsymbol{x}, R) =  \bigcup_{B \in \mS_2} B.\]  Without loss of generality, we may assume that there exists a $B \in \mS_1 \backslash \mS_2$.   Hence, we have \[\inte(B) \subset \bigcup_{B \in \mS_2} B,\] from which it follows that $\inte(B) \cap \inte(B') \neq \emptyset$ for some $B'  \in \mS_2$.  Since $B \neq B'$, Lemma~\ref{lemm:TesselEuclidFund} yields a contraction.

The proof for $\col^*(\B, B(\boldsymbol{x}, R))$ is analogous.  This proves the lemma.
\end{proof}

We have two characterizations of representability.

\begin{lemm}\label{lemm:CharRepresentableSubTess} Let $d \in \NN$, $R \geq R' >0$, and $\boldsymbol{x}, \boldsymbol{x'} \in \RR^d$.  Then $B(\boldsymbol{x}, R)$ is representable in $\B:=\B_{\RR^d}(\boldsymbol{x'}, R')$ if and only if $\begin{cases}  R/R' \textrm{ is an odd integer and } \boldsymbol{x} \equiv \boldsymbol{x'} \mod 2R'\ZZ^d\\ R/R' \textrm{ is an even integer and } \boldsymbol{x} \equiv \boldsymbol{x'}+ (R', \cdots, R')^t\mod 2R'\ZZ^d \end{cases}.$  Also, when $B(\boldsymbol{x}, R)$ is representable in $\B$, its representation is the $(R/R')$-tessellation of $B(\boldsymbol{x}, R)$.
 
\end{lemm}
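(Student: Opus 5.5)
The plan is to prove both directions by working one coordinate at a time. For the forward direction, suppose $B(\boldsymbol{x}, R)$ is representable, with representation $\mS$. Then $\mS$ is a subtessellation of $\B$, so it is an $M$-tessellation of some closed ball $B(\boldsymbol{\widetilde{x}}, \widetilde{R})$. Lemma~\ref{lemm:TesselBallFund} gives that the union of the elements of $\mS$ is exactly $B(\boldsymbol{\widetilde{x}}, \widetilde{R})$, and representability says this union is $B(\boldsymbol{x}, R)$. Comparing the two products of intervals forces $\boldsymbol{\widetilde{x}}=\boldsymbol{x}$ and $\widetilde{R}=R$. Lemma~\ref{lemm:ChordSubtesselProperties1} then gives $M = R/R' \in \NN$, and Remark~\ref{rema:MTessUnique} identifies $\mS$ as the $(R/R')$-tessellation of $B(\boldsymbol{x}, R)$. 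This already yields the final assertion of the lemma; it also follows from Lemma~\ref{lemm:RepresentUniqSubTess}.

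Next I would pin down the congruence of $\boldsymbol{x}$. Write $M=R/R'$. The left endpoint $x_i - R$ of the $i$-th factor of $B(\boldsymbol{x},R)$ is the left endpoint of an element of $\mS\subset\B$. Hence $x_i - R = (2m_i-1)R' + x'_i$ for some $m_i\in\ZZ$, which gives $x_i - x'_i = (M + 2m_i - 1)R'$. If $M$ is odd, $M+2m_i-1$ is even, so $x_i\equiv x'_i \bmod 2R'$. If $M$ is even, $M+2m_i-1$ is odd, so $x_i \equiv x'_i + R' \bmod 2R'$. Doing this in every coordinate gives the two stated cases.

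For the converse, assume $M:=R/R'\in\NN$ and $\boldsymbol{x}$ satisfies the congruence matching the parity of $M$. I would reverse the computation: in each coordinate set $m_i := (x_i - R - x'_i + R')/(2R')$. The congruence hypothesis makes this an integer: when $M$ is odd, $x_i-x'_i\in 2R'\ZZ$ and $-R+R'=-(M-1)R'\in 2R'\ZZ$; when $M$ is even, $x_i-x'_i-R'\in 2R'\ZZ$ and $2R'-R=-(M-2)R'\in 2R'\ZZ$. Then $x_i-R=(2m_i-1)R'+x'_i$ and $x_i+R=(2(m_i+M-1)+1)R'+x'_i$. The faces of $B(\boldsymbol{x},R)$ therefore lie on boundaries of elements of $\B$, and the hypothesis (\ref{eqn:lemm:LocalCoordinates}) of Lemma~\ref{lemm:LocalCoordinates} holds. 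Applying that lemma with $\Ls_i$ the chords of $B(\boldsymbol{x},R)$ through an interior point $\boldsymbol{y}$ of some element of $\B$ (for instance a point near $\boldsymbol{x}$ that avoids the boundary grid) gives a subtessellation $\mS$ that is the $M$-tessellation of $\prod_i[a_i,a'_i]=B(\boldsymbol{x},R)$. Its union is exactly $B(\boldsymbol{x},R)$, so the ball is representable.

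Alternatively, the subtessellation could be written down directly as $\{\prod_i[(2n_i-1)R'+x'_i,(2n_i+1)R'+x'_i]: m_i\le n_i\le m_i+M-1\}$, exactly as in the proof of Lemma~\ref{lemm:LocalCoordinates}. The only mildly delicate point is the parity bookkeeping and checking that $m_i$ is an integer; everything else is direct from the earlier lemmas.
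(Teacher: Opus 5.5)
Your proof is correct, but it takes a different route from the paper's. The paper works entirely through the completion machinery.

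\begin{itemize}
\item Forward direction: after getting $R/R'\in\NN$ (essentially as you do), the paper identifies $\overline{\mS}$. By the explicit construction of completions this is $\B_{\RR^d}(\boldsymbol{x},R')$ for odd $M$ and $\B_{\RR^d}(\boldsymbol{x}-(R',\dots,R')^t,R')$ for even $M$. Lemma~\ref{lemm:UniqCompletion} equates it with $\B$, and the congruence is then read off from Lemma~\ref{lemm:CompTessExtrapol}.
\item Converse: the paper takes $\mR_M(B(\boldsymbol{x},R))$ from Lemma~\ref{lemm:RefineTess} and computes its completion the same way. The congruence hypothesis and Lemma~\ref{lemm:CompTessExtrapol} show this completion is $\B$. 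Lemma~\ref{lemm:UniqCompletion} then makes $\mR_M(B(\boldsymbol{x},R))$ a subtessellation of $\B$, hence the representation.
\end{itemize}

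You bypass completions and the parameter torus altogether.

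\begin{itemize}
\item Forward direction: you read the congruence off the corner cell's endpoint $x_i-R=(2m_i-1)R'+x'_i$ via the parity of $M+2m_i-1$.
\item Converse: you construct the integers $m_i$ explicitly and feed the resulting grid-aligned faces into Lemma~\ref{lemm:LocalCoordinates}. This is in effect the reverse direction of the paper's second characterization, Lemma~\ref{lemm:ChordCharRepresentable}.
\end{itemize}

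The paper's argument is very short once the completion lemmas are in place. It also explains the parity split structurally: the two cases correspond to the two normalizations of the completion according to the parity of $M$. Your argument is self-contained and makes the parity mechanism transparent. It also names exactly which cells of $\B$ make up the representation, namely the indices $m_i\le n_i\le m_i+M-1$, at the cost of a little explicit arithmetic.
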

 \begin{proof}  We first prove the forward implication.  By Lemma~\ref{lemm:RepresentUniqSubTess}, the representation $\mS$ of $B(\boldsymbol{x}, R)$ is a unique element of $\col(\B, B(\boldsymbol{x}, R))$.  Consequently, $\mS$ is a subtessellation of $\B$ and, thus, is an $M$-tessellation of a closed ball given by $\|\cdot\|_\infty$ for some $M \in \NN$.  By Lemma~\ref{lemm:TesselBallFund}, that closed ball given by $\|\cdot\|_\infty$ is $B(\boldsymbol{x}, R)$.  Since $\mS$ is an $M$-tessellation, every element of $\mS$ has radius $R/M$.  Since $\overline{\mS}= \B$, every element of $\mS$ has radius $R'$.  Thus, $R/M = R'$ and $R/R' \in \NN$.
 
 If $R/R'$ is odd, we have that $\overline{\mS} = \B_{\RR^d}(\boldsymbol{x}, R')$ by construction of the completion and Lemma~\ref{lemm:UniqCompletion}.  Since $\overline{\mS}= \B$, Lemma~\ref{lemm:CompTessExtrapol} gives that $\boldsymbol{x} \equiv \boldsymbol{x'}\mod 2R'\ZZ^d$.
 
 Otherwise, $R/R'$ is even, and we have that  $\overline{\mS} = \B_{\RR^d}(\boldsymbol{x}- (R', \cdots, R')^t, R')$ by construction of the completion and Lemma~\ref{lemm:UniqCompletion}.  Since $\overline{\mS}= \B$, Lemma~\ref{lemm:CompTessExtrapol} gives that $\boldsymbol{x} \equiv \boldsymbol{x'}+ (R', \cdots, R')^t\mod 2R'\ZZ^d$.  This proves the forward implication.
 
 We now prove the reverse implication.  As $R \geq R' >0$, we have that $R/R' \geq 1$.  Let $M:=R/R' \in \NN$.  Then Lemma~\ref{lemm:RefineTess} gives that $\mR_M(B(\boldsymbol{x}, R))$ is the $M$-tessellation of $B(\boldsymbol{x}, R)$.   If $M$ is odd, then $\overline{\mR_M(B(\boldsymbol{x}, R))} = \B_{\RR^d}(\boldsymbol{x}, R/M)= \B_{\RR^d}(\boldsymbol{x}, R')$ by construction of the completion.  When $M$ is odd, we have the assumption that $\boldsymbol{x} \equiv \boldsymbol{x'} \mod 2R'\ZZ^d$, using which Lemma~\ref{lemm:CompTessExtrapol} gives that $\overline{\mR_M(B(\boldsymbol{x}, R))}= \B_{\RR^d}(\boldsymbol{x'}, R')=\B$.  Now Lemma~\ref{lemm:UniqCompletion} gives that $\mR_M(B(\boldsymbol{x}, R)$ is a subtessellation of $\B$, and Lemma~\ref{lemm:TesselBallFund} implies that $\mR_M(B(\boldsymbol{x}, R))$ is the representation of $B(\boldsymbol{x}, R)$.  Consequently, $B(\boldsymbol{x}, R)$ is representable, and $\mR_M(B(\boldsymbol{x}, R))$ is its representation.

 If $M$ is even, then $\overline{\mR_M(B(\boldsymbol{x}, R))} = \B_{\RR^d}(\boldsymbol{x}- (R', \cdots, R')^t, R')$ by construction of the completion.  When $M$ is even, we have the assumption that $\boldsymbol{x} \equiv \boldsymbol{x'}+ (R', \cdots, R')^t \mod 2R'\ZZ^d$, using which Lemma~\ref{lemm:CompTessExtrapol} gives that $\overline{\mR_M(B(\boldsymbol{x}, R))}= \B_{\RR^d}(\boldsymbol{x'}, R')=\B$.  As in the case for $M$ odd, we have that $B(\boldsymbol{x}, R)$ is representable, and $\mR_M(B(\boldsymbol{x}, R))$ is its representation.  This proves the reverse implication.

For the final assertion, we have shown that, when $B(\boldsymbol{x}, R)$ is representable, its representation is the $(R/R')$-tessellation of $B(\boldsymbol{x}, R)$.  This completes the proof the lemma.
 \end{proof}

\begin{lemm}\label{lemm:ChordCharRepresentable} Let $d \in \NN$, $R \geq R' >0$, and $\boldsymbol{x}, \boldsymbol{x'} \in \RR^d$.  Then $B(\boldsymbol{x}, R)$ is representable in $\B:=\B_{\RR^d}(\boldsymbol{x'}, R')$ if and only if there exists a \begin{align}\label{eqn:lemm:ChordCharRepresentable}
 \boldsymbol{y} \in \inte(B(\boldsymbol{x}, R)) \cap \bigcup_{B'' \in \B} \inte(B'') \end{align} such that the family $\{\sL_1(\boldsymbol{y}, B(\boldsymbol{x}, R)), \cdots, \sL_d(\boldsymbol{y}, B(\boldsymbol{x}, R)) \}$ is a uniform local coordinate system at $\boldsymbol{y}$ satisfying (\ref{eqn:lemm:LocalCoordinates}).
\end{lemm}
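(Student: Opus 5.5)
The plan is to prove both directions by reduction to Lemma~\ref{lemm:LocalCoordinates} and Lemma~\ref{lemm:CharRepresentableSubTess}, using the chords of $B(\boldsymbol{x}, R)$ through an interior point as the uniform local coordinate system.

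\emph{Forward implication.} Suppose $B(\boldsymbol{x}, R)$ is representable, with representation $\mS$. By Lemma~\ref{lemm:CharRepresentableSubTess}, $\mS$ is the $(R/R')$-tessellation of $B(\boldsymbol{x}, R)$ and is a subtessellation of $\B$.

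First I would choose $\boldsymbol{y}$ to be any point of $\inte(B'')$ for some $B'' \in \mS$. Since $B'' \subset B(\boldsymbol{x}, R)$, the interior of $B''$ lies in $\inte(B(\boldsymbol{x}, R))$. This follows coordinatewise from (\ref{eqn:defnInterior}), because each factor interval of $B''$ sits inside the corresponding factor of $B(\boldsymbol{x}, R)$. Hence $\boldsymbol{y}$ satisfies (\ref{eqn:lemm:ChordCharRepresentable}).

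Next, the chords $\sL_i(\boldsymbol{y}, B(\boldsymbol{x}, R))$ all have length $2R$. They are line segments $\Ls_i(\boldsymbol{y}, x_i - R, x_i + R)$ with $x_i - R < y_i < x_i + R$. So they form a uniform local coordinate system at $\boldsymbol{y}$ of size $2R$.

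Finally, Lemma~\ref{lemm:ChordSubtesselProperties2}, applied with this $\mS$ and $B = B(\boldsymbol{x}, R)$, gives that all endpoints lie in $\bigcup_{B''\in\B}(B''\backslash \inte(B''))$. That is exactly (\ref{eqn:lemm:LocalCoordinates}).

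\emph{Reverse implication.} Take $\boldsymbol{y}$ as in the hypothesis. The chords $\sL_i(\boldsymbol{y}, B(\boldsymbol{x}, R))$ form a uniform local coordinate system of size $2R$ with $a_i = x_i - R$ and $a'_i = x_i + R$, and they satisfy (\ref{eqn:lemm:LocalCoordinates}). Applying Lemma~\ref{lemm:LocalCoordinates} with $R'' = R$ produces a subtessellation $\mS$ of $\B$ that is the $(R/R')$-tessellation of $\prod_{i=1}^d [x_i - R, x_i + R] = B(\boldsymbol{x}, R)$. By Lemma~\ref{lemm:TesselBallFund}, $\bigcup_{B''\in \mS} B'' = B(\boldsymbol{x}, R)$. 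So $\mS \in \col(\B, B(\boldsymbol{x}, R))$ with equality in (\ref{eqn:CompTessMeetsBall}), and $B(\boldsymbol{x}, R)$ is representable.

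\emph{Main obstacle.} The only delicate point is checking that the hypotheses of the cited lemmas match precisely. Lemma~\ref{lemm:LocalCoordinates} requires $\boldsymbol{y}$ to lie in the interior of some tile of $\B$; this is supplied by the second factor in (\ref{eqn:lemm:ChordCharRepresentable}). Lemma~\ref{lemm:ChordSubtesselProperties2} requires $\boldsymbol{y} \in \inte(B)$, where $\mS$ is an $M$-tessellation of $B$; this holds because $\mS$ tessellates exactly $B(\boldsymbol{x}, R)$, which is the ball identified in Lemma~\ref{lemm:CharRepresentableSubTess}. Both checks are routine once the interior containment above is verified.
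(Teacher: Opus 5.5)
Your proof is correct and follows essentially the same argument as the paper. The reverse direction is identical: Lemma~\ref{lemm:LocalCoordinates} with $R''=R$, then Lemma~\ref{lemm:TesselBallFund}. In the forward direction the differences are cosmetic: you take $\boldsymbol{y}$ in the interior of a tile of the representation instead of using the paper's Lebesgue-measure argument, and you cite Lemma~\ref{lemm:ChordSubtesselProperties2} where the paper reproduces that same boundary-containment argument inline via Lemma~\ref{lemm:ChordsThruInteriorPointsChar}.
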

\begin{proof}  Let $\sL_i(\boldsymbol{y}):=\sL_i(\boldsymbol{y}, B(\boldsymbol{x}, R))$ for all $i$, and let $\lambda(\cdot)$ denote the Lebesgue measure on $\RR^d$.  We first prove the forward implication. Since \[\lambda\left(\RR^d \backslash \bigcup_{B'' \in \B} \inte(B'')\right)=0,\] we have that \[\lambda\left(\inte(B(\boldsymbol{x}, R)) \cap \bigcup_{B'' \in \B} \inte(B'')\right) = (2R)^d >0.\]  Consequently, we can and do pick a $\boldsymbol{y} \in \RR^d$ such that (\ref{eqn:lemm:ChordCharRepresentable}) holds.

Now recall that the chords of $B(\boldsymbol{x}, R)$ all have length $2R$, and, thus, the family $\{\sL_1(\boldsymbol{y}), \cdots, \sL_d(\boldsymbol{y})\}$ is a uniform local coordinate system at $\boldsymbol{y}$ of size $2R$.   Since $B(\boldsymbol{x}, R)$ is representable, we have that there exists $\mS \in \col(\B, B(\boldsymbol{x}, R))$ such that \[B(\boldsymbol{x}, R) =  \bigcup_{B'' \in \mS} B''\] and, hence, we have that \[\bigcup_{B'' \in \mS} \inte(B'') \subset \inte(B(\boldsymbol{x}, R)).\]  Thus, we have that \[B(\boldsymbol{x}, R) \backslash \inte(B(\boldsymbol{x}, R)) = \left(\bigcup_{B'' \in \mS} B''\right) \backslash \inte(B(\boldsymbol{x}, R))  \subset \bigcup_{B'' \in \mS} B'' \backslash \inte(B'').\]  Since $(\ref{eqn:lemm:ChordCharRepresentable})$ holds, Lemma~\ref{lemm:ChordsThruInteriorPointsChar} gives that $ \boldsymbol{e}\left(\sL_i(\boldsymbol{y})\right), \boldsymbol{e'}\left(\sL_i(\boldsymbol{y})\right) \in B(\boldsymbol{x}, R) \backslash \inte(B(\boldsymbol{x}, R))$ for all $i$. Thus, we obtain that the uniform local coordinate system $\{\sL_1(\boldsymbol{y}), \cdots, \sL_d(\boldsymbol{y}) \}$ satisfies (\ref{eqn:lemm:LocalCoordinates}).  This proves the forward implication.

We now prove the reverse implication.  Since the chords of $B(\boldsymbol{x}, R)$ all have length $2R$, the family $\{\sL_1(\boldsymbol{y}), \cdots, \sL_d(\boldsymbol{y})\}$ is a uniform local coordinate system at $\boldsymbol{y}$ of size $2R$.  Also, by (\ref{eqn:EndPointsChordDefn}), we have that \begin{align*}\boldsymbol{e}\left(\sL_i(\boldsymbol{y})\right) &= (y_1, \cdots, y_{i-1}, x_i-R, y_{i+1}, \cdots, y_{d}) \textrm { and } \\  \boldsymbol{e'}\left(\sL_i(\boldsymbol{y})\right)&=(y_1, \cdots, y_{i-1}, x_i+R, y_{i+1}, \cdots, y_{d}) \end{align*} for all $i$.  Since (\ref{eqn:lemm:LocalCoordinates}) holds, Lemma~\ref{lemm:LocalCoordinates} gives a unique subtessellation $\mS$ of $\B$ such that \[\sL_1(\boldsymbol{y}) \cup  \cdots \cup \sL_d(\boldsymbol{y}) \subset \bigcup_{B'' \in \mS} B''  \] holds and that $\mS$ is the $(R/R')$-tessellation of $\prod_{i=1}^d [ -R+ x_i, x_i+R] = B(\boldsymbol{x}, R)$.  By Lemma~\ref{lemm:TesselBallFund}, we have that \[\bigcup_{B'' \in \mS} B'' = B(\boldsymbol{x}, R).\]  Thus $B(\boldsymbol{x}, R)$ is representable as desired.  This proves the lemma.
\end{proof}

\begin{prop}\label{prop:RepsMinTess} Let $d \in \NN$, $R \geq R' >0$, and $\boldsymbol{x}, \boldsymbol{x'} \in \RR^d$.  If \begin{itemize}
\item $B(\boldsymbol{x}, R)$ is representable in $\B:=\B_{\RR^d}(\boldsymbol{x'}, R')$, then the representation $\mS$ of $B(\boldsymbol{x}, R)$ is the unique minimal tessellation for $B(\boldsymbol{x}, R)$, $R/R' \in \NN$, and $|\mS|= (R/R')^d$.
\item $B(\boldsymbol{x}, R)$ is not representable in $\B:=\B_{\RR^d}(\boldsymbol{x'}, R')$, then there exist finitely many minimal tessellations $\mS$ for $B(\boldsymbol{x}, R)$, all of which satisfy \[\begin{cases} |\mS| = (\lceil R/R' \rceil)^d \text{ or }  |\mS| = (\lceil R/R' \rceil+1)^d  & \text{ if } R/R' \notin \NN \\  |\mS| =  (R/R' +1)^d & \text{ if } R/R' \in \NN\end{cases}.\]\end{itemize}

\end{prop}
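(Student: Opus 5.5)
The plan is to reduce everything to one-dimensional counting along each coordinate axis. The first step is an explicit description of subtessellations of $\B=\B_{\RR^d}(\boldsymbol{x'},R')$. By Lemma~\ref{lemm:UniqCompletion} and Lemma~\ref{lemm:ChordSubtesselProperties1}, a subtessellation $\mS$ of $\B$ is the $M$-tessellation of a cube of radius $MR'$. Comparing with the definition of $\B_{\RR^d}(\boldsymbol{x'},R')$, as in the proof of Lemma~\ref{lemm:LocalCoordinates}, I would then show
\[\mS=\Big\{B[(n_1,\dots,n_d),\boldsymbol{x'}] : (n_1,\dots,n_d)\in \ZZ^d\cap\prod_{i=1}^d[m_i,m_i+M-1]\Big\}\]
for some integers $m_1,\dots,m_d$. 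So a subtessellation is the same as a box of cell indices whose side lengths are all equal to $M$, and $|\mS|=M^d$. Moreover, $\mS$ covers $B(\boldsymbol{x},R)$ if and only if, for each $i$, the interval $[(2m_i-1)R'+x'_i,\,(2m_i+2M-1)R'+x'_i]$ contains $[x_i-R,x_i+R]$.

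\textbf{Representable case.} Let $\mS$ be the representation. Lemma~\ref{lemm:CharRepresentableSubTess} gives $R/R'\in\NN$ and shows that $\mS$ is the $(R/R')$-tessellation, so $|\mS|=(R/R')^d$. For minimality and uniqueness, take any $\mS'\in\col(\B,B(\boldsymbol{x},R))$ and any $B\in\mS$. Then $\inte(B)\subset B(\boldsymbol{x},R)\subset\bigcup_{B''\in\mS'}B''$, so Lemma~\ref{lemm:ElementPartofTess} gives $B\in\mS'$. Hence $\mS\subset\mS'$ for every covering $\mS'$, which makes $\mS$ the unique minimal element.

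\textbf{Non-representable case.} For each $i$, let $k_i$ be the least number of consecutive grid intervals of length $2R'$ (with grid points $(2m-1)R'+x'_i$) whose union covers $[x_i-R,x_i+R]$. An elementary count gives the following:
\begin{itemize}
\item if $R/R'\notin\NN$, then $k_i\in\{\lceil R/R'\rceil,\lceil R/R'\rceil+1\}$;
\item if $R/R'\in\NN$, then $k_i=R/R'$ exactly when $x_i-R$ is a grid point, and $k_i=R/R'+1$ otherwise.
\end{itemize}
In each coordinate, the index windows of length $k$ that cover $[x_i-R,x_i+R]$ form a finite set, and any such window of length $K\ge k_i$ contains one of length $k_i$. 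Set $K:=\max_i k_i$. Then a covering $\mS'$ with side $M$ has $M\ge K$, and in every coordinate its window can be shrunk to a length-$K$ window that still covers. This yields a covering subtessellation of side $K$ inside $\mS'$. Consequently the minimal tessellations are exactly the covering boxes of side $K$. There are finitely many of them, since each coordinate allows at most $K-k_i+1$ placements, and each has cardinality $K^d$.

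The remaining step, which I expect to be the main obstacle, is to pin down $K$ when $R/R'\in\NN$; there one must show $K=R/R'+1$. Suppose instead that $k_i=R/R'$ for every $i$. Then every $x_i-R$ is a grid point, so $x_i\equiv x'_i+(R/R'-1)R'+R' \pmod{2R'}$. Equivalently, $\boldsymbol{x}\equiv\boldsymbol{x'}$ when $R/R'$ is odd, and $\boldsymbol{x}\equiv\boldsymbol{x'}+(R',\dots,R')^t$ when $R/R'$ is even. By Lemma~\ref{lemm:CharRepresentableSubTess} this means $B(\boldsymbol{x},R)$ is representable, a contradiction. The parity bookkeeping in this congruence, together with the careful verification of the grid-point counting (open versus closed endpoints), is where most of the care goes.
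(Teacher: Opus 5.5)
Your proposal is correct. For the representable case it is essentially the paper's argument: each $B\in\mS$ has $\inte(B)\subset B(\boldsymbol{x},R)$, hence lies in every cover, so $\mS$ is the least element of $\col(\B,B(\boldsymbol{x},R))$.

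For the non-representable case you take a genuinely different route. The paper never parametrizes subtessellations. It fixes a generic interior point $\boldsymbol{y}$ and extends each chord $\sL_j(\boldsymbol{y})$ to the nearest points of $\partial(\B)$ (Cases (1), (2), (3a), (3b)), then sets $L=\sup_j L_j$. It pads the short segments by one cell via the choices $\widehat{\boldsymbol{b}}$, $\widecheck{\boldsymbol{b}}$ to get a uniform local coordinate system, and feeds this into Lemma~\ref{lemm:LocalCoordinates}. Minimality, and the fact that every minimal tessellation arises this way, are then extracted from Lemma~\ref{lemm:prop:RepsMinTess} and chord-length comparisons.

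Your observation that a subtessellation is exactly an index cube $\prod_i[m_i,m_i+M-1]$ turns covering into coordinatewise interval containment. The problem then collapses to the one-dimensional numbers $k_i$ and $K=\max_i k_i$. This is much shorter and gives more:
\begin{enumerate}
\item the minimal tessellations are precisely the covering cubes of side $K$, so they all have the same cardinality $K^d$;
\item there are exactly $\prod_i (K-k_i+1)$ of them;
\item since $K-k_i\le 1$, the overhang bound $<4R'$ of Remark~\ref{rmk:proofPropMinTess2ndPart} is immediate.
\end{enumerate}
The same reasoning with contained windows would also handle Proposition~\ref{prop:RepsMaxTess}. The paper's route, in exchange, stays inside the chord and local-coordinate framework it builds and reuses for the maximal case.

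Two small repairs are needed. First, the index-cube description covers only subtessellations. You should say explicitly that $\B$ itself, which belongs to $\col(\B,B(\boldsymbol{x},R))$, is not minimal because it strictly contains a covering cube of side $K$.

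Second, your congruence has a stray $R'$. Since the grid points are $x_i'+(2m-1)R'$, the condition is $x_i\equiv x_i'+(R/R'-1)R' \pmod{2R'}$, and this is what yields the parity split you state. More simply, if every $k_i=R/R'$, then the covering cube of side $R/R'$ has union exactly $B(\boldsymbol{x},R)$ and is a representation by definition. So Lemma~\ref{lemm:CharRepresentableSubTess} is not needed at that step.
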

\begin{rema}\label{rmk:prop:RepsMinTess}
When $B(\boldsymbol{x}, R)$ is not representable in $\B$, it is also possible for the minimal tessellation of $B(\boldsymbol{x}, R)$ to be unique, but this is not guaranteed.  In our proof of the second assertion (see Section~\ref{subsubsec:ProofSecAssertProp:MinTess}), we explicitly construct a minimal tessellation of $B(\boldsymbol{x}, R)$ and show that any minimal tessellation of $B(\boldsymbol{x}, R)$ is constructed in a similar way by making various choices (see~ (\ref{eqn:RepsMinTess:ChoicebCaseAi}), \ref{eqn:RepsMinTess:ChoicebCaseB}) and the end of Section~\ref{subsubsec:ProofSecAssertProp:MinTess}).  Consequently, our proof of the second assertion gives explicit constructions for any minimal tessellation of $B(\boldsymbol{x}, R)$.

\end{rema}

\begin{rema}\label{rmk:proofPropMinTess2ndPart} The maximum distance with respect to $\|\cdot\|_\infty$ of any point in a minimal tessellation $\mS$ of $B(\boldsymbol{x}, R)$ to some point in $B(\boldsymbol{x}, R)$ is less than $4R'$.  If $B(\boldsymbol{x}, R)$ is representable, this follows by (\ref{eqn:CompTessMeetsBall}) and the definition of representable.  Otherwise, this follows by the proof of the second assertion of Proposition~\ref{prop:RepsMinTess} (see Section~\ref{subsubsec:ProofSecAssertProp:MinTess}): precisely, it follows by (\ref{eqn:prop:RepsMinTess:kappasize}, \ref{eqn:prop:RepsMinTess:kappasizeCase3}, \ref{eqn:prop:RepsMinTess:LDefn}), that the two possible values of $L_i$ are $L$ and $L-2R'$ (namely Cases (I) and (II) in Section~\ref{subsubsec:ProofSecAssertProp:MinTess}), and that any minimal tessellation comes from the construction in Section~\ref{subsubsec:ProofSecAssertProp:MinTess} (namely, as noted at the end of Section~\ref{subsubsec:ProofSecAssertProp:MinTess}, a choice of $\widehat{\boldsymbol{b}}$ or $\widecheck{\boldsymbol{b}}$ for each coordinate $i$).  
\end{rema}

\begin{prop}\label{prop:RepsMaxTess} Let $d \in \NN$, $R, R' >0$ such that \begin{align}\label{eqn:prop:RepsMaxTess:CondRRPrime}
 R \geq 2R', \end{align} $\boldsymbol{x}, \boldsymbol{x'} \in \RR^d$, and $\B:=\B_{\RR^d}(\boldsymbol{x'}, R')$.  If \begin{itemize}
\item $B(\boldsymbol{x}, R)$ is representable in $\B$, then the representation $\mS$ of $B(\boldsymbol{x}, R)$ is the unique maximal tessellation for $B(\boldsymbol{x}, R)$, $R/R' \in \NN$, and $|\mS|= (R/R')^d$.
\item $B(\boldsymbol{x}, R)$ is not representable in $\B$, then there exist finitely many maximal tessellations $\mS$ for $B(\boldsymbol{x}, R)$, all of which satisfy \[\begin{cases} |\mS| = \left(\lfloor R/R' \rfloor\right)^d \text{ or }  |\mS| = \left(\lfloor R/R' \rfloor-1\right)^d  & \text{ if } R/R' \notin \NN \\  |\mS| =  \left(R/R' -1\right)^d & \text{ if } R/R' \in \NN\end{cases}.\]\end{itemize}

\end{prop}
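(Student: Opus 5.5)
The plan is to reduce everything to one dimension, coordinate by coordinate, exploiting the product structure of $\|\cdot\|_\infty$-balls and of the complete tessellation $\B=\B_{\RR^d}(\boldsymbol{x'},R')$. An element of $\B$ is $\prod_i[(2m_i-1)R'+x'_i,(2m_i+1)R'+x'_i]$, and a subtessellation is exactly a set of the form $\{\prod_i I_{m_i}: m_i\in[\mu_i,\mu_i+M-1]\}$ with the same side count $M$ in every coordinate (Lemma~\ref{lemm:ChordSubtesselProperties1} and the definition of $M$-tessellation). Hence $\mS\in\col^*(\B,B(\boldsymbol{x},R))$ if and only if, for every $i$, the union of the 1-dimensional intervals $I_{\mu_i},\dots,I_{\mu_i+M-1}$ lies in $[x_i-R,x_i+R]$. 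I would first record this product characterization explicitly, using Lemma~\ref{lemm:TesselBallFund} to identify $\bigcup_{B''\in\mS}B''$ with the ball $\prod_i[(2\mu_i-1)R'+x'_i,(2\mu_i+2M-1)R'+x'_i]$. Lemma~\ref{lemm:MaxTessNonEmptyCond} ensures the collection is nonempty because $R\ge 2R'$.

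For the representable case, Lemma~\ref{lemm:CharRepresentableSubTess} gives $R/R'\in\NN$ and identifies the representation $\mS$ with the $(R/R')$-tessellation, so $|\mS|=(R/R')^d$. For maximality and uniqueness, take any $\mS'\in\col^*$. By Lemma~\ref{lemm:ElementPartofTess}, every element of $\mS'$ has interior meeting $B(\boldsymbol{x},R)=\bigcup_{\mS}B''$, so it lies in $\mS$, giving $\mS'\subset\mS$. Thus $\mS$ is the maximum element, hence the unique maximal one.

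For the non-representable case, in each coordinate $i$ let $N_i$ be the number of grid intervals $I_m$ contained in $[x_i-R,x_i+R]$. These intervals form a consecutive run, since the grid points $(2m-1)R'+x'_i$ are spaced $2R'$ apart, as in Lemma~\ref{lemm:PartitionLineSeg}. The interval has length $2R$, and the grid points inside it number either $\lfloor R/R'\rfloor$ or $\lfloor R/R'\rfloor+1$; moreover, $N_i$ equals that count minus one. Since the grid offset is the same in every coordinate only modulo $2R'$ via $x_i-x'_i$, different coordinates may give different $N_i\in\{\lfloor R/R'\rfloor-1,\lfloor R/R'\rfloor\}$. When $R/R'\in\NN$ and the ball is not representable, the endpoints cannot both be grid points in every coordinate. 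Here I would use Lemma~\ref{lemm:CharRepresentableSubTess}: non-representability forces that some coordinate misaligns, and in fact, for integer $R/R'$, alignment in one coordinate should be argued coordinatewise. This is the delicate point. One coordinate aligned gives $N_i=R/R'$, the others $N_i=R/R'-1$, and then a square subtessellation can only use $\min_i N_i=R/R'-1$.

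The maximal tessellations are then the squares with side $M=\min_iN_i$, positioned by choosing a window of length $M$ within each coordinate's run. A square cannot exceed $\min_i N_i$ in side, and any smaller square inside the allowed region extends to one of side $\min_iN_i$. So the maximal elements all have cardinality $(\min_iN_i)^d$, and they are finitely many, at most $\prod_i(N_i-M+1)\le 2^d$. This yields $|\mS|\in\{\lfloor R/R'\rfloor^d,(\lfloor R/R'\rfloor-1)^d\}$ when $R/R'\notin\NN$, and $(R/R'-1)^d$ when $R/R'\in\NN$.

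The main obstacle is the integer case. Non-representability in Lemma~\ref{lemm:CharRepresentableSubTess} is a parity-dependent congruence on all coordinates simultaneously. I must show that it implies $\min_iN_i=R/R'-1$, i.e.\ that some coordinate has its endpoints off-grid. The parity condition should in fact be automatic once $[x_i-R,x_i+R]$ has grid endpoints in every coordinate, so I would verify carefully that the two congruences in Lemma~\ref{lemm:CharRepresentableSubTess} are exactly the statement that $x_i\pm R$ are grid points.
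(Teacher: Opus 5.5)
Your argument is correct, and it takes a genuinely different and much shorter route than the paper.

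\textbf{The paper's route.} The paper adapts its proof of Proposition~\ref{prop:RepsMinTess} in four stages:
\begin{enumerate}
\item It fixes $\boldsymbol{y}$ in the interior of a cell $\widetilde B\subset B(\boldsymbol{x},R)$ and studies the grid points on each chord $\sL_j(\boldsymbol{y})$ through the case split (1), (2), (3a), (3b).
\item It trims the chords to a uniform local coordinate system of length $L=\inf_j L_j$ and invokes Lemma~\ref{lemm:LocalCoordinates} to produce one maximal tessellation.
\item It re-runs the chord comparison (Cases (I) and (II)) at an interior point of an arbitrary maximal tessellation to pin down its cardinality.
\item It obtains finiteness by noting that every maximal tessellation lies inside a minimal outer tessellation from Proposition~\ref{prop:RepsMinTess}.
\end{enumerate}

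\textbf{Your route.} You make the product structure explicit. Elements of $\col^*(\B,B(\boldsymbol{x},R))$ are exactly products of windows of a common length $M$, each lying inside an allowed run of length $N_i\in\{\lfloor R/R'\rfloor-1,\lfloor R/R'\rfloor\}$. Hence the maximal ones are precisely the windows of length $\min_i N_i$. The hypothesis $R\ge 2R'$ enters exactly to guarantee $\min_i N_i\ge 1$.

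\textbf{What each route buys.} Yours gives more than the statement asks:
\begin{itemize}
\item all maximal tessellations have the same cardinality $(\min_i N_i)^d$;
\item there are exactly $\prod_i(N_i-\min_k N_k+1)\le 2^d$ of them;
\item finiteness needs no appeal to Proposition~\ref{prop:RepsMinTess}.
\end{itemize}
Your treatment of the representable case is essentially the paper's: $\mS$ is the maximum of $\col^*$ by Lemma~\ref{lemm:ElementPartofTess}. The paper's route, in exchange, reuses the chord machinery it had already built for the minimal case.

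\textbf{The step you flag as delicate.} It is easier than you make it. Suppose $R/R'\in\NN$ and every interval $[x_i-R,x_i+R]$ has both endpoints on the grid $x_i'+R'+2R'\ZZ$. Then the product of the full runs is an $(R/R')$-tessellation of $B(\boldsymbol{x},R)$ contained in $\B$, which is by definition a representation. So non-representability directly gives a coordinate with $N_i=R/R'-1$, with no need to match congruences.

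The congruence check you propose also works. The condition $x_i-R\in x_i'+R'+2R'\ZZ$ is equivalent to $x_i\equiv x_i'+(R/R'+1)R' \pmod{2R'}$. This is exactly the odd/even condition of Lemma~\ref{lemm:CharRepresentableSubTess}.

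\textbf{A sentence to fix.} The sentence ``One coordinate aligned gives $N_i=R/R'$, the others $N_i=R/R'-1$'' is muddled. What you need is:
\begin{itemize}
\item aligned coordinates have $N_i=R/R'$;
\item misaligned coordinates have $N_i=R/R'-1$;
\item at least one coordinate is misaligned.
\end{itemize}
Then $\min_i N_i=R/R'-1$ follows.
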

\begin{rema}\label{rmk:prop:RepsMaxTess}
When $B(\boldsymbol{x}, R)$ is not representable in $\B$, it is also possible for the maximal tessellation of $B(\boldsymbol{x}, R)$ to be unique, but this is not guaranteed.

\end{rema}

\subsubsection{Proof of the first assertion of Proposition~\ref{prop:RepsMinTess}}\label{subsubsec:ProofFirstAssertProp:MinTess},
 Let $\mS' \in \col(\B, B(\boldsymbol{x}, R))$ such that $\mS'\subset \mS$.  Since every element of $\mS'$ is also an element of $\mS$, we have that \[B(\boldsymbol{x}, R) \subset \bigcup_{B \in \mS'} B \subset \bigcup_{B \in \mS} B =  B(\boldsymbol{x}, R),\] which implies that $\mS'$ is another representation of $B(\boldsymbol{x}, R)$.  Now Lemma~\ref{lemm:RepresentUniqSubTess}  gives that $\mS' = \mS$, which implies that $\mS$ is a minimal tessellation of $B(\boldsymbol{x}, R)$.  This proves the existence of a minimal tessellation of $B(\boldsymbol{x}, R)$ for the first assertion.

We now prove the uniqueness of the minimal tessellation of $B(\boldsymbol{x}, R)$ for the first assertion.  Let $\mS''$ be a minimal tessellation of $B(\boldsymbol{x}, R)$.  Since $\mS''$ covers $B(\boldsymbol{x}, R)$ and $\mS$ is a representation of $B(\boldsymbol{x}, R)$, we have that \[\bigcup_{B \in \mS} B = B(\boldsymbol{x}, R) \subset \bigcup_{B \in \mS''} B.\]  We claim that $\mS \subset \mS''$.  We now prove the claim.  Assume that the conclusion does not hold.  Then there exists an element $\widehat{B} \in \mS \backslash \mS''$.  Thus, we have that \[\inte(\widehat{B}) \subset B(\boldsymbol{x}, R)\subset \bigcup_{B \in \mS''} B,\] which implies that $\inte(\widehat{B}) \cap \widecheck{B} \neq \emptyset$ for some  $\widecheck{B} \in \mS''$.  Since both $\mS$ and $\mS''$ are subtessellations of $\B$, Lemma~\ref{lemm:TesselEuclidFund} yields a contraction and proves the claim.  Since $\mS''$ is a minimal tessellation of $B(\boldsymbol{x}, R)$, we have $\mS = \mS''$.  This proves the uniqueness of the minimal tessellation of $B(\boldsymbol{x}, R)$ for the first assertion.  By Lemma~\ref{lemm:CharRepresentableSubTess}, we have that $R/R' \in \NN$ and $\mS$ is the $(R/R')$-tessellation of $B(\boldsymbol{x}, R)$, which gives that $|\mS|= (R/R')^d$.  This completes the proof of the first assertion.

\subsubsection{Proof of the second assertion of Proposition~\ref{prop:RepsMinTess}}\label{subsubsec:ProofSecAssertProp:MinTess}
Let \[\partial(\B):=\bigcup_{B'' \in \B}\left(B'' \backslash \inte(B'') \right), \quad \B^{\circ} := \bigcup_{B'' \in \B} \inte(B'')\] and $ \boldsymbol{x} =: (x_1, \cdots, x_d)^t$.  Pick a \begin{align}\label{eqn:prop:RepsMinTess:Defny}
 \boldsymbol{y}:=(y_1, \cdots, y_d)^t \in \inte(B(\boldsymbol{x}, R)) \cap \B^{\circ},\end{align} and let $i$ be an integer such that $1 \leq i \leq d$.  Since $\sL_i(\boldsymbol{y}):=\sL_i(\boldsymbol{y}, B(\boldsymbol{x}, R))$ is the chord of $B(\boldsymbol{x}, R)$ parallel to the $i$-axis through the point $\boldsymbol{y}$, we have that the length of $\sL_i(\boldsymbol{y})$ is $2R$ for all $i$.  Consequently, the family $\{\sL_1(\boldsymbol{y}), \cdots, \sL_d(\boldsymbol{y}) \}$ is a uniform local coordinate system at $\boldsymbol{y}$ of size $2R$, which, by Lemma~\ref{lemm:ChordCharRepresentable}, does not satisfy (\ref{eqn:lemm:LocalCoordinates}).  Hence, there exists an integer $j$ such that $1 \leq j \leq d$ for which $\boldsymbol{e}\left(\sL_j(\boldsymbol{y})\right) \in  \B^{\circ}$ or $\boldsymbol{e'}\left(\sL_j(\boldsymbol{y})\right) \in  \B^{\circ}$.  Note that, by (\ref{eqn:EndPointsChordDefn}), we have \begin{align}\label{eqn:prop:RepsMinTess:EndPointsChords}
\boldsymbol{e}\left(\sL_j(\boldsymbol{y})\right) &= (y_1, \cdots, y_{j-1}, x_j-R, y_{j+1},\cdots, y_d)^t, \\\nonumber \boldsymbol{e'}\left(\sL_j(\boldsymbol{y})\right) &= (y_1, \cdots, y_{j-1}, x_j+R, y_{j+1},\cdots, y_d)^t.   \end{align}  There are three cases \begin{enumerate}
\item $\boldsymbol{e}\left(\sL_j(\boldsymbol{y})\right) \in  \B^{\circ}$ and $\boldsymbol{e'}\left(\sL_j(\boldsymbol{y})\right) \in  \partial(\B)$,
\item $\boldsymbol{e}\left(\sL_j(\boldsymbol{y})\right) \in \partial(\B)$ and $\boldsymbol{e'}\left(\sL_j(\boldsymbol{y})\right) \in  \B^{\circ}$,
\item $\boldsymbol{e}\left(\sL_j(\boldsymbol{y})\right) \in  \B^{\circ}$ and $\boldsymbol{e'}\left(\sL_j(\boldsymbol{y})\right) \in   \B^{\circ}$
\end{enumerate} to consider. 

First, we consider Case~(1).  Let us extend $\sL_j(\boldsymbol{y})$ to the superset \[\widetilde{\sL_j}(\boldsymbol{y}):=\{y_1\} \times \cdots \times \{y_{j-1}\} \times [-3R+x_j, x_j+R] \times \{y_{j+1}\} \times \cdots \times \{y_{d}\}.\]  Now define the line segment \[\widehat{\sL_j}(\boldsymbol{y}):=\{y_1\} \times \cdots \times \{y_{j-1}\} \times [-3R+x_j, x_j-R] \times \{y_{j+1}\} \times \cdots \times \{y_{d}\}\] and note that $\widetilde{\sL_j}(\boldsymbol{y}) = \sL_j(\boldsymbol{y}) \cup \widehat{\sL_j}(\boldsymbol{y})$ and $\sL_j(\boldsymbol{y}) \cap \widehat{\sL_j}(\boldsymbol{y})= \boldsymbol{e}\left(\sL_j(\boldsymbol{y})\right)$.  Now, since $\boldsymbol{e}\left(\sL_j(\boldsymbol{y})\right) \in  \B^{\circ}$, we have that \begin{align}\label{eqn:prop:RepsMinTess}
 \boldsymbol{e}\left(\sL_j(\boldsymbol{y})\right) \notin \sP(\sL_j(\boldsymbol{y})) \cup \sP(\widehat{\sL_j}(\boldsymbol{y})). \end{align}  Applying Lemma~\ref{lemm:PartitionLineSeg} to these three line segments yields that \begin{align}\label{eqn:prop:RepsMinTessDisjoint}
 \sP(\widetilde{\sL_j}(\boldsymbol{y})) =\sP(\widehat{\sL_j}(\boldsymbol{y})) \sqcup \sP(\sL_j(\boldsymbol{y})) \end{align} and that the distance between any two consecutive elements is $2R'$.  Now since the length of $\widehat{\sL_j}(\boldsymbol{y}))$ is equal to $2R$ and $R\geq R'$, the second assertion of Lemma~\ref{lemm:PartitionLineSeg} gives that $|\sP(\widehat{\sL_j}(\boldsymbol{y}))|\geq 1$.  Moreover, since $\boldsymbol{e'}\left(\sL_j(\boldsymbol{y})\right) \in  \partial(\B)$, we have that $|\sP(\sL_j(\boldsymbol{y}))|\geq 1$.

Now let \[\boldsymbol{b'}:=(y_1, \cdots, y_{j-1}, b_j', y_{j+1},\cdots, y_d)^t \in \sP(\sL_j(\boldsymbol{y}))\] be the unique element of $\sP(\sL_j(\boldsymbol{y}))$ nearest to $\boldsymbol{e}\left(\sL_j(\boldsymbol{y})\right)$.  By (\ref{eqn:prop:RepsMinTess}) and the definitions of $\boldsymbol{e}(\cdot)$ and $\sP(\cdot)$, we have $x_j-R < b_j'$. Note also that the uniqueness of $\boldsymbol{b'}$ follows from the definitions of $\boldsymbol{e}(\cdot)$ and $\sP(\cdot)$.

Now let \[\boldsymbol{b}:=(y_1, \cdots, y_{j-1}, b_j, y_{j+1},\cdots, y_d)^t \in \sP(\widehat{\sL_j}(\boldsymbol{y}))\] be the unique element of $\sP(\widehat{\sL_j}(\boldsymbol{y}))\}$ nearest to $\boldsymbol{e}\left(\sL_j(\boldsymbol{y})\right)$.  Since $\boldsymbol{e'}\left(\widehat{\sL_j}(\boldsymbol{y})\right)= \boldsymbol{e}\left(\sL_j(\boldsymbol{y})\right)$, we have, by  (\ref{eqn:prop:RepsMinTess}) and the definitions of $\boldsymbol{e'}(\cdot)$ and $\sP(\cdot)$, that $b_j< x_j-R$.  Note that the uniqueness of $\boldsymbol{b'}$ follow from the definitions of $\boldsymbol{e'}(\cdot)$ and $\sP(\cdot)$.  

By (\ref{eqn:prop:RepsMinTessDisjoint}), we have that $\boldsymbol{b}$ and $\boldsymbol{b'}$ are distinct elements of $\sP(\widetilde{\sL_j}(\boldsymbol{y}))$.  We claim that $\boldsymbol{b}$ and $\boldsymbol{b'}$ are consecutive elements of $\sP(\widetilde{\sL_j}(\boldsymbol{y}))$.  We now prove the claim.  Assume that the conclusion does not hold.  Then there exists \[\boldsymbol{b''}:=(y_1, \cdots, y_{j-1}, b_j'', y_{j+1},\cdots, y_d)^t \in \sP(\widetilde{\sL_j}(\boldsymbol{y}))\] such that $b_j<  b_j'' < b_j'$.  We have already shown that $b_j<  x_j-R < b_j'$.  By (\ref{eqn:prop:RepsMinTess}), we have that either $x_j-R < b_j''$ or $b_j'' < x_j-R$.  If $x_j-R < b_j''$ holds, then $\boldsymbol{b''}$ is an element of $\sP(\sL_j(\boldsymbol{y}))$ for which $\|\boldsymbol{e}\left(\sL_j(\boldsymbol{y})\right) -\boldsymbol{b''}\|_\infty < \|\boldsymbol{e}\left(\sL_j(\boldsymbol{y})\right) -\boldsymbol{b'}\|_\infty$, yielding a contradiction.  If $b_j'' < x_j-R$ holds, we also obtain a contradiction in analogous way.  This proves the claim.

Lemma~\ref{lemm:PartitionLineSeg} gives that consecutive elements of $\sP(\widetilde{\sL_j}(\boldsymbol{y}))$ have distance $2R'$ and, consequently, we have shown that \begin{align}\label{eqn:prop:RepsMinTess:BndsForxj}
 b_j < x_j -R < b_j' \quad \textrm{ and }\quad  b_j'-b_j= 2R' \end{align} hold.  Now form the line segment \[\widecheck{\sL}_j(\boldsymbol{y}):= \{y_1\} \times \cdots \times \{y_{j-1}\} \times [b_j, x_j+R] \times \{y_{j+1}\} \times \cdots \times \{y_{d}\}.\]  Thus we have that $\boldsymbol{e}\left(\widecheck{\sL}_j(\boldsymbol{y})\right), \boldsymbol{e'}\left(\widecheck{\sL}_j(\boldsymbol{y})\right) \in \partial(\B)$. Setting $\kappa_j:=x_j-R - b_j$, we have that $\widecheck{\sL}_j(\boldsymbol{y})$ has length \begin{align}\label{eqn:prop:RepsMinTess:Lj}
 L_j:= 2R + \kappa_j  \end{align}and \begin{align}\label{eqn:prop:RepsMinTess:kappasize}
 0<\frac{\kappa_j}{2R'} < 1 \end{align} holds.

Now, since $\boldsymbol{e}\left(\widecheck{\sL}_j(\boldsymbol{y})\right), \boldsymbol{e'}\left(\widecheck{\sL}_j(\boldsymbol{y})\right) \in \sP(\widetilde{\sL_j}(\boldsymbol{y}))$, we have, by Lemma~\ref{lemm:PartitionLineSeg}, that $L_j = 2 n R'$ for some $n \in \NN$.  Consequently, for Case~(1), $R/R'$ can not be an integer because, if it were, then $\kappa_j$ would be an integer multiple of $2R'$, contradicting (\ref{eqn:prop:RepsMinTess:kappasize}).  Finally, we claim that \begin{align}\label{eqn:prop:RepsMinTess:kappasizeCase1LRBnd} \frac{L_j} {2 R'} = \left\lceil \frac {R}{R'} \right\rceil
  \end{align} holds.  We now prove this claim.  By (\ref{eqn:prop:RepsMinTess:Lj}, \ref{eqn:prop:RepsMinTess:kappasize}), we have that $R/R'<L_j/2R' < R/R'+1$.  Since $L_j/2R'$ is an integer and $R/R'$ is not an integer, the claim follows.

Next, we consider Case~(2).  Define the line segment \[\widehat{\sL_j}(\boldsymbol{y}):=\{y_1\} \times \cdots \times \{y_{j-1}\} \times [R+x_j, x_j+3R] \times \{y_{j+1}\} \times \cdots \times \{y_{d}\}.\]  By the analog of the proof of Case~(1), we have that there exists an unique element \[\boldsymbol{b'}:=(y_1, \cdots, y_{j-1}, b_j', y_{j+1},\cdots, y_d)^t\] of $\sP(\widehat{\sL_j}(\boldsymbol{y}))$ nearest to $\boldsymbol{e'}\left(\sL_j(\boldsymbol{y})\right)$.  Now form the line segment \[\widecheck{\sL}_j(\boldsymbol{y}):= \{y_1\} \times \cdots \times \{y_{j-1}\} \times [-R+x_j, b_j'] \times \{y_{j+1}\} \times \cdots \times \{y_{d}\}\] and set $\kappa_j:= b_j'- (x_j+R)$.  By the analog of the proof of Case~(1), we have that (\ref{eqn:prop:RepsMinTess:Lj}, \ref{eqn:prop:RepsMinTess:kappasize}, \ref{eqn:prop:RepsMinTess:kappasizeCase1LRBnd} ) hold.  For the same reason as in Case~(1), $R/R'$ can not be an integer for Case~(2).

 Finally, we consider Case~(3).  Since $R \geq R'$, Lemma~\ref{lemm:PartitionLineSeg} gives that $|\sP({\sL_j}(\boldsymbol{y}))| \geq 1$.  Let $\boldsymbol{c}:=(c_1, \cdots, c_d)^t \in \sP({\sL_j}(\boldsymbol{y}))$.  Replacing $\sL_j(\boldsymbol{y})$ with \[\sL_j^{-}(\boldsymbol{y}):= \{y_1\} \times \cdots \times \{y_{j-1}\} \times [-R+x_j, c_j] \times \{y_{j+1}\} \times \cdots \times \{y_{d}\}\] and $\widetilde{\sL_j}(\boldsymbol{y})$ with \[\widetilde{\sL_j}^{-}(\boldsymbol{y}):=\{y_1\} \times \cdots \times \{y_{j-1}\} \times [-3R+x_j, c_j] \times \{y_{j+1}\} \times \cdots \times \{y_{d}\}\] in the proof of Case~(1) yields the uniqueness and existence of two consecutive elements \begin{align*}
\boldsymbol{b}:=&(y_1, \cdots, y_{j-1}, b_j, y_{j+1},\cdots, y_d)^t, \\ \boldsymbol{b'}:=&(y_1, \cdots, y_{j-1}, b_j', y_{j+1},\cdots, y_d)^t  \end{align*} of $\sP\left(\widetilde{\sL_j}^{-}(\boldsymbol{y})\right)$ such that (\ref{eqn:prop:RepsMinTess:BndsForxj}) holds.  Form the line segment \[\widecheck{\sL}^-_j(\boldsymbol{y}):= \{y_1\} \times \cdots \times \{y_{j-1}\} \times [b_j, c_j] \times \{y_{j+1}\} \times \cdots \times \{y_{d}\}\] and set $\kappa^-_j:=x_j-R - b_j$.

Now replacing $\sL_j(\boldsymbol{y})$ with \[\sL_j^{+}(\boldsymbol{y}):= \{y_1\} \times \cdots \times \{y_{j-1}\} \times [c_j, x_j+R] \times \{y_{j+1}\} \times \cdots \times \{y_{d}\}\] and $\widetilde{\sL_j}(\boldsymbol{y})$ with \[\widetilde{\sL_j}^{+}(\boldsymbol{y}):=\{y_1\} \times \cdots \times \{y_{j-1}\} \times [c_j, x_j+3R ] \times \{y_{j+1}\} \times \cdots \times \{y_{d}\}\] in the proof of Case~(2) yields the uniqueness and existence of two consecutive elements \begin{align*}
\boldsymbol{b''}:=&(y_1, \cdots, y_{j-1}, b_j'', y_{j+1},\cdots, y_d)^t, \\ \boldsymbol{b'''}:=&(y_1, \cdots, y_{j-1}, b_j''', y_{j+1},\cdots, y_d)^t  \end{align*} of $\sP\left(\widetilde{\sL_j}^{+}(\boldsymbol{y})\right)$ such that  \begin{align}\label{eqn:prop:RepsMinTess:BndsForxj2}
 b''_j < x_j +R < b_j''' \quad \textrm{ and }\quad  b_j'''-b_j''= 2R' \end{align} hold.  Form the line segment \[\widecheck{\sL}^+_j(\boldsymbol{y}):= \{y_1\} \times \cdots \times \{y_{j-1}\} \times [c_j, b_j'''] \times \{y_{j+1}\} \times \cdots \times \{y_{d}\}\] and set $\kappa^+_j:=b_j''' - (x_j+R)$.  Using (\ref{eqn:prop:RepsMinTess:BndsForxj}, \ref{eqn:prop:RepsMinTess:BndsForxj2}), we have that the line segment \[\widecheck{\sL}_j(\boldsymbol{y}):=\widecheck{\sL}^+_j(\boldsymbol{y}) \cup \widecheck{\sL}^-_j(\boldsymbol{y}) =  \{y_1\} \times \cdots \times \{y_{j-1}\} \times [b_j, b_j'''] \times \{y_{j+1}\} \times \cdots \times \{y_{d}\}\] has length \begin{align}\label{eqn:prop:RepsMinTess:LjCase3}
 L_j:= 2R + \kappa^-_j+\kappa^+_j \end{align} and \begin{align}\label{eqn:prop:RepsMinTess:kappasizeCase3}
 0<\frac{\kappa^-_j}{2R'} < 1, \quad 0<\frac{\kappa^+_j}{2R'} < 1\end{align} hold.  Now, since $\boldsymbol{e}\left(\widecheck{\sL}_j(\boldsymbol{y})\right), \boldsymbol{e'}\left(\widecheck{\sL}_j(\boldsymbol{y})\right) \in \sP\left(\widetilde{\sL_j}^-(\boldsymbol{y}) \cup \widetilde{\sL_j}^+(\boldsymbol{y})\right)$, we have, by Lemma~\ref{lemm:PartitionLineSeg}, that $L_j = 2 n R'$ for some $n \in \NN$.

 Unlike for Cases~(1) and~(2), $R/R'$ could be either an integer or not.  Therefore, Case~(3) splits into two further cases: \begin{enumerate}
\item[(3a)] $\boldsymbol{e}\left(\sL_j(\boldsymbol{y})\right) \in  \B^{\circ}$, $\boldsymbol{e'}\left(\sL_j(\boldsymbol{y})\right) \in   \B^{\circ}$, and $R/R' \in \NN$,
\item[(3b)] $\boldsymbol{e}\left(\sL_j(\boldsymbol{y})\right) \in  \B^{\circ}$,  $\boldsymbol{e'}\left(\sL_j(\boldsymbol{y})\right) \in   \B^{\circ}$, and $R/R' \notin \NN$.
\end{enumerate}  First, we consider Case~(3a).  We claim that \begin{align}\label{eqn:prop:RepsMinTess:kappasizeCase3aBnd} \kappa^-_j + \kappa^+_j = 2R' \quad \textrm{ and } \quad \frac{L_j} {2 R'} =  \frac {R}{R'} +1.
  \end{align}  We now prove the claim.  Since $R/R' \in \NN$, we have that $\kappa^-_j + \kappa^+_j$ is an integer multiple of $2R'$ by (\ref{eqn:prop:RepsMinTess:LjCase3}), then the claim follows by (\ref{eqn:prop:RepsMinTess:kappasizeCase3}).

Next, we consider Case~(3b).  We claim that \begin{align}\label{eqn:prop:RepsMinTess:kappasizeCase3bBnd}  \frac{L_j} {2 R'} = \left\lceil \frac {R}{R'} \right\rceil \quad \textrm{ or }  \quad\frac{L_j} {2 R'} = \left\lceil \frac {R}{R'} \right\rceil+1
  \end{align}    We now prove this claim.  By (\ref{eqn:prop:RepsMinTess:LjCase3}, \ref{eqn:prop:RepsMinTess:kappasizeCase3}), we have that $R/R'<L_j/2R' < R/R'+2$.  Since $R/R'$ is not an integer, the open interval $(R/R', R/R'+2)$ contains exactly two integers $\left\lceil R/R' \right\rceil$ and $\left\lceil R/R' \right\rceil+1$.  Since $L_j/2R'$ is an integer, the claim follows.  This completes Case~(3).

We now construct a uniform local coordinate system at $\boldsymbol{y}$ which will determine a desired minimal tessellation.  To use in applying Lemma~\ref{lemm:PartitionLineSeg}, we extend, for each $i = 1, \cdots, d$, the chord $\sL_i(\boldsymbol{y})$ to \[\sL_i^D(\boldsymbol{y}):=\{y_1\} \times \cdots \times \{y_{i-1}\} \times [-D+x_i, x_i+D] \times \{y_{i+1}\} \times \cdots \times \{y_{d}\}\] where $D>R$ is a real number.  Let \[\eJ := \{j \in \{1, \cdots, d\} : \boldsymbol{e}\left(\sL_j(\boldsymbol{y})\right) \in  \B^{\circ} \text{ or } \boldsymbol{e'}\left(\sL_j(\boldsymbol{y})\right) \in  \B^{\circ} \}.\] First, note that, for all three cases above, $\widecheck{\sL}_j(\boldsymbol{y})$ satisfies the minimality condition precisely formulated in the following lemma.
 \begin{lemm} \label{lemm:prop:RepsMinTess} Let $j \in \eJ$.  If ${\Ls}^\#_j(\boldsymbol{y})$ is a line segment parallel to the $j$-axis through the point $\boldsymbol{y}$ with endpoints $\boldsymbol{e}\left({\Ls}^\#_j(\boldsymbol{y})\right),  \boldsymbol{e'}\left({\Ls}^\#_j(\boldsymbol{y})\right) \in \partial(\B)$ such that ${\sL}_j(\boldsymbol{y}) \subset {\Ls}^\#_j(\boldsymbol{y})$, then $\widecheck{\sL}_j(\boldsymbol{y}) \subset {\Ls}^\#_j(\boldsymbol{y})$.
\end{lemm}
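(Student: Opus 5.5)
The statement is one-dimensional: everything happens on the line through $\boldsymbol{y}$ parallel to the $j$-axis. I will work in the $j$-th coordinate only. Write ${\Ls}^\#_j(\boldsymbol{y})$ with $j$-th coordinate interval $[\sigma,\tau]$. The hypothesis ${\sL}_j(\boldsymbol{y}) \subset {\Ls}^\#_j(\boldsymbol{y})$ gives $\sigma \le x_j - R$ and $x_j+R \le \tau$. Since the endpoints lie in $\partial(\B)$ while $\boldsymbol{y}\in\B^\circ$, the proof of Lemma~\ref{lemm:PartitionLineSeg} shows that $\sigma,\tau \in \{(2m-1)R'+x'_j : m\in\ZZ\}$. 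So $\sigma$ and $\tau$ are grid points of the partition along this line. It then suffices to show that $\sigma \le $ the left endpoint of $\widecheck{\sL}_j(\boldsymbol{y})$ and $\tau \ge$ its right endpoint.

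I will treat the three cases of the construction separately.

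\textbf{Case (1).} The right endpoint of $\widecheck{\sL}_j(\boldsymbol{y})$ is $x_j+R \le \tau$, so there is nothing to check on that side. The left endpoint is $b_j$, which by construction is the grid point nearest to $x_j-R$ from the left, with $x_j - R$ itself not a grid point by (\ref{eqn:prop:RepsMinTess}). Suppose $b_j < \sigma$. Then $\sigma$ is a grid point with $b_j<\sigma\le x_j-R$, and since $x_j-R$ is not a grid point, in fact $b_j<\sigma<x_j-R<b_j'$. This contradicts the claim that $\boldsymbol{b},\boldsymbol{b'}$ are consecutive elements of $\sP(\widetilde{\sL_j}(\boldsymbol{y}))$, because $\sigma\in[-3R+x_j,x_j+R]$ (here I use $b_j \ge -3R+x_j$). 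Hence $\sigma\le b_j$.

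\textbf{Case (2).} This is the mirror image of Case (1), with $b_j'$ and $\tau$ in place of $b_j$ and $\sigma$.

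\textbf{Case (3).} Apply the left argument with $\sL_j^-$ and $\widetilde{\sL_j}^-$, giving $\sigma\le b_j$, and the right argument with $\sL_j^+$ and $\widetilde{\sL_j}^+$, giving $\tau\ge b_j'''$. Here I need that $\sigma\ge -3R+x_j$ may fail; if $\sigma<-3R+x_j$ then trivially $\sigma<b_j$, so no consecutiveness argument is needed.

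In all cases, then, $[\text{left},\text{right}]\subset[\sigma,\tau]$, and since the other coordinates agree this gives $\widecheck{\sL}_j(\boldsymbol{y}) \subset {\Ls}^\#_j(\boldsymbol{y})$.

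The main obstacle is bookkeeping rather than ideas. First, I have to justify carefully that the endpoints of ${\Ls}^\#_j$ are grid points in coordinate $j$. This uses that the remaining coordinates $y_k$, $k\ne j$, avoid the grid because $\boldsymbol{y}\in\B^\circ$. Second, I have to handle the possibility that $\sigma$ falls outside the extended segment $\widetilde{\sL_j}$; as noted, that case is trivial. The key step is exactly the strict inequality $\sigma<x_j-R$, obtained from $\boldsymbol{e}(\sL_j(\boldsymbol{y}))\in\B^\circ$ in the relevant case.
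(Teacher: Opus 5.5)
Your proof is correct and is essentially the paper's argument. In both, the relevant endpoint $\sigma$ of ${\Ls}^\#_j(\boldsymbol{y})$ is a grid point with $\sigma\le x_j-R$. The inequality is strict because $x_j-R$ is not a grid point in that case, so $\sigma$ cannot exceed $b_j$, the nearest grid point to the left; the other endpoint is trivial when $\boldsymbol{e}(\sL_j(\boldsymbol{y}))\in\partial(\B)$.

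The differences are cosmetic. The paper splits by whether $\boldsymbol{e}(\sL_j(\boldsymbol{y}))$ lies in $\B^{\circ}$ rather than by Cases (1)--(3), and it works inside the longer segment $\sL_j^D(\boldsymbol{y})$ with $D=5L_j^\#$. You instead reuse $\widetilde{\sL_j}(\boldsymbol{y})$. Your contradiction hypothesis $b_j<\sigma$ already forces $\sigma$ into that segment, so the extra remark you make in Case (3) is unnecessary.
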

\begin{proof} Note that the above proof gives that $|\sP\left({\sL_j}(\boldsymbol{y})\right)|\geq 1$.  Let $L_j^\#$ be the length of ${\Ls}^\#_j(\boldsymbol{y})$ and set $D: = 5 L_j^\#$.  (Note the choice of $D$, as long as it is large enough, is immaterial to this proof.)  Let\begin{align*}
 \widetilde{\boldsymbol{c}}&:=(y_1, \cdots, y_{j-1}, \widetilde{c_j}, y_{j+1},\cdots, y_d)^t:= \boldsymbol{e}\left(\sL_j(\boldsymbol{y})\right), \\ \boldsymbol{c^\#}&:=(y_1, \cdots, y_{j-1}, c_j^\#, y_{j+1},\cdots, y_d)^t:= \boldsymbol{e}\left({\Ls}^\#_j(\boldsymbol{y})\right).\end{align*}  If $\widetilde{\boldsymbol{c}}\in  \B^{\circ}$, then the above proof gives that there exists consecutive elements \[\boldsymbol{b}:=(y_1, \cdots, y_{j-1}, b_j, y_{j+1},\cdots, y_d)^t \quad \textrm { and } \quad \boldsymbol{b'}:=(y_1, \cdots, y_{j-1}, b_j', y_{j+1},\cdots, y_d)^t\] of $\sP\left(\sL_j^D(\boldsymbol{y})\right)$ such that  $\boldsymbol{b} \notin \sL_j(\boldsymbol{y})$ and $\boldsymbol{b'} \in  \sL_j(\boldsymbol{y})$ with $b_j < \widetilde{c_j} < b_j'$ and that $ \boldsymbol{e}\left(\widecheck{\sL}_j(\boldsymbol{y})\right) = \boldsymbol{b}$.  Since ${\sL}_j(\boldsymbol{y}) \subset {\Ls}^\#_j(\boldsymbol{y})$ holds, we have that $\widetilde{\boldsymbol{c}}, \boldsymbol{b'} \in {\Ls}^\#_j(\boldsymbol{y})$.  Since  $\boldsymbol{c^\#} \in \partial(\B)$ holds, we have that $\boldsymbol{c^\#} \in \sP\left(\sL_j^D(\boldsymbol{y})\right)$.  Consequently, we have that $c_j^\# \leq \widetilde{c_j}$. Since $\widetilde{\boldsymbol{c}} \in  \B^{\circ}$ holds, we must have that $c_j^\# \leq b_j$.  This implies that $\boldsymbol{e}\left(\widecheck{\sL}_j(\boldsymbol{y})\right) = \boldsymbol{b} \in {\Ls}^\#_j(\boldsymbol{y})$.  

If $\widetilde{\boldsymbol{c}} \notin  \B^{\circ}$, then $\widetilde{\boldsymbol{c}} \in \partial(\B)$ holds and the above proof gives that $\boldsymbol{e}\left(\widecheck{\sL}_j(\boldsymbol{y})\right) =\widetilde{\boldsymbol{c}}$.  Since ${\sL}_j(\boldsymbol{y}) \subset {\Ls}^\#_j(\boldsymbol{y})$ holds, we have that $\boldsymbol{e}\left(\widecheck{\sL}_j(\boldsymbol{y})\right) =\widetilde{\boldsymbol{c}} \in {\Ls}^\#_j(\boldsymbol{y})$.

The proof that $\boldsymbol{e'}\left(\widecheck{\sL}_j(\boldsymbol{y})\right)\in {\Ls}^\#_j(\boldsymbol{y})$ holds is analogous to the proceeding.  Since $\boldsymbol{e}\left(\widecheck{\sL}_j(\boldsymbol{y})\right), \boldsymbol{e'}\left(\widecheck{\sL}_j(\boldsymbol{y})\right)\in {\Ls}^\#_j(\boldsymbol{y})$ holds, we have that $\widecheck{\sL}_j(\boldsymbol{y}) \subset {\Ls}^\#_j(\boldsymbol{y})$, which is the desired result.  This proves the lemma.
\end{proof}

\noindent Now set $D = 5R$.  (Note the choice of $D$, as long as it is large enough, is immaterial to this proof.)  By the above proof, we have that $\eJ \neq \emptyset$, and, for each $j \in \eJ$, we have $\widecheck{\sL}_j(\boldsymbol{y})$ is of length $L_j$.  Let \begin{align}\label{eqn:prop:RepsMinTess:LDefn}
 L:= \sup_{j \in \eJ} L_j . \end{align}    There are two cases \begin{enumerate}[label=(\Alph*)]
\item $R/R' \notin \NN$,
\item $R/R' \in \NN$,
\end{enumerate} to consider.  First, we consider Case~(A).  We claim that $\eJ = \{1, \cdots, d\}$.  We now prove the claim.  Assume that the conclusion of the claim does not hold.  Then there exists an integer $m$ such that $1 \leq m \leq d$ for which $\boldsymbol{e}\left(\sL_m(\boldsymbol{y})\right),  \boldsymbol{e'}\left(\sL_m(\boldsymbol{y})\right) \in \partial (\B)$ holds.  Consequently, we have that $\boldsymbol{e}\left(\sL_m(\boldsymbol{y})\right),  \boldsymbol{e'}\left(\sL_m(\boldsymbol{y})\right) \in \sP\left(\sL_m^D(\boldsymbol{y})\right)$, and, thus by Lemma~\ref{lemm:PartitionLineSeg}, the distance between $\boldsymbol{e}\left(\sL_m(\boldsymbol{y})\right)$ and $\boldsymbol{e'}\left(\sL_m(\boldsymbol{y})\right)$ is an integer multiple of $2R'$.  Hence, $2R$ is an integer multiple of $2R'$, which yields a contradiction.  This proves the claim.

Case~(A) has two further cases:  \begin{enumerate}
\item [(Ai)]  there exists a $k \in \eJ$ such that $L_k/2R' = \left\lceil R/R' \right\rceil+1$,
\item [(Aii)] there does not exist a $k \in \eJ$ such that $L_k/2R'  = \left\lceil R/R' \right\rceil+1$. 
\end{enumerate}  We consider Case~(Ai) first.  For every $j \in \eJ$ such that $L_j/2R'  = \left\lceil R/R' \right\rceil+1$, define ${\sL'}_j(\boldsymbol{y}):=\widecheck{\sL}_j(\boldsymbol{y})$ and denote its length by $L'_j:=L_j=2 R'\left(\left\lceil R/R' \right\rceil+1\right)$.  For any $j \in \eJ$ such that $L_j/2R' \neq \left\lceil R/R' \right\rceil+1$, we have that $L_j/2R' =  \left\lceil R/R' \right\rceil$ by the above proof.  Since $\boldsymbol{e}\left(\widecheck{\sL}_j(\boldsymbol{y})\right),  \boldsymbol{e'}\left(\widecheck{\sL}_j(\boldsymbol{y})\right) \in \sP\left(\sL_j^D(\boldsymbol{y})\right)$, we have two choices \begin{align}\label{eqn:RepsMinTess:ChoicebCaseAi}
 \widehat{\boldsymbol{b}}:= (y_1, \cdots, y_{j-1}, \widehat{b_j}, y_{j+1},\cdots, y_d)^t \quad \textrm{and} \quad \widecheck{\boldsymbol{b}}:=(y_1, \cdots, y_{j-1}, \widecheck{b_j}, y_{j+1},\cdots, y_d)^t \end{align} of elements of $\sP\left(\sL_j^D(\boldsymbol{y})\right)$ that are consecutive to $\boldsymbol{e}\left(\widecheck{\sL}_j(\boldsymbol{y})\right)$ or consecutive to $\boldsymbol{e'}\left(\widecheck{\sL}_j(\boldsymbol{y})\right)$ for which $\{\widehat{\boldsymbol{b}}, \widecheck{\boldsymbol{b}}\} \cap \widecheck{\sL}_j(\boldsymbol{y}) = \emptyset$.  Without loss of generality, we may assume that $ \widehat{b_j} < \widecheck{b_j}$.  We can choose either $\widehat{\boldsymbol{b}}$ or $\widecheck{\boldsymbol{b}}$ for our construction.  A different choice would yield a different minimal tessellation of $B(\boldsymbol{x}, R)$ but is immaterial for the proof of the existence of a minimal tessellation.  For definiteness, we choose $\widehat{\boldsymbol{b}}$.  From Lemma~\ref{lemm:PartitionLineSeg}, it follows that the distance between $\widehat{\boldsymbol{b}}$ and $\boldsymbol{e}\left(\widecheck{\sL}_j(\boldsymbol{y})\right)$ is $2R'$, and let us set $\boldsymbol{c'}:=(y_1, \cdots, y_{j-1}, c_j', y_{j+1},\cdots, y_d)^t:= \boldsymbol{e'}\left(\widecheck{\sL}_j(\boldsymbol{y})\right)$.  Now extend $\widecheck{\sL}_j(\boldsymbol{y})$ to \[{\sL'}_j(\boldsymbol{y}):=\{y_1\} \times \cdots \times \{y_{j-1}\} \times [\widehat{b_j}, c_j'] \times \{y_{j+1}\} \times \cdots \times \{y_{d}\},\] which has length $L'_j := 2R'\left(\left\lceil R/R' \right\rceil+1\right)$.  Note that the above proof gives that \begin{align}\label{eqn:prop:RepsMinTess:SegmentIncCaseAi}
 \sL_j(\boldsymbol{y}) \subset \widecheck{\sL}_j(\boldsymbol{y})\subset {\sL'}_j(\boldsymbol{y})\end{align} for all integers $j$ such that $1 \leq j \leq d$.  Consequently, for Case~(Ai), we have constructed a uniform local coordinate system $\{{\sL'}_1(\boldsymbol{y}), \cdots, {\sL'}_d(\boldsymbol{y})\}$ at $\boldsymbol{y}$ of size $L= 2R'\left(\left\lceil R/R' \right\rceil+1\right)$ satisfying (\ref{eqn:lemm:LocalCoordinates}).  Thus, by Lemma~\ref{lemm:LocalCoordinates}, we have that there exists a unique subtessellation $\mS$ of $\B$ such that\begin{align}\label{eqn:prop:RepsMinTess:ConstructedULCSaty}
  {\sL'}_1(\boldsymbol{y}) \cup  \cdots \cup{\sL'}_d(\boldsymbol{y}) \subset \bigcup_{B'' \in \mS} B'' \end{align} and, moreover, $\mS$ is the $\left(\left\lceil R/R' \right\rceil+1\right)$-tessellation of \[\prod_{j=1}^d\left[\pi_j\left(\boldsymbol{e}\left({\sL'}_j(\boldsymbol{y})\right)\right), \pi_j\left(\boldsymbol{e'}\left({\sL'}_j(\boldsymbol{y})\right)\right)\right] \supset \prod_{j=1}^d\left[\pi_j\left(\boldsymbol{e}\left({\sL}_j(\boldsymbol{y})\right)\right), \pi_j\left(\boldsymbol{e'}\left({\sL}_j(\boldsymbol{y})\right)\right)\right] = B(\boldsymbol{x}, R),\] where $\pi_j(\cdot)$ is defined in (\ref{eqn:DefnProjMaps}) and the superset and equality follow by (\ref{eqn:prop:RepsMinTess:SegmentIncCaseAi}, \ref{eqn:prop:RepsMinTess:EndPointsChords}).  Hence, $\mS$ covers $B(\boldsymbol{x}, R)$.
 
Now we claim that $\mS$ is a minimal tessellation of $B(\boldsymbol{x}, R)$.  We now prove the claim.  Assume the conclusion does not hold.  Then there exists an $\mS^* \in \col(\B, B(\boldsymbol{x}, R))$ such that $\mS^*$ is a proper subcollection of $\mS$.  We note that $\mS^* \neq \B$.  Thus $\mS^*$ must be a subtessellation and, hence, is also a tessellation.  Thus, there exists a closed ball $B(\boldsymbol{\widetilde{x}}, \widetilde{R})$ given by $\| \cdot \|_\infty$ for some  $\boldsymbol{\widetilde{x}} \in \RR^d$ and $\widetilde{R}>0$ such that $\mS^*$ is the $M$-tessellation of $B(\boldsymbol{\widetilde{x}}, \widetilde{R})$ for some $M \in \NN$.  Since $\mS$ is an $\left(\left\lceil R/R' \right\rceil+1\right)$-tessellation, we have that $|\mS| = \left(\left\lceil R/R' \right\rceil+1\right)^d$.  Thus, $|\mS^*|= M^d < \left(\left\lceil R/R' \right\rceil+1\right)^d$, which implies that \begin{align}\label{eqn:propMinTess:CaseAi:MBnd}
 M \leq \left\lceil R/R' \right\rceil. \end{align}

Since $\mS^*$ covers $B(\boldsymbol{x}, R)$, we have that \begin{align}\label{eqn:propMinTess:SubTessCovers}
 B(\boldsymbol{x}, R)) \subset \bigcup_{B'' \in \mS^*} B''  = B(\boldsymbol{\widetilde{x}}, \widetilde{R}) \end{align} holds by Lemma~\ref{lemm:TesselBallFund}, which gives that $\boldsymbol{y} \in  \inte\left( B(\boldsymbol{\widetilde{x}}, \widetilde{R})\right)$.  Let $\sL^*_j(\boldsymbol{y})$ be the  chord of $B(\boldsymbol{\widetilde{x}}, \widetilde{R})$ parallel to the $j$-axis through the point $\boldsymbol{y}$.  Since (\ref{eqn:propMinTess:SubTessCovers}) holds, we have that  $\sL_j(\boldsymbol{y}) \subset \sL^*_j(\boldsymbol{y})$, and, by Lemma~\ref{lemm:ChordSubtesselProperties2}, we have that \[
 \boldsymbol{e}\left(\sL^*_j(\boldsymbol{y})\right), \boldsymbol{e'}\left(\sL^*_j(\boldsymbol{y})\right) \in \bigcup_{B'' \in \mS^*} \left(B'' \backslash \inte(B'')\right) \subset \partial(\B)\] for all integers $j$ such that $1 \leq j \leq d$.

 In particular for $k$, Lemma~\ref{lemm:prop:RepsMinTess} implies that $\sL'_k(\boldsymbol{y}) \subset \sL^*_k(\boldsymbol{y})$.  Now, since $\sL^*_k(\boldsymbol{y})$ is a chord of $B(\boldsymbol{\widetilde{x}}, \widetilde{R})$, it has length $2\widetilde{R} = 2MR' \leq 2R'  \left\lceil R/R' \right\rceil$, where the equality follows from Lemma~\ref{lemm:ChordSubtesselProperties1} and the inequality follows from (\ref{eqn:propMinTess:CaseAi:MBnd}).  However, $\sL'_k(\boldsymbol{y})$ has length $L'_k = 2 R'\left(\left\lceil R/R' \right\rceil+1\right)$, which yields a contradiction.  This completes the proof of the claim and shows that $\mS$ is a minimal tessellation of $B(\boldsymbol{x}, R)$.  Since $\mS$ is also a $\left(\left\lceil R/R' \right\rceil+1\right)$-tessellation, we have $|\mS| = \left(\left\lceil R/R' \right\rceil+1\right)^d$ as desired.  This completes Case~(Ai).

Next, we consider Case~(Aii).  Since there does not exist a $j \in \eJ$ such that $L_j/2R'  = \left\lceil R/R' \right\rceil+1$, we have, by the above proof, that $L_j/2R' =  \left\lceil R/R' \right\rceil$ for all $j \in \eJ$.  Define ${\sL'}_j(\boldsymbol{y}):=\widecheck{\sL}_j(\boldsymbol{y})$ and denote its length by $L'_j:=L_j=2 R'\left(\left\lceil R/R' \right\rceil\right)$.  Using a proof analogous to the proof of Case~(Ai), we obtain (\ref{eqn:prop:RepsMinTess:SegmentIncCaseAi}) and that the uniform local coordinate system $\{{\sL'}_1(\boldsymbol{y}), \cdots, {\sL'}_d(\boldsymbol{y})\}$ of size $L=2 R'\left(\left\lceil R/R' \right\rceil\right)$ determines the unique subtessellation $\mS$ of $\B$ satisfying (\ref{eqn:prop:RepsMinTess:ConstructedULCSaty}).  The analogous proof also gives that $\mS$ covers $B(\boldsymbol{x}, R)$ and is also an $\left(\left\lceil R/R' \right\rceil\right)$-tessellation.

Now we claim that $\mS$ is a minimal tessellation of $B(\boldsymbol{x}, R)$. The proof of the claim is analogous to the proof in Case~(Ai) except that (\ref{eqn:propMinTess:CaseAi:MBnd}) is replaced by \[ M \leq \left\lceil R/R' \right\rceil - 1\] and the particular $k \in \eJ$ can be replaced by any $j \in \eJ$ as the length of $\sL'_j(\boldsymbol{y})$ is $L=L'_j$ for all $j \in \eJ$.  (Note that $\left\lceil R/R' \right\rceil - 1 \geq 1$ for the case under consideration.)  Since $\mS$ is also a $\left(\left\lceil R/R' \right\rceil\right)$-tessellation, we have $|\mS| = \left(\left\lceil R/R' \right\rceil\right)^d$ as desired.  This completes Case~(Aii) and, thus, Case~(A).

Next, we consider Case~(B).   Let $j \in \eJ$.  By the above proof, $\widecheck{\sL}_j(\boldsymbol{y})$ has length $L_j=2 R'\left( R/R' +1\right)$.  Define ${\sL'}_j(\boldsymbol{y}):=\widecheck{\sL}_j(\boldsymbol{y})$ and denote its length by $L'_j:=L_j=2 R'\left( R/R' +1\right)$.  

Now let $j \in \{1, \cdots, d\} \backslash \eJ$.  Then $\boldsymbol{e}\left(\sL_j(\boldsymbol{y})\right),  \boldsymbol{e'}\left(\sL_j(\boldsymbol{y})\right)$ are in $\partial (\B)$ and, therefore, also in $\sP\left(\sL_j^D(\boldsymbol{y})\right)$.  Let $\widecheck{\sL}_j(\boldsymbol{y}):= \sL_j(\boldsymbol{y})$.  Consequently, we have two choices \begin{align}\label{eqn:RepsMinTess:ChoicebCaseB} \widehat{\boldsymbol{b}}:= (y_1, \cdots, y_{j-1}, \widehat{b_j}, y_{j+1},\cdots, y_d)^t \quad \textrm{and} \quad \widecheck{\boldsymbol{b}}:=(y_1, \cdots, y_{j-1}, \widecheck{b_j}, y_{j+1},\cdots, y_d)^t \end{align} of elements of $\sP\left(\sL_j^D(\boldsymbol{y})\right)$ that are consecutive to $\boldsymbol{e}\left({\sL}_j(\boldsymbol{y})\right)$ or consecutive to $\boldsymbol{e'}\left({\sL}_j(\boldsymbol{y})\right)$ for which $\{\widehat{\boldsymbol{b}}, \widecheck{\boldsymbol{b}}\} \cap {\sL}_j(\boldsymbol{y}) = \emptyset$.  Analogous to the proof in Case (Ai), we choose $\widehat{\boldsymbol{b}}$ to define \begin{align}\label{eqn:prop:RepsMinTess:CaseBConstruct}
 {\sL'}_j(\boldsymbol{y}):=\{y_1\} \times \cdots \times \{y_{j-1}\} \times [\widehat{b_j}, x_j+R] \times \{y_{j+1}\} \times \cdots \times \{y_{d}\}, \end{align} which has length $L'_j := 2R'(R/R' + 1)$ by Lemma~\ref{lemm:PartitionLineSeg} and (\ref{eqn:prop:RepsMinTess:EndPointsChords}).  Similar to Case (Ai), a different choice would yield a different minimal tessellation of $B(\boldsymbol{x}, R)$.   Note that it may be possible for $\{1, \cdots, d\} \backslash \eJ = \emptyset$ in which case we do not need to define (\ref{eqn:prop:RepsMinTess:CaseBConstruct}) for our proof.

Using a proof analogous to the proof of Case~(Ai), we obtain (\ref{eqn:prop:RepsMinTess:SegmentIncCaseAi})  for all integers $j$ such that $1 \leq j \leq d$ and that the uniform local coordinate system $\{{\sL'}_1(\boldsymbol{y}), \cdots, {\sL'}_d(\boldsymbol{y})\}$ of size $L= 2R'(R/R' + 1)$ determines the unique subtessellation $\mS$ of $\B$ satisfying (\ref{eqn:prop:RepsMinTess:ConstructedULCSaty}).  The analogous proof also gives that $\mS$ covers $B(\boldsymbol{x}, R)$ and is also an $R/R' + 1$-tessellation.  

Now we claim that $\mS$ is a minimal tessellation of $B(\boldsymbol{x}, R)$. The proof of the claim is analogous to the proof in Case~(Ai) except that the particular $k \in \eJ$ can be replaced by any $j \in \eJ$ as the length of $\sL'_j(\boldsymbol{y})$ is $L=L'_j$ for all $j \in \eJ$.  Since $\mS$ is also a $(R/R' +1)$-tessellation, we have $|\mS| = (R/R' +1)^d$ as desired.  This completes Case~(B).  This shows that we can always construct a minimal tessellation $\mS$ of $B(\boldsymbol{x}, R)$ with the desired cardinalities.

We now claim that every minimal tessellation of $B(\boldsymbol{x}, R)$ comes from choosing either $\widehat{\boldsymbol{b}}$ or $\widecheck{\boldsymbol{b}}$ from (\ref{eqn:RepsMinTess:ChoicebCaseAi}) in Case~(Ai) and choosing either $\widehat{\boldsymbol{b}}$ or $\widecheck{\boldsymbol{b}}$ from (\ref{eqn:RepsMinTess:ChoicebCaseB}) in Case~(B) in the above construction of the minimal tessellation $\mS$.  (Note that Case~(Aii) does not involve $\widehat{\boldsymbol{b}}$ or $\widecheck{\boldsymbol{b}}$, and, hence, there is no choice to make for Case~(Aii).)   We now prove this claim.  Let $\mS^\#$ be a minimal tessellation of $B(\boldsymbol{x}, R)$.  If $\mS^\# = \B$, then, as $\mS \subset \B$, we have that $\mS= \B$, a contradiction.  Otherwise, we have that $\mS^\#$ is a subtessellation.   Thus, $\mS^\#$ is also a tessellation and, hence, there exists a closed ball $B(\boldsymbol{x^\#}, R^\#)$ for some $\boldsymbol{x^\#} \in \RR^d$ and $R^\#>0$ such that $\mS^\#$ is the $M^\#$-tessellation of $B(\boldsymbol{x^\#}, R^\#)$ for some $M^\# \in \NN$.  Thus, by Lemma~\ref{lemm:TesselBallFund} and (\ref{eqn:prop:RepsMinTess:Defny}), we have that $\boldsymbol{y} \in \inte(B(\boldsymbol{x^\#}, R^\#))$.  Let $\sL^\#_i(\boldsymbol{y})$ be the  chord of $B(\boldsymbol{x^\#}, R^\#)$ parallel to the $i$-axis through the point $\boldsymbol{y}$.  Since $\mS^\#$ is a minimal tessellation of $B(\boldsymbol{x}, R)$, we have that $\sL_i(\boldsymbol{y}) \subset \sL^\#_i(\boldsymbol{y})$.  Also, by Lemma~\ref{lemm:ChordSubtesselProperties2}, we have that \[
 \boldsymbol{e}\left(\sL^\#_i(\boldsymbol{y})\right), \boldsymbol{e'}\left(\sL^\#_i(\boldsymbol{y})\right) \in \bigcup_{B'' \in \mS^\#} \left(B'' \backslash \inte(B'')\right) \subset \partial(\B),\] which, applying Lemma~\ref{lemm:prop:RepsMinTess} for $i \in \eJ$, gives \begin{align}\label{eqn:prop:RepsMinTess:SegmentIncAllMinTess}
 \sL_i(\boldsymbol{y}) \subset \widecheck{\sL}_i(\boldsymbol{y})\subset \sL^\#_i(\boldsymbol{y})\end{align} for all integers $i$ such that $1 \leq i \leq d$.  (Note that, if $ \{1, \cdots, d \} \backslash \eJ \neq \emptyset$, then we have $\widecheck{\sL}_i(\boldsymbol{y})= \sL_i(\boldsymbol{y})$ for $i \in \{1, \cdots, d \} \backslash \eJ$ in the above proof.)  Thus, $2R^\# \geq L$.  By the above construction, we have two cases \begin{enumerate}[label=(\Roman*)]
\item $L_i = L$ 
\item $L_i = L - 2R'$
\end{enumerate} to consider.  Note that, in both Cases~(Ai) and~(B), we have $L=2 R'\left(\left\lceil R/R' \right\rceil+1\right)$ and Cases~(I) occurs for some $i$ (at least once such $i$) and Case~(II) occurs for other $i$ (possibly no such $i$); in Case~(Aii), we have $L=2 R'\left(\left\lceil R/R' \right\rceil\right)$ and only Case~(I) occurs for all $i$.  For Case~(I), we define $\sL^{**}_i(\boldsymbol{y}):= \widecheck{\sL}_i(\boldsymbol{y})$.  Thus, the $\sL^{**}_i(\boldsymbol{y})$ all have length $L$ for Case~(I).  This completes Case~(I).

We now consider Case~(II).  Let $C:= 10R^\#$.  (The constant $C$, as long as it is large enough, is immaterial to this proof.)  Now we have that  \[ \boldsymbol{e}\left(\widecheck{\sL}_i(\boldsymbol{y})\right), \boldsymbol{e'}\left(\widecheck{\sL}_i(\boldsymbol{y})\right), \boldsymbol{e}\left(\sL^\#_i(\boldsymbol{y})\right), \boldsymbol{e'}\left(\sL^\#_i(\boldsymbol{y})\right) \in \sP\left(\sL_i^{C}(\boldsymbol{y})\right).\]  By Lemma~\ref{lemm:PartitionLineSeg}, either $\widehat{\boldsymbol{b}}$ or $\widecheck{\boldsymbol{b}}$ lie in $\sL^\#_i(\boldsymbol{y})$.  Define \[\boldsymbol{c''}:= (y_1, \cdots, y_{i-1}, c_i'', y_{i+1},\cdots, y_d)^t:= \boldsymbol{e}\left(\widecheck{\sL}_i(\boldsymbol{y})\right)\] and  \[\sL^{**}_i(\boldsymbol{y}):= \begin{cases}  \sL'_i(\boldsymbol{y})  & \textrm{ if } \widehat{\boldsymbol{b}} \in  \sL^\#_i(\boldsymbol{y})\\ \{y_1\} \times \cdots \times \{y_{i-1}\} \times [c_i'', \widecheck{b}_j] \times \{y_{i+1}\} \times \cdots \times \{y_{d}\}& \textrm{ if } \widehat{\boldsymbol{b}} \notin  \sL^\#_i(\boldsymbol{y})\end{cases}.\]  Note that, by the definitions of $\widehat{\boldsymbol{b}}$ and $\widecheck{\boldsymbol{b}}$ and Lemma~\ref{lemm:PartitionLineSeg}, the $\sL^{**}_i(\boldsymbol{y})$ all have length $L$ for Case~(II).  This completes Case~(II).
 
 Consequently,  we have that \begin{align}\label{eqn:prop:RepsMinTess:SegmentIncAllMinTess2}
 \sL_i(\boldsymbol{y}) \subset \widecheck{\sL}_i(\boldsymbol{y})\subset \sL^{**}_i(\boldsymbol{y})\subset \sL^\#_i(\boldsymbol{y})\end{align} holds for all integers $i$ such that $1 \leq i \leq d$ and that $\{\sL^{**}_1(\boldsymbol{y}), \cdots, \sL^{**}_1(\boldsymbol{y})\}$ is a uniform local coordinate system at $\boldsymbol{y}$ of size $L$ satisfying (\ref{eqn:lemm:LocalCoordinates}).  Thus, by Lemma~\ref{lemm:LocalCoordinates}, we have there exists an unique subtessellation $\widetilde{\mS}$ of $\B$ such that\begin{align}\label{eqn:prop:RepsMinTess:ConstructedULCSaty2}
  \sL^{**}_1(\boldsymbol{y}) \cup  \cdots \cup \sL^{**}_d(\boldsymbol{y}) \subset \bigcup_{B'' \in \widetilde{\mS}} B'' \end{align} and, moreover, such that $\widetilde{\mS}$ is the $(L/2R')$-tessellation of \[\prod_{j=1}^d\left[\pi_j\left(\boldsymbol{e}\left(\sL^{**}_j(\boldsymbol{y})\right)\right), \pi_j\left(\boldsymbol{e'}\left(\sL^{**}_j(\boldsymbol{y})\right)\right)\right] \supset \prod_{j=1}^d\left[\pi_j\left(\boldsymbol{e}\left({\sL}_j(\boldsymbol{y})\right)\right), \pi_j\left(\boldsymbol{e'}\left({\sL}_j(\boldsymbol{y})\right)\right)\right] = B(\boldsymbol{x}, R),\] where $\pi_j(\cdot)$ is defined in (\ref{eqn:DefnProjMaps}) and the superset and equality follow by (\ref{eqn:prop:RepsMinTess:SegmentIncAllMinTess2}, \ref{eqn:prop:RepsMinTess:EndPointsChords}).  Hence, $\widetilde{\mS}$ covers $B(\boldsymbol{x}, R)$.

 Now, since \[B(\boldsymbol{x^\#}, R^\#) = \prod_{j=1}^d\left[\pi_j\left(\boldsymbol{e}\left(\sL^\#_j(\boldsymbol{y})\right)\right), \pi_j\left(\boldsymbol{e'}\left(\sL^\#_j(\boldsymbol{y})\right)\right)\right] \supset \prod_{j=1}^d\left[\pi_j\left(\boldsymbol{e}\left(\sL^{**}_j(\boldsymbol{y})\right)\right), \pi_j\left(\boldsymbol{e'}\left(\sL^{**}_j(\boldsymbol{y})\right)\right)\right]\] holds by (\ref{eqn:prop:RepsMinTess:SegmentIncAllMinTess2}), we have, for any $B \in \widetilde{\mS}$, that \[B \subset B(\boldsymbol{x^\#}, R^\#)\] by Lemma~\ref{lemm:TesselBallFund}.  Since $\mS^\#$ is the $M^\#$-tessellation of $B(\boldsymbol{x^\#}, R^\#)$, we have $B \in \mS^\#$ by Lemma~\ref{lemm:ElementPartofTess}.  Consequently, we have that $\widetilde{\mS} \subset \mS^\#$.  Since $\mS^\#$ is a minimal tessellation of $B(\boldsymbol{x}, R)$, we have $\mS^\# = \widetilde{\mS}$ and completes the proof of the claim.
 
 Finally, since, by the claim, any minimal tessellation of $B(\boldsymbol{x}, R)$ comes from, possibly, a choice of $\widehat{\boldsymbol{b}}$ or $\widecheck{\boldsymbol{b}}$ for each coordinate $i$, we have that there are a  finite number of minimal tessellations of $B(\boldsymbol{x}, R)$.  This completes the proof of the proposition.

\subsubsection{Proof of Proposition~\ref{prop:RepsMaxTess}}\label{sec:ProofPropRepsMaxTess}  The proof of the first assertion is analogous to the proof in Section~\ref{subsubsec:ProofFirstAssertProp:MinTess}.  We now prove the second assertion.  For this proof, we adapt the proof in Section~\ref{subsubsec:ProofSecAssertProp:MinTess} and, for conciseness, only give the necessary changes.  All our notation comes from that proof. Note, by Lemma~\ref{lemm:MaxTessNonEmptyCond}, we have that $\col^*(\B, B(\boldsymbol{x}, R)) \neq \emptyset$ and, thus, that there exists a $\widetilde{B} \in \B$ such that \begin{align}\label{eqn:prop:RepsMaxTess:SeedTess} \widetilde{B} \subset B(\boldsymbol{x}, R).\end{align} Replace the $\boldsymbol{y}$ from (\ref{eqn:prop:RepsMinTess:Defny}) with \begin{align}\label{eqn:prop:RepsMaxTess:Defny}
 \boldsymbol{y}:=(y_1, \cdots, y_d)^t \in \inte(\widetilde{B}) \cap \B^{\circ}.\end{align}  Let \[\widetilde{\sL_j}(\boldsymbol{y}):=\{y_1\} \times \cdots \times \{y_{j-1}\} \times [-(R+R')+x_j, x_j+R+R'] \times \{y_{j+1}\} \times \cdots \times \{y_{d}\}.\]  Note that $\sL_j(\boldsymbol{y}) \cap \widetilde{B}$ is the chord of $\widetilde{B}$ parallel to the $j$-axis through the point $\boldsymbol{y}$, and, by (\ref{eqn:prop:RepsMaxTess:SeedTess}) and Lemma~\ref{lemm:ChordsThruInteriorPointsChar}, we have that $\boldsymbol{e}(\sL_j(\boldsymbol{y}) \cap \widetilde{B}), \boldsymbol{e'}(\sL_j(\boldsymbol{y}) \cap \widetilde{B}) \in  \sP(\sL_j(\boldsymbol{y})) \subset \sP(\widetilde{\sL_j}(\boldsymbol{y}))$.   Using Lemma~\ref{lemm:PartitionLineSeg}, we may set \[\boldsymbol{a}:= (y_1, \cdots, y_{j-1}, a_j, y_{j+1},\cdots, y_d)^t \textrm{ and } \boldsymbol{a'}:= (y_1, \cdots, y_{j-1}, a_j', y_{j+1},\cdots, y_d)^t,\] where $a_j < a_j'$, to be the two elements of $\sP(\sL_j(\boldsymbol{y}))$ whose distance $D_j$ is greatest.  By (\ref{eqn:prop:RepsMinTess:EndPointsChords}), we have that \begin{align}\label{eqn:prop:RepsMaxTess:Dist}
 x_j-R\leq  a_j < a_j' \leq x_j+R.  \end{align}   Form the line segment \[\widecheck{\sL}_j(\boldsymbol{y}):= \{y_1\} \times \cdots \times \{y_{j-1}\} \times [a_j, a_j'] \times \{y_{j+1}\} \times \cdots \times \{y_{d}\}\] and set \[\kappa^+_j:= x_j+R- a_j' \quad \textrm{ and }\quad \kappa^-_j:= a_j - (x_j-R).\]

As in Section~\ref{subsubsec:ProofSecAssertProp:MinTess}, we have Cases~(1), (2), and~(3).  Let us first consider Case~(1).  We claim that $\boldsymbol{e'}(\sL_j(\boldsymbol{y})) = \boldsymbol{a'}$.  We now prove this claim.  Assume that the conclusion is false.  Then, using (\ref{eqn:prop:RepsMaxTess:Dist}), we have $a_j' < x_j+R$ and, thus, $\|\boldsymbol{e'}(\sL_j(\boldsymbol{y}))-\boldsymbol{a}\|_\infty = x_j+R - a_j > a_j'-a_j = D_j$, yielding two elements of $\sP(\sL_j(\boldsymbol{y}))$ whose distance is strictly larger than $D_j$.  This is a contradiction and proves the claim.  The claim gives that $\kappa^+_j = 0$.   Since $\boldsymbol{e}(\sL_j(\boldsymbol{y})) \in \B^\circ$, we have that $ x_j-R< a_j$.  Using Lemma~\ref{lemm:PartitionLineSeg}, set $\boldsymbol{\widehat{a}}:= (y_1, \cdots, y_{j-1}, \widehat{a}_j, y_{j+1},\cdots, y_d)^t$ to be the element of $\sP(\widetilde{\sL_j}(\boldsymbol{y}))$ consecutive to $\boldsymbol{a}$ such that $\widehat{a}_j < a_j$, and note that $a_j - \widehat{a}_j = 2R'$.

Next, we claim that $\widehat{a}_j < x_j-R$.  We now prove this claim.  Assume that the conclusion does not hold.  Then, we have that  $\widehat{a}_j \geq x_j-R$, which implies that $\boldsymbol{\widehat{a}} \in \sP(\sL_j(\boldsymbol{y}))$.  Consequently, we have that $\|\boldsymbol{a'}- \boldsymbol{\widehat{a}}\|_\infty = a_j' - \widehat{a}_j = a_j' - a_j + 2R' = D_j +2 R'$.  This yields two elements of $\sP(\sL_j(\boldsymbol{y}))$ with distance strictly greater than $D_j$, which is a contradiction.  This proves the claim.  Consequently, we have \[\widehat{a}_j < x_j-R< a_j \quad \textrm{ and } \quad a_j -\widehat{a}_j = 2R',\] that $\widecheck{\sL}_j(\boldsymbol{y})$ has length \begin{align}\label{eqn:prop:RepsMaxTess:Lj}
 L_j:= 2R - \kappa_j^-,  \end{align} and that \begin{align}\label{eqn:prop:RepsMaxTess:kappasize}
 0<\frac{\kappa_j^-}{2R'} < 1 \end{align} holds.  Analogously to the proof in Section~\ref{subsubsec:ProofSecAssertProp:MinTess}, we have, for Case~(1), that \begin{align}\label{eqn:prop:RepsMaxTess:kappasizeCase1LRBnd} \frac{L_j} {2 R'} = \left\lfloor \frac {R}{R'} \right\rfloor
  \end{align} holds and $R/R'$ can not be an integer.  Note that in Case~(1), we have $\boldsymbol{e'}(\sL_j(\boldsymbol{y})) = \boldsymbol{e'}(\widecheck{\sL}_j(\boldsymbol{y}))$.
  
For Case~(2), the analogous proof to Case~(1) gives that $\widecheck{\sL}_j(\boldsymbol{y})$ has length $L_j$ where (\ref{eqn:prop:RepsMaxTess:kappasizeCase1LRBnd}) holds and $R/R'$ can not be an integer.  Note that in Case~(2), we have $\boldsymbol{e}(\sL_j(\boldsymbol{y})) = \boldsymbol{e}(\widecheck{\sL}_j(\boldsymbol{y}))$.

Finally, for Case~(3), we combine the proofs of Cases~(1) and~(2) to yield that $\widecheck{\sL}_j(\boldsymbol{y})$ has length \begin{align}\label{eqn:prop:RepsMaxTess:LjCase3}
 L_j:= 2R - \kappa^-_j-\kappa^+_j \end{align} and (\ref{eqn:prop:RepsMinTess:kappasizeCase3}) holds.  Case~(3) splits into Cases~(3a) and~(3b).  For Case~(3a), the analogous proof from Section~\ref{subsubsec:ProofSecAssertProp:MinTess} yields that \begin{align}\label{eqn:prop:RepsMaxTess:kappasizeCase3aBnd} \kappa^-_j + \kappa^+_j = 2R' \quad \textrm{ and } \quad \frac{L_j} {2 R'} =  \frac {R}{R'} -1
  \end{align} and that $R/R' \in \NN$.  Since for Proposition~\ref{prop:RepsMaxTess} the additional condition (\ref{eqn:prop:RepsMaxTess:CondRRPrime}) holds, we have that the integer $R/R' \geq 2$ and, thus, $L_j >0$ for Case~(3a).  For Case~(3b), the analogous proof from Section~\ref{subsubsec:ProofSecAssertProp:MinTess} yields that \begin{align}\label{eqn:prop:RepsMaxTess:kappasizeCase3bBnd}  \frac{L_j} {2 R'} = \left\lfloor \frac {R}{R'} \right\rfloor \quad \textrm{ or }  \quad\frac{L_j} {2 R'} = \left\lfloor \frac {R}{R'} \right\rfloor -1
  \end{align} and that $R/R' \notin \NN$.  Thus (\ref{eqn:prop:RepsMaxTess:CondRRPrime}) gives that $R/R' \geq 2$ and, thus, $L_j >0$ for Case~(3b).  This completes Case~(3).

 We now construct a uniform local coordinate system at $\boldsymbol{y}$ which will determine a desired maximal tessellation.

 \begin{lemm} \label{lemm:prop:RepsMaxTess} Let $j \in \eJ$.  If ${\Ls}^\#_j(\boldsymbol{y})$ is a line segment parallel to the $j$-axis through the point $\boldsymbol{y}$ with endpoints $\boldsymbol{e}\left({\Ls}^\#_j(\boldsymbol{y})\right),  \boldsymbol{e'}\left({\Ls}^\#_j(\boldsymbol{y})\right) \in \partial(\B)$ such that ${\sL}_j(\boldsymbol{y}) \supset {\Ls}^\#_j(\boldsymbol{y})$, then $\widecheck{\sL}_j(\boldsymbol{y}) \supset {\Ls}^\#_j(\boldsymbol{y})$.
\end{lemm}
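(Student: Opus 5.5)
The plan is to compare the endpoints of ${\Ls}^\#_j(\boldsymbol{y})$ and $\widecheck{\sL}_j(\boldsymbol{y})$ one coordinate at a time, mirroring the proof of Lemma~\ref{lemm:prop:RepsMinTess} with all inclusions reversed. Write
\[
\boldsymbol{c^\#}:=\boldsymbol{e}\left({\Ls}^\#_j(\boldsymbol{y})\right), \qquad \boldsymbol{c^{\#\prime}}:=\boldsymbol{e'}\left({\Ls}^\#_j(\boldsymbol{y})\right),
\]
with $j$-th coordinates $c_j^\# < c_j^{\#\prime}$.

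\textbf{Step 1 (the endpoints are partition points).} Since ${\sL}_j(\boldsymbol{y}) \supset {\Ls}^\#_j(\boldsymbol{y})$ and both endpoints of ${\Ls}^\#_j(\boldsymbol{y})$ lie in $\partial(\B)$, we get $\boldsymbol{c^\#}, \boldsymbol{c^{\#\prime}} \in \sP(\sL_j(\boldsymbol{y}))$. This uses the description of $\sP$ in Lemma~\ref{lemm:PartitionLineSeg}; the fixed coordinates $y_i$ with $i\neq j$ are interior to tiles because $\boldsymbol{y} \in \B^\circ$.

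\textbf{Step 2 (comparison with the extremal pair).} By construction, $\boldsymbol{a}$ and $\boldsymbol{a'}$ are the two elements of $\sP(\sL_j(\boldsymbol{y}))$ at greatest distance. Since this set lies on a line, they must be its leftmost and rightmost elements. If some element lay strictly left of $\boldsymbol{a}$, then its distance to $\boldsymbol{a'}$ would exceed $D_j$. The same argument on the right handles $\boldsymbol{a'}$; it is exactly the argument already used in the two claims of Case~(1). Therefore $a_j \le c_j^\#$ and $c_j^{\#\prime} \le a_j'$.

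\textbf{Step 3 (conclusion).} Since $\widecheck{\sL}_j(\boldsymbol{y})$ is the segment with $j$-th coordinate ranging over $[a_j, a_j']$, Step~2 gives $\widecheck{\sL}_j(\boldsymbol{y}) \supset {\Ls}^\#_j(\boldsymbol{y})$.

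\textbf{Main obstacle.} The delicate point is making sure $\boldsymbol{a}, \boldsymbol{a'}$ are well defined, i.e.\ that $\sP(\sL_j(\boldsymbol{y}))$ has at least two elements. This follows from (\ref{eqn:prop:RepsMaxTess:SeedTess}): the chord of $\widetilde{B}$ through $\boldsymbol{y}$ has both endpoints in $\sP(\sL_j(\boldsymbol{y}))$, and these are distinct because the chord has length $2R'>0$. The hypothesis $j\in\eJ$ is not really needed for this argument. Unlike the minimal case, no extension $\sL_j^D$ is required, because everything stays inside the chord $\sL_j(\boldsymbol{y})$.
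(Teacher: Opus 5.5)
Your proof is correct. It is essentially the argument the paper has in mind when it says the proof is ``analogous to that of Lemma~\ref{lemm:prop:RepsMinTess}'': the endpoints of ${\Ls}^\#_j(\boldsymbol{y})$ lie in $\sP(\sL_j(\boldsymbol{y}))$, while $\widecheck{\sL}_j(\boldsymbol{y})$ runs between the extremal points $\boldsymbol{a},\boldsymbol{a'}$ of that set. Using the extremal-pair definition directly makes your version a little cleaner, since it avoids the case split on whether the endpoints of $\sL_j(\boldsymbol{y})$ lie in $\B^{\circ}$ or in $\partial(\B)$.
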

\begin{proof}  The proof is analogous to that of Lemma~\ref{lemm:prop:RepsMinTess}.

\end{proof}

Continuing to adapt the proof in Section~\ref{subsubsec:ProofSecAssertProp:MinTess}, we replace $L$ from (\ref{eqn:prop:RepsMinTess:LDefn}) with \begin{align*}
 L:= \inf_{j \in \eJ} L_j . \end{align*}  As in Section~\ref{subsubsec:ProofSecAssertProp:MinTess}, we have two cases \begin{enumerate}[label=(\Alph*)]
\item $R/R' \notin \NN$ and $R/R'\geq 2$,
\item $R/R' \in \NN$ and $R/R'\geq 2$,
\end{enumerate} to consider.  Case~(A) has two further cases:  \begin{enumerate}
\item [(Ai)]  there exists a $k \in \eJ$ such that $L_k/2R' = \left\lfloor R/R' \right\rfloor-1$,
\item [(Aii)] there does not exist a $k \in \eJ$ such that $L_k/2R'  =  \left\lfloor R/R' \right\rfloor-1$. 
\end{enumerate}  We consider Case~(Ai) first.   For every $j \in \eJ$ such that $L_j/2R'  = \left\lfloor R/R' \right\rfloor-1$, define ${\sL'}_j(\boldsymbol{y}):=\widecheck{\sL}_j(\boldsymbol{y})$, and denote its length by $L'_j:=L_j$.  For any $j \in \eJ$ such that $L_j/2R' \neq  \left\lfloor R/R' \right\rfloor-1$, we have that $L_j/2R' =   \left\lfloor R/R' \right\rfloor$ by the above proof.  Since \begin{align}\label{eqn:prop:RepsMaxTess:CaseAiDefnC}
\boldsymbol{c'''}&:=(y_1, \cdots, y_{j-1}, c_j''', y_{j+1},\cdots, y_d)^t:=\boldsymbol{e}\left(\widecheck{\sL}_j(\boldsymbol{y})\right), \\\nonumber \boldsymbol{c'}&:=(y_1, \cdots, y_{j-1}, c_j', y_{j+1},\cdots, y_d)^t:=\boldsymbol{e'}\left(\widecheck{\sL}_j(\boldsymbol{y})\right )\end{align} are in $\sP\left(\widecheck{\sL}_j(\boldsymbol{y})\right)$, we have by Lemma~\ref{lemm:PartitionLineSeg} and the definitions of $\boldsymbol{e}(\cdot), \boldsymbol{e'}(\cdot)$, a unique element $\widehat{\boldsymbol{a}}:=(y_1, \cdots, y_{j-1}, \widehat{a}_j, y_{j+1},\cdots, y_d)^t $ of $\sP\left(\widecheck{\sL}_j(\boldsymbol{y})\right)$ that is least in distance to $\boldsymbol{c'''}$ and a unique element $\widecheck{\boldsymbol{a}}:=(y_1, \cdots, y_{j-1}, \widecheck{a}_j, y_{j+1},\cdots, y_d)^t $ of $\sP\left(\widecheck{\sL}_j(\boldsymbol{y})\right)$ that is least in distance to $\boldsymbol{c'}$.  Then define \begin{align*}
\widehat{\Ls}_j &:= \{y_1\} \times \cdots \times \{y_{i-1}\} \times [ \widehat{a}_j, c_j'] \times \{y_{i+1}\} \times \cdots \times \{y_{d}\} \\
\widecheck{\Ls}_j &:= \{y_1\} \times \cdots \times \{y_{i-1}\} \times [c_j''',  \widecheck{a}_j] \times \{y_{i+1}\} \times \cdots \times \{y_{d}\},\end{align*} and note that, by Lemma~\ref{lemm:PartitionLineSeg}, the lengths of both are equal to $2R'\left(\left\lfloor R/R' \right\rfloor-1\right)$.  Note that either $\widehat{\Ls}_j$ or $\widecheck{\Ls}_j$ (or both) contain $\boldsymbol{y}$.  Define\footnote{Note that, if $\boldsymbol{y}$ is contained in both, we could make the other choice, but this is immaterial for the proof of the existence of a maximal tessellation and we make this choice for definiteness.} \begin{align}\label{eqn:prop:RepsMaxTess:CaseB:Defnlprime}
 {\sL'}_j(\boldsymbol{y}):=\begin{cases} 
\widehat{\Ls}_j &\text{if }   \boldsymbol{y} \in\widehat{\Ls}_j\\ \widecheck{\Ls}_j &\text{if }   \boldsymbol{y} \notin\widehat{\Ls}_j\end{cases}, \end{align} and denote its length by $L'_j:=2R'\left(\left\lfloor R/R' \right\rfloor-1\right)$.  Analogous to the proof in Section~\ref{subsubsec:ProofSecAssertProp:MinTess}, we have there exists a subtessellation  $\mS$ of $\B$ such that $\mS$ is the $\left(\left\lfloor R/R' \right\rfloor-1\right)$-tessellation of \[\prod_{j=1}^d\left[\pi_j\left(\boldsymbol{e}\left({\sL'}_j(\boldsymbol{y})\right)\right), \pi_j\left(\boldsymbol{e'}\left({\sL'}_j(\boldsymbol{y})\right)\right)\right] \subset \prod_{j=1}^d\left[\pi_j\left(\boldsymbol{e}\left({\sL}_j(\boldsymbol{y})\right)\right), \pi_j\left(\boldsymbol{e'}\left({\sL}_j(\boldsymbol{y})\right)\right)\right] = B(\boldsymbol{x}, R),\] where $\pi_j(\cdot)$ is defined in (\ref{eqn:DefnProjMaps}).  Hence, $\mS$ is contained in $B(\boldsymbol{x}, R)$.
 
 Now we claim that $\mS$ is a maximal tessellation of $B(\boldsymbol{x}, R)$.  By Lemma~\ref{lemm:MaxTessNonEmptyCond}, we have that $\B \notin \col^*(\B, B(\boldsymbol{x}, R))$.  The proof of the claim is analogous to that in Section~\ref{subsubsec:ProofSecAssertProp:MinTess}.  Note that $|\mS| = \left(\left\lfloor R/R' \right\rfloor-1\right)^d$ for Case~(Ai).  This completes Case~(Ai).  The proof for Case~(Aii) is also analogous to that in Section~\ref{subsubsec:ProofSecAssertProp:MinTess} and the constructed maximal tessellation $\mS$ of $B(\boldsymbol{x}, R)$ is such that  $|\mS| = \left(\left\lfloor R/R' \right\rfloor\right)^d$.  This completes Case~(A).

Next, we consider Case~(B).   Let $j \in \eJ$.  By the above proof, $\widecheck{\sL}_j(\boldsymbol{y})$ has length $L_j=2 R'\left( R/R' -1\right)$.  Define ${\sL'}_j(\boldsymbol{y}):=\widecheck{\sL}_j(\boldsymbol{y})$ and denote its length by $L'_j:=L_j=2 R'\left( R/R' -1\right)$.  If $ \{1, \cdots, d\} \backslash \eJ = \emptyset$, we have constructed $\{{\sL'}_1(\boldsymbol{y}), \cdots, {\sL'}_d(\boldsymbol{y})\}$.

Otherwise, let $j \in \{1, \cdots, d\} \backslash \eJ$.  As in Section~\ref{subsubsec:ProofSecAssertProp:MinTess}, set $\widecheck{\sL}_j(\boldsymbol{y}):= \sL_j(\boldsymbol{y})$. Replacing the $\boldsymbol{c'''}, \boldsymbol{c'}$ in (\ref{eqn:prop:RepsMaxTess:CaseAiDefnC}) with \begin{align*}\label{eqn:prop:RepsMaxTess:CaseBDefnC}
\boldsymbol{c'''}&:=(y_1, \cdots, y_{j-1}, c_j''', y_{j+1},\cdots, y_d)^t:=\boldsymbol{e}\left({\sL}_j(\boldsymbol{y})\right), \\\nonumber \boldsymbol{c'}&:=(y_1, \cdots, y_{j-1}, c_j', y_{j+1},\cdots, y_d)^t:=\boldsymbol{e'}\left({\sL}_j(\boldsymbol{y})\right )\end{align*} we define ${\sL'}_j(\boldsymbol{y})$ analogously as in (\ref{eqn:prop:RepsMaxTess:CaseB:Defnlprime}).  Note that the length of ${\sL'}_j(\boldsymbol{y})$ is $L'_j:=2 R'\left( R/R' -1\right)$.  Consequently, we have constructed a uniform local coordinate system $\{{\sL'}_1(\boldsymbol{y}), \cdots, {\sL'}_d(\boldsymbol{y})\}$ at $\boldsymbol{y}$ of size of size $L$.  Analogous to the proof in Section~\ref{subsubsec:ProofSecAssertProp:MinTess}, the collection $\{{\sL'}_1(\boldsymbol{y}), \cdots, {\sL'}_d(\boldsymbol{y})\}$ yields a maximal tessellation $\mS$ of $B(\boldsymbol{x}, R)$ such that $|\mS| = (R/R' -1)^d$. This completes Case~(B).  This shows that we can always construct a maximal tessellation $\mS$ of $B(\boldsymbol{x}, R)$ with the desired cardinalities.

Next, we claim that any maximal tessellation $\mS^\#$ of $B(\boldsymbol{x}, R)$ has desired cardinalities.  We now prove this claim.  Let $B^\# \in \mS^\#$ and $\boldsymbol{z} \in \inte(B^\#).$  Replacing $\boldsymbol{y}$ with $\boldsymbol{z}:=(z_1, \cdots, z_d)^t$ in the above proof yields the maximal tessellation $\mS$ of $B(\boldsymbol{x}, R)$.  Note that $\mS$ has the desired cardinalities by the above proof and that \[\boldsymbol{z} \in \bigcup_{B'' \in \mS} B''.\]  Since $\mS^\#$ is a subtessellation (by Lemma~\ref{lemm:MaxTessNonEmptyCond}), it is also a tessellation and, hence, there exists a closed ball $B(\boldsymbol{x^\#}, R^\#)$ for some $\boldsymbol{x^\#} \in \RR^d$ and $R^\#>0$ such that $\mS^\#$ is an $M^\#$-tessellation of $B(\boldsymbol{x^\#}, R^\#)$ for some $M^\# \in \NN$.  Thus, by Lemma~\ref{lemm:TesselBallFund}, we have that $\boldsymbol{z} \in \inte(B(\boldsymbol{x^\#}, R^\#))$.  Let $\sL^\#_i(\boldsymbol{z})$ be the  chord of $B(\boldsymbol{x^\#}, R^\#)$ parallel to the $i$-axis through the point $\boldsymbol{z}$.  By Lemma~\ref{lemm:ChordSubtesselProperties1}, $\sL^\#_i(\boldsymbol{z})$ has length $L_j^\#:=2 R^\# = 2 M^\# R' \geq 2R'$.  Since $\mS^\#$ is a maximal tessellation of $B(\boldsymbol{x}, R)$, we have that $\sL_i(\boldsymbol{z}) \supset \sL^\#_i(\boldsymbol{z})$.  Applying Lemmas~\ref{lemm:ChordSubtesselProperties2} and~\ref{lemm:prop:RepsMaxTess} for $i \in \eJ$ and, we have that \begin{align*}
 \sL_i(\boldsymbol{z}) \supset \widecheck{\sL}_i(\boldsymbol{z})\supset \sL^\#_i(\boldsymbol{z})\end{align*} for all integers $i$ such that $1 \leq i \leq d$.  (Note that, if $ \{1, \cdots, d \} \backslash \eJ \neq \emptyset$, then we have $\widecheck{\sL}_i(\boldsymbol{z})= \sL_i(\boldsymbol{z})$ for $i \in \{1, \cdots, d \} \backslash \eJ$ in the above proof.)

 Thus, \begin{align} \label{eqn:prop:RepsMaxTess:CompareSegs}
 2R'\leq L_j^\# \leq L. \end{align}  By the above construction, we have two cases \begin{enumerate}[label=(\Roman*)]
\item $L_i = L$ 
\item $L_i = L + 2R'$
\end{enumerate} to consider.  Note that, in both Cases~(Ai) and~(B), we have $L=2 R'\left(\left\lfloor R/R' \right\rfloor-1\right)$ and Cases~(I) occurs for some $i$ (at least once such $i$) and Case~(II) occurs for other $i$ (possibly no such $i$); in Case~(Aii), we have $L=2 R'\left(\left\lfloor R/R' \right\rfloor\right)$ and only Case~(I) occurs for all $i$.  For Case~(I), we define $\sL^{**}_i(\boldsymbol{z}):= \widecheck{\sL}_i(\boldsymbol{z})$.  Thus, the $\sL^{**}_i(\boldsymbol{z})$ all have length $L$ for Case~(I).  This completes Case~(I).
 
We now consider Case~(II).  Only Cases~(Ai) and~(B) in the proof above applies to Case~(II).  Let $\widehat{\Ls}_j, \widecheck{\Ls}_i$ be as in Cases~(Ai) and~(B).  Then Lemma~\ref{lemm:PartitionLineSeg} gives that both $\widehat{\Ls}_i, \widecheck{\Ls}_i$ have length $L$.  Let \begin{align*}
\widehat{\Ls}^-_i &:= \{z_1\} \times \cdots \times \{z_{i-1}\} \times [ c_i''', \widehat{a}_i) \times \{z_{i+1}\} \times \cdots \times \{z_{d}\} \\
\widecheck{\Ls}^+_i &:= \{z_1\} \times \cdots \times \{z_{i-1}\} \times (\widecheck{a}_i, c_i'] \times \{z_{i+1}\} \times \cdots \times \{z_{d}\}.\end{align*}  Also, by the above proof, we have that \[\widehat{\Ls}_i \cup \widecheck{\Ls}_i = \widehat{\Ls}^-_i \sqcup \widehat{\Ls}_i=\widecheck{\Ls}_i \sqcup \widecheck{\Ls}^+_i = \widecheck{\sL}_i(\boldsymbol{z}).\]  We assert that either $\sL^\#_i(\boldsymbol{z}) \subset \widehat{\Ls}_i$ or $\sL^\#_i(\boldsymbol{z}) \subset \widecheck{\Ls}_i$.  We now prove the assertion.  Assume that the conclusion does not hold.  Then we have that $\widecheck{\Ls}^+_i \cap \sL^\#_i(\boldsymbol{z})$ and $\widehat{\Ls}^-_i \cap \sL^\#_i(\boldsymbol{z})$ are both nonempty sets.  Since, by Lemma~\ref{lemm:ChordSubtesselProperties2}, we have  $\boldsymbol{e}\left(\sL^\#_i(\boldsymbol{z})\right), \boldsymbol{e'}\left(\sL^\#_i(\boldsymbol{z})\right) \in\sP\left(\sL_i(\boldsymbol{z})\right)$, it follows by Lemma~\ref{lemm:PartitionLineSeg} that $\boldsymbol{c'''},\boldsymbol{c'} \in \sL^\#_i(\boldsymbol{z})$, which implies $L_j^\#\geq L_i= L + 2R'$.  This contradicts (\ref{eqn:prop:RepsMaxTess:CompareSegs}) and proves the assertion.  Now by construction both $\widehat{\Ls}_i$ and $\widecheck{\Ls}_i$ are subsets of $\widecheck{\sL}_j(\boldsymbol{y})$.  Define \[\sL^{**}_i(\boldsymbol{z}):= \begin{cases}  \widehat{\Ls}_i & \text{ if }  \sL^\#_i(\boldsymbol{z}) \subset \widehat{\Ls}_i \\ \widecheck{\Ls}_i & \text{ if }  \sL^\#_i(\boldsymbol{z}) \not\subset \widehat{\Ls}_i\end{cases}.\]  Note that $\sL^{**}_i(\boldsymbol{z})$ has length $L$ for Case~(II).  This completes Case~(II).  Consequently, we have that \begin{align*}
 \sL_i(\boldsymbol{y}) \supset \widecheck{\sL}_i(\boldsymbol{y})\supset \sL^{**}_i(\boldsymbol{y})\supset \sL^\#_i(\boldsymbol{y})\end{align*} holds for all integers $i$ such that $1 \leq i \leq d$.  Now, using the analogous proof in Section~\ref{subsubsec:ProofSecAssertProp:MinTess}, the claim that any maximal tessellation of $B(\boldsymbol{x}, R)$ has the desired cardinalities follows.

Finally, we show that there are only a finitely-many maximal tessellations of $B(\boldsymbol{x}, R)$.  By Lemma~\ref{lemm:TesselEuclidFund}, every maximal tessellations of $B(\boldsymbol{x}, R)$ must lie in a minimal tessellation $\mS^+$ of $B(\boldsymbol{x}, R)$, which, by Proposition~\ref{prop:RepsMinTess}, has finite cardinality and, thus, finitely-many subcollections.  Since every maximal  tessellation of $B(\boldsymbol{x}, R)$ is a subcollection of $\mS^+$, we have that desired result.  This completes the proof the proposition.

\section{An upper bound for the Hausdorff dimension of ubiquitously losing sets}\label{sec:UpBndHDLosingSets}

In this section, we prove Theorem~\ref{thm:UpBndHDLosingSets} (see Section~\ref{subsec:ProofThm:UpBndHDLosingSets}).  To do this, we use the upper box dimension to provide an upper bound on the Hausdorff dimension.  Recall a definition of upper box dimension (see, for example,~\cite{Fac03} for an introduction to Hausdorff and box dimensions) is as follows.   Let $F$ be a nonempty bounded subset of $\RR^d$.  We say $F$ {\em is covered by} a collection $\mC$ of subsets of $\RR^d$ or, alternatively, $\mC$ {\em covers} $F$ if we have that \[F \subset \bigcup_{A \in \mC} A.\]  Let $\delta >0$ and $N_\delta(F)$ be the smallest number of closed balls of radius $\delta$ that cover $F$.  Then the {\em upper box dimension} of $F$, denoted by $\overline{\dim}_B (F)$, is given by \[\overline{\dim}_B (F) = \limsup_{\delta \rightarrow 0} \frac{\log N_\delta(F)}{-\log \delta}.\]

\subsection{Proof of Theorem~\ref{thm:UpBndHDLosingSets}}\label{subsec:ProofThm:UpBndHDLosingSets}  All the balls that we consider in this proof are given by $\|\cdot\|_\infty$.  We prove (\ref{eqn:thm:UpBndHDLosingSets2}) first and, thus, restrict $1/\beta$ to be an integer.  We will play infinitely-many $(1/j, \beta; S)$-accelerated games for Bob on $\RR^d$.  Bob picks a closed ball $B^*_1$ for a $(1/j, \beta; S)$-game on $\RR^d$.  Let $\rho_1 := \rho(B^*_1)$.  Lemmas~\ref{lemm:TesselEuclidFund} and~\ref{lemm:UniqCompletion} and Remark~\ref{rmk:lemm:UniqCompletion} give that \[\RR^d = \bigcup_{B_1' \in  \overline{\{B^*_1\}}} B_1'.\]  Note that each element in $ \overline{\{B^*_1\}}$ is a possible choice of Bob's initial ball for some $(1/j, \beta; S)$-game.  For each $B_1'\in  \overline{\{B^*_1\}}$, we obtain an upper bound for $\dim_H(S \cap B_1')$ using the proof that follows.  Since we obtain the same upper bound for each $B_1' \in  \overline{\{B^*_1\}}$, we have, by countable stability of Hausdorff dimension, the same upper bound for $\dim_H(S)$.  Consequently, we may restrict, without loss of generality, to considering one element $B_1$ of $ \overline{\{B^*_1\}}$.  

\subsubsection{A distinguished set}\label{subsubsec:ADistingSet}  In this section, we show some properties of the set defined in (\ref{eqn:ProofUpperBndHD:DistingSetDefn}).  Let $\hh_0:=\mC_0 := \{B_1\}$ and $F_0(B_1):= B_1$.  Now, since $j \geq 2$ is an integer, we have that \begin{align}\label{eqn:thm:UpBndHDLosingSets:Refinement}
 \mR_j\left(B_1\right) =: \left\{A_{1,(m_1)}: m_1 \in \NN \textrm{ such that } 1\leq m_1 \leq j^d\right\} \end{align} is the $j$-tessellation of $B_1$.  Recall that the radius of any element of $\mR_j\left(B_1\right)$ is $\frac 1 j \rho(B_1) = \rho_1/j$.  Consequently, for all $m_1$, we have that \[A_{1,(m_1)} \in \left(B_1 \right)^{1/j}\] and, thus, that each $A_{1,(m_1)}$ is a possible choice for Alice in a $(1/j, \beta; S)$-game.  Now, for each $m_1$, Bob chooses \[B_{2,(m_1)} \in \left(A_{1,(m_1)}\right)^\beta\] according to a $(1/j, \beta; S)$-winning strategy for Bob with initial ball $B_1$.  Let \[\B_{1,(m_1)}:=\mR_{j^s \beta^{-s}}\left(B_{2,(m_1)}\right).\]  Note that the complete tessellation $\overline{\B_{1,(m_1)}}$ is comprised of closed balls of radius \[R'_1:=\frac{\rho\left(B_{2,(m_1)} \right) }{j^s \beta^{-s} }= \frac{\rho_1 \beta^{s+1}}{j^{s+1}}.\]  Since $\B_{1,(m_1)}$ is a subtessellation of $\overline{\B_{1,(m_1)}}$ and the $\left(j^s \beta^{-s}\right)$-tessellation of $B_{2,(m_1)}$, we have that $B_{2,(m_1)}$ is representable in $\overline{\B_{1,(m_1)}}$.  Define \begin{align}\label{eqn:UpperBndDimCount}
 N:= \left(j^s \beta^{-s-1} +1\right)^d - j^{sd} \beta^{-sd}. \end{align} By Propositions~\ref{prop:RepsMinTess} and~\ref{prop:RepsMaxTess}, we have that the set \begin{align}\label{eq:subsubsec:ADistingSetA1RemoveB2}
 A_{1,(m_1)}  \backslash B_{2,(m_1)} \end{align} is covered by a collection \[\mC_{1,(m_1)}:=\left\{B_{1,(m_1, m_2)}\in \overline{\B_{1,(m_1)}}:m_2 \in \NN \textrm{ such that } 1 \leq m_2 \leq N \right\}\] of closed balls of radius $R'_1$.  Consequently, by Lemma~\ref{lemm:TesselBallFund}, we have that the set \begin{align}\label{eqn:UppBndHD:DesChainFsInitCase}
 F_1(B_1):=F_0(B_1) \big\backslash \bigcup_{m_1=1}^{j^d}B_{2,(m_1)} \end{align} is covered by  the collection \[\mC_1:=  \bigcup_{m_1=1}^{j^d}\mC_{1,(m_1)}=\left\{B_{1,(m_1, m_2)}:m_1, m_2 \in \NN \textrm{ such that } 1\leq m_1 \leq j^d, 1 \leq m_2 \leq N \right\}\] of closed balls of radius $R'_1$.  Note that the collection of balls  that have been removed from $B_1$ is \[\hh^*_1(B_1) := \left\{ B_{2,(m_1)} : m_1 \in \NN \textrm{ such that } 1\leq m_1 \leq j^d\right\}.\]

Now, for each $B_{1,(m_1, m_2)} \in \mC_1$, we can repeat the above with $B_1$ replaced by $B_{1,(m_1, m_2)}$.  Precisely, we proceed as follows.  Bob picks $B_{1,(m_1, m_2)}$ as his initial choice of ball for a $(1/j, \beta; S)$-game on $\RR^d$.  We have the analog of (\ref{eqn:thm:UpBndHDLosingSets:Refinement}), namely \begin{align*}
 \mR_j\left(B_{1,(m_1, m_2)}\right) =: \left\{A_{1,(m_1, m_2, m_3)}: m_3 \in \NN \textrm{ such that } 1\leq m_3 \leq j^d\right\}, \end{align*} whose elements are possible choices for Alice in a $(1/j, \beta; S)$-game in which Bob has picked $B_{1,(m_1, m_2)}$ as his initial choice of ball.  Now, for each $m_3$, Bob chooses \[B_{2,(m_1, m_2, m_3)} \in \left(A_{1,(m_1, m_2, m_3))}\right)^\beta\] according to a $(1/j, \beta; S)$-winning strategy for Bob with initial ball $B_{1,(m_1, m_2)}$.  Let \[\B_{2,(m_1, m_2, m_3)}:= \mR_{j^s \beta^{-s}}\left(B_{2,(m_1, m_2, m_3)}\right).\]  Using the analogous proof to that above, we have that \[B_{1,(m_1, m_2)} \big\backslash \bigcup_{m_3=1}^{j^d}B_{2,(m_1, m_2, m_3)}\] is covered by  the collection \[\mC_{2,(m_1, m_2)}:= \left\{B_{1,(m_1, m_2, m_3, m_4)}:m_3, m_4 \in \NN \textrm{ such that } 1\leq m_3 \leq j^d, 1 \leq m_4 \leq N \right\}\] and, consequently, that \[F_2(B_1):=F_1(B_1) \big\backslash \bigcup_{m_1=1}^{j^d}\bigcup_{m_2=1}^{N} \bigcup_{m_3=1}^{j^d}B_{2,(m_1, m_2, m_3)}\] is covered by  the collection \begin{align*}
\mC_2&:= \bigcup_{m_1=1}^{j^d}\bigcup_{m_2=1}^{N} \mC_{2,(m_1, m_2)}= \\ &\left\{B_{1,(m_1, m_2, m_3, m_4)}:m_1, m_2, m_3, m_4 \in \NN \textrm{ such that } 1\leq m_1, m_3 \leq j^d, 1 \leq m_2, m_4 \leq N \right\}  \end{align*} of closed balls of radius \[R'_2:=\frac{\rho\left(B_{2,(m_1, m_2, m_3)} \right) }{j^s \beta^{-s} }= \frac{\rho_1 \beta^{2s+2}}{j^{2s+2}}.\]  The collection of balls  that have been removed from $F_1(B_1)$ is \[\hh^*_2(B_1) := \left\{ B_{2,(m_1, m_2, m_3)} : m_1, m_2, m_3 \in \NN \textrm{ such that } 1\leq m_1, m_3 \leq j^d, 1 \leq m_2 \leq N\right\}.\]

Let $t \in \NN$ such that $t \geq 2$.  Continuing thus by recursion, we obtain for the $t$-th step that the set \begin{align}\label{eqn:UppBndHD:DesChainFs}
 F_t(B_1):=F_{t-1}(B_1) \big\backslash \bigcup_{m_1=1}^{j^d}\bigcup_{m_2=1}^{N} \cdots \bigcup_{m_{2t-3}=1}^{j^d}\bigcup_{m_{2t-2}=1}^{N}\bigcup_{m_{2t-1}=1}^{j^d}B_{2,(m_1, \cdots, m_{2t-1})} \end{align} is covered by  the collection \begin{align*}
\mC_t :=\left\{B_{1,(m_1, \cdots, m_{2t})}:m_1,\cdots, m_{2t} \in \NN \textrm{ such that } 1\leq m_1, m_3, \cdots, m_{2t-1} \leq j^d, 1 \leq m_2, m_4, \cdots, m_{2t} \leq N \right\}  \end{align*} of closed balls of radius \[R'_t:=\frac{\rho\left(B_{2,(m_1, \cdots, m_{2t-1})}\right) }{j^s \beta^{-s} }= \frac{\rho_1 \beta^{ts+t}}{j^{ts+t}}.\]  The collection of balls  that have been removed from $F_{t-1}(B_1)$ is \[\hh^*_t(B_1) := \left\{B_{2,(m_1, \cdots, m_{2t-1})}: m_1, \cdots, m_{2t-1} \in \NN \textrm{ such that } 1\leq m_1, \cdots m_{2t-1} \leq j^d, 1 \leq m_2, \cdots, m_{2t-2} \leq N\right\}.\]

Consequently, we have a descending chain \[F_0(B_1) \supset F_1(B_1) \supset \cdots \supset F_t(B_1) \supset \cdots.\]  Let \begin{align}\label{eqn:ProofUpperBndHD:DistingSetDefn}
 F(B_1) := \bigcap_{t=0}^\infty F_t(B_1). \end{align}  Then $F(B_1)$ is covered by $\mC_t$ for any $t \in \NN$.

\begin{lemm}\label{lemm:UppHDEstBndUbitLosing}  We have that \[\dim_H\left(F(B_1)\right) \leq\frac{\log\left(j^d N\right)}{\log\left(j^{s+1}\beta^{-(s+1)} \right)}= \frac{d \log(j) + \log\left(\left(j^s \beta^{-s-1} +1\right)^d - j^{sd} \beta^{-sd}\right)}{\log\left(j^{s+1}\beta^{-(s+1)} \right)} =: \zeta\]
 
\end{lemm}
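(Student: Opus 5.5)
The plan is to bound the upper box dimension of $F(B_1)$ and use $\dim_H \leq \overline{\dim}_B$ for nonempty bounded sets. (If $F(B_1)=\emptyset$ the bound is trivial.) Since $F(B_1)\subset F_0(B_1)=B_1$ is bounded and, by the construction in Section~\ref{subsubsec:ADistingSet}, is covered by $\mC_t$ for every $t\in\NN$, the covers $\mC_t$ should suffice. The first step is to count: by construction $|\mC_t| \leq (j^d N)^t$, since each ball of $\mC_{t-1}$ yields at most $j^d$ choices of $A$ and each of these at most $N$ covering balls. The exact value of $N$ in (\ref{eqn:UpperBndDimCount}) comes from Propositions~\ref{prop:RepsMinTess} and~\ref{prop:RepsMaxTess}: $B_{2,(m_1)}$ is representable by $(j^s\beta^{-s})^d$ balls, and $A_{1,(m_1)}$ has radius $j^s\beta^{-s-1}$ times $R'_1$, so it is covered by at most $(j^s\beta^{-s-1}+1)^d$ balls. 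I will take this count as given from the construction. All balls in $\mC_t$ have radius $R'_t=\rho_1 (\beta/j)^{t(s+1)}$.

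Next, fix $0<\delta< \rho_1$ and choose $t\in\NN$ with $R'_{t}\leq \delta < R'_{t-1}$. The covering by $\mC_t$ shows that $N_\delta(F(B_1)) \leq |\mC_t| \leq (j^dN)^t$. Here I use that a sup-norm ball of radius $R'_t\le\delta$ sits inside a ball of radius $\delta$ with the same center. Also, $-\log\delta > -\log R'_{t-1} = (t-1)(s+1)\log(j/\beta) - \log\rho_1$. Therefore
\[\frac{\log N_\delta(F(B_1))}{-\log\delta} \leq \frac{t\log(j^dN)}{(t-1)\log\left(j^{s+1}\beta^{-(s+1)}\right)-\log\rho_1}.\]
As $\delta\to0$ we have $t\to\infty$, and the $\limsup$ is at most $\log(j^dN)/\log(j^{s+1}\beta^{-(s+1)})=\zeta$. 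The equality with the displayed expanded form is just $\log(j^dN)=d\log j+\log N$.

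The only point requiring care, rather than a genuine obstacle, is to make sure the count $|\mC_t|\le (j^dN)^t$ holds uniformly at every stage of the recursion. This needs the same $N$ at each level, independently of the position of $B_{2,(\cdot)}$ inside $A_{1,(\cdot)}$. That uniformity holds because every $B_{2,(\cdot)}$ is representable in its own completion $\overline{\B_{t,(\cdot)}}$ with the same ratio $j^s\beta^{-s}$, and every $A$ has the same relative radius. It is also where the integrality of $1/\beta$ is used, since $j^s\beta^{-s}$ must be an integer for the refinement $\mR_{j^s\beta^{-s}}$ to be defined.
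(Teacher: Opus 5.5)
Your proposal is correct and is essentially the paper's proof. You cover $F(B_1)$ by $\mC_t$, which consists of at most $(j^dN)^t$ sup-norm balls of radius $R'_t=\rho_1(\beta/j)^{t(s+1)}$, choose $t$ with $R'_t\le\delta<R'_{t-1}$, and deduce $\overline{\dim}_B F(B_1)\le \log(j^dN)/\log(j^{s+1}\beta^{-(s+1)})$, which bounds $\dim_H F(B_1)$; your remarks on the empty case and on why $N$ is the same at every level when $1/\beta\in\NN$ are accurate details the paper leaves implicit.
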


\begin{proof}  Let $\delta>0$ be such that $R'_t \leq \delta < R'_{t-1}$.  Since $F(B_1)$ is covered by $\mC_t$, we have that \[N_\delta\left(F(B_1)\right) \leq j^{dt} N^t\] where $N$ is given by (\ref{eqn:UpperBndDimCount}).  Hence, we have that \begin{align*}\overline{\dim}_BF(B_1) \leq  \limsup_{t \rightarrow \infty} \frac{t\log \left(j^dN \right)}{-\log (\rho_1) + (t-1) \log\left(j^{s+1}\beta^{-(s+1)}\right)} = \frac{\log\left(j^d N\right)}{\log\left(j^{s+1}\beta^{-(s+1)} \right)},
  \end{align*} from which the desired result follows.
\end{proof}

Now define the countable collections \[\hh(B_1) := \bigcup_{t \in \NN} \hh^*_t(B_1) \quad \textrm{ and } \quad \mK(B_1) := \bigcup_{t \in \NN} \mC_t\] and set $\hh_1:= \hh(B_1)$ and $\mK_1:= \mK(B_1)$.  Note that $\hh(B_1)$ is the collection of all balls removed.

\begin{lemm}\label{lemm:UppHDEstFBComp}  

We have that \[B_1 \cap \bigcup_{B \in \hh(B_1)}B = B_1 \cap\left(F(B_1)\right)^c.\]

\end{lemm}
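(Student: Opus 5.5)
We prove the set equality by two inclusions, using the descending chain $F_0(B_1)\supset F_1(B_1)\supset\cdots$ and the recursive definition (\ref{eqn:UppBndHD:DesChainFs}). Write $U_t$ for the union of the balls in $\hh^*_t(B_1)$, so that $F_t(B_1)=F_{t-1}(B_1)\backslash U_t$. Iterating from $F_0(B_1)=B_1$, we first show by induction on $t$ that
\[F_t(B_1)=B_1\backslash \bigcup_{k=1}^{t}U_k\]
for every $t\in\NN$. The step from $t-1$ to $t$ is immediate from the definition, because set difference satisfies $(X\backslash Y)\backslash Z = X\backslash (Y\cup Z)$.

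Intersecting over $t$ then gives
\[F(B_1)=\bigcap_{t}F_t(B_1)=B_1\backslash \bigcup_{t\in\NN}U_t=B_1\backslash \bigcup_{B\in\hh(B_1)}B,\]
since $\hh(B_1)=\bigcup_t\hh^*_t(B_1)$. We take the complement and intersect with $B_1$. This yields $B_1\cap (F(B_1))^c = B_1\cap \bigcup_{B\in \hh(B_1)} B$, which is the claim.

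The main point to verify is that every removed ball actually lies in $B_1$. Only then is the ``$B_1\cap$'' on the left harmless and the de Morgan step exact; but the identity $B_1\cap(B_1\backslash X)^c=B_1\cap X$ holds regardless, so containment is not even needed. The genuine subtlety is different. Each $B_{2,(m_1,\dots,m_{2t-1})}$ is chosen inside $B_{1,(m_1,\dots,m_{2t-2})}$, which is an element of a completion $\overline{\B_{t-1,\cdot}}$. These balls may stick out of $B_1$ or out of $F_{t-1}(B_1)$, because minimal tessellations cover slightly more than the removed region (see Remark~\ref{rmk:proofPropMinTess2ndPart}). The formula is still a purely set-theoretic identity, however, and it does not depend on such containment.

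I therefore expect no real obstacle beyond bookkeeping the indices in (\ref{eqn:UppBndHD:DesChainFs}). In particular, one must confirm that the union removed at step $t$ is exactly the union over $\hh^*_t(B_1)$, with the same index ranges $1\le m_{\text{odd}}\le j^d$ and $1\le m_{\text{even}}\le N$.
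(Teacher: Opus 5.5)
Your proof is correct and is essentially the paper's argument. The paper writes $(F_t(B_1))^c = (F_0(B_1))^c \cup \bigcup_{k\le t}\bigcup_{B\in\hh^*_k(B_1)}B$ by recursion and then takes the union over $t$; this is just the De Morgan dual of your formula $F_t(B_1)=B_1\backslash\bigcup_{k\le t}U_k$ followed by intersecting over $t$. You are also right that the removed balls need not lie in $B_1$, since the identity is purely set-theoretic.
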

\begin{proof}
By (\ref{eqn:UppBndHD:DesChainFsInitCase}), we have that \[\left(F_1(B_1)\right)^c = \left(F_0(B_1)\right)^c \cup \bigcup_{B \in \hh^*_1(B_1)} B\] and, by (\ref{eqn:UppBndHD:DesChainFs}), we have, more generally, for any $t \in \NN$, that \[\left(F_t(B_1)\right)^c = \left(F_{t-1}(B_1)\right)^c \cup \bigcup_{B \in \hh^*_t(B_1)} B = \left(F_0(B_1)\right)^c \cup \bigcup_{B \in \hh^*_1(B_1)} B \cup \cdots \cup \bigcup_{B \in \hh^*_t(B_1)} B\] where the second equality follows by recursion.

Consequently, we have that \[\left(F(B_1)\right)^c = \bigcup_{t=1}^\infty \left(F_t(B_1)\right)^c  = \left(F_0(B_1)\right)^c \cup \bigcup_{B \in \hh(B_1)}B,\] which implies the desired result.

\end{proof}

\subsubsection{Constructing finite plays and plays that are winning for Bob}\label{subsubsec:ConstructWinningPlays}  In this section, we construct the elements of $\mW_\infty$ (see \ref{eqn:DeftWInfty:subsubsec:ConstructWinningPlays}) by recursively, via outer and inner recursions, constructing finite plays that are winning for Bob and plays that are winning for Bob.  To do this, we must first construct additional possible choices of Bob's initial ball for accelerated games as follows.  Let \[\mR_3(B_1) =: \left\{D^{[\ell]} : \ell \in \NN \textrm{ such that } 1 \leq \ell \leq 3^d\right\}\] where the ordering is chosen so that $D^{[1]}$ has the same center as $B_1$.  For all $1 \leq \ell \leq 3^d$, let $B_1^{[\ell]}$ be the closed ball given by $\|\cdot\|_\infty$ centered at the center of $D^{[\ell]}$ such that $\rho\left(B_1^{[\ell]}\right) = \rho(B_1)$.  Note that \begin{align}\label{eqn:subsubsec:ConstructWinningPlays:BOneIsBOne}
 B_1^{[1]} = B_1. \end{align}  Let \[\mW_0 := \left\{B_1^{[\ell]} : \ell \in \NN  \textrm{ such that }1 \leq \ell \leq 3^d\right\}.\]  We can regard $\mW_0$ as a collection of $1$-finite plays.  Recall that any $1$-finite play is winning for Bob.  Thus, we can further regard $\mW_0$ as a collection of $1$-finite plays that are winning for Bob.

Step~$1$ of the outer recursion is the proof in Section~\ref{subsubsec:ADistingSet}, the collection of balls to consider is $\hh_0$, and the collection of finite plays that are winning for Bob to consider is $\mW_0$.  The collection of balls removed in Step~$1$ is $\hh_1$.  We now construct finite plays that are winning for Bob using the elements of $\mW_0$ and $\hh_1$ as follows.

\begin{lemm}\label{lemm:AllHolesContainedBOneVariants}
Let $B \in \hh_1 \cup \mK_1$.  Then $B \subset B_1^{[\ell]}$ for some integer $\ell$ such that $1 \leq \ell \leq 3^d$.
\end{lemm}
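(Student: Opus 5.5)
The plan is to prove the stronger statement that every ball of $\hh_1 \cup \mK_1$ lies within $\|\cdot\|_\infty$-distance less than $4R'_t \le 4R'_1$ of $B_1$, together with its own radius. Then a containment argument in the enlarged ball $B(\boldsymbol{x}_1, 3\rho_1)$, where $\boldsymbol{x}_1$ is the center of $B_1$, will finish. Throughout, write $B_1 = B(\boldsymbol{x}_1,\rho_1)$.

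First I handle $\hh_1$. Every element of $\hh^*_t(B_1)$ is a ball $B_{2,(m_1,\dots,m_{2t-1})}$ chosen by Bob inside $A_{1,(m_1,\dots,m_{2t-1})}$. This ball is an element of $\mR_j(B_{1,(m_1,\dots,m_{2t-2})})$, so it is contained in some ball of $\mC_{t-1}$ (or in $B_1$ when $t=1$). Hence it suffices to treat $\mK_1$, i.e.\ the balls of the $\mC_t$.

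For $\mC_t$ I induct on $t$. The balls of $\mC_{1,(m_1)}$ form a minimal tessellation covering $A_{1,(m_1)}\setminus B_{2,(m_1)} \subset B_1$, built from the complete tessellation $\overline{\B_{1,(m_1)}}$ of radius $R'_1=\rho_1\beta^{s+1}/j^{s+1}$. By Remark~\ref{rmk:proofPropMinTess2ndPart}, every point of such a covering tessellation lies within distance less than $4R'_1$ of $A_{1,(m_1)}\subset B_1$. If the ball in question is a representation, it is contained in $A_{1,(m_1)}$ itself. In general, each $B\in\mC_1$ lies in $B(\boldsymbol{x}_1,\rho_1+4R'_1)$.

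Inductively, each ball of $\mC_{t}$ lies within $4R'_t$ of a ball of $\mC_{t-1}$. So all balls of $\mK_1$ lie in
\[
B\Bigl(\boldsymbol{x}_1,\ \rho_1 + 4\sum_{t\ge1} R'_t\Bigr).
\]
Since $\beta\le 1/2$, $j\ge2$ and $s\ge1$, we have $R'_t\le \rho_1 4^{-2t}$, so $4\sum_t R'_t \le \rho_1/3$. The union therefore sits inside $B(\boldsymbol{x}_1, 4\rho_1/3)$, while each individual ball has radius at most $R'_1\le\rho_1/16$.

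Finally I locate a suitable $B_1^{[\ell]}$. The centers of the $D^{[\ell]}$ are the points $\boldsymbol{x}_1 + \tfrac{2\rho_1}{3}\boldsymbol{k}$ with $\boldsymbol{k}\in\{-1,0,1\}^d$, and the balls $B_1^{[\ell]}$ have radius $\rho_1$. Fix a ball $B$ with center $\boldsymbol{c}$ and radius $r\le\rho_1/16$, and assume $\|\boldsymbol{c}-\boldsymbol{x}_1\|_\infty+r\le 4\rho_1/3$. Coordinatewise, I choose $k_i\in\{-1,0,1\}$ to minimize $|c_i-x_{1,i}-\tfrac{2\rho_1}{3}k_i|$.
\begin{itemize}
\item If $|c_i-x_{1,i}|\le\rho_1/3$, take $k_i=0$. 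The deviation is then at most $\rho_1/3$.
\item Otherwise take $k_i=\pm1$. The deviation is then at most $\max(\rho_1/3,\ 4\rho_1/3-r-2\rho_1/3)\le 2\rho_1/3$.
\end{itemize}
In both cases the deviation plus $r$ is at most $\rho_1$, so $B\subset B_1^{[\ell]}$.

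The main obstacle is the distance bound in the second step. It requires carefully justifying, via Remark~\ref{rmk:proofPropMinTess2ndPart} and the construction in Section~\ref{subsubsec:ProofSecAssertProp:MinTess}, that the covering balls stay close to $B_1$ and do not drift outward uncontrollably across the recursion. The geometric series above is what controls this drift.
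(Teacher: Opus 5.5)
Your proposal is correct and follows essentially the paper's route. The paper likewise uses Remark~\ref{rmk:proofPropMinTess2ndPart} to bound the outward drift of the balls of $\mC_t$ from $B_1$ by the geometric series $\sum_t 4R'_t \le 4\rho_1/15$, and observes that elements of $\hh^*_{t+1}(B_1)$ lie inside elements of $\mC_t$. Your coordinatewise choice of the nearest center $\boldsymbol{x}_1+\tfrac{2\rho_1}{3}\boldsymbol{k}$, which uses the radius bound $r\le\rho_1/16$, is a more explicit and fully checked version of the paper's brief final step (choose a point $x\in B_1$ realizing the maximal distance and take the $D^{[\ell]}$ containing it).
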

\begin{proof}

In the proof in Section~\ref{subsubsec:ADistingSet}, we have that the elements of $\hh_1^*(B_1)$ are in $B_1^{[1]}$.  Moreover, the elements of $\mC_t$ come from Proposition~\ref{prop:RepsMinTess} (and Proposition~\ref{prop:RepsMaxTess}) for which Remark~\ref{rmk:proofPropMinTess2ndPart} applies.  Consequently, by the proof in Section~\ref{subsubsec:ADistingSet}, the maximum distance with respect to $\|\cdot\|_\infty$ of a point of an element of $\mC_1$ to some point of $B_1$ is less than $4R_1'\leq \frac{\rho_1}{4}$.  By recursion, the maximum distance with respect to $\|\cdot\|_\infty$ of a point of an element of $\mC_t$ to  some point of $B_1$ is less than \[\frac{\rho_1}{4} + \frac{\rho_1}{4^3} + \cdots + \frac{\rho_1}{4^{2t-1}}.\]  Now, by the proof in Section~\ref{subsubsec:ADistingSet}, any element of $\hh_{t+1}^*(B_1)$ is contained in an element of $\mC_t$.  Consequently, by the geometric series, the maximum distance with respect to $\|\cdot\|_\infty$ of a point of $B$ to some point of $B_1$ is less than $\frac{4 \rho_1}{15}$.  Since the distance function is continuous and the relevant sets are compact, there is a point $x \in B_1$ where this maximum distance is achieved.  By Lemma~\ref{lemm:TesselBallFund}, we have that $x \in D^{[\ell]}$ and, thus, we have that $B \subset B_1^{[\ell]}$ for some $\ell \in \{1, \cdots, 3^d\}$.

 \end{proof}

\begin{rema}
Note that, in the lemma, $\ell$ need not be unique for $B$.
\end{rema}

We will, via an inner recursion, construct finite plays that are winning for Bob.  The collection $\mW_0$ is Iterate~$0$ of the inner recursion.  Let us denote the collection of all finite plays that are winning for Bob that we have constructed at the end of Iterate~$0$ by $\mP_{1,0}$.  Thus, $\mP_{1,0} =  \mW_0$.  By recursion on $t$, we now construct finite plays that are winning for Bob using the elements of $\hh^*_t(B_1)$.  Consider Iterate~$t=1$.  Fix a $B \in \hh^*_1(B_1)$.  By Lemma~\ref{lemm:AllHolesContainedBOneVariants}, $B$ is appendable to some element of $\mP_{1,0}$.  Thus, each $B_1^{[\ell]} \in \mP_{1,0}$ for which $B$ is appendable yields, by Lemma~\ref{lemm:InducedWinningPlayAppend} (with $B^* = B_1$), a $2$-finite play $B_1^{[\ell]} \supset B$ that is winning for Bob.  Note also that every element of $ \hh^*_1(B_1)$ is the end ball of at least one of these finite plays that is winning for Bob.  Also note, in particular, this construction yields, by the proof in Section~\ref{subsubsec:ADistingSet}, that every $2$-finite play $B_1 \supset B_{2,(m_1)}$ is winning for Bob (but the construction may also yield other $2$-finite plays that are winning for Bob using the elements of $\hh^*_1(B_1)$).   Now, let $\mP_{1,1}$ be the collection of all the finite plays constructed at the end of Iterate~$t=1$ and the elements of $\mP_{1,0}$.

Consider Iterate~$t=2$.  Fix a $B_{2,(m_1, m_2, m_3)} \in \hh^*_2(B_1)$.  By Lemma~\ref{lemm:AllHolesContainedBOneVariants}, $B_{2,(m_1, m_2, m_3)}$ is appendable to some element of $\mP_{1,1}$.  (Note that no element of $\hh^*_2(B_1)$ is insertable due to the radii of the relevant balls.)  Thus, by Lemma~\ref{lemm:InducedWinningPlayAppend} (with $B^* =B_{1,(m_1, m_2)}$  as the proof in Section~\ref{subsubsec:ADistingSet} shows), each $\widetilde{B}_1 \supset \cdots \supset  \widetilde{B}_k \in \mP_{1,1}$ for which $B_{2,(m_1, m_2, m_3)}$ is appendable yields a finite play \[\widetilde{B}_1 \supset \cdots \supset  \widetilde{B}_k \supset B_{2,(m_1, m_2, m_3)}\] that is winning for Bob.  Note that every element of $ \hh^*_2(B_1)$ is the end ball of at least one of these finite plays that is winning for Bob.  Now, let $\mP_{1,2}$ be the collection of all the finite plays constructed at the end of Iterate~$t=2$ and the elements of $\mP_{1,1}$.

Continuing thus by recursion, we obtain, at Iterate~$t$, finite plays \[\widetilde{B}_1 \supset \cdots \supset  \widetilde{B}_{\widetilde{k}} \supset B_{2,(m_1, \cdots, m_{2t-1})}\] that are winning for Bob.  Note  that every element of $ \hh^*_t(B_1)$ is the end ball of at least one of these finite plays that is winning for Bob.  Also, note that the elements of $\hh^*_t(B_1)$  for a fixed $t \in \NN$ have the same radius by the proof in Section~\ref{subsubsec:ADistingSet} and, thus, Remark~\ref{rmk:DistinctPlaysWhenDiffBallsSameRad} applies.  Let $\mP_{1,t}$ be the collection of all the finite plays constructed at the end of Iterate~$t$ and the elements of $\mP_{1, t-1}$.

Define \[\mW_1 := \bigcup_{t =0}^\infty \mP_{1,t}\] and note that, by construction, we have $\mW_1 \supset \mW_0$.

Next, for Step~$2$ of the outer recursion, the collection of balls to consider is $\hh_1$, and the collection of finite plays that are winning for Bob and plays that are winning for Bob to consider is $\mW_1$.   Let us denote the collection of all finite plays that are winning for Bob and plays that are winning for Bob that we have constructed at the end of Step~$1$ of the outer recursion by $\mP_{2,0}$.  Thus, $\mP_{2,0} =  \mW_1$.  Step~$2$ of the outer recursion is as follows.  Let $t  \in \NN$.  Now an element $B_{2,(m_1, \cdots, m_{2t-1})} \in \hh^*_t(B_1)$ is Bob's $2$-nd choice of ball for a $(1/j, \beta; S)$-game for which Bob's first choice of ball is $B_{1,(m_1, \cdots, m_{2t-2})} \in \mC_{t-1}$ if $t \geq 2$ or $B_1\in \mC_0$ if $t=1$.  We repeat the proof in Section~\ref{subsubsec:ADistingSet} with $B_{2,(m_1, \cdots, m_{2t-1})}$ in place of $B_1$ to obtain Bob's next choice of ball $B_3$ for this $(1/j, \beta; S)$-game according to a $(1/j, \beta; S)$-winning strategy with Bob's initial choice of ball as $B_{1,(m_1, \cdots, m_{2t-2})}$ if $t \geq 2$ or $B_1$ if $t=1$.  As in the proof in Section~\ref{subsubsec:ADistingSet}, the collection of all such $B_3$ is denoted by $\hh^*_1\left(B_{2,(m_1, \cdots, m_{2t-1})}\right)$.  Furthermore, the proof in Section~\ref{subsubsec:ADistingSet}, gives that \[|\hh^*_1\left( B_{2,(m_1, \cdots, m_{2t-1})}\right)|  = j^d\] and that $\hh\left( B_{2,(m_1, \cdots, m_{2t-1})}\right)$ is the collection of balls removed.  Also, we have that the analog of Lemma~\ref{lemm:UppHDEstBndUbitLosing} gives that $\dim_H\left(F(B_{2,(m_1, \cdots, m_{2t-1})})\right) \leq \zeta$ and the analog of Lemma~\ref{lemm:UppHDEstFBComp} holds. 

Thus we have \begin{align}\label{eqn:NestedHoles2}  \hh_2 := \bigcup_{B \in \hh_1} \hh\left(B\right) \quad \textrm{ and } \quad \hh'_2 := \bigcup_{B \in \hh_1}\hh^*_1(B)
  \end{align} are both countable collections.  Note that $\hh_2$ is the collection of balls removed in Step~$2$ of the outer recursion and will be the collection of balls considered in Step~$3$ of the outer recursion.  Its subcollection $\hh'_2$ is the collection of Bob's next choices of balls made in Step~$2$ of the outer recursion for the various $(1/j, \beta; S)$-games.  By countable stability of Hausdorff dimension, we have that \begin{align*}\dim_H\left(\bigcup_{B\in \hh_1}F(B)\right) \leq \zeta.
  \end{align*}

Now partition $\hh_2$ into subsets $\hh_{2,r}$ where the elements have the same radius $r>0$.  Since $\hh_2$ is countable, the possible values of $r$ are countable.  Let $\{r_i\}_{i=1}^\infty$ denote these possible values of $r$ in descending order.  The iterates of the inner recursion of Step~$2$ of the outer recursion will be over $i$.  Consider Iterate~$i=1$.  By Lemma~\ref{lemm:AllHolesContainedBOneVariants2}, every $\widehat{B} \in \hh_{2,r_1}$ is insertable or appendable to some element of $\mP_{2,0}$.  For each such element of $\mP_{2,0}$, construct the $\widehat{B}$-induced finite play or $\widehat{B}$-induced play, which, by Lemmas~\ref{lemm:InducedWinningPlayInsert},~\ref{lemm:InducedWinningPlayInsert2} and~\ref{lemm:InducedWinningPlayAppend}, are all winning for Bob.  Now, let $\mP_{2,1}$ be the collection of all the finite plays or plays constructed at the end of Iterate~$1$ of the inner recursion and the elements of $\mP_{2,0}$.  Note that, since every element of $\hh_{2,r_1}$ is appendable to some element of $\mP_{2,0}$, then our construction yields that every element of $\hh_{2,r_1}$ is the end ball of some element of $\mP_{2,1}$.
  
  Continuing thus by recursion, we have that every $\widehat{B} \in \hh_{2,r_i}$ is insertable or appendable to some element of $\mP_{2,i-1}$, that $\mP_{2,i}$ is the collection of all the finite plays or plays constructed (in the way analogous to that of Iterate $i=1$) at the end of Iterate~$i$ of the inner recursion and the elements of $\mP_{2,i-1}$.  Our construction yields that every element of $\hh_{2,r_i}$ is the end ball of some element of $\mP_{2,i}$.

  Define \[\mW_2 := \bigcup_{t =0}^\infty \mP_{2,t}\] and note that, by construction, we have $\mW_2 \supset \mW_1 \supset \mW_0$.  
  
 \begin{rema}\label{rmk:WinningPlaysAtStep2}
We note, especially, the following distinguished elements of $\mW_2$, constructed by the above proof as follows.   For any $\widetilde{B} \in \hh_1$, Step~$1$ of the outer recursion gives that there exists at least one element $\widetilde{B}_1 \supset \cdots \supset  \widetilde{B}_k \in \mW_1$ such that $\widetilde{B}_k=\widetilde{B}$. Consequently, since $|\hh_1^*(\widetilde{B})|=j^d$, each such $\widetilde{B}_1 \supset \cdots \supset  \widetilde{B}_k$ generates $j^d$ finite plays that are winning for Bob.  Thus, every element of $\hh'_2$ is the end ball of at least one of these finite plays that is winning for Bob.\footnote{\label{footnote:CentralGames}While many of these finite plays that are winning for Bob are for proper accelerated games for Bob, there are some finite plays that are winning for Bob for the $(1/j, \beta; S)$-game.  In particular, we have the following $3$-finite plays that are winning for Bob for the $(1/j, \beta; S)$-game:  \[B_1 \supset B_{2,(m_1)} \supset B_3,\]  where $B_3 \in \hh_1^*( B_{2,(m_1)})$.  There are $j^{2d}$ of these $3$-finite plays and they are in $\mW_2$.} 

\end{rema}

  Let $p \in \NN$ such that $p \geq 3$.  Continuing thus by recursion, we have Step~$p$ of the outer recursion for which the collection of balls to consider is $\hh_{p-1}$ and the collection of finite plays that are winning for Bob and plays that are winning for Bob to consider is $\mW_{p-1}$.  As in the previous steps of the outer recursion, for every element $B_{2} \in \hh_{p-1}$, we have that \[|\hh^*_1\left( B_{2}\right)|  = j^d,\] $\hh\left( B_{2}\right)$ is the collection of balls removed, the analog of Lemma~\ref{lemm:UppHDEstBndUbitLosing} gives $\dim_H\left(F(B_{2})\right) \leq \zeta$, and the analog of Lemma~\ref{lemm:UppHDEstFBComp} holds.

Thus we have \begin{align}\label{eqn:NestedHolesp}  \hh_p := \bigcup_{B \in \hh_{p-1}} \hh\left(B\right) \quad \textrm{ and } \quad \hh'_p := \bigcup_{B \in \hh_{p-1}}\hh^*_1(B)
  \end{align} are both countable collections.  Note that $\hh_p$ is the collection of balls removed in Step~$p$ of the outer recursion and will be the collection of balls considered in Step~$p+1$ of the outer recursion.  Its subcollection $\hh'_p$ is the collection of Bob's next choices of balls from Step~$p$ of the outer recursion for the various $(1/j, \beta; S)$-games.  By countable stability of Hausdorff dimension, we have that \begin{align*}\dim_H\left(\bigcup_{B\in \hh_{p-1}}F(B)\right) \leq \zeta.
  \end{align*}  We also have that $\mW_p$ is the collection of all the finite plays or plays constructed (in the way analogous to that of Step $p=2$ of the outer recursion) at the end of Step~$p$ of the outer recursion and the elements of $\mW_{p-1}$.  Note that the analog of Remark~\ref{rmk:WinningPlaysAtStep2} holds.

Thus, by countable stability of Hausdorff dimension, we have \begin{align}\label{eqn:MainHDBndUbitLosSets}
\dim_H\left(\bigcup_{p =1}^\infty \bigcup_{B\in \hh_{p-1}}F(B)\right) \leq \zeta.
  \end{align}  We also define \begin{align}\label{eqn:DeftWInfty:subsubsec:ConstructWinningPlays}
 \overline{\hh} :=\bigcup_{p =1}^\infty \bigcup_{B\in \hh_{p-1}} B \quad \textrm{ and } \quad \mW_\infty := \bigcup_{p =0}^\infty \mW_p \end{align} and note that $\mW_0 \subset \mW_1 \subset \cdots$.  An observation similar to that in Footnote~\ref{footnote:CentralGames} holds for $\mW_\infty$.
  
 \begin{rema} \label{rmk:HHpMaxRadDectoZero} Observe that the elements of $\hh_p$ have a maximum radius and this maximum radius decreases to $0$ as $p \rightarrow \infty$.

\end{rema}  

Finally, we state and prove the analog of Lemma~\ref{lemm:AllHolesContainedBOneVariants}, used above in our construction of finite plays and plays winning for Bob.

\begin{lemm}\label{lemm:AllHolesContainedBOneVariants2}
Let $p \in \NN$ and $B \in \hh_p$.  Then $B \subset B_1^{[\ell]}$ for some integer $\ell$ such that $1 \leq \ell \leq 3^d$.
\end{lemm}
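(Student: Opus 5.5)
The plan is to extend the distance estimate in the proof of Lemma~\ref{lemm:AllHolesContainedBOneVariants} from $\hh_1$ to every $\hh_p$ by controlling how far any removed ball can drift from $B_1$. The key quantity is the maximum $\|\cdot\|_\infty$-distance from a point of a ball in $\hh_p$ to the nearest point of $B_1$. Once this is shown to be less than $\frac{4\rho_1}{15}$ for all $p$ (or at least less than $\rho_1/3$), the conclusion follows exactly as before. Take $x\in B_1$ nearest to the farthest point; by Lemma~\ref{lemm:TesselBallFund}, $x$ lies in some $D^{[\ell]}$. Since $D^{[\ell]}$ has radius $\rho_1/3$ and $B_1^{[\ell]}$ has the same center and radius $\rho_1$, the whole ball lies in $B_1^{[\ell]}$.

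I will proceed by induction on $p$, with $p=1$ being Lemma~\ref{lemm:AllHolesContainedBOneVariants}. For the inductive step, note from (\ref{eqn:NestedHolesp}) that each $B\in\hh_p$ lies in $\hh(B')$ for some $B'\in\hh_{p-1}$. It therefore suffices to establish a nested-drift statement: for any starting ball $B'$ of radius $\rho'$, every element of $\hh(B')\cup\mK(B')$ lies within distance $\frac{4\rho'}{15}$ of $B'$. This is the content of the proof of Lemma~\ref{lemm:AllHolesContainedBOneVariants}, which uses only Remark~\ref{rmk:proofPropMinTess2ndPart} and the radius ratios.

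The naive induction then adds these drifts along the chain. A point of $B\in\hh(B')$ is within $\frac{4\rho'}{15}$ of a point of $B'$, which in turn is within the accumulated drift of $B_1$. The radii of the ancestors decay geometrically: each $B'\in\hh_{p-1}$ arises as some $B_{2,(\cdots)}$, whose radius is at most $\beta\rho/j\le \rho/4$ relative to the ball it was chosen inside. The total drift is therefore bounded by a geometric series in the radii.

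The main obstacle is that summing $\frac{4}{15}\rho'$ over ancestors does not obviously stay below $\rho_1/3$. The fix I would try first is to strengthen the induction hypothesis. The claim to prove is that every ball of $\hh(B')$ is actually contained in $B'$ itself, or in a set within $\frac{4\rho'}{15}$ of $B'$ where $\rho'\le\rho_1/4^{k}$ at depth $k$.

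The essential observation is that the elements $B_{2,(\cdots)}$ of $\hh^*_t$ sit inside elements of $\mC_{t-1}$, which are comparatively tiny, at most $\rho_1\beta^{s+1}/j^{s+1}\le\rho_1/16$. Elements of $\hh_1$ therefore have radius at most $\rho_1/8$, and recursively radius at most $\rho_1 4^{-(2p-1)}$ at depth $p$. This gives a total drift bounded by
\[
\frac{\rho_1}{4}\sum_{k\ge0}4^{-k}\cdot\frac{16}{15}\cdot c,
\]
and I expect careful bookkeeping to keep this below $\rho_1/3$.
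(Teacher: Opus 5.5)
Your overall frame is the paper's. You accumulate drift along the chain $B_1=\widetilde B_{[0]}$, $\widetilde B_{[\tau]}\in\hh(\widetilde B_{[\tau-1]})$ by the triangle inequality, and finish by locating a nearest point of $B_1$ in some $D^{[\ell]}$. You also correctly see that the naive estimate $\frac{4}{15}\rho_1\bigl(1+\frac14+\frac1{16}+\cdots\bigr)=\frac{16}{45}\rho_1$ exceeds $\frac13\rho_1$.

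The gap is that nothing in the proposal overcomes this. Your ``strengthened'' hypothesis (drift at most $\frac{4}{15}\rho'$ from a parent of radius $\rho'\le\rho_1/4^k$) is the naive estimate again, and it sums to the same $\frac{16}{45}\rho_1$. The radius bounds you then invoke are false:
\begin{itemize}
\item $B_{2,(m_1)}\in\hh^*_1(B_1)\subset\hh_1$ has radius $\beta\rho_1/j$, which equals $\rho_1/4$ for $\beta=\frac12$, $j=2$, not at most $\rho_1/8$.
\item Taking the balls of $\hh^*_1(\cdot)$ at each successive level produces elements of $\hh_p$ of radius $(\beta/j)^p\rho_1=\rho_1/4^p$, not at most $\rho_1 4^{-(2p-1)}$.
\end{itemize}
The closing display, with an unspecified constant $c$ and ``careful bookkeeping'', is exactly where the argument is needed, and it is not there.

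The missing idea is that drift and radius are coupled and must be bounded together. Suppose $\widetilde B\in\hh^*_{t_1+1}(B_1)$. Then $\widetilde B$ lies in an element of $\mC_{t_1}$, so its points are within $\sum_{n=1}^{t_1}\rho_1/4^{2n-1}$ of $B_1$. At the same time $\rho(\widetilde B)=(\beta/j)^{1+t_1(s+1)}\rho_1\le\rho_1/4^{2t_1+1}$. Hence an element of $\hh^*_{t_2+1}(\widetilde B)$ drifts from $\widetilde B$ by at most $\sum_{m=1}^{t_2}\rho(\widetilde B)/4^{2m-1}\le\sum_{n=t_1+1}^{t_1+t_2}\rho_1/4^{2n-1}$. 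The nested drifts fill the \emph{tail} of the same series rather than restarting it.

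The paper's proof inducts on $\tau$ with two hypotheses:
\begin{itemize}
\item $\rho(\widetilde B_{[\tau]})\le\rho_1/4^{2(t_1+\cdots+t_\tau)+\tau}$;
\item every point of $\widetilde B_{[\tau+1]}$ is within $\sum_{n=1}^{t_1+\cdots+t_{\tau+1}}\rho_1/4^{2n-1}$ of $B_1$.
\end{itemize}
This keeps the total drift below $\frac{4}{15}\rho_1$ for every $p$.

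Alternatively, your naive bound $\frac{16}{45}\rho_1$ would already suffice if you chose $\ell$ coordinatewise instead of via a nearest point. With $\boldsymbol{w}=(w_1,\ldots,w_d)$ the center of $B_1$, the balls $B_1^{[\ell]}$ are exactly the products of the intervals $[w_i-\frac53\rho_1,w_i+\frac13\rho_1]$, $[w_i-\rho_1,w_i+\rho_1]$ and $[w_i-\frac13\rho_1,w_i+\frac53\rho_1]$. Every element of $\hh_p$ has radius at most $\rho_1/4$, so any drift below $\frac23\rho_1$ lets each coordinate projection fit into one of these intervals. As written, though, your proposal does neither.
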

 \begin{proof} Lemma~\ref{lemm:AllHolesContainedBOneVariants} gives the desired result for $p=1$.  Let $p = 2$.  By the above proof for Step~2 of the outer recursion, we have chosen a $\widetilde{B} \in \hh_1$ for which to repeat proof in Section~\ref{subsubsec:ADistingSet}.  Thus, there exists a $t_1 \in \NN \cup \{0\}$ for which $\widetilde{B} \in \hh^*_{t_1+1}(B_1)$.   By the proof in Section~\ref{subsubsec:ADistingSet}, we further have that $\widetilde{B}$ is contained in an element of $\mC_{t_1}$ and, thus, \begin{align}\label{Eqn1:lemm:AllHolesContainedBOneVariants2}
 \rho(\widetilde{B}) = \frac \beta j \left( \frac{\rho_1 \beta^{t_1s+t_1}}{j^{t_1s+t_1}}\right) \leq \rho_1/4^{2t_1+1}. \end{align}  Applying the analog of the proof of Lemma~\ref{lemm:AllHolesContainedBOneVariants}, we have that the maximum distance with respect to $\|\cdot\|_\infty$ of a point of an element of $\hh^*_{t_2+1}(\widetilde{B})$ to  some point of $\widetilde{B}$ is less than \begin{align}\label{Eqn3:lemm:AllHolesContainedBOneVariants2}
 \frac{\rho(\widetilde{B})}{4} + \frac{\rho(\widetilde{B})}{4^3} + \cdots + \frac{\rho(\widetilde{B})}{4^{2t_2-1}} \leq \frac{\rho_1}{4^{2t_1+2}} + \frac{\rho_1}{4^{2t_1+4}} + \cdots + \frac{\rho_1}{4^{2(t_1+t_2)}}. \end{align}  Consequently, since $\widetilde{B}$ is contained in an element of $\mC_{t_1}$,  the maximum distance with respect to $\|\cdot\|_\infty$ of a point of an element of $\hh^*_{t_2+1}(\widetilde{B})$ to some point of $B_1$ is less than\begin{align}\label{Eqn2:lemm:AllHolesContainedBOneVariants2}
  \frac{\rho_1}{4} + \frac{\rho_1}{4^3} + \cdots + \frac{\rho_1}{4^{2t_1-1}}+ \frac{\rho_1}{4^{2t_1+1}} + \frac{\rho_1}{4^{2t_1+3}} + \cdots + \frac{\rho_1}{4^{2(t_1+t_2)-1}}. \end{align}  Here, we have that $t_2 \in \NN \cup \{0\}$.  Consequently, by the geometric series, the maximum distance with respect to $\|\cdot\|_\infty$ of a point of an element of $\hh_2$ to some point of $B_1$ is less than $\frac{4 \rho_1}{15}$ and the desired result for $p=2$ follows as it does in Lemma~\ref{lemm:AllHolesContainedBOneVariants}.

 Let $\widetilde{B}_{[0]} := B_1$, $p \geq 3$, and $\tau$ be an integer such that $1 \leq \tau \leq p$.  Analogous to the proof for $p=2$, we have, for $p \geq 3$, that $\widetilde{B}_{[\tau]} \in \hh_\tau$ and $t_\tau \in  \NN \cup \{0\}$ for which \[\widetilde{B}_{[\tau]} \in \hh^*_{t_{\tau}+1}\left(\widetilde{B}_{[\tau-1]}\right).\]  For $1 \leq \tau \leq p-1$, we claim that the following assertions both hold.  \begin{description}
\item[Assertion~(1)] We have that $\rho(\widetilde{B}_{[\tau]}) \leq \rho_1/4^{2(t_1+ \cdots + t_{\tau})+\tau}$.
\item[Assertion~(2)] The maximum distance with respect to $\|\cdot\|_\infty$ of a point of $\widetilde{B}_{[\tau+1]}$ to some point of $B_1$ is less than\begin{align*}
  \sum_{n=1}^{t_1+t_2+ \cdots + t_{\tau+1}} \frac{\rho_1}{4^{2n-1}}.\end{align*} 
\end{description}  We prove the claim by induction on $\tau$ for $1 \leq \tau \leq p-1$.  The initial case $\tau =1$ is given in (\ref{Eqn1:lemm:AllHolesContainedBOneVariants2}, \ref{Eqn2:lemm:AllHolesContainedBOneVariants2}).  Assume that the two assertions hold for $\tau -1$.  The the analog of the proof of (\ref{Eqn1:lemm:AllHolesContainedBOneVariants2}) gives \begin{align*}
 \rho(\widetilde{B}_{[\tau]}) \leq \rho\left( \widetilde{B}_{[\tau-1]}\right)/4^{2t_\tau+1}, \end{align*} which, together with the induction hypothesis for Assertion~(1), yields Assertion~(1).  
 
 Likewise, the analog of the proof of (\ref{Eqn3:lemm:AllHolesContainedBOneVariants2}) gives the maximum distance with respect to $\|\cdot\|_\infty$ of a point of $\widetilde{B}_{[\tau+1]}$ to  some point of $\widetilde{B}_{[\tau]}$ is less than  \begin{align}\label{Eqn4:lemm:AllHolesContainedBOneVariants2}
 \frac{\rho(\widetilde{B}_{[\tau]})}{4} + \frac{\rho(\widetilde{B}_{[\tau]})}{4^3} + \cdots + \frac{\rho(\widetilde{B}_{[\tau]})}{4^{2t_{\tau+1}-1}} &\leq \frac{\rho\left( \widetilde{B}_{[\tau-1]}\right)}{4^{2t_\tau+2}} + \frac{\rho\left( \widetilde{B}_{[\tau-1]}\right)}{4^{2t_\tau+4}} + \cdots + \frac{\rho\left( \widetilde{B}_{[\tau-1]}\right)}{4^{2(t_\tau+t_{\tau+1})}}\\\nonumber
 &\leq \frac{\rho_1}{4^{2(t_1+ \cdots + t_{\tau})+\tau+1}} + \frac{\rho_1}{4^{2(t_1+ \cdots + t_{\tau})+\tau+3}}+ \cdots + \frac{\rho_1}{4^{2(t_1+ \cdots + t_{\tau+1})+\tau-1}}\\\nonumber
 &\leq \frac{\rho_1}{4^{2(t_1+ \cdots + t_{\tau})+1}} + \frac{\rho_1}{4^{2(t_1+ \cdots + t_{\tau})+3}}+ \cdots + \frac{\rho_1}{4^{2(t_1+ \cdots + t_{\tau+1})-1}}.\end{align}  Here, the second inequality follows by the induction hypothesis for Assertion~(1).  Applying (\ref{Eqn4:lemm:AllHolesContainedBOneVariants2}) and the induction hypothesis for Assertion~(2), yields Assertion~(2).  Now the analog of the $p=2$ proof above yields the desired result for $p$.  This proves the lemma.

\end{proof}

 \subsubsection{A cover for $S$ and the conclusion of the proof of Theorem~\ref{thm:UpBndHDLosingSets} for $1/\beta$ an integer}  
In this section, we show that \[\bigcup_{p =1}^\infty \bigcup_{B\in \hh_{p-1}}F(B)\] is a cover for $B_1 \cap S$ (Proposition~\ref{lemm:UbiqLosSetInUnionHoles}) and use this result to conclude the proof of Theorem~\ref{thm:UpBndHDLosingSets}  for $1/\beta$ an integer.  Let \[ \mW:=\bigcap_{p =1}^\infty \bigcap_{B\in \hh_{p-1}}\left(F(B)\right)^c  \cap B_1.\]  For now, assume that $\mW \neq \emptyset$ and let \[\boldsymbol{x}:= (x_1, \cdots, x_d)^t\in \mW. \]  
 
 \begin{lemm}\label{lemm:CoverForS:BallSeq}  For every $p \in \NN$, there exists an element $B^\#_p \in \hh_{p-1}$ such that $\boldsymbol{x} \in B^\#_p$.  
 
\end{lemm}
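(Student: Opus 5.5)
We argue by induction on $p$, using only the definition of $\mW$ together with Lemma~\ref{lemm:UppHDEstFBComp} and its analogs for the balls in each $\hh_{p-1}$.

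\emph{Base case $p=1$.} Here $\hh_0=\{B_1\}$. Since $\boldsymbol{x}\in\mW\subset B_1$, we may take $B^\#_1:=B_1$.

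\emph{Inductive step.} Suppose $B^\#_p\in\hh_{p-1}$ has been found with $\boldsymbol{x}\in B^\#_p$. By definition of $\mW$, we have $\boldsymbol{x}\in (F(B^\#_p))^c$. Hence
\[
\boldsymbol{x}\in B^\#_p\cap\left(F(B^\#_p)\right)^c .
\]
The analog of Lemma~\ref{lemm:UppHDEstFBComp} for $B^\#_p$ says that this set equals
\[
B^\#_p\cap\bigcup_{B\in\hh(B^\#_p)}B ,
\]
where $\hh(B^\#_p)$ is the collection of balls removed when the construction of Section~\ref{subsubsec:ADistingSet} is run with $B^\#_p$ in place of $B_1$. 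So there is some $B\in\hh(B^\#_p)$ containing $\boldsymbol{x}$. By (\ref{eqn:NestedHolesp}) (or (\ref{eqn:NestedHoles2}) when $p=1$), $\hh(B^\#_p)\subset\hh_p$, so we set $B^\#_{p+1}:=B$. This completes the induction.

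The only point needing care is confirming that the analog of Lemma~\ref{lemm:UppHDEstFBComp} really holds for every ball in $\hh_{p-1}$, not just for $B_1$. The text asserts this at each step of the outer recursion. The argument for $B_1$ used nothing except the recursive definition (\ref{eqn:UppBndHD:DesChainFs}) of the $F_t$, and that definition is identical when the construction starts from any other ball $B$. So the same proof applies verbatim.

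A minor bookkeeping point: if $\hh(B^\#_p)$ contained several balls containing $\boldsymbol{x}$, any one of them works, so the sequence $B^\#_p$ need not be unique; this does not affect the statement. The sequence produced is automatically nested, $B^\#_{p+1}\subset B^\#_p$, because every ball in $\hh(B)$ lies inside $B$. This nesting will presumably be used in the subsequent argument.
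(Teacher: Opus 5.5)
Your induction is correct and follows the paper's own argument. You get $\boldsymbol{x}\in(F(B^\#_p))^c$ from the definition of $\mW$, apply the analog of Lemma~\ref{lemm:UppHDEstFBComp} to land in a ball of $\hh(B^\#_p)$, and use $\hh(B^\#_p)\subset\hh_p$. (For $p=1$ that inclusion is the definition $\hh_1:=\hh(B_1)$; (\ref{eqn:NestedHoles2}) is the case $p=2$.)

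Drop your closing claim that the $B^\#_p$ are automatically nested, because it is false. Balls of $\hh^*_t(B)$ with $t\geq 2$ lie inside elements of minimal outer tessellations, and these can protrude beyond $B$ (Remark~\ref{rmk:proofPropMinTess2ndPart}). That is why Lemma~\ref{lemm:AllHolesContainedBOneVariants} only places them in some translate $B_1^{[\ell]}$. So a ball of $\hh(B^\#_p)$ containing $\boldsymbol{x}$ need not lie in $B^\#_p$, and the paper accordingly extracts a strictly nested subsequence in Lemma~\ref{lemm:CoverForS:PlayExists} rather than assuming nesting. The lemma itself does not need nesting, so your proof of it stands.
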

 \begin{proof}
The proof is by induction on $p$.  For $p=1$, since $\mW \subset B_1$ and $\hh_0 = \{B_1\}$, we have that $B^\#_1 = B_1$.  For $p=2$, we note that \[\mW \subset \left(F( B_1)\right)^c  \cap B_1= B_1 \cap \bigcup_{\widecheck{B} \in \hh(B_1)}\widecheck{B}\] by Lemma~\ref{lemm:UppHDEstFBComp}, and, hence, since $\hh_1 = \hh(B_1)$, there exists $B^\#_2 \in \hh_{1}$ such that $\boldsymbol{x} \in B^\#_2$.  

Now, since $\boldsymbol{x} \in \mW$, we have that $\boldsymbol{x} \in \left(F( B^\#_2)\right)^c$.  Thus, we have that \[\boldsymbol{x} \in  \left(F( B^\#_2)\right)^c \cap B^\#_2 = B^\#_2 \cap \bigcup_{\widecheck{B} \in \hh(B^\#_2)}\widecheck{B},\] where the equality comes from the analog of Lemma~\ref{lemm:UppHDEstFBComp} for $B^\#_2$.  Note the analog of Lemma~\ref{lemm:UppHDEstFBComp} is proved in Section~\ref{subsubsec:ConstructWinningPlays}.  Consequently, we have, by (\ref{eqn:NestedHoles2}), that there exists $B^\#_3 \in \hh_{2}$ such that $\boldsymbol{x} \in B^\#_3$.  Continuing thus by induction, we obtain $\boldsymbol{x} \in \left(F( B^\#_{p-1})\right)^c \cap B^\#_{p-1}$, which by (\ref{eqn:NestedHolesp}) and the analog of Lemma~\ref{lemm:UppHDEstFBComp} gives that $B^\#_p \in \hh_{p-1}$.  This proves the lemma.
\end{proof}

Let $\{ B^\#_p\}_{p=1}^\infty$ be the sequence of balls constructed in Lemma~\ref{lemm:CoverForS:BallSeq}.  By Lemma~\ref{lemm:CoverForS:BallSeq} and Remark~\ref{rmk:HHpMaxRadDectoZero}, we have that \begin{align}\label{eqn:ACovForS:xinSeqBalls}
 \{\boldsymbol{x}\}= \bigcap_{p=1}^\infty B^\#_p. \end{align}

 \begin{lemm}\label{lemm:CoverForS:PlayExists}  There exists a subsequence $\{B^\#_{p_\kappa}\}_{\kappa=1}^\infty$ such that \[\{\boldsymbol{x}\} = \bigcap_{\kappa=1}^\infty B^\#_{p_\kappa}\] and $B^\#_{p_1} \supsetneq B^\#_{p_2} \supsetneq B^\#_{p_3} \supsetneq \cdots$.
 
\end{lemm}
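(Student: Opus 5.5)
We work with the sequence $\{B^\#_p\}_{p=1}^\infty$ from Lemma~\ref{lemm:CoverForS:BallSeq}. By (\ref{eqn:ACovForS:xinSeqBalls}) it already satisfies $\{\boldsymbol{x}\}=\bigcap_p B^\#_p$. What is missing is that the balls need not be nested, let alone strictly nested. The plan is therefore to extract, greedily, a subsequence along which each ball strictly contains the next, and then check that its intersection is still $\{\boldsymbol{x}\}$.

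First, the ingredients.
\begin{enumerate}
\item Every $B^\#_p$ contains $\boldsymbol{x}$.
\item By Remark~\ref{rmk:HHpMaxRadDectoZero}, $\rho(B^\#_p)\le r_p$, where $r_p$ denotes the maximum radius of the elements of $\hh_{p-1}$ and $r_p\to0$.
\item By construction, each element of $\hh_{p-1}$ for $p\ge2$ lies in $\hh(B)$ for some $B\in\hh_{p-2}$, and so is a subset of $B$. Also, every element of $\hh(B)$ has radius at most $\frac{\beta}{j}\rho(B)<\rho(B)$.
\item Membership in a ball of $\hh_{p-1}$ need not persist: the ball of $\hh_{p-2}$ containing $B^\#_p$ need not be $B^\#_{p-1}$.
\end{enumerate}

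Take $p_1=1$, so $B^\#_{p_1}=B_1$. Suppose $p_\kappa$ has been chosen. Let $\eta_\kappa>0$ be the distance in $\|\cdot\|_\infty$ from $\boldsymbol{x}$ to the complement of $B^\#_{p_\kappa}$, provided $\boldsymbol{x}$ lies in the interior of $B^\#_{p_\kappa}$. Since $r_p\to0$, we can pick $p_{\kappa+1}>p_\kappa$ with $2r_{p_{\kappa+1}}<\eta_\kappa$ and $r_{p_{\kappa+1}}<\rho(B^\#_{p_\kappa})$. Then $B^\#_{p_{\kappa+1}}$ contains $\boldsymbol{x}$ and has diameter less than $\eta_\kappa$, so $B^\#_{p_{\kappa+1}}\subset B^\#_{p_\kappa}$. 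The inclusion is strict because the radius is strictly smaller.

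Given strict nesting, the intersection identity is immediate. The intersection contains $\boldsymbol{x}$, and its diameter is bounded by $2r_{p_\kappa}\to0$, so it equals $\{\boldsymbol{x}\}$.

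The main obstacle is the case where $\boldsymbol{x}$ lies on the boundary of some $B^\#_{p_\kappa}$, so that $\eta_\kappa=0$. In that case I would use the tree structure instead of the metric argument. Trace $B^\#_{p}$ back through its ancestors $B^\#_p\subset A_{p-1}\subset A_{p-2}\subset\cdots\subset B_1$, where $A_q\in\hh_{q-1}$, and use that $\boldsymbol{x}\in A_q$ for every $q$. By a K\H{o}nig-type argument, a finite-branching issue at each fixed radius, one can replace the sequence $B^\#_p$ by a single genealogical chain in which each ball is an element of $\hh(\cdot)$ of the previous one. Each level $\hh_{p-1}$ has only finitely many balls containing $\boldsymbol{x}$ at each radius, since they tessellate, so compactness applies.

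Consecutive members of such a chain are strictly nested by (3). I would then redefine $B^\#_p$ as this chain, which is a legitimate choice in Lemma~\ref{lemm:CoverForS:BallSeq} because that lemma only asserts existence, and take $p_\kappa=\kappa$.
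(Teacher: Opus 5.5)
Your metric greedy step works while $\boldsymbol{x}$ stays interior to the current ball. Nothing stops $\boldsymbol{x}$ from landing on the boundary of the ball you just chose, and your fallback for that case does not work. It rests on item~(3), that every element of $\hh(B)$ is a subset of $B$, and that is false in this construction. Only the first-generation holes $\hh^*_1(B)$ are guaranteed to lie in $B$. The balls of $\mC_t$ come from minimal outer tessellations (Proposition~\ref{prop:RepsMinTess}), and these stick out of the ball being covered by distances up to $4R'$ (Remark~\ref{rmk:proofPropMinTess2ndPart}). So when $A_{1,(m_1)}$ meets the boundary of $B$, those covering balls need not lie in $B$. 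The same goes for the holes of $\hh^*_t(B)$, $t\ge 2$, that Bob places inside them; some can even be disjoint from $B$. This is exactly why the paper introduces the enlarged balls $B_1^{[\ell]}$ and proves Lemmas~\ref{lemm:AllHolesContainedBOneVariants} and~\ref{lemm:AllHolesContainedBOneVariants2}. Consequently a genealogical chain $C_{q+1}\in\hh(C_q)$ with $\boldsymbol{x}\in C_q$ has strictly decreasing radii but need not be nested. In precisely your problem case, with $\boldsymbol{x}$ on the boundary of $C_q$, the members of $\hh(C_q)$ containing $\boldsymbol{x}$ can be protruding ones, so the chain just reproduces the original difficulty. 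K\H{o}nig's lemma is also beside the point: the analog of Lemma~\ref{lemm:UppHDEstFBComp} already gives each node a child containing $\boldsymbol{x}$, which is how Lemma~\ref{lemm:CoverForS:BallSeq} builds its sequence. And the holes do not tessellate, since they are Bob's strategic choices.

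The missing idea is the paper's coordinatewise pigeonhole on where $\boldsymbol{x}$ sits in each ball. For $\xi=1,\dots,d$ in turn, pass to a subsequence along which $x_\xi$ is always the left endpoint of $\pi_\xi(B^\#_p)$, or always the right endpoint, or always an interior point. One of the three happens infinitely often, since $\boldsymbol{x}\in B^\#_p$ for every $p$. In the endpoint cases the projections are $[x_\xi,x_\xi+2\rho]$ or $[x_\xi-2\rho,x_\xi]$. These become strictly nested once you thin to strictly decreasing radii, which is possible by Remark~\ref{rmk:HHpMaxRadDectoZero}. In the interior case, your $\eta$-argument applied to that single coordinate lets you thin so that each projection lies in the interior of the previous one. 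All three properties survive further thinning. So after the $d$-th coordinate every projection is strictly nested, hence so are the balls, and the intersection is $\{\boldsymbol{x}\}$ because the radii tend to $0$.
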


\begin{proof}

Let $p_{0,\eta} = \eta$ for $\eta \in \NN$.  Then the sequence $\{p_{0,\eta}\}_{\eta = 1}^\infty$ is the sequence $\{p\}_{p=1}^\infty$.  Let \begin{align*}  
 B^\#_{p_{0,\eta}}=:[a_1^{(0,\eta)}, b_1^{(0,\eta)}] \times \cdots \times [a_d^{(0,\eta)}, b_d^{(0,\eta)}]. \end{align*}  We now recursively, for $\xi = 1, \cdots ,d$, construct subsequences of the sequence $\{p_{{0,\eta}}\}_{\eta = 1}^\infty$ as follows.  Let $\xi =1$.  Now we have three possible cases:  \begin{enumerate}
\item $x_\xi = a_\xi^{(\xi-1,\eta)}$ for infinitely many $\eta \in \NN$, 
\item $x_\xi = b_\xi^{(\xi-1,\eta)}$ for infinitely many $\eta \in \NN$, or
\item $x_\xi \in (a_\xi^{(\xi-1,\eta)}, b_\xi^{(\xi-1,\eta)})$ for infinitely many $\eta \in \NN$.
\end{enumerate}  Note that all three cases can occur.  We claim that at least one of the three cases must occur.  We now prove this claim.  If the claim does not hold, then there exists an $\eta' \in \NN$ such that, for all $\eta \geq \eta'$, we have that $x_\xi \notin [a_\xi^{(\xi-1,\eta)}, b_\xi^{(\xi-1,\eta)}]$, which implies that \[\boldsymbol{x} \notin \bigcap_{\eta=1}^\infty B^\#_{p_{\xi-1,\eta}}.\]  This contradicts (\ref{eqn:ACovForS:xinSeqBalls}) and proves the claim.

Note that, in each of the three cases, the infinitely many $\eta \in \NN$ yields a subsequence $\Fs$ of the sequence $\left(p_{{\xi-1,\eta}}\right)_{\eta = 1}^\infty$.  Our desired subsequence is either $\Fs$ or a subsequence of $\Fs$.  If Case~(1) holds, then denote the subsequence $\Fs$ given by the infinitely many $\eta \in \NN$ for Case~(1) as follows:  \begin{align}\label{eqn:ACovForS:SubSeq2superseq}
 p^*_{\xi,1} < p^*_{\xi,2} < p^*_{\xi,3} < \cdots .\end{align}  The desired subsequence is constructed from $\Fs$ as follows.  Let $p_{\xi,1}:=p^*_{\xi,1}$.  By Remark~\ref{rmk:HHpMaxRadDectoZero}, we can choose $p^*_{\xi,\eta} > p_{\xi,1}$ such that \begin{align*}
 \rho\left(B^\#_{p_{\xi,1}}\right) > \rho\left(B^\#_{p^*_{\xi,\eta}}\right). \end{align*}   Let $p_{\xi,2}:=p^*_{\xi,\eta}$.  Continuing thus by recursion, we obtain our desired subsequence \begin{align}\label{eqn:ACovForS:SubSeq2}
 p_{\xi,1} < p_{\xi,2} < p_{\xi,3} < \cdots, \end{align} and, letting \begin{align}\label{eqn:ACovForS:BallSubSeq}B^\#_{p_{\xi,\eta}} =: [a_1^{(\xi,\eta)}, b_1^{(\xi,\eta)}] \times \cdots \times [a_d^{(\xi,\eta)}, b_d^{(\xi,\eta)}],
  \end{align} which, in particular, gives \begin{align}\label{eqn:ACovForS:ProjectionSequence}
 \pi_\xi\left(B^\#_{p_{\xi,\eta}}\right)=  [a_\xi^{(\xi,\eta)}, b_\xi^{(\xi,\eta)}], \end{align} we note that \begin{align}\label{eqn:ACovForS:Case1Cond}    b_\xi^{(\xi,1)} > b_\xi^{(\xi,2)}> b_\xi^{(\xi,3)}> \cdots > x_\xi = a_\xi^{(\xi,1)} = a_\xi^{(\xi,2)} = \cdots
  \end{align} holds.  Note that $\pi_\xi(\cdot)$ is defined in (\ref{eqn:DefnProjMaps}).

 Otherwise, Case~(1) does not hold.  If Case~(2) holds, then denote the subsequence $\Fs$ given by the infinitely many $\eta \in \NN$ for Case~(2) by (\ref{eqn:ACovForS:SubSeq2superseq}).  The desired subsequence is constructed from $\Fs$ in the analogous way to that in Case~(1), yielding our desired subsequence (\ref{eqn:ACovForS:SubSeq2}).  Letting (\ref{eqn:ACovForS:BallSubSeq}) hold, which gives (\ref{eqn:ACovForS:ProjectionSequence}), we note that the analog of (\ref{eqn:ACovForS:Case1Cond}) is \begin{align}\label{eqn:ACovForS:Case2Cond} a_\xi^{(\xi,1)} < a_\xi^{(\xi,2)} < a_\xi^{(\xi,3)}< \cdots < x_\xi = b_\xi^{(\xi,1)} = b_\xi^{(\xi,2)}= \cdots.
  \end{align} 
 
 Otherwise, both Cases~(1) and~(2) do not hold.  Therefore, by the claim, Case~(3) must hold, and let us denote the subsequence $\Fs$ given by the infinitely many $\eta \in \NN$ for Case~(3) by (\ref{eqn:ACovForS:SubSeq2superseq}).  By Remark~\ref{rmk:HHpMaxRadDectoZero}, we can choose $p^*_{\xi,\eta} > p_{\xi,1}$ such that \begin{align*}\pi_\xi\left(B^\#_{p^*_{\xi,\eta}}\right) \subset \inte \left( \pi_\xi\left(B^\#_{p_{\xi,1}}\right)\right). \end{align*}   Let $p_{\xi,2}:=p^*_{\xi,\eta}$.  Continuing thus by recursion, we obtain our desired subsequence (\ref{eqn:ACovForS:SubSeq2}).  Letting (\ref{eqn:ACovForS:BallSubSeq}) hold, which gives (\ref{eqn:ACovForS:ProjectionSequence}), we note that the analog of (\ref{eqn:ACovForS:Case1Cond}) or (\ref{eqn:ACovForS:Case2Cond}) is \begin{align}\label{eqn:ACovForS:Case3Cond}    a_\xi^{(\xi,1)} < a_\xi^{(\xi,2)} < a_\xi^{(\xi,3)}< \cdots < x_\xi < \cdots < b_\xi^{(\xi,3)} <b_\xi^{(\xi,2)} < b_\xi^{(\xi,1)}.
  \end{align}

  Consequently, we have constructed a subsequence $\{p_{\xi,\eta}\}_{\eta = 1}^\infty$ from the sequence $\{p_{\xi-1,\eta}\}_{\eta = 1}^\infty$, and we have (\ref{eqn:ACovForS:BallSubSeq}).  Continuing thus by induction for $\xi =2, \cdots, d$, we have the subsequence $\{p_{d,\eta}\}_{\eta = 1}^\infty$ and  \[B^\#_{p_{d,\eta}}=[a_1^{(d,\eta)}, b_1^{(d,\eta)}] \times \cdots \times [a_d^{(d,\eta)}, b_d^{(d,\eta)}] .\] 
  
  Next we claim that \begin{align}\label{eqn:ACovForS:BallIncSubseqMain} B^\#_{p_{d,\eta}} \supsetneq B^\#_{p_{d,\eta+1}}.
  \end{align}  We now prove the claim.  For any integer $\xi$ such that $1 \leq \xi \leq d$, the above proof gives that Case~(1) holds, Case~(1) does not hold and Case~(2) holds, or Cases~(1) and~(2) do not hold and Case~(3) holds.  If Case~(1) holds, then (\ref{eqn:ACovForS:Case1Cond}) implies that \begin{align}\label{eqn:ACovForS:BallIncSubseq}
 \pi_\xi\left(B^\#_{p_{d,\eta}}\right) \supsetneq  \pi_\xi\left(B^\#_{p_{d,\eta+1}}\right). \end{align}  If Case~(2) holds, then (\ref{eqn:ACovForS:Case2Cond}) implies (\ref{eqn:ACovForS:BallIncSubseq}).  Finally, if Case~(3) holds, then (\ref{eqn:ACovForS:Case3Cond}) implies (\ref{eqn:ACovForS:BallIncSubseq}).  Since (\ref{eqn:ACovForS:BallIncSubseq}) holds for all $\xi$, we have that (\ref{eqn:ACovForS:BallIncSubseqMain}) holds.  This proves the claim.
 
 Let $p_\kappa = p_{d,\kappa}$ for $\kappa \in \NN$.  Then the claim gives the desired result.  This proves the lemma.\end{proof}

Let $\{B^\#_{p_\kappa}\}_{\kappa=1}^\infty$ be as in Lemma~\ref{lemm:CoverForS:PlayExists}. 
\begin{lemm}\label{lemm:ACoverForS:PlayWinBob}
The sequence \begin{align}\label{eqn:ACoverForS:PlayWinBob}
 B^\#_{p_1} \supset  B^\#_{p_2} \supset B^\#_{p_3} \supset \cdots \end{align} is a play for some $(1/j, \beta; S)$-accelerated game for Bob.  Moreover, the play (\ref{eqn:ACoverForS:PlayWinBob}) is winning for Bob.
\end{lemm}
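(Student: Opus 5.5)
The plan is to show that the strictly nested subsequence $\{B^\#_{p_\kappa}\}$ from Lemma~\ref{lemm:CoverForS:PlayExists} is the sequence of end balls obtained along a single play in $\mW_\infty$. Such plays are winning for Bob by construction via Lemmas~\ref{lemm:InducedWinningPlayInsert}, \ref{lemm:InducedWinningPlayInsert2} and~\ref{lemm:InducedWinningPlayAppend}. The restriction property in Remark~\ref{rmk:RestrictionPlay} would then finish the argument.

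First, I would check that each $B^\#_p$ is the end ball of some finite play in $\mW_p$ or $\mW_{p-1}$. This holds for $B^\#_1=B_1\in\mW_0$. For $p\ge 2$, $B^\#_p\in\hh_{p-1}$, and the inner recursion of Step~$p-1$ of the outer recursion makes every element of $\hh_{p-1}$ the end ball of some element of $\mW_{p-1}$; this uses Lemma~\ref{lemm:AllHolesContainedBOneVariants2} to guarantee that the ball is appendable or insertable.

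Next I would check the radius condition (\ref{eqn:InsertionCond}). Every ball in every $\hh_p$ arises from repeated $j$-refinements, $\beta$-shrinkings and $(j^s\beta^{-s})$-refinements starting from $B_1$. Since $1/\beta\in\NN$ and $j\in\NN$, each ball has radius $\rho_1(\alpha\beta)^\ell$ with $\alpha=1/j$ and some $\ell\in\NN$. Hence consecutive terms $B^\#_{p_\kappa}\supsetneq B^\#_{p_{\kappa+1}}$ have radius ratio $(\alpha\beta)^{s_\kappa+1}$ for some $s_\kappa\ge0$, and Alice has a legal intermediate ball $A\in (B^\#_{p_\kappa})^{\alpha}$ containing $B^\#_{p_{\kappa+1}}$. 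This makes (\ref{eqn:ACoverForS:PlayWinBob}) a play for the $(1/j,\beta^{\{s_\kappa\}})$-game, giving the first assertion.

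For the winning assertion, I would build inductively a chain of finite plays $P_1\subset P_2\subset\cdots$ in $\mW_\infty$ whose ball sets contain $B^\#_{p_1},\dots,B^\#_{p_\kappa}$. At stage $\kappa+1$ the ball $B^\#_{p_{\kappa+1}}$ lies strictly inside $B^\#_{p_\kappa}$. The construction processes balls in order of decreasing radius (outer recursion by $p$, inner recursion by $r_i$), so $B^\#_{p_{\kappa+1}}$ is inserted into or appended to a play extending $P_\kappa$, namely the play whose end ball is the smallest ball of $P_\kappa$ containing it. This uses Lemma~\ref{lemm:InducedWinningPlayAppend} with $B^*$ the ball from whose game $B^\#_{p_{\kappa+1}}$ was chosen, or Lemma~\ref{lemm:InducedWinningPlayInsert}. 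The union of the $P_\kappa$ is a play winning for Bob (Lemma~\ref{lemm:InducedWinningPlayInsert2} handles later insertions into infinite plays), and (\ref{eqn:ACoverForS:PlayWinBob}) is a restriction of it, so it is winning for Bob by Remark~\ref{rmk:RestrictionPlay}.

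The main obstacle is this coherence step. The construction records only that every hole is the end ball of \emph{some} winning play, not of one extending a prescribed play, and the adapted strategies $\widetilde g_{B_1}$ change at each insertion or append. I would first try to argue directly from the recursion that $B^\#_{p_{\kappa+1}}\in\hh(B')$ for some ball $B'$ that is the end ball of an extension of $P_\kappa$. Failing that, I would pass to a further subsequence where the generating ball $B^*$ of each term is itself an earlier term, so that every append is legitimate.
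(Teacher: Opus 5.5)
Your route is the paper's: the first assertion comes from the radii being $\rho_1(\beta/j)^{\tau}$ with $\tau$ strictly increasing, which is fine as you wrote it. The second comes from realising (\ref{eqn:ACoverForS:PlayWinBob}) as a restriction of one element of $\mW_\infty$ built by successive appends and insertions, followed by Remark~\ref{rmk:RestrictionPlay}.

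The finite part of your coherence worry is already handled by the construction. Each hole is attached to \emph{every} compatible element of the current collection, and $B^\#_{p_{\kappa+1}}$ is treated at a later outer step than $B^\#_{p_\kappa}$. Lemma~\ref{lemm:InducedWinningPlayAppend} also allows $B^*$ to be any generating ball. So each finite chain $B_1^{[\ell]}\supset B^\#_{p_1}\supset\cdots\supset B^\#_{p_\kappa}$ lies in $\mW_\infty$, with no subsequence needed.

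The genuine gap is the passage to the limit, which you assert (``the union of the $P_\kappa$ is a play winning for Bob'') and then rightly suspect. Every element of $\mW_\infty$ arises from finitely many appends and insertions. A finite play is winning only because the adapted strategy eventually follows one fixed $g_{B^*}$. For a ubiquitously losing set essentially any legal finite play has this property, so it says nothing about the limit point. Along your chain the strategy is replaced infinitely often: a term coming from $\hh^*_t(\cdot)$ with $t\ge 2$ is Bob's first reply in a fresh game started at a covering ball. Hence no single winning strategy governs $\bigcap_\kappa B^\#_{p_\kappa}$. Your two fallbacks only give longer finite plays or games restarted at earlier terms. What you would actually need is that from some point on all terms are continuation moves of one and the same game, and that fails for typical $x$.

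In fact the second assertion is false in this generality, so the gap cannot be closed. The paper's own proof passes over the same point with ``continuing thus by recursion on $\kappa$''. By \cite[Theorem~3]{Sch66}, $\BA$ is $(1/2,1/2)$-winning. Hence $S:=\RR\setminus\BA$ is $(1/2,1/2)$-ubiquitously losing: after Alice's $A_1$, Bob treats $A_1$ as the opponent's first ball in Schmidt's game for $\BA$ and copies the winner's strategy, and all moves are legal because both ratios are $1/2$. (Alternatively, Theorem~\ref{theo:WinningBADelta} makes $\RR\setminus\BA(1/36)$ a $(1/2,1/3)$-ubiquitously losing set of full measure.)

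Run the construction with $d=s=1$, $j=2$, $\beta=1/2$. The set $\bigcup_{p}\bigcup_{B\in\hh_{p-1}}F(B)$ has Hausdorff dimension at most $\log 10/\log 16<1$, and that part of the argument is sound, so the set is Lebesgue-null. Since $\BA$ is also null, almost every $x\in B_1$ lies in $\mW\cap S$. For such $x$ the sequence $B^\#_{p_1}\supset B^\#_{p_2}\supset\cdots$ is a legal accelerated play, by your correct first assertion. It shrinks to $x\in S$, so it is not winning for Bob. Equivalently, Theorem~\ref{thm:UpBndHDLosingSets} would give $\dim_H(\RR\setminus\BA)\le 0.83$, which contradicts $\RR\setminus\BA$ having full Lebesgue measure.
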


\begin{proof}
The proofs in Sections~\ref{subsubsec:ADistingSet} and~\ref{subsubsec:ConstructWinningPlays} imply that, for all $\kappa \in \NN$, we have that $\rho\left(B^\#_{p_\kappa}\right) = \rho_1 (\beta/j)^\tau$ where $\tau \in \NN \cup \{0\}$ and depends on $\kappa$.  Furthermore, Lemma~\ref{lemm:CoverForS:PlayExists} gives that $\tau$ is strictly increasing as $\kappa$ increases.  Consequently, we have that (\ref{eqn:ACoverForS:PlayWinBob}) is a play for some $(1/j, \beta; S)$-accelerated game for Bob.

We now show that the play (\ref{eqn:ACoverForS:PlayWinBob}) is winning for Bob for this accelerated game for Bob by showing that (\ref{eqn:ACoverForS:PlayWinBob}) is the restriction of an element $\Fb$ of $\mW_\infty$ (see Remark~\ref{rmk:RestrictionPlay}).  Since the elements of $\mW_\infty$ are finite plays that are winning for Bob or plays that are winning for Bob and since a restriction is an infinite sequence, we must have that $\Fb$ is a play that is winning for Bob.

First consider $B^\#_{p_1} \neq B_1$.  Since $B^\#_{p_1} \in \overline{\hh}$, the proof in Section~\ref{subsubsec:ConstructWinningPlays} gives that there exists an integer $\ell$ such that $1 \leq \ell \leq 3^d$ and that \begin{align}\label{lemm:ACoverForS:PlayWinBob:Eqn1}
 B^{[\ell]}_1\supset B^\#_{p_1} \in \mW_\infty. \end{align}  Likewise, we must have \begin{align}\label{lemm:ACoverForS:PlayWinBob:Eqn2}
 B^{[\ell]}_1\supset B^\#_{p_2} \in \mW_\infty.\end{align}  Consequently, the proof in Section~\ref{subsubsec:ConstructWinningPlays} gives that $B^\#_{p_1}$ is inserted into (\ref{lemm:ACoverForS:PlayWinBob:Eqn2}) or $ B^\#_{p_2}$ is appended onto (\ref{lemm:ACoverForS:PlayWinBob:Eqn1}), yielding  \begin{align}\label{lemm:ACoverForS:PlayWinBob:Eqn3}
 B^{[\ell]}_1\supset B^\#_{p_1} \supset  B^\#_{p_2} \in \mW_\infty. \end{align}  Analogous to (\ref{lemm:ACoverForS:PlayWinBob:Eqn1},~\ref{lemm:ACoverForS:PlayWinBob:Eqn2}), we have \begin{align}\label{lemm:ACoverForS:PlayWinBob:Eqn4}
 B^{[\ell]}_1\supset B^\#_{p_3} \in \mW_\infty. \end{align}  Then we have the following possible cases of which one must occur:  \begin{itemize}
\item $B^\#_{p_1}$ is inserted into (\ref{lemm:ACoverForS:PlayWinBob:Eqn4}) to obtain   \begin{align}\label{lemm:ACoverForS:PlayWinBob:Eqn5}
 B^{[\ell]}_1\supset B^\#_{p_1} \supset  B^\#_{p_3} \in \mW_\infty \end{align} and then $B^\#_{p_2}$ is inserted into (\ref{lemm:ACoverForS:PlayWinBob:Eqn5}) to obtain  \begin{align}\label{lemm:ACoverForS:PlayWinBob:Eqn6}
 B^{[\ell]}_1\supset B^\#_{p_1} \supset  B^\#_{p_2}\supset B^\#_{p_3} \in \mW_\infty \end{align}
\item $B^\#_{p_2}$ is inserted into (\ref{lemm:ACoverForS:PlayWinBob:Eqn4}) to obtain  \begin{align}\label{lemm:ACoverForS:PlayWinBob:Eqn7}
 B^{[\ell]}_1\supset B^\#_{p_2} \supset  B^\#_{p_3} \in \mW_\infty \end{align} and then $B^\#_{p_1}$ is inserted into (\ref{lemm:ACoverForS:PlayWinBob:Eqn7}) to obtain (\ref{lemm:ACoverForS:PlayWinBob:Eqn6})

 \item $B^\#_{p_3}$ is appended onto (\ref{lemm:ACoverForS:PlayWinBob:Eqn1}) to obtain (\ref{lemm:ACoverForS:PlayWinBob:Eqn5}) and then $B^\#_{p_2}$ is inserted into (\ref{lemm:ACoverForS:PlayWinBob:Eqn5}) to obtain (\ref{lemm:ACoverForS:PlayWinBob:Eqn6}).
 
 \item $B^\#_{p_2}$ is appended onto (\ref{lemm:ACoverForS:PlayWinBob:Eqn1}) to obtain (\ref{lemm:ACoverForS:PlayWinBob:Eqn3}) and then $B^\#_{p_3}$ is appended onto (\ref{lemm:ACoverForS:PlayWinBob:Eqn3}) to obtain (\ref{lemm:ACoverForS:PlayWinBob:Eqn6})

\item $B^\#_{p_3}$ is appended onto (\ref{lemm:ACoverForS:PlayWinBob:Eqn2}) to obtain (\ref{lemm:ACoverForS:PlayWinBob:Eqn7}) and then $B^\#_{p_1}$ is inserted into (\ref{lemm:ACoverForS:PlayWinBob:Eqn7}) to obtain (\ref{lemm:ACoverForS:PlayWinBob:Eqn6}).
 
 \item $B^\#_{p_1}$ is inserted into (\ref{lemm:ACoverForS:PlayWinBob:Eqn2}) to obtain (\ref{lemm:ACoverForS:PlayWinBob:Eqn3}) and then $B^\#_{p_3}$ is appended onto (\ref{lemm:ACoverForS:PlayWinBob:Eqn3}) to obtain (\ref{lemm:ACoverForS:PlayWinBob:Eqn6}).
 
\end{itemize}

Consequently, we have that (\ref{lemm:ACoverForS:PlayWinBob:Eqn6}) holds.  Continuing thus by recursion on $\kappa$ yields that $\Fb$ is \[B^{[\ell]}_1\supset B^\#_{p_1} \supset  B^\#_{p_2}\supset B^\#_{p_3} \supset B^\#_{p_3} \supset \cdots.\]  By Remark~\ref{rmk:RestrictionPlay}, we have that the play (\ref{eqn:ACoverForS:PlayWinBob}) is winning for Bob as desired.  Finally, for $B^\#_{p_1} = B_1$, we replace $\ell$ with $1$ (see~(\ref{eqn:subsubsec:ConstructWinningPlays:BOneIsBOne})) and $B^{[\ell]}_1\supset B^\#_{p_1}$ with $B_1$ in (a simplified version of) the above proof.  This proves the lemma.

\end{proof}

Applying Lemmas~\ref{lemm:CoverForS:PlayExists} and~\ref{lemm:ACoverForS:PlayWinBob} yields \begin{align}\label{Eqn:ACoverForS:WinSComple}
\mW \subset S^c  \end{align}  for $\mW \neq \emptyset$.  For $\mW = \emptyset$, (\ref{Eqn:ACoverForS:WinSComple}) also holds.

\begin{prop}\label{lemm:UbiqLosSetInUnionHoles}
We have that \[B_1 \cap S \subset \bigcup_{p =1}^\infty \bigcup_{B\in \hh_{p-1}}F(B).\] 
\end{prop}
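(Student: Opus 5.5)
The plan is a short set-theoretic argument resting on the inclusion (\ref{Eqn:ACoverForS:WinSComple}), $\mW \subset S^c$, which has just been established. By definition,
\[\mW=\bigcap_{p =1}^\infty \bigcap_{B\in \hh_{p-1}}\left(F(B)\right)^c  \cap B_1,\]
so De Morgan's laws give
\[B_1 \backslash \mW = B_1 \cap \bigcup_{p =1}^\infty \bigcup_{B\in \hh_{p-1}}F(B).\]
Now take $\boldsymbol{x} \in B_1 \cap S$. Since $\boldsymbol{x} \in S$ and $\mW \subset S^c$, we get $\boldsymbol{x} \notin \mW$. Because $\boldsymbol{x} \in B_1$, it lies in $B_1 \backslash \mW$, and the displayed identity shows $\boldsymbol{x}$ belongs to $F(B)$ for some $p$ and some $B \in \hh_{p-1}$. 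This is exactly the claimed inclusion.

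All the real work is contained in (\ref{Eqn:ACoverForS:WinSComple}). That inclusion comes from Lemmas~\ref{lemm:CoverForS:BallSeq}, \ref{lemm:CoverForS:PlayExists} and~\ref{lemm:ACoverForS:PlayWinBob}. Every point of $\mW$ is the intersection point of a strictly nested subsequence of balls. That subsequence is a restriction of a play in $\mW_\infty$ that is winning for Bob, so by Remark~\ref{rmk:RestrictionPlay} it is itself winning for Bob, and its limit point therefore lies in $S^c$.

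The only point that needs checking is the degenerate case $\mW=\emptyset$. Here (\ref{Eqn:ACoverForS:WinSComple}) holds trivially, and the argument above goes through unchanged: it shows $B_1 = B_1\cap \bigcup_p\bigcup_{B\in\hh_{p-1}} F(B)$, which is stronger than what is needed. I expect no real obstacle. The step deserving care is making sure the definition of $\mW$ is intersected with $B_1$, so that complementing within $B_1$ is legitimate.

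Afterwards, this proposition is combined with (\ref{eqn:MainHDBndUbitLosSets}) and monotonicity of Hausdorff dimension to get $\dim_H(B_1\cap S)\le \zeta$. Countable stability over the tiles of $\overline{\{B^*_1\}}$ then gives (\ref{eqn:thm:UpBndHDLosingSets2}).
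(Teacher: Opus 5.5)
Your argument is correct and is essentially the paper's own proof: both rest entirely on the inclusion $\mW \subset S^c$ from (\ref{Eqn:ACoverForS:WinSComple}), followed by a De Morgan manipulation. The paper phrases it set-wise as $\bigcap_{p}\bigcap_{B\in\hh_{p-1}}(F(B))^c = \mW \cup \bigl(\bigcap_{p}\bigcap_{B\in\hh_{p-1}}(F(B))^c \cap B_1^c\bigr) \subset S^c \cup B_1^c$ and then takes complements, whereas you argue point-wise through $B_1\backslash\mW$.
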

\begin{proof}  By (\ref{Eqn:ACoverForS:WinSComple}), we  have \begin{align*} \bigcap_{p =1}^\infty \bigcap_{B\in \hh_{p-1}}\left(F(B)\right)^c= \mW \cup \left(\bigcap_{p =1}^\infty \bigcap_{B\in \hh_{p-1}}\left(F(B)\right)^c  \cap B_1^c \right) \subset S^c \cup B_1^c,
  \end{align*} which implies the desired result.

\end{proof}

Applying (\ref{eqn:MainHDBndUbitLosSets}), Proposition~\ref{lemm:UbiqLosSetInUnionHoles}, the monotonicity of Hausdorff dimension, the remarks at the beginning of Section~\ref{subsec:ProofThm:UpBndHDLosingSets}, and Proposition~\ref{prop:AuxBndHDCalc} (below) proves Theorem~\ref{thm:UpBndHDLosingSets} for the case that $1/\beta$ is an integer.

\subsubsection{The conclusion of the proof of Theorem~\ref{thm:UpBndHDLosingSets}}  We now prove the general case for which $1/\beta$ need not be an integer by giving the necessary changes to the proof of the case for which $1/\beta$ is an integer.  The changes are to Section~\ref{subsubsec:ADistingSet}.  In particular, the complete tessellation $\overline{\B_{1,(m_1)}}$ is replaced by a complete tessellation $\B_{\RR^d}(\boldsymbol{x'}, R_1')$ for some $\boldsymbol{x'} \in \RR^d$.  We replace $N$ from (\ref{eqn:UpperBndDimCount}) with\begin{align}\label{eqn:UpperBndDimCountReals}
 N_\RR:= \left(\lceil j^s \beta^{-s-1}\rceil +1\right)^d -  \left(\lfloor j^s \beta^{-s}\rfloor -1\right)^d. \end{align}  The minimal outer tessellation and maximal inner tessellation used in Propositions~\ref{prop:RepsMinTess} and~\ref{prop:RepsMaxTess} to obtain the collection $\mC_{1,(m_1)}$ which covers (\ref{eq:subsubsec:ADistingSetA1RemoveB2}) are now appropriate subtessellations of $\B_{\RR^d}(\boldsymbol{x'}, R_1')$.  Likewise, the complete tessellation $\overline{B_{2,(m_1, \cdots, m_{2t-1})}}$ at the $t$-th step of the recursion is replaced by a complete tessellation $\B_{\RR^d}(\boldsymbol{x'}, R_t')$ and the minimal outer tessellation and maximal inner tessellation used in Propositions~\ref{prop:RepsMinTess} and~\ref{prop:RepsMaxTess} come from $\B_{\RR^d}(\boldsymbol{x'}, R_t')$.  (Note that the choice of $\boldsymbol{x'}$ does not matter and can be different for each step of the recursion over $t$.)

Finally, $\zeta$ is replaced by \[\zeta_\RR:=  \frac{d \log(j) + \log\left(\left(\lceil j^s \beta^{-s-1}\rceil +1\right)^d -  \left(\lfloor j^s \beta^{-s}\rfloor -1\right)^d\right)}{\log\left(j^{s+1}\beta^{-(s+1)} \right)},\]  which agrees with our replacement of $N$ with $N_\RR$, and Lemma~\ref{lemm:UppHDEstBndUbitLosing} is replaced by

\begin{lemm}\label{lemm:UppHDEstBndUbitLosingReals}  We have that \[\dim_H\left(F(B_1)\right) \leq\frac{\log\left(j^d N_\RR\right)}{\log\left(j^{s+1}\beta^{-(s+1)} \right)}= \zeta_\RR\]
 
\end{lemm}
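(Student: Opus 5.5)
The plan is to copy the proof of Lemma~\ref{lemm:UppHDEstBndUbitLosing}: first bound the upper box dimension of $F(B_1)$ by counting covers, then use $\dim_H \leq \overline{\dim}_B$. The construction of Section~\ref{subsubsec:ADistingSet} is unchanged except that the covering balls at step $t$ now come from a complete tessellation $\B_{\RR^d}(\boldsymbol{x'}, R_t')$, with $R_t' = \rho_1 \beta^{t(s+1)} j^{-t(s+1)}$ as before. So the main point is to show that, for each $m_1$, the set $A_{1,(m_1)} \backslash B_{2,(m_1)}$ is covered by at most $N_\RR$ balls of $\B_{\RR^d}(\boldsymbol{x'}, R_1')$. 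The same count then holds at every later recursion step, by scaling.

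For this count, the ratio $\rho(A_{1,(m_1)})/R_1' = j^s\beta^{-s-1}$ need not be an integer. I would take a minimal outer tessellation $\mS^+$ of $A_{1,(m_1)}$ from Proposition~\ref{prop:RepsMinTess}. In the non-representable case it has at most $(\lceil j^s\beta^{-s-1}\rceil+1)^d$ elements, and if the ratio is an integer the count is at most $(j^s\beta^{-s-1}+1)^d \le (\lceil j^s\beta^{-s-1}\rceil+1)^d$. Next, since $\rho(B_{2,(m_1)})/R_1' = j^s\beta^{-s} \geq 2$, Proposition~\ref{prop:RepsMaxTess} gives a maximal inner tessellation $\mS^-$ of $B_{2,(m_1)}$ with at least $(\lfloor j^s\beta^{-s}\rfloor-1)^d$ elements. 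That proposition needs $R\ge 2R'$, which holds because $j \ge 2$, $s \ge 1$ and $\beta \le 1/2$.

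Both $\mS^+$ and $\mS^-$ are subtessellations of the same complete tessellation, so each element of $\mS^-$ lies in $B_{2,(m_1)} \subset A_{1,(m_1)}$. By Lemma~\ref{lemm:ElementPartofTess} (interiors meet the union) together with Lemma~\ref{lemm:TesselEuclidFund}, every element of $\mS^-$ is then an element of $\mS^+$. The collection $\mC_{1,(m_1)} := \mS^+\backslash\mS^-$ covers $A_{1,(m_1)}\backslash B_{2,(m_1)}$: any point of the difference lies in some ball of $\mS^+$, and a point lying only in balls of $\mS^-$ lies inside $B_{2,(m_1)}$. Hence $|\mC_{1,(m_1)}| \le N_\RR$. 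The same argument applies verbatim at step $t$ for each ball of $\mC_{t-1}$, so $F_t(B_1)$ is covered by $\mC_t$ with $|\mC_t| \le (j^d N_\RR)^t$.

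Finally, for $R_t' \le \delta < R_{t-1}'$ we get $N_\delta(F(B_1)) \le (j^dN_\RR)^t$, and
\[\overline{\dim}_B F(B_1) \le \limsup_{t\to\infty}\frac{t\log(j^dN_\RR)}{-\log\rho_1+(t-1)\log(j^{s+1}\beta^{-(s+1)})} = \zeta_\RR.\]
The main obstacle is the inclusion $\mS^- \subset \mS^+$, since the cardinality subtraction fails without it. I handle it using the shared complete tessellation and the interior-disjointness lemmas. If boundary points of $B_{2,(m_1)}$ cause trouble, I note that $\mS^+\backslash\mS^-$ still covers the closure of $A_{1,(m_1)}\backslash B_{2,(m_1)}$, because its members are closed and their union contains the open difference.
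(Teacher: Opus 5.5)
Your proposal is correct and follows the paper's route. The paper proves this lemma by rerunning the box-counting argument of Lemma~\ref{lemm:UppHDEstBndUbitLosing} with $N_\RR$ in place of $N$, where the bound on $|\mC_{1,(m_1)}|$ comes from a minimal outer tessellation of $A_{1,(m_1)}$ minus a maximal inner tessellation of $B_{2,(m_1)}$ in a common complete tessellation (Propositions~\ref{prop:RepsMinTess} and~\ref{prop:RepsMaxTess}). Your explicit check that $\mS^-\subset\mS^+$ via Lemma~\ref{lemm:ElementPartofTess} fills in a step the paper leaves implicit, and your closing remark about closures is unnecessary because your direct covering argument already handles the set-theoretic difference.
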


\begin{proof}
 Replace $N$ with $N_\RR$, (\ref{eqn:UpperBndDimCount}) with (\ref{eqn:UpperBndDimCountReals}), and $\zeta$ with $\zeta_\RR$ in the proof of Lemma~\ref{lemm:UppHDEstBndUbitLosing}.
\end{proof}

The rest of the proof of Theorem~\ref{thm:UpBndHDLosingSets} for the general case for which $1/\beta$ need not be an integer is the same as in the proof of Theorem~\ref{thm:UpBndHDLosingSets} for the case for which $1/\beta$ is an integer.  This proves Theorem~\ref{thm:UpBndHDLosingSets}.

\subsection{Proposition~\ref{prop:AuxBndHDCalc}} The following proposition is used in Section~\ref{subsec:ProofThm:UpBndHDLosingSets} to show that the right-hand sides of (\ref{eqn:thm:UpBndHDLosingSets1}) and (\ref{eqn:thm:UpBndHDLosingSets2}) are strictly less than $d$.

\begin{prop}\label{prop:AuxBndHDCalc}  Let $d \in \NN$, $s\geq d, j \geq 2$ be integers, and $0 < \beta \leq 1/2$.  Then we have that \begin{align*}
 j^d\left(\left(\lceil j^s \beta^{-s-1}\rceil +1\right)^d -  \left(\lfloor j^s \beta^{-s}\rfloor -1\right)^d \right)< j^{(s+1)d}\beta^{-(s+1)d}. \end{align*} 
 
\end{prop}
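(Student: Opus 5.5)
Set $X := j^s\beta^{-s}$ and $Y := j^s\beta^{-s-1} = X/\beta$. Since $0<\beta\le 1/2$ and $j\ge 2$, we have $X\ge 2^{2s}\ge 4$ and $Y = X/\beta\ge 2X$. Dividing the desired inequality by $j^d$ and using $j^{(s+1)d}\beta^{-(s+1)d}/j^d = Y^d$, the claim becomes
\[
\left(\lceil Y\rceil+1\right)^d - \left(\lfloor X\rfloor-1\right)^d < Y^d .
\]
The plan is to bound the left side crudely from above, using $\lceil Y\rceil+1 < Y+2$ and $\lfloor X\rfloor -1 > X-2\ge 2>0$. This reduces the problem to showing
\[
(Y+2)^d - (X-2)^d \le Y^d ,
\]
with strict inequality inherited from the strict rounding bounds. (When $1/\beta\in\NN$, $Y$ is an integer and the sharper form $(Y+1)^d - X^d < Y^d$ of (\ref{eqn:thm:UpBndHDLosingSets2}) follows the same way.)

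Next, expand $(Y+2)^d - Y^d$ by the mean value theorem:
\[
(Y+2)^d - Y^d \le 2d\,(Y+2)^{d-1} .
\]
It then suffices to show $2d(Y+2)^{d-1} \le (X-2)^d$. Using $X\ge 4$, we get $X-2\ge X/2$, so the right side is at least $X^d/2^d$. For the left side, $Y+2\le 2Y$ gives at most $2d\,2^{d-1}Y^{d-1}$. With $Y = X/\beta$, we need
\[
d\,4^d\,\beta^{-(d-1)} \le X = j^s\beta^{-s} ,
\]
that is, $d\,4^d \le j^s\beta^{-(s-d+1)}$.

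This is the step where the hypothesis $s\ge d$ must carry the argument. The right side is at least $j^s\beta^{-1}\ge 2^{s+1}\ge 2^{d+1}$, which falls short of $d\,4^d$. So the crude factor losses must be tightened rather than accepted. I would keep $X-2\ge X/2$ but compare more carefully: $\beta^{-(s-d+1)}\ge 2^{s-d+1}$, giving $j^s\beta^{-(s-d+1)}\ge 2^{2s-d+1}\ge 2^{d+1}$. If this is still insufficient for small $d$, I would handle $d=1$ directly, where the claim is simply $\lceil Y\rceil+1-\lfloor X\rfloor+1<Y$, i.e.\ roughly $X>4$, which holds since $X\ge 4$ with care at equality. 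For $d\ge2$, I would sharpen the MVT step using $Y+2\le Y(1+1/(2X))$, so that $(Y+2)^d-Y^d\le Y^d\big((1+1/X)^d-1\big)\le Y^d\cdot 2d/X$, and compare this with $(X-2)^d\ge (X/2)^d$. This reduces the requirement to $2d\,2^d\beta^{-d}\le X^{d+1}$, i.e.\ $2d\,2^d\le j^{s(d+1)}\beta^{d-s(d+1)}$, which holds comfortably since $s\ge d$ forces the exponent of $\beta^{-1}$ to be positive and $j^{s(d+1)}\ge 2^{d(d+1)}\ge 2d\,2^d$. The main obstacle is this bookkeeping of rounding losses; the sharpened relative-error form of the estimate is the route I would commit to.
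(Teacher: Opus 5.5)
Your reduction to $(\lceil Y\rceil+1)^d-(\lfloor X\rfloor-1)^d<Y^d$ is correct. So is your treatment of $d=1$, once you make it precise as $\lceil Y\rceil-\lfloor X\rfloor+2<(Y+1)-(X-1)+2\le Y$ using $X\ge 4$; this is actually cleaner than the paper, which needs a separate integrality argument at $\beta=1/2$.

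The gap is in the case $d\ge 2$. From $(Y+2)^d-Y^d\le 2dY^d/X$ and $(X-2)^d\ge (X/2)^d$, the sufficient condition is $2d\,2^dY^d\le X^{d+1}$. Substituting $Y^d=X^d\beta^{-d}$ gives $2d\,2^d\beta^{-d}\le X$. Your version, $2d\,2^d\beta^{-d}\le X^{d+1}$, drops the factor $X^d$. The correct condition reads $2d\,2^d\le j^s\beta^{d-s}$, and it is false whenever $j=2$ and $s=d$, since the right side is then $2^d$.

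A concrete case: for $d=s=j=2$ and $\beta=1/2$ you have $X=16$, $Y=32$, and $(Y+2)^2-Y^2=132>64=(X/2)^2$. Yet the target $33^2-15^2=864<1024=Y^2$ is true.

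This is not just an algebra slip. The available slack is roughly $X^d/(2dY^{d-1})=j^s\beta^{d-1-s}/(2d)$, which equals $2^d/d$ at $j=2$, $s=d$, $\beta=1/2$. So the factor $2^d$ discarded in $(X-2)^d\ge (X/2)^d$ cannot be recovered by any sharpening of the $Y$-side. Incidentally, $Y+2\le Y(1+1/(2X))$ is false for $\beta>1/4$; only $Y+2\le Y(1+1/X)$ holds, which is what you actually use on the next line.

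To repair this, keep the relative error on the $X$-side too. Combine $(Y+2)^d-Y^d\le 2d(Y+2)^{d-1}$ with Bernoulli's inequality $(X-2)^d\ge X^d-2dX^{d-1}$. Dividing by $X^{d-1}$ and multiplying by $\beta^{d-1}$ turns $(Y+2)^d-(X-2)^d\le Y^d$ into
$2d\big((1+2\beta/X)^{d-1}+\beta^{d-1}\big)\le j^s\beta^{d-1-s}$.
The right side is at least $2^{2s-d+1}\ge 2^{d+1}$ because $s\ge d$. Since $2\beta/X\le 4^{-d}$, the left side is at most $2d\big((1+4^{-d})^{d-1}+2^{1-d}\big)$. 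This equals $4$ for $d=1$ and is below $2^{d+1}$ for $d\ge 2$. Strictness of the original inequality then comes from $\lceil Y\rceil+1<Y+2$ and $\lfloor X\rfloor-1>X-2$.

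This is essentially the paper's route. It divides by $Y^d$ and compares $(1+2/Y)^d$ with $(\beta-2/Y)^d=\beta^d(1-2/X)^d$ by second-order Taylor expansion of both terms. That reduces the claim to $\beta^d>M$ with $M\le\beta^d\big(d2^{-d}+d2^{1-2d}+2^{-2d-1}+2^{1-3d}\big)<\beta^d$ for $d\ge 2$.
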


\begin{proof}  Consider the case $d \geq 2$ first.  We have \begin{align*}j^d\left(\left(\lceil j^s \beta^{-s-1}\rceil +1\right)^d -  \left(\lfloor j^s \beta^{-s}\rfloor -1\right)^d \right)&\leq j^{(s+1)d} \beta^{-(s+1)d}\left(\left(1+ \frac{2\beta^{s+1}}{j^s} \right)^d -  \left(\beta - \frac {2\beta^{s+1}}{j^s} \right)^d \right)\\
&\leq j^{(s+1)d} \beta^{-(s+1)d}\left(\left(1+ \frac{\beta^{s+1}}{2^{d-1}} \right)^d -  \left(\beta - \frac {\beta^{s+1}}{2^{d-1}} \right)^d \right).
\end{align*}  By Taylor's theorem for the function $x^d$, we have that \[\left(1+ \frac{\beta^{s+1}}{2^{d-1}} \right)^d = 1 + \frac{d \beta^{s+1}}{2^{d-1}} + \varepsilon\left(1+ \frac{\beta^{s+1}}{2^{d-1}}\right)\] where, for $x \in [1-\beta^{s+1}/2^{d-1}, 1+\beta^{s+1}/2^{d-1}]$, we have that \begin{align*}
 |\varepsilon(x)|&\leq \frac {d(d-1)} 2 \left( 1+ \frac{\beta^{s+1}}{2^{d-1}}\right)^{d-2} \left(\frac{\beta^{s+1}}{2^{d-1}} \right)^2 \\ &\leq \frac {d(d-1)} 2 \left( 1+ \frac{1}{2^{2d}}\right)^{d-2} \left(\frac{\beta^{s+1}}{2^{d-1}} \right)^2 < \frac 1 2 \frac{\beta^{2s+2}}{2^{d-2}}
 \leq  \frac{\beta^{2d}}{2^{d+1}}.
 \end{align*}
 
 Similiarly, we have that  \[\left(1- \frac{\beta^{s}}{2^{d-1}} \right)^d = 1 - \frac{d \beta^{s}}{2^{d-1}} + \varepsilon'\left(1- \frac{\beta^{s}}{2^{d-1}}\right)\] where, for $x \in [1- \frac{\beta^{s}}{2^{d-1}}, 1+ \frac{\beta^{s}}{2^{d-1}}]$, we have that \begin{align*}
 |\varepsilon'(x)|&\leq \frac {d(d-1)} 2 \left( 1+ \frac{\beta^{s}}{2^{d-1}}\right)^{d-2} \left(\frac{\beta^{s}}{2^{d-1}} \right)^2 \\ &\leq \frac {d(d-1)} 2 \left( 1+ \frac{1}{2^{2d-1}}\right)^{d-2} \left(\frac{\beta^{2s}}{2^{2d-1}} \right)< \frac{\beta^{2s}}{2^{d-1}}
 \leq  \frac{\beta^{2d}}{2^{d-1}}.
 \end{align*}

Finally, to show the desired result, it suffices to show that $\beta^d$ is strictly larger than \[M:= \frac{d \beta^{s+1}}{2^{d-1}}+ \frac{d \beta^{s+d}}{2^{d-1}}+\frac{\beta^{2d}}{2^{d+1}}+\frac{\beta^{3d}}{2^{d-1}} \leq \beta^d\left(\frac d {2^d} + \frac d {2^{2d-1}} +\frac{1}{2^{2d+1}}+\frac{1}{2^{3d-1}}\right).\]  As the maximum of $\left(\frac d {2^d} + \frac d {2^{2d-1}} +\frac{1}{2^{2d+1}}+\frac{1}{2^{3d-1}}\right) < 1$ when $d \geq 2$, we have that $M < \beta^d$.  This proves the desired result for $d \geq 2$.

For $d=1$, we have \[\left(1+ \frac{2\beta^{s+1}}{j^s} \right) -  \left(\beta - \frac {2\beta^{s+1}}{j^s} \right) \leq 1 - \beta +2 \beta^2,\] which is strictly less than $1$ for $0 < \beta < 1/2$.  This proves the desired result for $d=1$ and $0 < \beta < 1/2$.  

Finally, for $d=1$ and $\beta = 1/2$, we have \[j\left(\left(\lceil j^s \beta^{-s-1}\rceil +1\right) -  \left(\lfloor j^s \beta^{-s}\rfloor -1\right)\right) \leq j^{s+1} \beta^{-(s+1)}\left(1 + \frac 1{j^s2^{s}}-\frac 1 2\right) < j^{s+1} \beta^{-(s+1)},\] which proves the desired result.  (Note that the arguments of the floor and ceiling functions are both integers here.)  This proves all the cases and thus the proposition.
 
\end{proof}

\section{Properties of $(\alpha, \beta)$-winning sets and $(\alpha, \beta)$-ubiquitously losing sets}\label{sec:PropertiesWinAndUbitLose}

Schmidt gave a lower bound for the Hausdorff dimension of winning sets~\cite[Corollary~1]{Sch66}.  We state Schmidt's result for the supremum norm $\| \cdot \|_\infty$ on $\RR^d$ in order to directly compare it to our upper bound in Theorem~\ref{thm:UpBndHDLosingSets} and also to directly use in proving some of our main results (see Section~\ref{sec:StateMainResults}).  Since all norms on $\RR^d$ are equivalent, the analogous result for another norm on $\RR^d$ can be obtained by applying the result on bilipschitz mappings and $(\alpha, \beta)$-winning sets in \cite[Proposition~5.3]{Dan87} to Theorem~\ref{thm:LowerBndHDWinSet}.

\begin{theo}\label{thm:LowerBndHDWinSet} Let $d \in \NN$, $\RR^d$ be equipped with the supremum norm $\| \cdot \|_\infty$, $0< \alpha <1$, $0 < \beta <1$, and $S \subset \RR^d$ be an $(\alpha, \beta)$-winning set of $\RR^d$.  Then we have that \[\dim_H(S) \geq \frac{d \log\left(\lfloor \beta^{-1}\rfloor\right)}{- \log(\alpha \beta)}.\] 
 
\end{theo}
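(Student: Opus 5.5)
We will follow Schmidt's original argument for \cite[Corollary~1]{Sch66}: build, from Alice's winning strategy, a Cantor-type subset of $S$ and estimate its dimension with a mass distribution. Fix a (positional, by Remark~\ref{rem:PositionalWinning}) winning strategy for Alice in the $(\alpha,\beta;S)$-game and an initial ball $B_1$ for Bob, of radius $\rho$. Set $N:=\lfloor \beta^{-1}\rfloor \geq 1$; if $N=1$ the bound is $0$ and there is nothing to prove, so assume $N\ge 2$.

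\emph{Building the tree.} Given any ball $A$ chosen by Alice, of radius $r$, consider the $N$-tessellation $\mR_N(A)$ of $A$, which consists of $N^d$ closed $\|\cdot\|_\infty$-balls of radius $r/N \geq \beta r$. Inside each of these, Bob may place a ball of radius exactly $\beta r$, say concentric with the tessellation cell; these $N^d$ balls lie in $A^\beta$, and their interiors are pairwise disjoint by Lemma~\ref{lemm:TesselBallFund}. Alice then responds to each according to her strategy. Iterating from $A_1$ gives, at level $n$, $N^{d(n-1)}$ balls of Alice of radius $\rho\alpha(\alpha\beta)^{n-1}$. Each infinite branch is a legitimate play in which Alice follows her winning strategy, so its limit point lies in $S$. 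Let $K\subset S$ be the set of all such limit points.

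\emph{Dimension estimate.} Put the natural probability measure $\mu$ on $K$, giving mass $N^{-d(n-1)}$ to each level-$n$ ball of Alice. To apply the mass distribution principle, take an arbitrary set $U$ of diameter $\epsilon$ small, and choose $n$ with $\rho(\alpha\beta)^{n}\lesssim \epsilon < \rho(\alpha\beta)^{n-1}$. The aim is to show that $U$ meets at most a bounded number (depending only on $d$, $\alpha$, $\beta$) of level-$n$ Alice balls. This is where the care lies, and it is the step I expect to be the main obstacle: distinct children of one parent sit inside distinct tessellation cells of width $2r/N$, but two children can have touching cells, so they are not separated by a positive gap. I would handle it as follows. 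Children of the same parent lie in disjoint-interior cells of side comparable to $\beta r$, so a set of diameter $\lesssim \beta r$ meets at most $2^d$ of those cells. Children of different parents are contained in disjoint-interior cells at the coarser level, and so the same counting applies there. Inducting down the levels then bounds the number of level-$n$ balls meeting $U$ by a constant $C(d)$.

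Granting this bound, we get $\mu(U)\le C\,N^{-d(n-1)}\le C'\epsilon^{\,d\log N/(-\log(\alpha\beta))}$, since $N^{-d n} = \big((\alpha\beta)^{n}\big)^{d\log N/(-\log \alpha\beta)}$. The mass distribution principle then gives $\dim_H(S)\ge \dim_H(K)\ge d\log\lfloor\beta^{-1}\rfloor/(-\log(\alpha\beta))$. This argument is essentially Schmidt's, specialised to the supremum norm, where the cubes tessellate exactly; that exact tessellation is why the clean constant $\lfloor\beta^{-1}\rfloor^d$ appears.
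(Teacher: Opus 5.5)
Your argument is correct in outline, but it is more self-contained than the paper's proof. The paper never builds a Cantor set. It observes that any ball $B$ contains a ball $B'$ of radius $\lfloor\beta^{-1}\rfloor\beta\rho(B)$, whose $\lfloor\beta^{-1}\rfloor$-tessellation supplies $\lfloor\beta^{-1}\rfloor^d$ elements of $B^\beta$ with pairwise disjoint interiors, and then invokes \cite[Corollary~1]{Sch66}. You use the same combinatorial input: your concentric balls inside the cells of $\mR_N(A)$ play the role of the tessellation of $B'$. You then reprove Schmidt's corollary through the tree and the mass distribution principle. The paper's route takes two lines. Yours does not depend on the exact hypothesis of Schmidt's corollary, and it handles explicitly the touching children that occur when $\beta^{-1}\in\NN$; the paper leaves that point to the citation.

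Two points need tightening.

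First, children may share boundary points, so ``mass $N^{-d(n-1)}$ to each level-$n$ ball'' is not literally additive. Define $\mu$ instead as the push-forward of the uniform product measure on $\{1,\dots,N^d\}^{\NN}$ under the continuous map sending a branch to its limit point. Then $\mu(U)$ is at most $N^{-d(n-1)}$ times the number of level-$n$ Alice balls meeting $U$.

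Second, and more important, your ``inducting down the levels'' cannot be a level-by-level count. Bounding the number of level-$k$ cells met by $U$ by $2^d$ times the number at level $k-1$ compounds to $2^{d(n-1)}$, which is not a uniform constant. Use induction only to show that all level-$n$ cells, over all parents, have pairwise disjoint interiors:
\begin{itemize}
\item cells of one parent, by Lemma~\ref{lemm:TesselBallFund};
\item cells of different parents, because those parents lie in distinct disjoint-interior cells one level up.
\end{itemize}
Then count by volume. All level-$n$ cells have the same side $L_n\ge 2\rho(\alpha\beta)^{n-1}$, and distinct level-$n$ Alice balls lie in distinct cells. If $\diam U\le L_n$, every cell meeting $U$ lies in a box of side $3L_n$, so at most $3^d$ cells meet $U$. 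Taking $\rho(\alpha\beta)^n<\epsilon\le\rho(\alpha\beta)^{n-1}$ gives $\mu(U)\le 3^dN^d(\epsilon/\rho)^{d\log N/(-\log(\alpha\beta))}$, and the mass distribution principle finishes the proof.
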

\begin{proof}
 Let $B$ be a closed ball of $\RR^d$ given by $\| \cdot \|_\infty$.  Then $B$ contains a closed ball $B'$ given by $\| \cdot \|_\infty$ such that \[\rho(B')=\lfloor \beta^{-1}\rfloor \beta \rho(B).\]  Now $\mR_{\lfloor \beta^{-1}\rfloor}(B')$ is the $\lfloor \beta^{-1}\rfloor$-tessellation of $B'$, which, by Lemma~\ref{lemm:TesselBallFund}, contains $\lfloor \beta^{-1}\rfloor^d$ elements of $B^\beta$ with pairwise disjoint interiors.   The desired result now follows by applying~\cite[Corollary~1]{Sch66}.
 \end{proof}

The remaining results in this section have analogs for spaces more general than $\RR^d$.  See Remark~\ref{rmk:UbiqLosSet2}.  The following theorem is an adaption of~\cite[Theorem~2]{Sch66} and its proof.
\begin{theo}\label{thm:CtbleWinPropForWinSets}  Let $d, N \in  \NN$, $0< \alpha<1$, $0< \beta <1$, and $\{S_i\}_{i=1}^N$ be a family of $(\alpha, \beta(\alpha \beta)^{N-1})$-winning sets of $\RR^d$.  Then $\bigcap_{i=1}^N S_i$ is an $(\alpha, \beta)$-winning set of $\RR^d$.
 
\end{theo}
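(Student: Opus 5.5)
The plan is to adapt Schmidt's proof of \cite[Theorem~2]{Sch66}: Alice runs the $N$ winning strategies in rotation. Fix positional winning strategies $f_1,\dots,f_N$ for Alice in the $(\alpha,\beta(\alpha\beta)^{N-1};S_i)$-games (Remark~\ref{rem:PositionalWinning}). Alice also needs a way to regard a single $(\alpha,\beta)$-game as $N$ interleaved games, each with slower parameter $\beta' := \beta(\alpha\beta)^{N-1}$. For this we set $\gamma := \alpha\beta$. In the $(\alpha,\beta)$-game the radii satisfy $\rho(B_{n+N}) = (\alpha\beta)^N\rho(B_n) = \alpha\beta'\rho(B_n)$, which is exactly the ratio between consecutive Bob balls in an $(\alpha,\beta')$-game.

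Concretely, for each $i\in\{1,\dots,N\}$, define the $i$-th subgame to consist of Bob's balls $B_i, B_{i+N}, B_{i+2N},\dots$. Its first ball is $B_i$, which is arbitrary from the point of view of $f_i$. At move $n = i+kN$, Alice answers $B_n$ with $A_n := f_i(B_i, A_i, B_{i+N}, \dots, B_n)$, i.e., the ball prescribed by $f_i$ applied to the history of the $i$-th subgame; since $f_i$ is positional, it is simply $f_i(B_n)\in B_n^\alpha$. To see that this is legal within the $i$-th subgame, check that $B_{n+N}\in A_n^{\beta'}$. This holds because $B_{n+N}\subset A_n$ by nestedness, and $\rho(B_{n+N}) = \beta(\alpha\beta)^{N-1}\rho(A_n)$, since between $A_n$ and $B_{n+N}$ there are $N-1$ further full rounds and one Bob step. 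Thus the sequence $B_i\supset A_i\supset B_{i+N}\supset A_{i+N}\supset\cdots$ is a legitimate play of the $(\alpha,\beta';S_i)$-game in which Alice follows $f_i$, and hence $\bigcap_k B_{i+kN}\in S_i$. Each such intersection equals $\bigcap_n B_n$, since both are nested sequences with radii tending to $0$ lying in the same chain. So the common point lies in every $S_i$, and therefore in $\bigcap_{i=1}^N S_i$.

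The main (mild) obstacle is bookkeeping. For $i\ge2$ the subgame starts with Bob's ball $B_i$ rather than $B_1$, so one must note that a winning strategy wins for every initial ball of Bob; this is built into the definition of an $(\alpha,\beta)$-winning set. The other point is the exact radius identity, which must hold with equality so that $B_{n+N}\in A_n^{\beta'}$ rather than merely being a subset. Both are routine, so the proof should follow \cite[Theorem~2]{Sch66} nearly verbatim. The subgame formalism parallels that of Lemma~\ref{lemm:UbitLosImplies}, except that Alice, not Bob, is the player interleaving strategies.
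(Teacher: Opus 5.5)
Your proposal is correct and matches the paper's proof: Alice runs the $N$ winning strategies in rotation, using the strategy for $S_i$ at moves $k \equiv i \pmod N$ and treating $B_i$ as Bob's initial ball for that subgame, so that $\bigcap_n B_n$ lies in every $S_i$. Your explicit checks that $B_{n+N}\subset A_n$ and $\rho(B_{n+N}) = \beta(\alpha\beta)^{N-1}\rho(A_n)$ spell out why each subgame is a legal $(\alpha, \beta(\alpha\beta)^{N-1})$-game, a step the paper leaves implicit.
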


\begin{rema*}
An $(\alpha, \beta)$-winning set of $\RR^d$ is dense in $\RR^d$ for any $0< \alpha<1$, $0< \beta <1$.  Consequently, Theorem~\ref{thm:CtbleWinPropForWinSets} enables us to detect finite intersections that are dense (and thus nonempty).  Furthermore, for $\beta$ small enough, we can not only detect density but also obtain a strictly positive lower bound on the Hausdorff dimension by applying Theorem~\ref{thm:LowerBndHDWinSet} (with Proposition~\ref{prop:BilipschitzInvar2} if we are considering a norm different from the supremum norm $\| \cdot\|_\infty$).
\end{rema*}

\begin{proof}
Let $S:= \bigcap_{i=1}^N S_i$.  We play an $(\alpha, \beta; S)$-game.  Bob chooses $B_1$.  Let $k \in \NN$.  For $k \equiv 1 \pmod {N}$, Alice  applies an $(\alpha, \beta (\alpha\beta)^{N-1}; S_1)$-winning strategy to choose a ball $A_k$.  In particular, in this way, Alice chooses $A_1$.  Bob chooses $B_2$.  Now, for $k \equiv 2 \pmod {N}$, Alice regards $B_2$ as Bob's initial choice of ball and applies an $(\alpha, \beta (\alpha\beta)^{N-1}; S_2)$-winning strategy to choose a ball $A_k$.  Note that Alice will continue to index her balls according to the indexing for the $(\alpha, \beta; S)$-game, not according to the $(\alpha, \beta (\alpha\beta)^{N-1}; S_2)$-game. In particular, in this way, Alice chooses $A_2$.  Bob chooses $B_3$.  Repeating the above, we have that Bob chooses $B_{N}$ and that, for $k \equiv N \pmod {N}$, Alice regards $B_{N}$ as Bob's initial choice of ball and applies an $(\alpha, \beta (\alpha\beta)^{N-1}; S_N)$-winning strategy to choose a ball $A_k$.  As in previous steps, Alice indexes according to the $(\alpha, \beta; S)$-game.  This yields the dyadic play \[B_1 \supset A_1 \supset B_2 \supset A_2 \supset \cdots B_N \supset A_N \supset B_{N+1} \supset A_{N+1} \supset \cdots.\]

Consequently, we have that \[\bigcap_{\ell=0}^\infty A_{i+ \ell N} \in S_i\] for all $i  \in \NN$ such that $1 \leq i \leq N$.  Thus, we have\[\bigcap_{n=1}^\infty B_n \in \bigcap_{i=1}^N S_i,\] yielding the desired result.  

\end{proof}

An analog of~\cite[Theorem~2]{Sch66} holds for $(\alpha, \beta)$-ubiquitously losing sets.
\begin{theo}\label{thm:CtbleWinPropForLoseSets} Let $d \in \NN$, $0< \alpha<1$, $0< \beta <1$, and $\{S_i\}_{i=1}^\infty$ be a countable family of $(\alpha, \beta)$-ubiquitously losing sets of $\RR^d$.  Then $\bigcup_{i=1}^\infty S_i$ is an $(\alpha, \beta)$-ubiquitously losing set of $\RR^d$.
 
\end{theo}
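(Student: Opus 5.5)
We adapt the proof of \cite[Theorem~2]{Sch66} with the roles of Alice and Bob exchanged. Fix an arbitrary closed ball $B_1$; we must give Bob a winning strategy for the $(\alpha,\beta;S)$-game with $S:=\bigcup_{i=1}^\infty S_i$. The idea is for Bob to run countably many strategies in parallel, each on its own subsequence of moves. Each strategy $g^{(i)}$ will be a winning strategy for Bob in the $(\alpha,\beta;S_i)$-game, started at some ball of the play and then used only at a sparse, infinite set of Bob's moves. On the remaining moves Bob simply shrinks, as in the proof of Lemma~\ref{lemm:UbitLosImplies}.

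First fix a bookkeeping partition of $\{2,3,4,\dots\}$ into infinitely many infinite sets $T_1,T_2,\dots$, with $\min T_i$ increasing in $i$. A concrete choice is to let $T_i$ consist of the integers $n\ge 2$ for which the $2$-adic valuation of $n-1$ equals $i-1$. Let $n_i:=\min T_i-1$.

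The strategy runs as follows. When the play reaches the ball $B_{n_i}$, Bob declares $B_{n_i}$ to be the initial ball of an auxiliary $(\alpha,\beta;S_i)$-game. This is legitimate precisely because $S_i$ is \emph{ubiquitously} losing, so Bob has a winning strategy $g^{(i)}$ for every initial ball. This is the essential use of Definition~\ref{defn:UbitLos}. Now list the elements of $T_i$ as $n_i+1=k_1<k_2<\cdots$. At move $k_m$, Bob chooses $B_{k_m}$ by applying $g^{(i)}$ to an auxiliary ball $\widetilde A$, obtained as follows:
\begin{itemize}
\item if $m=1$, take $\widetilde A:=A_{n_i}$ itself;
\item if $m\ge2$, the current ball $A_{k_m-1}$ has radius $\alpha\rho(B_{k_{m-1}})(\alpha\beta)^{s}$ with $s=k_m-k_{m-1}-1\ge0$, and we take $\widetilde A\in (A_{k_m-1})$ of the correct radius $\alpha\rho(B_{k_{m-1}})$.
\end{itemize}
Concretely, for $m\ge2$ the ball $\widetilde A$ is an Alice-ball in the $S_i$-game, chosen so that $A_{k_m-1}$ lies inside a legal continuation. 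Exactly as in Lemma~\ref{lemm:RestrictedStrat}, we regard the intermediate real moves as Bob and Alice playing $s$ further rounds of the $S_i$-game, with Bob's choices in those rounds being arbitrary balls consistent with $g^{(i)}$. It is cleaner to phrase this through the $\{s_n\}$-restriction of $g^{(i)}$ from Lemma~\ref{lemm:RestrictedStrat}, with $s_m:=k_{m+1}-k_m-1$. The subsequence $B_{n_i}\supset A_{n_i}\supset B_{k_1}\supset A_{k_1}\supset B_{k_2}\supset\cdots$ is then a dyadic play of an $(\alpha,\beta^{\{s_m\}};S_i)$-game, played according to a restriction of $g^{(i)}$.

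It remains to check that this works. Since the restriction is winning for Bob, $\bigcap_m B_{k_m}\in S_i^c$. The balls of the full play are nested with radii tending to $0$, so $\bigcap_n B_n=\bigcap_m B_{k_m}$. Hence the common point lies in $S_i^c$ for every $i$, that is, in $S^c$. Since $B_1$ was arbitrary, $S$ is $(\alpha,\beta)$-ubiquitously losing.

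The main obstacle is making the restriction step rigorous. One must check that Lemma~\ref{lemm:RestrictedStrat}, which there is stated for a strategy with initial ball equal to the first ball of the game, applies to a strategy launched at a later ball $B_{n_i}$. One must also check that Bob can compute the positional strategy at move $k_m$ from the current ball alone, since the $T_i$ interleave. If positionality becomes awkward, the fallback is to let Bob's strategy depend on the full history, which is permitted for a winning strategy. The claim would then follow exactly as in Schmidt's original argument, with the countable union replacing the countable intersection.
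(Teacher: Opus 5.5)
\textbf{There is a genuine gap, and it cannot be closed: the statement is false as written.}

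\medskip

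\textbf{The failing step.} It is your claim that the moves controlled by $g^{(i)}$ form a dyadic play of the $(\alpha,\beta^{\{s_m\}};S_i)$-game ``played according to a restriction of $g^{(i)}$.'' The proof of Lemma~\ref{lemm:RestrictedStrat} reduces to that of Lemma~\ref{lemm:UbitLosImplies}. There the $s_m$ intermediate rounds are filled in by Bob himself: he picks the intermediate Alice balls and answers them with the strategy. So the whole sequence is a genuine $(\alpha,\beta)$-play consistent with the strategy, and the acceleration is on Bob's side, which is harmless to him. In your interleaving those rounds are instead played by the real Alice and by the other strategies $g^{(i')}$.

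So when $g^{(i)}$ must move at time $k_m$, the ball it has to answer, $A_{k_m-1}$, is from its point of view an arbitrary ball of radius $\alpha(\alpha\beta)^{s}\rho(B_{k_{m-1}})$ inside $B_{k_{m-1}}$. The game $g^{(i)}$ is really playing is therefore the $(\alpha(\alpha\beta)^{s},\beta;S_i)$-game, in which \emph{Alice} is accelerated. An $(\alpha,\beta)$-winning strategy for Bob guarantees nothing there.

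Your recipe for $\widetilde A$ already shows the mismatch. With your $T_i$ one has $s=2^i-1\ge1$ at every controlled move after the first. No ball of radius $\alpha\rho(B_{k_{m-1}})$ fits inside $A_{k_m-1}$. If instead $\widetilde A\supset A_{k_m-1}$, then $g^{(i)}(\widetilde A)$ has radius $\alpha\beta\rho(B_{k_{m-1}})$ rather than the legal $\beta\rho(A_{k_m-1})$, and it need not lie in $A_{k_m-1}$. Relaunching at $B_{n_i}$ and positionality are not the real obstacles, and history-dependent strategies do not help.

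\medskip

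\textbf{Counterexample.} Take $d=1$ and any $\alpha,\beta$ with $2\alpha-1\ge\alpha\beta$, for instance $\alpha=9/10$, $\beta=1/2$.
\begin{itemize}
\item If Bob's ball is $[x_0-r,x_0+r]$, any Alice ball of radius $\alpha r$ inside it has its center within $(1-\alpha)r$ of $x_0$. It therefore contains $[x_0-\alpha\beta r,x_0+\alpha\beta r]$, so Bob can recenter at $x_0$ forever.
\item At his first move, the admissible centers in $A_1$ fill an interval of length $2\alpha(1-\beta)\rho(B_1)>0$. So he may take $x_0$ irrational or rational, as he likes.
\end{itemize}
Thus $\QQ$ and $\RR\setminus\QQ$ are both $(9/10,1/2)$-ubiquitously losing, but their union $\RR$ is not.

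\medskip

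\textbf{Comparison with the paper.} The paper's proof uses the same $2$-adic interleaving and the same appeal to Lemma~\ref{lemm:UbitLosImplies}, so it has the same gap. It even has Bob use an $(\alpha,\beta(\alpha\beta)^{2^i-1};S_i)$-strategy to produce $B_{k+1}$ from $A_k$. That strategy outputs a ball of radius $\beta(\alpha\beta)^{2^i-1}\rho(A_k)$ instead of $\beta\rho(A_k)$. Your proposal matches the paper's route, but neither establishes the statement.

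\medskip

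\textbf{What the interleaving does prove.} It gives the true dual of Theorem~\ref{thm:CtbleWinPropForWinSets}: if each $S_i$ is $(\alpha(\alpha\beta)^{2^i-1},\beta)$-ubiquitously losing, then $\bigcup_i S_i$ is $(\alpha,\beta)$-ubiquitously losing. Launch $g^{(i)}$ at $B_{2^{i-1}+1}$, which ubiquity permits. Then feed it the real balls $A_{2^{i-1}+m2^i}$, $m\ge1$. Together with Bob's answers these form a legal play of the $(\alpha(\alpha\beta)^{2^i-1},\beta)$-game, with all intermediate moves absorbed into Alice's choices.
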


\begin{proof}  The proof is an adaptation of the proof of \cite[Theorem~2]{Sch66}.  The details are as follows.  By Lemma~\ref{lemm:UbitLosImplies}, each $S_i$ is an $(\alpha, \beta^{\{s_m\}})$-ubiquitously losing set for every acceleration sequence $\{s_m\} \subset \NN \cup \{0\}$.  Let $B_1$ be a closed ball of $\RR^d$ that is the initial ball of a play for an $(\alpha, \beta;S)$-game.  For $k \equiv 2^{i-1} \pmod {2^i}$, Bob applies an $(\alpha, \beta (\alpha\beta)^{2^i-1}; S_i)$-winning strategy for Bob with initial ball $B_1$ to choose a ball $B_{k+1}$ (from Alice's choice of ball $A_k$).  Consequently, we have that \[\bigcap_{n=1}^\infty B_n \in S_i^c\] for all $i  \in \NN$.  Thus, we have\[\bigcap_{n=1}^\infty B_n \in \bigcap_{i=1}^\infty S_i^c = \left(\bigcup_{i=1}^\infty S_i \right)^c,\] yielding the desired result.  
\end{proof}

The proof of the following proposition is an adaptation of the proof of~\cite[Lemma~8]{Sch66}.
\begin{prop}\label{prop:UbitLosingInvariance}  Let $d \in  \NN$, $0< \alpha<1$, $0< \beta <1$, $0< \alpha'<1$, and $0< \beta' <1$ such that $\alpha \beta = \alpha' \beta'$ and $\beta' \leq \beta$.  Then every $(\alpha, \beta)$-ubiquitously losing set of $\RR^d$ is an $(\alpha', \beta')$-ubiquitously losing set of $\RR^d$.
 
\end{prop}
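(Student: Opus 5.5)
We adapt the proof of \cite[Lemma~8]{Sch66}, interchanging the roles of Alice and Bob. Let $S$ be an $(\alpha,\beta)$-ubiquitously losing set of $\RR^d$ and fix an arbitrary closed ball $B_1$. We must give Bob a winning strategy for the $(\alpha',\beta';S)$-game with initial ball $B_1$. The plan is to simulate an $(\alpha,\beta;S)$-game with the same initial ball $B_1$, in which Bob plays his $(\alpha,\beta;S)$-winning strategy $g_{B_1}$. The key identity is $\alpha\beta=\alpha'\beta'$: Bob's balls in both games then have radii $\rho(B_1)(\alpha\beta)^{n-1}$, so we can try to take Bob's $n$-th ball to be literally the same in both games.

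For the simulation, suppose inductively that Bob's ball $B_n$ is common to both games, and that in the real $(\alpha',\beta')$-game Alice answers with $A'_n\in B_n^{\alpha'}$. Since $\beta'\le\beta$, we have $\alpha'=\alpha\beta/\beta'\ge\alpha$. Hence $B_n^\alpha$ contains a ball $A_n\subset A'_n$, and we declare $A_n$ to be Alice's move in the simulated game. Bob computes $B_{n+1}=g_{B_1}(A_n)\in A_n^\beta$, so that $B_{n+1}\subset A_n\subset A'_n$ and
\[
\rho(B_{n+1})=\beta\alpha\,\rho(B_n)=\beta'\alpha'\rho(B_n)=\beta'\rho(A'_n).
\]
This shows $B_{n+1}\in (A'_n)^{\beta'}$, so it is a legal move for Bob in the real game, and the induction continues.

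The resulting nested sequence $B_1\supset B_2\supset\cdots$ is the same in both games. Because the simulated game is played according to $g_{B_1}$, we get $\bigcap_n B_n\in S^c$, so Bob wins the real game. Since $B_1$ was arbitrary, $S$ is $(\alpha',\beta')$-ubiquitously losing.

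The only point needing care is the inclusion $B_n^\alpha\ni A_n\subset A'_n$. This amounts to fitting a $\|\cdot\|$-ball of radius $\alpha\rho(B_n)$ inside a ball of radius $\alpha'\rho(B_n)\ge\alpha\rho(B_n)$; the concentric ball works, and it automatically lies in $B_n$. Bob's choice of $A_n$ should be made by a fixed rule so that his strategy stays positional, matching Remark~\ref{rem:PositionalWinning}. No other obstacle is expected.
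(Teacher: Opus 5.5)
Your argument is correct, but it runs the simulation in the opposite direction from the paper.

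You keep Bob's balls literally identical in the two games. You turn Alice's real move $A'_n\in B_n^{\alpha'}$ into a simulated move by passing to its concentric sub-ball of radius $\alpha\rho(B_n)$, which works exactly because $\beta'\le\beta$ forces $\alpha'\ge\alpha$.

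The paper instead keeps Alice's balls identical and shrinks Bob's. From $A'_n$, Bob's $(\alpha,\beta;S)$-strategy produces $B_{n+1}\in(A'_n)^\beta$, and his real move is any $B'_{n+1}\in B_{n+1}^{\beta'/\beta}$. Then $\rho(A'_{n+1})=\alpha'\beta'\rho(B_{n+1})/\beta=\alpha\rho(B_{n+1})$, so Alice's real reply is also a legal simulated reply. Since every real Bob ball must be a shrink of a simulated one, the paper cannot start the simulated game at $B'_1$. It starts it at an arbitrary $B_2\in(A'_1)^\beta$ chosen after Alice's first move, and the outcome is $\bigcap_{n\ge1}B'_n=\bigcap_{n\ge2}B_n$. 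This restart is exactly where the ubiquity hypothesis is genuinely used. The same restart-inside-Alice's-first-move template reappears in the paper's bilipschitz invariance proofs, where balls cannot be identified across the two games; so the paper's route buys uniformity with those results.

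Your route needs no index shift and is pointwise in the initial ball. A Bob winning strategy for the $(\alpha,\beta;S)$-game from $B_1$ becomes one for the $(\alpha',\beta';S)$-game from the same $B_1$. So your argument also shows directly that $(\alpha,\beta)$-losing sets are $(\alpha',\beta')$-losing under the same parameter constraints, which the paper's argument does not give as written.
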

\begin{proof}  First, note that we may assume $\beta' < \beta$ without loss of generality.
Let $S \subset \RR^d$ be an $(\alpha, \beta)$-ubiquitously losing set of $\RR^d$.  Bob picks $B'_1$ for an $(\alpha', \beta'; S)$-game on $\RR^d$.  Now Alice chooses \[A'_1\in (B'_1)^{\alpha'}\] for the $(\alpha', \beta'; S)$-game. Since $S \subset \RR^d$ is an $(\alpha, \beta)$-ubiquitously losing set of $\RR^d$, Bob can choose an initial closed ball \[B \in (A'_1)^{\beta}\] according to an $(\alpha, \beta)$-winning strategy for the $(\alpha, \beta; S)$-game.  (In fact, for any choice of initial closed ball that Bob makes, Bob will have an $(\alpha, \beta)$-winning strategy.)  By reindexing, we refer to $B$ as $B_2$.

Now Bob chooses any closed ball \[B'_2 \in B_2^{(\beta'/\beta)}\] and Alice chooses \[A'_2 \in(B'_2)^{\alpha'}\] for the $(\alpha', \beta'; S)$-game.  Note that \[\rho(A'_2) = \alpha' \rho(B'_2) = \frac{\alpha' \beta'}{\beta}\rho(B_2)= \alpha \rho(B_2).\]  Consequently, Bob picks \[B_3 \in (A'_2)^\beta\] according to an $(\alpha, \beta)$-winning strategy for the $(\alpha, \beta; S)$-game with Bob's initial choice of closed ball as $B_2$.  Also Bob picks any closed ball \[B'_3 \in B_3^{(\beta'/\beta)}.\]  Continuing thus by recursion yields that \[\bigcap_{n=1}^\infty B'_n = \bigcap_{n=2}^\infty B_n \in S^c,\] giving the desired result.

\end{proof}

The proposition gives an analog of~\cite[Lemma~11]{Sch66} with essentially the same proof, given below for the convenience of the reader.
\begin{coro}\label{cor:bigeralphalosingsets} Let $d \in  \NN$.  If $0< \alpha < \alpha'<1$ and $0<  \beta_0<1$, then every $(\alpha \mid \beta_0)$-ubiquitously losing set of $\RR^d$ is an $(\alpha' \mid \frac {\alpha \beta_0}{\alpha'})$-ubiquitously losing set of $\RR^d$.
\end{coro}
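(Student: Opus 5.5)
We unpack the hypothesis and reduce to Proposition~\ref{prop:UbitLosingInvariance}, in the same way that \cite[Lemma~11]{Sch66} reduces to \cite[Lemma~8]{Sch66}.

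Let $S$ be an $(\alpha \mid \beta_0)$-ubiquitously losing set of $\RR^d$. By Definition~\ref{defn:UbitLos}, this means $S$ is $(\alpha,\beta)$-ubiquitously losing for every $0<\beta\le\beta_0$. We must show that $S$ is $(\alpha',\beta')$-ubiquitously losing for every $0<\beta'\le \alpha\beta_0/\alpha'$.

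Fix such a $\beta'$ and define
\[
\beta:=\frac{\alpha'\beta'}{\alpha}.
\]
Then two things hold:
\begin{itemize}
\item $0<\beta\le \frac{\alpha'}{\alpha}\cdot\frac{\alpha\beta_0}{\alpha'}=\beta_0<1$, so $S$ is $(\alpha,\beta)$-ubiquitously losing by hypothesis.
\item $\alpha\beta=\alpha'\beta'$, and since $\alpha<\alpha'$ we get $\beta=\frac{\alpha'}{\alpha}\beta'>\beta'$.
\end{itemize}
These are exactly the hypotheses of Proposition~\ref{prop:UbitLosingInvariance}. Applying it, $S$ is $(\alpha',\beta')$-ubiquitously losing. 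Since $\beta'\in\bigl(0,\alpha\beta_0/\alpha'\bigr]$ was arbitrary, $S$ is $(\alpha' \mid \frac{\alpha\beta_0}{\alpha'})$-ubiquitously losing.

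The main point to verify is that $\alpha\beta_0/\alpha'$ is an admissible parameter, i.e. lies in $(0,1)$. This holds because $\alpha<\alpha'$ and $\beta_0<1$ give $\alpha\beta_0/\alpha'<\beta_0<1$. The same inequality shows that every $\beta'$ under consideration is automatically less than $1$. No further game-theoretic work is needed, since the proof of Proposition~\ref{prop:UbitLosingInvariance} already handles an arbitrary initial ball.
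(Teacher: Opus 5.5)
Your proposal is correct and follows the paper's proof essentially verbatim: fix $0<\beta'\le \alpha\beta_0/\alpha'$, set $\beta:=\alpha'\beta'/\alpha$, check that $0<\beta\le\beta_0$ and $\beta'\le\beta$ with $\alpha\beta=\alpha'\beta'$, and apply Proposition~\ref{prop:UbitLosingInvariance}. Your extra check that $\alpha\beta_0/\alpha'<1$, so the new parameter is admissible, is a small point the paper leaves implicit.
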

\begin{proof}  Given any \[0 < \beta' \leq \frac {\alpha \beta_0}{\alpha'},\] we have \[0 < \beta:=\frac{\alpha' \beta'} {\alpha} \leq \beta_0\] and thus $\beta' \leq \beta$.  Now Proposition~\ref{prop:UbitLosingInvariance} yields the desired result.
 
\end{proof}

Winning sets are known to be invariant under bilipschitz mappings (\cite[Proposition~5.3]{Dan87} and see also~\cite[Theorem~1]{Sch66} and~\cite[Introduction]{McM10}), and we have the following analogous result.  Our proof is an adaptation of the proof for winning sets.

\begin{prop}\label{prop:BilipschitzInvar}  Let $d \in  \NN$, $f: \RR^d \rightarrow \RR^d$ be a bilipschitz mapping with bilipschitz constant $K \geq 1$, $0< \alpha <K^{-2}$, $0<  \beta_0<1$, and $S \subset \RR^d$ be an $(\alpha \mid \beta_0)$-ubiquitously losing set of $\RR^d$.  Then $f(S)$ is an $(\alpha K^2 \mid \beta_0K^{-2})$-ubiquitously losing set of $\RR^d$.
 
\end{prop}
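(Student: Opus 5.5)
The plan is to adapt the standard transfer argument for winning sets (\cite[Proposition~5.3]{Dan87}) to Bob's side. Fix $0<\beta'\leq \beta_0K^{-2}$ and set $\alpha':=\alpha K^2<1$. We must show that for every initial closed ball $B'_1$, Bob has a winning strategy for the $(\alpha',\beta'; f(S))$-game. The idea is that Bob simulates an auxiliary $(\alpha,\beta;S)$-game in the domain with $\beta:=\beta' K^{2}\leq\beta_0$; there he has a winning strategy because $S$ is $(\alpha\mid\beta_0)$-ubiquitously losing. He then pulls Alice's balls back by $f^{-1}$ and pushes his own balls forward by $f$. The point of $\alpha\beta=\alpha'\beta'$ is that the two games shrink at the same rate per round, so the lost factors $K$ can be absorbed at each step.

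First, record the two containments that a $K$-bilipschitz map gives for closed balls: $f(B(x,r))\supset B(f(x),r/K)$ and $f(B(x,r))\subset B(f(x),Kr)$, with the same bounds for $f^{-1}$. These hold for $\|\cdot\|_\infty$ or for any norm, since only the metric inequalities are used.

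Next, set up the simulation. Given Bob's target-side ball $B'_n=B(y_n,r_n)$ and Alice's reply $A'_n=B(z_n,\alpha' r_n)\subset B'_n$, Bob forms a domain ball $A_n:=B(f^{-1}(z_n),\alpha' r_n/K)\subset f^{-1}(A'_n)$. He needs $A_n$ to be a legal Alice move inside his previous domain ball $B_n$, i.e.\ $A_n\in B_n^{\alpha}$. So the domain balls should be taken with radius $\rho(B_n)=\alpha' r_n/(K\alpha)=Kr_n$, and one must check that $B_n\subset f^{-1}(B'_n)$ is compatible with this. The check works like this: Bob's winning strategy gives $B_{n+1}\in A_n^{\beta}$, of radius $\beta\alpha' r_n/K=\beta' K\alpha' r_n$. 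Its image $f(B_{n+1})$ contains a ball of radius $\beta'\alpha' r_n$, which is exactly the size of a legal move in $(A'_n)^{\beta'}$. Bob then takes $B'_{n+1}\subset f(B_{n+1})\cap A'_n$ of radius $\beta'\alpha' r_n$, centred at $f$ of the centre of $B_{n+1}$.

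The factor bookkeeping is the main obstacle: one must make consistent, at every step, the size condition between the domain and target games together with the nesting $B'_{n+1}\subset A'_n$ and $A_{n+1}\subset B_{n+1}$. I would try first to fix radii via a single constant ratio $\rho(B_n)=c\,\rho(B'_n)$ and solve for $c$ and $\beta$ from the two inclusion requirements. Those requirements are $A_n\subset B_n$, using a factor $K$ from $f^{-1}(A'_n)$, and $B'_{n+1}\subset f(B_{n+1})$, using another factor $K$. Together they force the loss $K^2$ split between $\alpha$ and $\beta$, which is exactly why the hypotheses read $\alpha'=\alpha K^2$ and $\beta'\leq\beta_0K^{-2}$. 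If the centred inclusions are slightly too tight, I would invoke Proposition~\ref{prop:UbitLosingInvariance} and Corollary~\ref{cor:bigeralphalosingsets} to adjust parameters with $\alpha\beta$ fixed and recover slack.

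Finally, once the simulated play is legal, $\bigcap B_n=\{x\}$ with $x\in S^c$. Since $B'_n\subset f(B_n)$, we get $\bigcap B'_n=\{f(x)\}$, and $f(x)\notin f(S)$ because $f$ is injective. This holds for every initial $B'_1$, because the initial domain ball may be any ball (ubiquitous losing). That gives the claim.
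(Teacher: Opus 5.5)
Your proposal is correct and is essentially the paper's own argument: simulate an $(\alpha,\beta'K^2)$-game for $S$ in the domain, pull Alice's balls back through $f^{-1}$ (losing one factor $K$) and push Bob's strategic balls forward through $f$ (losing another $K$), and the nesting closes exactly via $B'_{n+1}\subset f(B_{n+1})\subset f(A_n)\subset A'_n$ and $A_{n+1}\subset f^{-1}(A'_{n+1})\subset f^{-1}(B'_{n+1})\subset B_{n+1}$. No extra slack from Proposition~\ref{prop:UbitLosingInvariance} is needed. (Two small corrections: the inclusion you need is $f^{-1}(B'_n)\subset B_n$, not $B_n\subset f^{-1}(B'_n)$, and your construction does give the former; also, the only cosmetic difference from the paper is that it starts the auxiliary game at an arbitrary ball inside $f^{-1}(A'_1)$ rather than at a ball of radius $Kr_1$ containing $f^{-1}(B'_1)$, with both versions relying on ubiquity.)
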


\begin{proof}  Let $0 < \beta \leq \beta_0$.  Bob  chooses $B_1$ for an $(\alpha K^2, \beta K^{-2}; f(S))$-game, and Alice chooses \[A_1 \in B_1^{\alpha K^2}\] for the $(\alpha K^2, \beta K^{-2}; f(S))$-game.  It follows from the definition of bilipschitz mapping that $f^{-1}(A_1)$ contains a closed ball $A'_1$ such that  $\rho(A'_1) = K^{-1} \rho(A_1)$.  Now Bob now  chooses a closed ball  \[B' \in (A'_1)^{ \beta }\] according to an $(\alpha, \beta; S)$-winning strategy with Bob's initial choice of closed ball as $B'$.  By reindexing, we refer to $B'$ as $B'_2$.  The definition of bilipschitz mapping allows Bob to choose, for the $(\alpha K^2, \beta K^{-2}; f(S))$-game, a closed ball $B_2$ contained in $f(B'_2)$ such that \begin{align}\label{eqn:BilipInvarUbitLosing}\rho(B_2) =  
 K^{-1} \rho(B'_2)=K^{-2} \beta \rho(A_1) \end{align} and \begin{align}\label{eqn:BilipInvarUbitLosing2}
 B_2 \subset  f(B_2') \subset f(A'_1) \subset f \circ f^{-1} (A_1) = A_1. \end{align} 

Now Alice chooses \[A_2 \in B_2^{\alpha K^2}\] for the $(\alpha K^2, \beta K^{-2}; f(S))$-game.  Thus, $f^{-1}(A_2)$ contains a closed ball $A'_2$ such that \[\rho(A'_2)= K^{-1} \rho(A_2) = \alpha  \rho(B_2')\] by (\ref{eqn:BilipInvarUbitLosing}).  Note that here we have \[A'_2 \subset f^{-1}(A_2) \subset f^{-1}(B_2) \subset f^{-1}\circ f(B'_2) = B'_2 \] by (\ref{eqn:BilipInvarUbitLosing2}).  Now Bob  chooses $B'_3 \in (A'_2)^{ \beta }$ according to an $(\alpha, \beta; S)$-winning strategy with Bob's initial choice of closed ball as $B'_2$.  The definition of bilipschitz mapping allows Bob to choose, for the $(\alpha K^2, \beta K^{-2}; f(S))$-game, a closed ball $B_3$ contained in $f(B'_3)$ such that \begin{align*}\rho(B_3) =  
 K^{-1} \rho(B'_3)=K^{-2} \beta \rho(A_2) \end{align*} and \[B_3 \subset  f(B_3') \subset f(A'_2) \subset f \circ f^{-1} (A_2) = A_2.\] 
 
 Continuing thus by recursion yields that \[\bigcap_{n=2}^\infty B'_n \in S^c\] and, thus, that \[\bigcap_{n=2}^\infty B_n = \bigcap_{n=2}^\infty f(B'_n) = f\left( \bigcap_{n=2}^\infty B'_n \right) \in f(S^c) = (f(S))^c,\] which gives the desired result.

\end{proof}

For use with Theorem~\ref{thm:UpBndHDLosingSets} in computing upper bounds on the Hausdorff dimension for norms different from the supremum norm $\|\cdot\|_\infty$ on $\RR^d$, we have a version of Proposition~\ref{prop:BilipschitzInvar} in which the Lipschitz constants for the mapping and its inverse are explicitly given.  

\begin{prop}\label{prop:BilipschitzInvar2}  Let $d \in  \NN$, $f: \RR^d \rightarrow \RR^d$ be a bijective Lipschitz mapping with Lipschitz constant $K > 0$ whose inverse $f^{-1}$ has Lipschitz constant $L > 0$, $0< \alpha <(KL)^{-1}$, $0<  \beta_0<1$, and $S \subset \RR^d$ be an $(\alpha \mid \beta_0)$-ubiquitously losing set of $\RR^d$.  Then $f(S)$ is an $(\alpha KL \mid \beta_0(KL)^{-1})$-ubiquitously losing set of $\RR^d$.
 
\end{prop}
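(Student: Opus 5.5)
We will adapt the proof of Proposition~\ref{prop:BilipschitzInvar}, keeping track separately of the Lipschitz constant $K$ of $f$ and the constant $L$ of $f^{-1}$ rather than using a single bilipschitz constant. Fix $0<\beta\le \beta_0$. We will show that Bob wins the $(\alpha KL, \beta (KL)^{-1}; f(S))$-game from an arbitrary initial ball $B_1$. Note that $\alpha KL<1$ by hypothesis, so this is a legitimate game. Also note that $KL\ge 1$ always holds, since $\|x-y\|\le L\|f(x)-f(y)\|\le KL\|x-y\|$.

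The two geometric facts we use are the following. For a closed ball $A$, the set $f^{-1}(A)$ contains the closed ball $A'$ centered at $f^{-1}(\text{center of }A)$ with radius $L^{-1}\rho(A)$. Indeed, if $\|z-f^{-1}(a)\|\le \rho(A)/L$, then $\|f(z)-a\|\le K\rho(A)/L$; this is not directly $\le\rho(A)$, so we argue instead via $f^{-1}$. Since $f^{-1}$ is $L$-Lipschitz, $f^{-1}$ maps the complement of $A$ far away: if $\|w-a\|>\rho(A)$ then $\|f^{-1}(w)-f^{-1}(a)\|\ge \|w-a\|/K>\rho(A)/K$. Hence the ball of radius $\rho(A)/K$ about $f^{-1}(a)$ lies in $f^{-1}(A)$. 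Similarly, for a ball $B'$, the set $f(B')$ contains the ball of radius $\rho(B')/L$ about $f(\text{center})$.

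So the correct radii are the following. From Alice's ball $A_n$, we get $A'_n\subset f^{-1}(A_n)$ with $\rho(A'_n)=\rho(A_n)/K$. Bob plays $B'_{n+1}\in (A'_n)^\beta$ by his $(\alpha,\beta;S)$-winning strategy, started with initial ball $B'_2$ as in the earlier proof. Then he takes $B_{n+1}\subset f(B'_{n+1})\subset A_n$ with $\rho(B_{n+1})=\rho(B'_{n+1})/L=\beta\rho(A_n)/(KL)$. This is exactly a legal move for the game with parameter $\beta(KL)^{-1}$.

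Next, Alice plays $A_{n+1}\in B_{n+1}^{\alpha KL}$, and we take $A'_{n+1}\subset f^{-1}(A_{n+1})\subset B'_{n+1}$ with radius $\rho(A_{n+1})/K=\alpha KL\rho(B_{n+1})/K=\alpha\rho(B'_{n+1})$. So $A'_{n+1}\in (B'_{n+1})^\alpha$ is a legal Alice move in the auxiliary $(\alpha,\beta;S)$-game.

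The recursion therefore yields $\bigcap B'_n\in S^c$. Since the $B'_n$ have radii tending to $0$, $\bigcap B_n=f(\bigcap B'_n)\in f(S^c)=f(S)^c$ by bijectivity of $f$. As the initial ball $B_1$ and $\beta\le\beta_0$ were arbitrary, $f(S)$ is $(\alpha KL\mid\beta_0(KL)^{-1})$-ubiquitously losing.

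The main obstacle is the inner-ball containment step: pinning down which constant ($K$ or $L$) governs each inscribed ball, since a naive argument produces the wrong constant. We handle it via the complement argument above, verified in both directions.
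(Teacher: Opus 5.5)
Your proposal is correct and follows the paper's route: the paper just says the proof is analogous to that of Proposition~\ref{prop:BilipschitzInvar}. Your bookkeeping, namely $A'_n\subset f^{-1}(A_n)$ of radius $\rho(A_n)/K$ and $B_{n+1}\subset f(B'_{n+1})$ of radius $\rho(B'_{n+1})/L$, which yields the parameters $\alpha KL$ and $\beta(KL)^{-1}$, together with the remark $KL\ge 1$, is exactly that adaptation. One presentational slip: the inequality $\|f^{-1}(w)-f^{-1}(a)\|\ge\|w-a\|/K$ comes from $f$ being $K$-Lipschitz, not from $f^{-1}$ being $L$-Lipschitz, and the direct argument ($\|z-f^{-1}(a)\|\le\rho(A)/K$ implies $\|f(z)-a\|\le\rho(A)$) already gives radius $\rho(A)/K$, so you can drop both the detour and the initially stated radius $L^{-1}\rho(A)$.
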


\begin{proof}
 The proof is analogous to that of Proposition~\ref{prop:BilipschitzInvar}.  Note that $KL \geq 1$ follows from $d_a(f(x), f(y)) \leq K d_b(x, y)$ and $d_b(x,  y) = d_b(f^{-1} \circ f(x), f^{-1} \circ f(y)) \leq L d_a(f(x), f(y))$.  Here, $d_a$ and $d_b$ are the metrics on the range and domain, respectively, of the mapping $f$.
\end{proof}

\end{document}